\newtheorem{theorem}{Theorem}[section]
\newtheorem{thm}[theorem]{Theorem}
\newtheorem{fact}[theorem]{Fact}
\newtheorem{proposition}[theorem]{Proposition}
\newtheorem{remark}[theorem]{Remark}
\newtheorem{prop}[theorem]{Proposition}
\newtheorem{claim}[theorem]{Claim}
\newtheorem{lemma}[theorem]{Lemma}
\newtheorem{corollary}[theorem]{Corollary}
\newtheorem{cor}[theorem]{Corollary}
\newtheorem{question}[theorem]{Question}
\theoremstyle{definition}
\newtheorem{definition}[theorem]{Definition}
\newcommand{\spe}{\mathrm{Spec}}
\newcommand{\CK}{\omega_1^{\mathrm{CK}}}
\newcommand{\KO}{\mathcal{O}}
\newcommand{\DII}{\Delta^0_2}
\newcommand{\NN}{{\mathbb{N}}}
\newcommand{\RR}{{\mathbb{R}}}
\newcommand{\R}{{\mathbb{R}}}
\newcommand{\QQ}{{\mathbb{Q}}}
\newcommand{\ZZ}{{\mathbb{Z}}}
\newcommand{\sub}{\subseteq}
\newcommand{\sN}[1]{_{#1\in \NN}}
\newcommand{\uhr}[1]{\! \upharpoonright_{#1}}
\newcommand{\ML}{Martin-L{\"o}f}
\newcommand{\SI}[1]{\Sigma^0_{#1}}
\newcommand{\PI}[1]{\Pi^0_{#1}}
\newcommand{\PPI}{\PI{1}}
\renewcommand{\P}{\mathcal P}
\newcommand{\bi}{\begin{itemize}}
\newcommand{\ei}{\end{itemize}}
\newcommand{\bc}{\begin{center}}
\newcommand{\ec}{\end{center}}
\newcommand{\Halt}{{\ES'}}
\newcommand{\ES}{\emptyset}
\newcommand{\estring}{\la \ra}
\newcommand{\ria}{\rightarrow}
\newcommand{\tp}[1]{2^{#1}}
\newcommand{\ex}{\exists}
\newcommand{\fa}{\forall}
\newcommand{\la}{\langle}
\newcommand{\ra}{\rangle}
\newcommand{\seqcantor}{2^{ \NN}}
\newcommand{\cantor}{\seqcantor}
\newcommand{\strcantor}{2^{ < \omega}}
\newcommand{\strbaire}{\omega^{ < \omega}}
\newcommand{\Opcl}[1]{[#1]^\prec}
\newcommand{\leT}{\le_{\mathrm{T}}}
\renewcommand{\P}{\mathcal P}
\newcommand{\MLR}{\mbox{\rm \textsf{MLR}}}
\newcommand{\Om}{\Omega}
\newcommand{\n}{\noindent}
\newcommand{\vsp}{\vspace{6pt}}
\newcommand{\leb}{\mathbf{\lambda}}
\newcommand{\sss}{\sigma}
\newcommand{\aaa}{\alpha}
\newcommand{\s}{\sigma}
\newcommand{\rest}[1]{\! \upharpoonright_{#1}}
\newcommand{\lland}{\, \land \, }
\newcommand \seq[1]{{\left\langle{#1}\right\rangle}}
\newcommand\+[1]{\mathcal{#1}}
\newcommand{\sC}{\+ C}
\newcommand{\wt}{\widetilde}
\newcommand{\ol}{\overline}
\newcommand{\ul}{\underline}
\newcommand{\lra}{\leftrightarrow}
\newcommand{\LR}{\Leftrightarrow}
\newcommand{\RA}{\Rightarrow}
\newcommand{\LA}{\Leftarrow}
\newcommand{\rapf}{\n $\RA:$\ }
\newcommand{\lapf}{\n $\LA:$\ }
\newcommand{\UM}{\mathbb{U}}
\newcommand{\sssl}{\ensuremath{|\sigma|}}
\newcommand{\dom}{\ensuremath{\mathrm{dom}}}
\def\uh{\upharpoonright}
\newcommand{\dset}[2]{\{#1 : #2 \}}
  \newcommand{\SR}{\mbox{\rm \textsf{SR}}}
  \newcommand{\CR}{\mbox{\rm \textsf{CR}}}
    \newcommand{\rank}{\mbox{\rm \textsf{rank}}}
\newcommand{\RCA}{\ensuremath{\mathbf{RCA_0}}} \newcommand{\WKL}{\ensuremath{\mathbf{WKL_0}}} \newcommand{\ACA}{\ensuremath{\mathbf{ACA_0}}}
\begin{document}

\begin{abstract}  The 2013  logic blog has focussed on the following:

\n 1.  Higher randomness. Among others, the Borel complexity of $\Pi^1_1$ randomness and higher weak 2 randomness is determined.

\n 2. Reverse mathematics and its relationship to randomness.  For instance, what is the strength of Jordan's theorem in analysis?  (His  theorem states that each  function of bounded variation is the difference of two nondecreasing functions.)

\n 3. Randomness and computable analysis. This focusses on the connection of randomness of a real $z$ and Lebesgue density of effectively closed sets at $z$.

\n 4. Exploring similarity relations for Polish metric spaces, such as isometry, or having Gromov-Hausdorff distance $0$. In particular   their complexity was studied.

\n 5. Various results connecting computability theory and randomness.
 \end{abstract}

\title{Logic Blog 2013}

 \author{Editor: Andr\'e Nies}

\maketitle

Previous Logig Blogs from 2010 on can be found on Nies' web site.

  
%
%
%
%

  %
\medskip

\tableofcontents



\newpage


\newpage

\part{Higher randomness}

\section{Greenberg, Monin: An upper bound on the Borel rank of the set  of $\Pi^1_1$-random reals}
Written by Benoit Monin in August, joint work with Noam Greenberg.
\\

Recall that a set  $Z \in \cantor$ is $\Pi^1_1$-random if it is in no $\Pi^1_1$ null class. Kechris~\cite{Kechris:75}  showed  that there is a largest $\Pi^1_1$ null class, which can be seen as  a universal test for $\Pi^1_1$-randomness.  A simple direct proof of this fact is in the last section of Hjorth and Nies~\cite{Hjorth.Nies:07}. For background on higher randomness see \cite[Ch.\ 9]{Nies:book}.

We show that the set of $\Pi^1_1$-randoms is $\mathbf{\Pi^0_3}$. Together with Yu Liang's result in Section~\ref{s:GB-Monin sharpness}, this give the exact Borel rank of the $\Pi^1_1$-random reals (thus also the exact Borel rank of the Kechris' largest $\Pi^1_1$ nullset).

We will show that being $\Pi^1_1$-random is equivalent to a certain notion of genericity. The class of elements for this notion of genericity will have the Borel complexity of $\mathbf{\Pi^0_3}$. This notion of genericity is a variation of the notion of forcing with $\Pi^0_1$ class of positive measure, where we use the same idea lying in the difference between $1$-generic and weakly-$1$-generic.

Following the thesis of Kautz where forcing with closed classes of positive measure is called Solovay forcing, we introduce the two notions of \textbf{weakly-Solovay-$\Sigma^1_1$-generic} real and \textbf{Solovay-$\Sigma^1_1$-generic} reals:

\begin{definition}
We say that $X$ is weakly Solovay-$\Sigma^1_1$-generic if for any uniformly  $\Sigma^1_1$ sequence $\{F_n\}_{n \in \omega}$ of closed sets of positive measure  with $\lambda(\bigcup_n F_n)=1$ we have that $X$ is in one of the $F_n$.
\end{definition}

It is easy to see that weak Solovay-$\Sigma^1_1$-genericity is the same as  higher weak 2-randomness (sometimes  called strong $\Pi^1_1$-ML-randomness). We now give the notion of genericity that  will turn out  to be equivalent to $\Pi^1_1$-randomness:

\begin{definition}
We say that $X$ is Solovay-$\Sigma^1_1$-generic if for any  uniformly  $\Sigma^1_1$ sequence $\{F_n\}_{n \in \omega}$ of closed set of positive measure, $\Sigma^1_1$, we have either $X$ is in one of the $F_n$ or there is a $\Sigma^1_1$ closed set of positive measure $G$ such that $G \cap \bigcup F_n =\emptyset$ and $X \in G$.
\end{definition}

Clearly, the     two last definitions are related  between each other in the same way as    $1$-genericity is related to  weak  $1$-genericity. This   justifies the terms weakly-Solovay-$\Sigma^1_1$-generic and Solovay-$\Sigma^1_1$-generic. We now have to prove that this last notion of genericity coincides with  $\Pi^1_1$-randomness. The only difficult part of the demonstration is to show that if $X$ is Solovay-$\Sigma^1_1$-generic then $\omega_1^X$ is equal to $\omega_1^{ck}$. In order to prove this, we use the idea imaginated by Sacks and simplified by Greenberg, to show that the set of $X$ with $\omega_1^X > \omega_1^{ck}$ has measure $0$.\\

The idea is the following, suppose that for some $X$ we have a function $\varphi$ such that:
$$\forall n\ \ \exists \alpha < \omega_1^{ck}\ \ \varphi^X(n) \in \KO_\alpha^X$$
where $\KO^X$ is the set of Kleene's notation for ordinals computable in $X$ and $\KO^X_\alpha$ the set of Kleene's notation for ordinals computable in $X$ with order-type strictly smaller than $\alpha$. Suppose also that $X$ is Solovay-$\Sigma^1_1$-generic. Then we will show that the supremum of $\varphi^X(n)$ over $n \in \omega$ is smaller than $\omega_1^{ck}$. To show this we need two lemmas:

\begin{lemma} \label{approx}
Let $S$ be a $\Sigma^1_1$ predicate of positive measure of the form
$$S(X) \leftrightarrow \exists n\ \ \forall \alpha < \omega_1^{ck}\ \ S_{\alpha, n}(X)$$
where $S_{\alpha, n}$ is a $\Delta^1_1$ predicate uniformly in $n$ and $\alpha$. Then there is a union of uniformly $\Sigma^1_1$ closed set $\bigcup_n F_n \subseteq S$ with $\lambda(S - \bigcup_n F_n) = 0$.
\end{lemma}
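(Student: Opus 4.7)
Set $T_n := \{X : \forall \alpha < \omega_1^{ck}\ S_{\alpha, n}(X)\}$, so that $T_n$ is $\Pi^1_1$ uniformly in $n$ and $S = \bigcup_n T_n$. The plan is to construct, uniformly $\Sigma^1_1$ in triples $(a, n, k) \in \omega^3$, closed sets $F_{a, n, k} \subseteq T_n \subseteq S$ such that for each pair $(n,k)$ some suitable $\mathcal{O}$-notation $a$ witnesses $\lambda(T_n \setminus F_{a, n, k}) < 2^{-k}$. Relabeling $\{F_{a, n, k}\}_{(a,n,k)}$ as a single sequence then gives the desired family with $\bigcup F \subseteq S$ and $\lambda(S \setminus \bigcup F) = 0$.

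Without loss of generality replace each $S_{\alpha, n}$ by $\bigcap_{\beta \le \alpha} S_{\beta, n}$, so that the family is nested decreasing in $\alpha$ while remaining uniformly $\Delta^1_1$. Then by continuity of measure, $\lambda(S_{\alpha, n}) \downarrow \lambda(T_n)$ as $\alpha \to \omega_1^{ck}$ through the computable ordinals, so for every $k$ there exists a notation $a \in \mathcal{O}$ with $\lambda(S_{|a|, n} \setminus T_n) < 2^{-k-2}$. Let $\tilde S_{a, n}$ denote the $\Delta^1_1$ set defined by the uniform $\Delta^1_1$-formula at index $(a, n)$; this agrees with $S_{|a|, n}$ whenever $a \in \mathcal{O}$. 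The construction invokes two effectivity ingredients: inner regularity for $\Delta^1_1$ sets, yielding a $\Sigma^1_1$-closed $D_{a, n, k} \subseteq \tilde S_{a, n}$ with $\lambda(\tilde S_{a, n} \setminus D_{a, n, k}) < 2^{-k-2}$ uniformly in $(a,n,k)$; and outer regularity applied to the $\Sigma^1_1$ set $\tilde S_{a, n} \cap T_n^c$, yielding a $\Sigma^1_1$-open $V_{a, n, k} \supseteq \tilde S_{a, n} \cap T_n^c$ of measure at most $\lambda(\tilde S_{a, n} \cap T_n^c) + 2^{-k-2}$.

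Define $F_{a, n, k} := D_{a, n, k} \cap V_{a, n, k}^c$. Its complement $D_{a, n, k}^c \cup V_{a, n, k}$ is a $\Sigma^1_1$-open set uniformly in $(a, n, k)$, so $F_{a, n, k}$ is $\Sigma^1_1$-closed. Crucially, the construction forces $F_{a, n, k} \subseteq T_n$ for every $a$, whether or not $a \in \mathcal{O}$: if $X \in F_{a, n, k}$ then $X \in \tilde S_{a, n}$ via $D_{a, n, k}$ and $X \notin \tilde S_{a, n} \cap T_n^c$ via $V_{a, n, k}^c$, forcing $X \in T_n$. For $a \in \mathcal{O}$ with $|a|$ chosen as above, one has $\lambda(V_{a, n, k}) < 2^{-k-1}$ and therefore $\lambda(T_n \setminus F_{a, n, k}) \le \lambda(T_n \setminus D_{a, n, k}) + \lambda(V_{a, n, k}) < 2^{-k}$; this gives the measure exhaustion of $S$.

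The main obstacle is the effective outer regularity for $\tilde S_{a, n} \cap T_n^c$. Inner regularity for $\Delta^1_1$ follows routinely by recursion on Borel codes, but outer regularity here requires approximating the $\Sigma^1_1$ set $T_n^c = \bigcup_b S_{|b|, n}^c$ by effective opens while controlling the contribution of the junk coordinates $b \notin \mathcal{O}$. I expect to handle this by covering each level $\tilde S_{a, n} \cap \tilde S_{b, n}^c$ (uniformly $\Delta^1_1$ in $b$) by a $\Sigma^1_1$-open with geometric excess $2^{-b-k-3}$, then arguing that the non-notation contributions to the union are absorbed thanks to the pointwise enforcement of $F_{a, n, k} \subseteq T_n$ built into the definition.
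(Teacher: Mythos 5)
Your overall architecture is viable, but the step you defer --- effective outer regularity for $\tilde S_{a,n}\cap T_n^c$ --- is precisely where the content of the lemma lives, and filling it in requires the same transfinite bookkeeping that the paper's proof performs directly on the inside, so the detour through complements buys nothing. The paper's argument is: for each $m,n$ and each $\alpha<\omega_1^{ck}$, take a $\Sigma^1_1$ closed $F_{\alpha,m,n}\subseteq S_{\alpha,n}$ with $\lambda(S_{\alpha,n}\setminus F_{\alpha,m,n})<2^{-p(\alpha)}2^{-m}$, where $p$ is an injection of $\omega_1^{ck}$ into $\omega$, and set $F_{m,n}=\bigcap_{\alpha<\omega_1^{ck}}F_{\alpha,m,n}$. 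This is automatically contained in $S_n=\bigcap_\alpha S_{\alpha,n}$, it is $\Sigma^1_1$ closed because a $\forall\alpha<\omega_1^{ck}$ quantifier over a uniformly given matrix yields a $\Sigma^1_1$ predicate, and $\lambda(S_n\setminus F_{m,n})\le\sum_\alpha\lambda(S_{\alpha,n}\setminus F_{\alpha,m,n})\le 2^{-m}$. No outer approximation, no distinguished level $|a|$, no junk indices: the quantifier over notations is absorbed into the definition of each single $F_{m,n}$ rather than spread over an $\omega$-indexed family.

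Two concrete problems with your version. First, the complexity bookkeeping is reversed: $T_n=\{X:\forall\alpha<\omega_1^{ck}\,S_{\alpha,n}(X)\}$ is $\Sigma^1_1$, not $\Pi^1_1$ (it is the $\exists\alpha$ form that gives $\Pi^1_1$), so $\tilde S_{a,n}\cap T_n^c$ is $\Pi^1_1$; correspondingly, for $F_{a,n,k}=D_{a,n,k}\cap V_{a,n,k}^c$ to be $\Sigma^1_1$ closed you need $V_{a,n,k}$ to be a $\Pi^1_1$ open set, so that its complement is $\Sigma^1_1$ closed --- a union over $\beta<\omega_1^{ck}$ of $\Delta^1_1$ open covers of $\tilde S_{a,n}\cap S_{\beta,n}^c$ with excesses $2^{-p(\beta)-k-3}$ does have this form, but notice that this is exactly the paper's geometric-summation-over-an-injection trick, now applied to complements. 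Second, your pointwise-containment trick disposes of junk $a$ only if $D_{a,n,k}\subseteq\tilde S_{a,n}$ and $V_{a,n,k}\supseteq\tilde S_{a,n}\cap T_n^c$ can themselves be guaranteed for $a\notin\mathcal{O}$; but for such $a$ the object $\tilde S_{a,n}$ need not be $\Delta^1_1$ (or a well-defined set at all), so ``inner regularity uniformly in $(a,n,k)$'' is not available as stated. This can be repaired by working with Borel codes produced by a $\Sigma_1(L_{\omega_1^{ck}})$ assignment and running the construction only on codes recognized as valid, but that is an additional layer of care which the direct intersection argument avoids entirely.
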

\begin{proof}
Let $S_n=\{X\ | \forall \alpha < \omega_1^{ck}\ \ S_{\alpha, n}(X)\}$. So we have $S=\bigcup_n S_n$ and $S_n = \bigcap_{\alpha < \omega_1^{ck}} S_{\alpha, n}$. Let us fix $n$ and let us build a union of uniformly $\Sigma^1_1$ closed set $\bigcup_m F_{m,n} \subseteq S_n$ with $\lambda(S_n - F_{m,n}) < 2^{-m}$.\\

For each $m$, in each $S_{\alpha, n}$, find a $\Sigma^1_1$ closed set $F_{\alpha, m, n}$ with $F_{\alpha, m, n} \subseteq S_{\alpha, n}$ and $\lambda(S_{\alpha, n} - F_{\alpha, m, n}) < 2^{-p(\alpha)}2^{-m}$ where $p$ is an injection of $\omega_1^{ck}$ into $\omega$. Let us set $F_{m,n}=\bigcap_{\alpha < \omega_1^{ck}} F_{\alpha, m, n}$. As the intersection of closed set $\bigcap_\alpha F_{\alpha, m, n}$ is a closed set and as the predicate $\forall \alpha\ \ X \in F_{\alpha, m, n}$ is clearly a $\Sigma^1_1$ predicate, we have that $F_{m,n}$ is a $\Sigma^1_1$ closed set. Also as $S_n - F_{m,n}=\bigcup_\alpha S_n - F_{\alpha, m,n}$ we have:
$$
\begin{array}{rcl}
\lambda(S_n - F_{m,n})&\leq&\lambda(\bigcup_\alpha S_n - F_{\alpha, m,n}) \\
&\leq&\lambda(\bigcup_\alpha S_{n, \alpha} - F_{\alpha, m,n}) \\
&\leq&\sum_\alpha \lambda(S_{\alpha, n} - F_{\alpha, m, n}) \leq 2^{-m}
\end{array}
$$

Then, uniformly in $n$ and $m$ we have sequenes of $\Sigma^1_1$ closed set $F_{n,m} \subseteq S_n$ such that $\lambda(S_n - F_{n,m}) < 2^{-m}$. Any $\omega$-order of $\omega \times \omega$ gives us the desired sequence of $\Sigma^1_1$ closed set.
\end{proof}

\begin{lemma}
Let $P(X)$ be a $\Pi^1_1$ predicate of the form
$$P(X) \leftrightarrow \forall n\ \ \exists \alpha < \omega_1^{ck}\ \ P_{\alpha, n}(X)$$
where each $P_{\alpha, n}$ is $\Delta^1_1$ uniformly in $n$ and $\alpha$. Suppose that $X$ is Solovay $\Sigma^1_1$-generic and suppose $P(X)$. Then there exists a $\Sigma^1_1$ closed set $F$ of positive measure with $X \in F$ and $\lambda(F - P)=0$.
\end{lemma}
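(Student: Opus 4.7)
The plan is to apply Lemma~\ref{approx} to the $\Sigma^1_1$ predicate $\neg P$ and then invoke the Solovay-$\Sigma^1_1$-genericity of $X$. Observe that $\neg P(X)$ is equivalent to $\exists n\,\forall \alpha<\omega_1^{ck}\,\neg P_{\alpha,n}(X)$, and each $\neg P_{\alpha,n}$ is $\Delta^1_1$ uniformly in $n$ and $\alpha$; so $\neg P$ fits exactly the form to which Lemma~\ref{approx} applies.

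If $\lambda(\neg P)=0$ then $P$ has full measure and we may simply take $F=\cantor$. Otherwise, Lemma~\ref{approx} produces a uniformly $\Sigma^1_1$ sequence of closed sets $\{F_n\}_{n\in\omega}$ with $\bigcup_n F_n\subseteq \neg P$ and $\lambda\bigl(\neg P\setminus\bigcup_n F_n\bigr)=0$. Since $P(X)$ holds, $X\notin \neg P$ and hence $X\notin F_n$ for every $n$. Applying Solovay-$\Sigma^1_1$-genericity of $X$ to this sequence, the first alternative is ruled out and we obtain a $\Sigma^1_1$ closed set $G$ of positive measure with $X\in G$ and $G\cap\bigcup_n F_n=\emptyset$. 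Then
\[
G\setminus P \;=\; G\cap \neg P \;\subseteq\; \neg P\setminus \bigcup_n F_n,
\]
which has measure zero. Setting $F=G$ completes the argument.

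The one point requiring care is that the definition of Solovay-$\Sigma^1_1$-genericity demands each $F_n$ to have positive measure, while Lemma~\ref{approx} as stated gives no such guarantee. I would handle this by first extracting from the proof of Lemma~\ref{approx} a single $\Sigma^1_1$ closed subset $F^*\subseteq \neg P$ of positive measure---this is possible because $\lambda(\neg P)>0$ forces some $\lambda(S_n)>0$, and the $F_{n,m}$ constructed there have measure arbitrarily close to $\lambda(S_n)$. Replacing each term of the sequence by $F_n\cup F^*$ yields a uniformly $\Sigma^1_1$ sequence of closed sets \emph{of positive measure}, still contained in $\neg P$ and still covering it up to a null set, so genericity applies. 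I expect this measure-positivity bookkeeping to be the only mildly delicate part; everything else is a direct combination of the approximation lemma with the definition of Solovay-$\Sigma^1_1$-genericity.
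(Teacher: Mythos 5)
Your proof is correct and follows essentially the same route as the paper's: apply Lemma~\ref{approx} to the complement of $P$ (disposing of the null-complement case separately) and then invoke Solovay-$\Sigma^1_1$-genericity, whose first alternative is excluded because $X\in P$. The positive-measure bookkeeping you flag at the end is a legitimate point that the paper's terse proof glosses over, and your fix of unioning each $F_n$ with one fixed positive-measure $\Sigma^1_1$ closed subset of the complement does repair it.
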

\begin{proof}
If the complement of $\{X\ |\ P(X)\}$ is of measure 0 then take $F=2^\omega$. Otherwise from lemma \ref{approx} we have a union of $\Sigma^1_1$ closed set of positive measure included in the complement and equal to it up to a set of measure 0. As $X$ is Solovay $\Sigma^1_1$-generic and in $P$ we have a $\Sigma^1_1$ closed set of positive measure containing $X$ which is disjoint from the complement of $P$ up to a set of measure 0.
\end{proof}

We can now prove the desired theorem:

\begin{theorem}\label{hard to find name}
If $Y$ is Solovay $\Sigma^1_1$-generic then $\omega_1^Y=\omega_1^{ck}$.
\end{theorem}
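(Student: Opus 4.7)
The plan is to argue by contradiction. Suppose $\omega_1^Y > \omega_1^{ck}$. Since $\omega_1^{ck}$ has countable cofinality, I can fix a Turing functional $\Phi$ with $\Phi^Y(n) \in \KO^Y$ and $|\Phi^Y(n)|_{\KO^Y} < \omega_1^{ck}$ for every $n$, while $\sup_n |\Phi^Y(n)|_{\KO^Y} = \omega_1^{ck}$. The $\Pi^1_1$ predicate
\[
P(X) \equiv \forall n\, \exists \alpha < \omega_1^{ck}\ \Phi^X(n) \in \KO^X_\alpha
\]
then holds at $Y$ and is of the form required by the preceding lemma, which supplies a $\Sigma^1_1$ closed set $F \ni Y$ of positive measure with $\lambda(F \setminus P) = 0$. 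The residual goal is to locate some $\alpha^\dagger < \omega_1^{ck}$ with $\Phi^Y(n) \in \KO^Y_{\alpha^\dagger}$ for every $n$, contradicting the choice of $\Phi$.

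Fix a computable strictly increasing cofinal sequence $(\alpha_k)_{k \in \omega}$ in $\omega_1^{ck}$ and set $B_k := \{X : \forall n\, \Phi^X(n) \in \KO^X_{\alpha_k}\}$, which is $\Delta^1_1$ uniformly in $k$, increasing, and satisfies $\bigcup_k B_k = \{X : \sup_n |\Phi^X(n)|_{\KO^X} < \omega_1^{ck}\}$. The Sacks--Greenberg idea referenced in the text yields $\lambda(\{X : \omega_1^X > \omega_1^{ck}\}) = 0$; since any $X$ with $P(X)$ and $X \notin \bigcup_k B_k$ has $\omega_1^X > \omega_1^{ck}$ (the values $\Phi^X(n)$ witness an $X$-computable cofinal $\omega$-sequence at $\omega_1^{ck}$), we deduce $\lambda(P \setminus \bigcup_k B_k) = 0$, equivalently $\lambda(\cantor \setminus (\bigcup_k B_k \cup \neg P)) = 0$.

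I would then apply Lemma~\ref{approx} to each $B_k$ (cast in the $\exists j \forall \alpha$ form using a trivial $\alpha$-quantifier) to obtain uniformly $\Sigma^1_1$ closed sets $\{C_{k, j}\}_{k, j}$ with $\bigcup_j C_{k, j} \subseteq B_k$ and $\lambda(B_k \setminus \bigcup_j C_{k, j}) = 0$, and combine these with the uniformly $\Sigma^1_1$ closed cover $\{H_i\}$ of $\neg P$ produced inside the proof of the preceding lemma. The combined family $\{C_{k, j}\} \cup \{H_i\}$ is a uniformly $\Sigma^1_1$ sequence of closed sets of positive measure whose union has measure $1$. Solovay-$\Sigma^1_1$-genericity of $Y$ applied to such a full-measure cover forces $Y$ into the cover (the ``disjoint positive-measure $\Sigma^1_1$ closed set'' alternative is impossible when the cover is already full); since $Y \in P$ excludes the $H_i$, we obtain $Y \in C_{k, j} \subseteq B_k$ for some $k, j$, yielding $\sup_n |\Phi^Y(n)|_{\KO^Y} \leq \alpha_k < \omega_1^{ck}$, the desired contradiction. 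The main obstacle will be the Sacks--Greenberg measure-zero claim invoked in the middle paragraph: following the text, one should reprove it inside this argument via Lemma~\ref{approx} and a $\Sigma^1_1$-boundedness selection of $\KO$-notations from $F$ (for each $(n, m)$, the minimal $k(n, m)$ with $\lambda(F \setminus \{\Phi^X(n) \in \KO^X_{\alpha_{k(n,m)}}\}) < 2^{-n-m}$ admits a $\Sigma^1_1$-definable notation witness, whose range in $\KO$ is bounded below $\omega_1^{ck}$), rather than importing Sacks' theorem as a black box.
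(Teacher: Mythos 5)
Your opening move coincides with the paper's: apply the lemma preceding the theorem to the $\Pi^1_1$ predicate $P$ to obtain a $\Sigma^1_1$ closed set $F \ni Y$ of positive measure with $\lambda(F \setminus P) = 0$. The gap is in your second paragraph: you ``fix a computable strictly increasing cofinal sequence $(\alpha_k)_{k\in\omega}$ in $\omega_1^{ck}$''. No such sequence exists --- not computably, not $\Delta^1_1$, not even $\Sigma^1_1$: a uniformly presented $\omega$-sequence of notations has $\Sigma^1_1$ range in $\KO$, and by $\Sigma^1_1$-boundedness the corresponding ordinals are bounded strictly below $\omega_1^{ck}$ (equivalently, $\omega_1^{ck}$ is admissible and so admits no $\Sigma_1$-definable cofinal $\omega$-sequence). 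Consequently the sets $B_k$ cannot be formed as a uniform $\Delta^1_1$ family, the identity $\lambda(P \setminus \bigcup_k B_k) = 0$ cannot be set up, and the full-measure cover $\{C_{k,j}\} \cup \{H_i\}$ to which you want to apply Solovay genericity never materializes. This is not a presentational defect: the nonexistence of such a sequence is precisely the phenomenon the theorem must work around.

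The repair is the admissibility argument the paper uses, of which your closing parenthetical is a garbled echo. Do not index by a pre-chosen cofinal sequence. Instead, for each pair $\langle n,m\rangle$ search --- a $\Sigma_1$ search over $L_{\omega_1^{ck}}$ --- for the least ordinal $\alpha_{n,m}$ with $\lambda(F^c_{\alpha_{n,m}} \cup \bigcup_{\alpha<\alpha_{n,m}} P_{\alpha,n}) > 1 - 2^{-m}$; such an ordinal exists because $\lambda(F^c \cup P_n)=1$ and measure is continuous along these increasing unions. Admissibility ($\Sigma_1$-replacement over $L_{\omega_1^{ck}}$) then bounds $\alpha^* = \sup_{n,m}\alpha_{n,m} < \omega_1^{ck}$. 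Now $F \setminus \bigcap_n\bigcup_{\alpha<\alpha^*}P_{\alpha,n}$ is a $\Sigma^1_1$ null set; since Solovay $\Sigma^1_1$-genericity implies weak Solovay $\Sigma^1_1$-genericity, $Y$ avoids every $\Sigma^1_1$ null set, hence $Y \in \bigcap_n\bigcup_{\alpha<\alpha^*}P_{\alpha,n}$ and $\sup_n|\Phi^Y(n)| \le \alpha^* < \omega_1^{ck}$ --- your desired contradiction. Note that on this route Sacks' measure-zero theorem is not needed even as a black box: the single bound $\alpha^*$ does all the work.
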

\begin{proof}
Suppose that $Y$ is Solovay $\Sigma^1_1$-generic. For any Turing functional $\varphi^X$, consider the set:
$$P=\{X\ |\ \forall n\ \ \exists \alpha < \omega_1^{ck}\ \ \varphi^X(n) \in \KO^X_{\alpha}\}$$
Let $P_n=\{X\ |\ \exists \alpha < \omega_1^{ck}\ \ \varphi^X(n) \in \KO^X_{\alpha}\}$ and $P_{\alpha, n}=\{X\ |\ \varphi^X(n) \in \KO^X_{\alpha}\}$, so $P=\bigcap_n P_n$  and $P_n=\bigcup_{\alpha < \omega_1^{ck}}P_{\alpha, n}$. Note that $P_{\alpha, n}$ is $\Delta^1_1$ uniformly in $n$ and $\alpha$.\\

Suppose that $Y$ is in $P$. As $Y$ is Solovay $\Sigma^1_1$-generic, from the previous proposition, it is contained in a closed set of positive measure $F$ with $\lambda(F-P)=0$. In particular for each $n$ we have $\lambda(F - P_n)=0$ and then $\lambda(F^c \cup P_n)=1$. Then for each pair $\langle n, m \rangle$ we can search for the smallest ordinal $\alpha_{n,m}$ such that:
$$\lambda(F^c_{\alpha_{n,m}} \cup \bigcup_{\alpha < \alpha_{n,m}} P_{\alpha, n}) > 1 - 2^{-m}$$
where $F^c_\alpha$ is the open set $F^c$ enumerated up to stage $\alpha$. Let $\alpha^*=\sup_{n, m} \alpha_{n,m}$. By admissibility we have that $\alpha^*< \omega_1^{ck}$. Then we have:

$$
\begin{array}{rlcl}
&\forall n\ \ \lambda(F^c_{\alpha^*} \cup \bigcup_{\alpha < \alpha^*} P_{\alpha, n})&=&1\\
\rightarrow&\forall n\ \ \lambda(F_{\alpha^*} \cap \bigcap_{\alpha < \alpha^*} P_{\alpha, n}^c)&=&0\\
\rightarrow&\forall n\ \ \lambda(F \cap \bigcap_{\alpha < \alpha^*} P_{\alpha, n}^c)&=&0\\
\rightarrow&\forall n\ \ \lambda(F - \bigcup_{\alpha < \alpha^*} P_{\alpha, n})&=&0\\
\rightarrow&\lambda(F - \bigcap_n \bigcup_{\alpha < \alpha^*} P_{\alpha, n})&=&0
\end{array}
$$

As $X$ if Solovay-$\Sigma^1_1$ generic it is in particular weakly-Solovay-$\Sigma^1_1$ generic and then it weakly-$\Pi^1_1$-random. In particular it belongs to no $\Sigma^1_1$ set of measure 0. Then as $F - \bigcap_n \bigcup_{\alpha < \alpha^*} P_{\alpha, n}$ is a $\Sigma^1_1$ set of measure 0 we have that $X$ belongs to $\bigcap_n \bigcup_{\alpha < \alpha_{n,m}} P_{\alpha, n}$ and then $\sup_n \varphi^X(n) \leq \alpha^* < \omega_1^{ck}$.
\end{proof}

Using the equivalence between $\Pi^1_1$-random and $\Delta^1_1$-random + $\omega_1^X=\omega_1^{ck}$, we then have that the Solovay $\Sigma^1_1$-generic are included in the $\Pi^1_1$-randoms. All we have to do is prove the reverse inclusion.

\begin{theorem}\label{theorem: monin and greenberg}
The set of Solovay $\Sigma^1_1$-generic is exactly the set of $\Pi^1_1$ randoms.
\end{theorem}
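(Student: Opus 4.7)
The inclusion of the Solovay-$\Sigma^1_1$-generics in the $\Pi^1_1$-randoms is immediate from Theorem~\ref{hard to find name} together with the equivalence $\Pi^1_1$-random $\Leftrightarrow$ $\Delta^1_1$-random $\andd$ $\omega_1^X=\omega_1^{ck}$, as noted in the passage just above the statement. My plan therefore focuses on the reverse inclusion. Fix a $\Pi^1_1$-random $X$ and a uniformly $\Sigma^1_1$ family $\{F_n\}$ of closed sets of positive measure, set $U=\bigcup_n F_n$, and assume $X\notin U$. I will construct a uniformly $\Sigma^1_1$ sequence $\{H_k\}_{k\in\omega}$ of closed sets of positive measure, each disjoint from $U$, whose union together with $U$ has measure $1$. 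Then $V=U\cup\bigcup_k H_k$ is $\Sigma^1_1$ and $V^c$ is a $\Pi^1_1$ null set, so $\Pi^1_1$-randomness of $X$ forces $X\in V$; since $X\notin U$, some $H_k$ contains $X$ and witnesses Solovay-$\Sigma^1_1$-genericity.

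To construct the $H_k$ I would iterate an effective inner-regularity step. At stage $k$, the set $U\cup\bigcup_{j<k}H_j$ is $\Sigma^1_1$, so its complement $A_k$ is $\Pi^1_1$; if $\lambda(A_k)=0$ the construction terminates, otherwise pick $H_k\subseteq A_k$ a $\Sigma^1_1$-closed set with $\lambda(H_k)\ge\lambda(A_k)/2$. This halves the residual measure at each step, so $\lambda(V^c)=0$. Uniformity in $k$ is arranged by a canonical selection, e.g.\ searching through a fixed universal indexing of $\Sigma^1_1$-closed sets for the first index whose set witnesses the half-measure lower bound relative to $A_k$.

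The main obstacle is the supporting \emph{effective inner regularity lemma}: every $\Pi^1_1$ set $A$ of positive measure contains a $\Sigma^1_1$-closed subset of positive measure, uniformly in a description of $A$. My plan for it is: modulo the $\Sigma^1_1$ null set $N_0=\{Y:\omega_1^Y>\omega_1^{ck}\}$ (null by the Sacks/Greenberg argument recalled just before Lemma~\ref{approx}), $A$ admits a decomposition $A\setminus N_0=\bigcup_{\alpha<\omega_1^{ck}}A_\alpha$ with $\{A_\alpha\}$ uniformly $\Delta^1_1$, via the tree representation $A=\{Y:T^Y\text{ well-founded}\}$ and $A_\alpha=\{Y:\text{rank}(T^Y)\le\alpha\}$. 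Since $\omega_1^{ck}$ is countable and $\lambda(A)>0$, some $A_\alpha$ has positive measure, and being $\Delta^1_1$ it contains a closed (in fact $\Delta^1_1$) subset of measure close to $\lambda(A_\alpha)$ by a uniform version of classical inner regularity. This step is essentially dual to Lemma~\ref{approx}, and the careful bookkeeping of complexities for $V$, $V^c$, and the selection of $H_k$ is where the technical subtlety sits.
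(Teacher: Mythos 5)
Your handling of the forward inclusion matches the paper. The reverse inclusion, however, has a genuine gap, and it sits exactly where you locate the ``technical subtlety'': the union $\bigcup_k H_k$ produced by your canonical selection is not $\Sigma^1_1$. To define $H_k$ as ``$S_e$ for the first index $e$ such that $S_e$ is a closed subset of $A_k$ with $\lambda(S_e)\ge\lambda(A_k)/2$'' you must express the containment $S_e\subseteq A_k$ (a $\Pi^1_1$ condition, since $A_k$ is $\Pi^1_1$) together with ``no smaller index works'' (the negation of such conditions). The predicate $Y\in H_k$ is then a Boolean combination of $\Sigma^1_1$ and $\Pi^1_1$ conditions, not $\Sigma^1_1$; consequently $V$ is not $\Sigma^1_1$, $V^c$ is not a $\Pi^1_1$ null set, and the single appeal you make to $\Pi^1_1$-randomness is unavailable. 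A symptom of the problem is that your argument for the reverse inclusion never uses $\omega_1^X=\omega_1^{ck}$, which is the crux of the theorem. (Minor point: $N_0=\{Y:\omega_1^Y>\omega_1^{ck}\}$ is $\Pi^1_1$ rather than $\Sigma^1_1$, though this does not affect your use of it.)

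The paper's proof avoids any global construction and is local to $X$. Since $X$ is $\Pi^1_1$-random, $\omega_1^X=\omega_1^{ck}$, so in the rank decomposition $U^c\setminus N_0=\bigcup_{\alpha<\omega_1^{ck}}A_\alpha$ the point $X$ itself lies in $A_\alpha$ for a single $\alpha<\omega_1^{ck}$: the rank of the $X$-recursive well-founded tree witnessing $X\in U^c$ is below $\omega_1^X$. That one $\Delta^1_1$ set $A_\alpha$ is disjoint from $U$ and is approximated from inside by uniformly $\Delta^1_1$ closed sets $C_m$ with $\lambda(A_\alpha\setminus\bigcup_m C_m)=0$; this difference is a $\Pi^1_1$ null set, so $X\in C_m$ for some $m$, and $\lambda(C_m)>0$ because $X$ is $\Delta^1_1$-random. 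No iteration and no conull $\Sigma^1_1$ cover are needed. If you do want your stronger global statement, the right tool is admissibility rather than halving: the map sending $m$ to the least $\alpha$ with $\lambda(A_\alpha)>\lambda(U^c)-2^{-m}$ is $\Sigma_1$ over $L_{\omega_1^{ck}}$ and total, hence its range is bounded by some $\alpha^*<\omega_1^{ck}$, so a single $\Delta^1_1$ component $A_{\alpha^*}$ already carries all of $\lambda(U^c)$ and its inner closed approximations form a genuinely uniform $\Sigma^1_1$ family.
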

\begin{proof}
Suppose $X$ is not Solovay $\Sigma^1_1$-generic. Either $\omega^{X}_1 > \omega_1^{ck}$ and then $X$ is not $\Pi^1_1$-random. Or $\omega^{X}_1 = \omega_1^{ck}$. In this case there is a sequence of $\Sigma^1_1$ closed set $\bigcup_n F_n$ of positive measure such that $X$ is not in $\bigcup_n F_n$ and such that any $\Sigma^1_1$ closed set of positive measure which is disjoint from $\bigcup_n F_n$ does not contain $X$. The complement of $\bigcup_n F_n$ is a $\Pi^1_1$ set contaning $X$ that we can write as an uncountable union of Borel sets.
As $\omega^{X}_1 = \omega_1^{ck}$ we have that $X$ is in the first $\omega_1^{ck}$ components of the uncountable union. Then $X$ is in a $\Delta^1_1$ set disjoint from $\bigcup_n F_n$. Since we can approximate this $\Delta^1_1$ from below by a union of $\Sigma^1_1$ closed set of the same measure, then $X$ is in a $\Pi^1_1$ set of measure 0 and then not $\Pi^1_1$-random.
\end{proof}

\begin{corollary}
The set of $\Pi^1_1$-randoms is $\mathbf{\Pi^0_3}$
\end{corollary}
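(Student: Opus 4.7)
The plan is to invoke Theorem~\ref{theorem: monin and greenberg}, which identifies the $\Pi^1_1$-randoms with the Solovay-$\Sigma^1_1$-generic reals, and then read off the Borel complexity directly from the definition of Solovay-$\Sigma^1_1$-genericity. So the work is really a bookkeeping calculation: I need to exhibit the class of Solovay-$\Sigma^1_1$-generics as a countable intersection of $F_\sigma$ sets.

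First, I would fix an effective enumeration $(F_n^e)_{n, e \in \omega}$ of all uniformly lightface $\Sigma^1_1$ sequences of closed sets of positive measure, and an effective enumeration $(C_{e'})_{e' \in \omega}$ of all lightface $\Sigma^1_1$ closed sets; here the indices $e, e'$ are integers coding the $\Sigma^1_1$ definitions, even though the predicates of being such a sequence or such a set are themselves complicated (at worst $\Pi^1_1$). Define the purely number-theoretic set
$$A_e = \{\, e' \in \omega : \lambda(C_{e'}) > 0 \text{ and } C_{e'} \cap \textstyle\bigcup_n F_n^e = \emptyset \,\},$$
which, crucially, does not depend on $X$. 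Unwinding the definition, $X$ is Solovay-$\Sigma^1_1$-generic exactly when
$$X \in \bigcap_{e \in \omega}\Bigl[\, \bigcup_{n \in \omega} F_n^e \;\cup\; \bigcup_{e' \in A_e} C_{e'} \,\Bigr].$$

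Now I would count the complexity. Each $F_n^e$ and each $C_{e'}$ is an honest closed subset of $2^\omega$, hence $\Pi^0_1$. So for each fixed $e$, the set $\bigcup_n F_n^e$ is a countable union of closed sets and hence $\mathbf{\Sigma^0_2}$, and similarly $\bigcup_{e' \in A_e} C_{e'}$ is a countable union of closed sets (indexed by some arbitrary subset of $\omega$) and hence also $\mathbf{\Sigma^0_2}$. Their union is $\mathbf{\Sigma^0_2}$, and the intersection over $e \in \omega$ is therefore $\mathbf{\Pi^0_3}$, giving the corollary.

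The main (and only) conceptual point to double-check is that the bold-face complexity does not feel the $\Pi^1_1$ difficulty of deciding membership in $A_e$: what matters is that $A_e$ is a subset of $\omega$, so $\bigcup_{e' \in A_e} C_{e'}$ is still just a countable union of $\Pi^0_1$ sets regardless of how hard it is to recognize $A_e$. Once that is observed, the calculation is routine and no further machinery is needed beyond Theorem~\ref{theorem: monin and greenberg}.
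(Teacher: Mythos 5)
Your proposal is correct and is essentially the paper's own (implicit) argument: the corollary is stated without proof precisely because it follows from the equivalence of $\Pi^1_1$-randomness with Solovay-$\Sigma^1_1$-genericity together with the observation, announced at the start of the section, that the genericity class is visibly $\mathbf{\Pi^0_3}$. Your bookkeeping — in particular the remark that the index sets $A_e$ are fixed subsets of $\omega$ whose descriptive difficulty is invisible to the boldface count, so each disjunct is a countable union of closed sets and the whole class is a countable intersection of $\mathbf{\Sigma^0_2}$ sets — correctly fills in the only detail the paper leaves unstated.
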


The notion of test is interesting. For any union of closed $\Sigma^1_1$ set $S=\bigcup_n S_n$, let us define $\tilde{S}$ as the smallest intersection of $\Pi^1_1$ open set $O=\bigcap_n O_n$ contaning it. Then a test is equal to $\tilde{S} - S$. The question of whether weakly-Solovay-$\Sigma^1_1$-generic implies Solovay-$\Sigma^1_1$-generic is now the same as the open question of whether weak-$\Pi^1_1$-randomness implies $\Pi^1_1$-randomness. And this question is still open.

\section{Yu Liang: A lower bound on the Borel rank \\ of the set of $\Pi^1_1$-random reals}
Input by Yu Liang in April.

Let $\mathcal{S}$ be the set of $\Pi^1_1$-random reals. We will show that $\mathcal S$ is not $\mathbf{\Sigma}^0_2$. (This will be improved to not $\mathbf{\Sigma}^0_3$  below.)
Since $\mathcal{S}$ is a dense meager set, it is not $\mathbf{\Pi}^0_2$. As noted by Chong, Nies and Yu \cite[proof of Thm 3.12]{Chong.Nies.Yu:08}, $\+ S$ is Borel: $\+ S$ is the intersection of the $\Delta^1_1$ randoms ($\PI 3$)  with  the sets that are low for $\omega_1^{CK}$, which is properly $\mathbf{\Pi}^0_{\omega_1^{CK}+2}$ by  a result of M.
 Steel~\cite[end of Section 2]{Steel:78}.

Since $\mathcal{S}$ is a $\Sigma^1_1$-set, there must be some recursive tree $T\subseteq 2^{\omega}\times \omega^{\omega}$ such that $$x\in \mathcal{S}\leftrightarrow \exists f \forall n(x\uh n,f\uh n)\in T.$$

Now assume for a contradiction  that $\mathcal{S}$ is a $\mathbf{\Sigma}^0_2$-set. Choose   a sequence of closed sets $\{P_n\}_{n\in \omega}$ so that $\bigcup_{n\in \omega}P_n=\mathcal{S}$.

Recall that the  Gandy topology on Cantor space is given by the $\Sigma^1_1$ sets as a countable basis. (Note this is Polish on the set of sets  that are low for $\omega_1^{CK}$.)
\begin{lemma}
For each $n$, the set $\mathcal{S}\setminus P_n$ is comeager in $\mathcal{S}$ in the sense of the Gandy topology.
\end{lemma}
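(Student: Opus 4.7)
The plan is to show that $P_n = P_n \cap \mathcal{S}$ (using $P_n \subseteq \mathcal{S}$) is Gandy-meager in $\mathcal{S}$. First I would observe that $P_n$ is Gandy-closed: its complement $P_n^c$ is open in the standard topology and therefore a countable union of basic cylinders $[\sigma]$, each of which is lightface $\Sigma^0_1 \subseteq \Sigma^1_1$, hence Gandy-open. For a Gandy-closed set in $\mathcal{S}$, being meager is equivalent to being nowhere dense, which is equivalent to having empty Gandy-interior. So the task reduces to showing that no nonempty basic Gandy-open set $U \cap \mathcal{S}$ (with $U$ lightface $\Sigma^1_1$) is contained in $P_n$.

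Suppose for contradiction that $U$ is lightface $\Sigma^1_1$ with $\emptyset \neq U \cap \mathcal{S} \subseteq P_n$. Since every boldface $\Sigma^1_1$ null set is contained in the largest $\Pi^1_1$ null set $\mathcal{N}$ (by outer regularity of Lebesgue measure), the condition $U \cap \mathcal{S} \neq \emptyset$ forces $\lambda(U) > 0$. Moreover $U \setminus P_n$ is $\Sigma^1_1$ and contains no $\Pi^1_1$-random, so it lies in $\mathcal{N}$; hence $\lambda(U \setminus P_n) = 0$ and $\lambda(U \cap P_n) = \lambda(U)$.

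To obtain the contradiction I would apply the Jockusch--Soare Low Basis Theorem to a lightface $\Pi^0_1$ subclass $C$ of $U$ that is effectively forced into $P_n$. Such a $C$ would be obtained from the recursive $\Sigma^1_1$ tree defining $U$ by projecting bounded subtrees and intersecting with an inner $\Pi^1_1$-approximation of $P_n$, in the spirit of the Greenberg--Monin approximation Lemma \ref{approx} above. The Low Basis Theorem then yields $z \in C$ with $z' \leq_T 0'$, so in particular $z \leq_T 0'$ is $\Delta^0_2 \subseteq \Delta^1_1$, i.e., hyperarithmetic. Then $\{z\}$ is a lightface $\Pi^1_1$ null set, so $z \in \mathcal{N}$ and $z$ is not $\Pi^1_1$-random; on the other hand $z \in C \subseteq P_n \subseteq \mathcal{S}$ forces $z$ to be $\Pi^1_1$-random, the desired contradiction.

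The main obstacle is the effective inner-approximation step: one must ensure that the lightface $\Pi^0_1$ subclass $C$ genuinely sits inside $P_n$ set-theoretically, not merely up to measure zero, despite $P_n$ itself being only boldface. I expect this to require a Greenberg--Monin-style approximation of $P_n$ by a union of lightface $\Sigma^1_1$ closed sets of prescribed measure, combined with the $\omega_1^{\mathrm{CK}}$-admissibility argument used in Theorem \ref{hard to find name} above. Once the set-theoretic containment $C \subseteq P_n$ is secured, the Low Basis step is immediate and yields the contradiction.
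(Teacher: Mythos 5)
Your topological reduction is sound and is essentially the paper's: $P_n\cap\mathcal S$ is Gandy-closed in $\mathcal S$, so it suffices to rule out a nonempty basic Gandy-open set $U\cap\mathcal S\subseteq P_n$ with $U$ lightface $\Sigma^1_1$. The gap is in how you derive a contradiction from that assumption. You want a nonempty \emph{lightface} $\Pi^0_1$ class $C\subseteq P_n$ so that the Low Basis Theorem hands you a $\Delta^0_2$ (hence hyperarithmetic, hence non-$\Pi^1_1$-random) element of $P_n\subseteq\mathcal S$. But $P_n$ is an arbitrary boldface closed set and $U$ is only $\Sigma^1_1$: neither ``projecting bounded subtrees'' of the recursive tree for $U$ nor the Greenberg--Monin approximation lemma produces lightface $\Pi^0_1$ classes --- that lemma yields $\Sigma^1_1$ closed sets, to which the Low Basis Theorem does not apply. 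Worse, no nonempty lightface $\Pi^0_1$ class is contained in $\mathcal S$ at all (precisely by your own Low Basis argument), so the object whose construction you defer is exactly as hard to produce as the contradiction itself; the step you flag as ``the main obstacle'' is the entire content of the lemma and is left unproved. The measure-theoretic preliminaries ($\lambda(U)>0$, $\lambda(U\setminus P_n)=0$) are correct but end up unused. (Also, the parenthetical claim that every boldface $\Sigma^1_1$ null set lies in the largest $\Pi^1_1$ null set is false --- a singleton $\{z\}$ with $z$ a $\Pi^1_1$-random is a counterexample --- though you only apply it to lightface $U$, where it is fine.)

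The repair is the one observation the paper makes that you are missing: use the basis theorem appropriate to the pointclass you actually have. Since $P_n$ is closed, $U\cap\mathcal S\subseteq P_n$ gives $\overline{U\cap\mathcal S}\subseteq P_n$, the closure being taken in Cantor space; and $\overline{U\cap\mathcal S}$ is a nonempty $\Sigma^1_1$ \emph{closed} set (here one uses that $\mathcal S$ itself is $\Sigma^1_1$, so $U\cap\mathcal S$ is). The leftmost real of a nonempty $\Sigma^1_1$ closed set is either hyperarithmetic or hyperarithmetically equivalent to $\mathcal O$; in either case it is not $\Pi^1_1$-random, yet it lies in $P_n\subseteq\mathcal S$ --- contradiction. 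This single move replaces your entire Low Basis and inner-approximation scheme, and no measure theory is needed.
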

\begin{proof}
Fix a $\Sigma^1_1$ uncountable set $C\subseteq \mathcal{S}$. Let  $\bar{C}$ be the closure (in the Cantor space sense) of $C$. Then $\bar{C}$ is a $\Sigma^1_1$ closed set. The leftmost real in $\bar{C}$ is either hyperarithmetic or hyperarithmetically equivalent to $\+ O$. So $\bar{C}$ is not a subset of $P_n$. So there must be some $\sigma\in 2^{<\omega}$ so that $[\sigma]\cap C\neq\emptyset$ but $[\sigma]\cap C\cap P_n=\emptyset$. Let $D=[\sigma]\cap C$. So $D\subseteq C$ is an open set (in the Gandy topology sense). Thus $\mathcal{S}\setminus P_n$ contains a dense open set and so must be comeager in $\mathcal{S}$.
\end{proof}

Since Gandy topology has Baire property,  there must be some real $x\in \bigcap_{n\in \omega}\mathcal{S}\setminus P_n=\mathcal{S}\setminus \bigcup_{n\in \omega} P_n$, contradiction.
\bigskip

%

\section{Yu: Strong $\Pi^1_1$-ML-randomness is properly   $\mathbf{\Pi}^0_3$} \label{s:GB-Monin sharpness}
Input by Yu in May.

Strong $\Pi^1_1$-ML-randomness is the higher analog of weak 2-randomness. This was mentioned in a problem in  \cite[Ch.\ 9]{Nies:book}. It is open whether this notion is the same as $\Pi^1_1$-randomness.

Obviously the collection of  strongly $\Pi^1_1$-ML-random reals is $\mathbf{\Pi}^0_3$.
\begin{proposition}
The collection of strongly $\Pi^1_1$-ML-random reals is not $\mathbf{\Sigma}^0_3$.
\end{proposition}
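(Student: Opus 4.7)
The plan is to adapt Yu's Gandy-topology Baire category argument from the previous section. Suppose for contradiction that $\mathcal{S}'$, the set of strongly $\Pi^1_1$-ML-random reals, is $\mathbf{\Sigma}^0_3$; write $\mathcal{S}' = \bigcup_n Q_n$ with each $Q_n \in \mathbf{\Pi}^0_2$, say $Q_n = \bigcap_m V_{n,m}$ where each $V_{n,m}$ is open in Cantor space (WLOG decreasing in $m$). Every $Q_n$ is contained in $\mathcal{S}'$, and every element of $\mathcal{S}'$ is low for $\omega_1^{\mathrm{CK}}$, so I work in the Gandy topology on the Polish space of reals low for $\omega_1^{\mathrm{CK}}$ where Baire category applies, as in Yu's previous proof.

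The strategy is to show that each $Q_n$ is Gandy-meager in $\mathcal{S}'$, from which $\bigcup_n Q_n \neq \mathcal{S}'$ follows by Baire, giving the desired contradiction. To show $Q_n$ is Gandy-nowhere-dense, I fix a $\Sigma^1_1$ set $C$ with $C \cap \mathcal{S}' \neq \emptyset$ and try to produce a $\Sigma^1_1$ refinement $C' \subseteq C$ meeting $\mathcal{S}'$ and disjoint from $Q_n$. The natural dichotomy is: either some $m$ yields $C \cap V_{n,m}^c \cap \mathcal{S}' \neq \emptyset$, in which case $C' := C \cap V_{n,m}^c$ works immediately (since $Q_n \subseteq V_{n,m}$, so $C' \cap Q_n = \emptyset$); or else $C \cap \mathcal{S}' \subseteq V_{n,m}$ for every $m$, so $C \cap \mathcal{S}' \subseteq \bigcap_m V_{n,m} = Q_n$, and this second case must be excluded.

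To exclude the second case, I invoke the leftmost-point argument: the leftmost real $y$ of $\bar C$ is hyperarithmetic or hyperarithmetically equivalent to $\mathcal{O}$, so $y \notin \mathcal{S}'$ and $y \notin Q_n$; hence $y \in V_{n,m}^c$ for some $m$. In Yu's $\mathbf{\Sigma}^0_2$ argument, $P_n$ being Cantor-closed let one directly extract a cylinder $[\sigma]$ with $[\sigma] \cap \bar C \cap P_n = \emptyset$; in the present setting, $V_{n,m}^c$ is Cantor-closed but not open, so no such cylinder need arise automatically around $y$.

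The main obstacle is precisely this second case. I expect to handle it by iterating the leftmost construction through a descending sequence of cylinders $[\sigma_k] \cap \bar C$, at each stage exploiting the hyperarithmetic leftmost point of $[\sigma_k] \cap \bar C$ to force escape from a new $V_{n,m_k}$, and combining this with a measure-theoretic observation: if $C \cap \mathcal{S}' \subseteq V_{n,m}$ for every $m$, then since $\mathcal{S}'$ has full measure in $C$, one has $\lambda(C \setminus V_{n,m}) = 0$ for every $m$, forcing severe constraints on $C$ that contradict the Cantor density of hyperarithmetic points inside $\bar C$ produced by the iteration. Once each $Q_n$ is shown Gandy-meager in $\mathcal{S}'$, Baire category in the Gandy topology yields $\bigcup_n Q_n \subsetneq \mathcal{S}'$, the desired contradiction.
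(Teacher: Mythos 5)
Your proposal does not follow the paper's argument, and the step you yourself flag as ``the main obstacle'' is in fact the entire content of the proposition; it is left unresolved, and the sketch you give for it does not work. Concretely: in Case 2 you know $C \cap \mathcal{S}' \subseteq V_{n,m}$ for all $m$, and your measure-theoretic observation then yields $\lambda(C \setminus V_{n,m}) = 0$ for every $m$, hence that $Q_n$ has \emph{full} measure in $C$. This is not in tension with anything: $Q_n \subseteq \mathcal{S}'$ and $\mathcal{S}'$ is itself conull, so full relative measure of $Q_n$ in $C$ is exactly what one expects and yields no ``severe constraints.'' Nor can the iterated leftmost-point device rescue this: the hyperarithmetic (or $\mathcal{O}$-equivalent) leftmost points you produce form a countable, hence null, subset of $\bar{C}$ (and need not lie in $C$ at all), so they cannot conflict with measure statements about $C$. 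There is also a smaller but real issue in Case 1: $C \cap V_{n,m}^c$ is the intersection of a lightface $\Sigma^1_1$ set with a boldface closed set, so it is not a basic open set of the Gandy topology and does not witness nowhere-density of $Q_n$ there; in Yu's $\mathbf{\Sigma}^0_2$ argument the refinement is by a \emph{cylinder}, which is why that proof goes through.

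The deeper reason the Gandy--Baire approach stalls is that category in the Gandy topology is the wrong tool at the $\mathbf{\Sigma}^0_3$ level: a sufficiently Gandy-generic real need not be random in any sense (null $\Sigma^1_1$ sets are basic open and dense below themselves), so even if you proved each $Q_n$ meager you would still need a separate argument that the comeager intersection meets $\mathcal{S}'$. The paper instead forces with the partial order of $\Sigma^1_1$ \emph{closed sets of positive measure}. A sufficiently generic real for that forcing is automatically strongly $\Pi^1_1$-ML-random, and the real work is Lemma~\ref{lemma: dense dv}: for any sequence of open sets $\{V_m\}$ whose intersection contains only strongly $\Pi^1_1$-ML-randoms, the conditions $T$ with $[T] \cap \bigcap_m V_m = \emptyset$ are dense. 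That density is proved by building a difference test $\{U_n\}$ inside $[T]$ (Lemma~\ref{lemma: tech for sml}) and invoking the Bienvenu--Greenberg--Monin fact that every strongly $\Pi^1_1$-ML-random is $\Pi^1_1$-difference random (Lemmas~\ref{lemma: nies} and~\ref{lemma: GLM}). Some such randomness-theoretic input appears unavoidable, and your proposal contains no substitute for it.
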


We use a forcing argument.

Let $\mathbb{P}=(\mathbf{P},\leq)$ where $\mathbf{P}$ is the collection of $\Sigma^1_1$ closed sets with a positive measure.

Obviously, if $g$ is sufficiently generic, then it must be  strongly $\Pi^1_1$-ML-random.

\begin{lemma}\label{lemma: tech for sml}
For any $\Sigma^1_1$ tree $T$ with $\mu([T])>0$ having only $\Pi^1_1$-ML random reals, there is a uniformly $\Pi^1_1$ sequence open sets $\{U_n\}_{n\in \omega}$ so that
\begin{itemize}
\item $\forall n\mu(U_n\cap [T])<2^{-n}$; and
\item for any $\sigma$, if $[\sigma]\cap [T]\neq\emptyset$, then $[\sigma]\cap [T] \cap (\bigcap_n\{U_n\}_{n\in \omega})\neq\emptyset$.
\end{itemize}
\end{lemma}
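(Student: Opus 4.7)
The plan is to adapt, to the higher setting, the classical construction of a dense null $G_\delta$ set inside a closed positive-measure class consisting of Martin-L\"of randoms: for each $n$ we build an open $U_n$ whose trace on $[T]$ is open-dense in $[T]$ yet of measure less than $2^{-n}$, after which the Baire Category Theorem in $[T]$ finishes the proof.

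Let $A=\{\sigma\in 2^{<\omega}:[\sigma]\cap [T]\ne\emptyset\}$, a $\Sigma^1_1$ set of strings. For each $\sigma\in A$ and each $n\in\omega$, let $\tau(\sigma,n)$ be the initial segment of length $2|\sigma|+n+2$ of the leftmost path of $[\sigma]\cap [T]$. Such a path exists since $[\sigma]\cap[T]\ne\emptyset$, and the map $(\sigma,n)\mapsto\tau(\sigma,n)$ is uniformly $\Pi^1_1$ because the leftmost path of a nonempty $\Sigma^1_1$ closed set is $\Pi^1_1$-computable uniformly in a string parameter (the higher analogue of the classical fact that the leftmost path of a nonempty $\Pi^0_1$ class is $\le_T \emptyset'$). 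Define
\[
U_n \;=\; \bigcup_{\sigma\in A}[\tau(\sigma,n)],
\]
a uniformly $\Pi^1_1$ open sequence in $n$.

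The measure estimate is a crude union bound, summed level by level:
\[
\mu(U_n\cap[T]) \;\le\; \mu(U_n) \;\le\; \sum_{\sigma\in A} 2^{-(2|\sigma|+n+2)} \;\le\; \sum_{k\ge 0} 2^k\cdot 2^{-2k-n-2} \;=\; 2^{-n-1} \;<\; 2^{-n}.
\]
For the density requirement, for each $\sigma\in A$ the clopen $[\tau(\sigma,n)]$ is contained in $[\sigma]\cap U_n$ and contains the leftmost path of $[\sigma]\cap[T]$, so it meets $[T]$; hence $U_n\cap[T]$ is open-dense in $[T]$ in the subspace topology. Since $[T]$ is a closed subspace of $2^\omega$, it is compact Polish and in particular a Baire space; the Baire Category Theorem now gives that $\bigcap_n (U_n\cap[T])$ is dense in $[T]$, so $[\sigma]\cap[T]\cap\bigcap_n U_n\ne\emptyset$ for every $\sigma\in A$, as required.

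The main obstacle is arranging the uniform $\Pi^1_1$ complexity of the selection $\tau(\sigma,n)$, which reduces to the $\Pi^1_1$-computability of the leftmost path of a $\Sigma^1_1$ closed set; everything else is an elementary union bound and a Baire-category argument. The hypothesis that $[T]$ contains only $\Pi^1_1$-ML random reals plays no role in this construction; it enters only in the surrounding proof of the proposition, to ensure that a point in $[T]\cap\bigcap_n U_n$ (which exists by the above) is simultaneously $\Pi^1_1$-ML random and a member of a null $\Pi^1_1$ $G_\delta$ set, contradicting strong $\Pi^1_1$-ML randomness.
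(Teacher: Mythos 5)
Your construction is the right one in spirit --- it is essentially the construction the paper uses --- but there is a genuine gap in the complexity claim, and it is exactly the point the paper's phrasing is designed to get around. You define $U_n$ from the \emph{true} initial segment $\tau(\sigma,n)$ of the leftmost path of $[\sigma]\cap[T]$, and assert that the map $(\sigma,n)\mapsto\tau(\sigma,n)$ is uniformly $\Pi^1_1$. It is not. Writing it out, $\tau=\tau(\sigma,n)$ requires both ``every same-length $\tau'\succeq\sigma$ strictly to the left of $\tau$ has $[\tau']\cap[T]=\emptyset$'' (a $\Pi^1_1$ condition, since $T$ is $\Sigma^1_1$) \emph{and} ``$[\tau]\cap[T]\neq\emptyset$'' (a genuinely $\Sigma^1_1$ condition). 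The graph is thus only a difference of $\Pi^1_1$ sets, so your generating set of strings is not $\Pi^1_1$ and $U_n$ is not a $\Pi^1_1$ open set in the sense needed (generated by a $\Pi^1_1$ set of strings, so that it admits an approximation along stages $\alpha<\omega_1^{\mathrm{CK}}$). Your own classical analogy signals the problem: the leftmost path of a nonempty $\Pi^0_1$ class is $\Delta^0_2$, not c.e., so the open set generated by its initial segments is only $\Sigma^0_2$, not effectively open; in the higher setting the leftmost path of a $\Sigma^1_1$ closed set can be hyperarithmetically equivalent to Kleene's $\KO$ (as noted elsewhere in this blog), which is far from $\Pi^1_1$ as a set of strings.

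The paper's fix is to enumerate into $U_n$ \emph{every} string of length $2|\sigma|+n+1$ that is ``possibly the leftmost'' at some ordinal stage of the $\Pi^1_1$ approximation of $T$ (strings leave $T$ over time, so the apparent leftmost candidate migrates rightward). This over-enumeration makes the generating set $\Pi^1_1$ --- membership, once attained, persists --- but of course destroys any bound on $\mu(U_n)$ itself. The measure bound survives because all the spurious candidates have $[\tau]\cap[T]=\emptyset$: only the final leftmost string above each $\sigma$ can meet $[T]$, so $\mu(U_n\cap[T])\le\sum_\sigma 2^{-(2|\sigma|+n+1)}\le 2^{-n}$. This is precisely why the lemma is stated with $\mu(U_n\cap[T])<2^{-n}$ rather than $\mu(U_n)<2^{-n}$; your proof bounds $\mu(U_n)$, which is a sign that the construction has been made too clean to be $\Pi^1_1$. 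The rest of your argument is fine, and in fact the Baire category step is an unnecessary detour: the leftmost path of $[\sigma]\cap[T]$ itself lies in $[\tau(\sigma,n)]\subseteq U_n$ for every $n$, so it directly witnesses $[\sigma]\cap[T]\cap\bigcap_n U_n\neq\emptyset$. You are also correct that the hypothesis that $[T]$ contains only $\Pi^1_1$-ML-random reals is not used in the lemma itself.
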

\begin{proof} This is like difference tests. 
Given $n$, for any $\sigma$, we enumerate strings of $2^{2\cdot |\sigma|+n+1}$ into $U_n$ from left to right which are possibly the leftmost finite string of length $2\cdot|\sigma|+n+1$ among those in $[\sigma]\cap [T]$.

The sequence $\{U_n\}_{n\in \omega}$ is precisely what we want.
\end{proof}

\begin{lemma}[Nies, see Thm. 11.7 \cite{LogicBlog:12}]\label{lemma: nies}
A $\Pi^1_1$-ML random real $x$ is $\Pi^1_1$-difference random if and only if 
the $\Pi^1_1$ version of Chaitin's halting probability   $\ul \Omega \not \leq_{\mathrm{h-T}}x$.
\end{lemma}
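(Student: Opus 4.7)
The statement is the higher analog of the Franklin--Ng theorem characterising difference randomness among Martin--L\"of randoms by non-computation of $\Omega$. My plan is to mirror the classical argument, replacing $\Sigma^0_1$ with $\Sigma^1_1$, Turing reducibility with hyperarithmetic Turing reducibility, and Chaitin's $\Omega$ with $\ul\Omega$; every ``limit'' step that classically runs through $\omega$-many stages will instead run through $\omega_1^{\mathrm{CK}}$-many stages, justified by admissibility as in Lemma~\ref{approx}.

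For the forward direction (contrapositive), assume $x$ is $\Pi^1_1$-ML random with $\ul\Omega \leq_{\mathrm{h-T}} x$; I will construct a $\Pi^1_1$-difference test capturing $x$. Fix a universal $\Pi^1_1$-ML test $\{G_n\}$ and set $P = \cantor \setminus G_1$, a $\Pi^1_1$ closed set of positive measure containing $x$. Using the $x$-computation of $\ul\Omega$, one locates for each $n$ a $\Pi^1_1$ stage by which the measure of $G_n$ has essentially settled; this allows $x$ to pick out, uniformly in $n$, a $\Sigma^1_1$ open $V_n$ with $x \in V_n$ and $\mu(V_n \cap P) \leq 2^{-n}$, imitating the classical high-implies-not-difference-random construction (here $\ul\Omega$ plays the role of a ``high'' oracle in the higher world).

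For the backward direction, suppose $(V_n, P)$ is a $\Pi^1_1$-difference test capturing $x$. I will show $\ul\Omega \leq_{\mathrm{h-T}} x$ by showing that $x$ hyperarithmetically determines the measure $\mu(P)$ with arbitrary precision. From $x \in V_n \cap P$ and $\mu(V_n \cap P) \leq 2^{-n}$, lower bounds on $\mu(P)$ are computable from $x$: enumerate the $\Sigma^1_1$ open set $V_n$ until a basic neighborhood of $x$ is covered and then read off $\mu(V_n) - 2^{-n}$ as a lower bound. Upper bounds are available $\Pi^1_1$-uniformly since $P$ is $\Pi^1_1$ closed. A higher analog of the Ku\v{c}era--G\'acs--Slaman step---every $\Pi^1_1$-left-c.e.\ real is $\mathrm{h\text{-}T}$-reducible to the measure of a $\Pi^1_1$ closed class of positive measure containing a $\Pi^1_1$-ML random---then yields $\ul\Omega \leq_{\mathrm{h-T}} \mu(P) \leq_{\mathrm{h-T}} x$.

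The main obstacle is the backward direction, specifically establishing the correct higher analog of the Ku\v{c}era--G\'acs--Slaman step for $\mu(P)$: one must verify that $\mu(P)$ is ``$\Pi^1_1$-left-c.e.'' in the appropriate sense and that the density/coding argument that recovers $\ul\Omega$ survives the passage from $\omega$-stage to $\omega_1^{\mathrm{CK}}$-stage approximations. The fact that $\omega_1^x = \omega_1^{\mathrm{CK}}$ for $\Pi^1_1$-ML randoms (Chong--Nies--Yu, also used implicitly throughout this section) is essential so that $x$ can ``see'' the relevant $\Pi^1_1$-enumerations; without it the hyperarithmetic computation from $x$ of the bounds on $\mu(P)$ would fail to close up.
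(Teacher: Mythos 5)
First, a point of order: the paper does not actually prove this lemma; it is imported verbatim from the 2012 Logic Blog (Thm.~11.7 there), so there is no in-paper argument to measure you against. Your overall plan---transporting the Franklin--Ng characterisation of difference randomness to the higher setting, with stages running through $\omega_1^{\mathrm{CK}}$---is indeed the right strategy. But as written the proposal has several concrete defects, starting with the test format itself: in higher randomness the analogue of ``c.e.''\ is $\Pi^1_1$, so the open components $V_n$ of a higher difference test must be uniformly $\Pi^1_1$ open and the fixed class $P$ must be $\Sigma^1_1$ closed (the complement of the $\Pi^1_1$ open set $G_1$ is $\Sigma^1_1$ closed, not $\Pi^1_1$ closed). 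You have these swapped, and at this level the swap is not cosmetic.

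The substantive gaps are in both directions. In the backward direction, $\mu(V_n)-2^{-n}$ is a lower bound on $\mu(V_n\setminus P)\le 1-\mu(P)$, hence an \emph{upper} bound on $\mu(P)$, not a lower bound; worse, since $V_n$ only has to contain $x$ it can be a single tiny basic interval, so these quantities need not converge to anything, and the scheme ``$x$ computes $\mu(P)$'' collapses. The ``higher Ku\v{c}era--G\'acs--Slaman step'' you lean on is false already classically: a $\Pi^0_1$ class of positive \emph{computable} measure still contains ML-randoms, and its measure computes nothing. The classical proof does not factor through $\mu(P)$; it extracts $\Omega$ from the stages at which $x$ enters the $V_n$, playing the smallness of $\mu(V_n\cap P)$ against the positive relative measure of $P$ above initial segments of the ML-random $x$. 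Your appeal to ``$\omega_1^x=\omega_1^{\mathrm{CK}}$ for $\Pi^1_1$-ML-randoms'' is also wrong: that is the Chong--Nies--Yu characterisation of $\Pi^1_1$-\emph{randomness}, and $\ul\Omega$ itself is a $\Pi^1_1$-ML-random with $\omega_1^{\ul\Omega}>\omega_1^{\mathrm{CK}}$. Finally, in the forward direction $\ul\Omega$ is not used as a ``high'' oracle: the classical construction builds the $V_n$ uniformly from the reduction procedure $\Gamma$ with $\Gamma^x=\ul\Omega$ together with the monotone left-c.e.\ (here: left-$\Pi^1_1$ along $\omega_1^{\mathrm{CK}}$) approximation to $\ul\Omega$; your phrasing has $x$ itself ``picking out'' the $V_n$, which conflates constructing the test with failing it. Each of these points needs a genuine argument before the proof stands.
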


\begin{lemma}[Greenberg, Bienvenu, Monin]\label{lemma: GLM}
No strongly $\Pi^1_1$-ML random is $\mathrm{h-T}$-above $\ul \Omega$. So by  Lemma~\ref{lemma: nies} every strongly $\Pi^1_1$-ML random real is $\Pi^1_1$-difference random.
\end{lemma}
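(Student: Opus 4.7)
The plan is to exploit the assumption $\Phi^X = \ul\Omega$ to produce a uniformly $\Sigma^1_1$ family of closed sets whose union has Lebesgue measure $1$ and lies inside $\{Y : \Phi^Y \neq \ul\Omega\}$; since strong $\Pi^1_1$-ML-randomness of $X$ is exactly weak Solovay-$\Sigma^1_1$-genericity, this forces $X$ into that union and contradicts $\Phi^X = \ul\Omega$.

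First, I would argue that $A := \{Y \in \cantor : \Phi^Y = \ul\Omega\}$ has Lebesgue measure $0$. If not, the Lebesgue density theorem supplies a dyadic cylinder $[\sigma]$ in which $A$ has density strictly above $1/2$, and then for each $n$ the bit $\ul\Omega(n)$ is the unique $b \in \{0,1\}$ satisfying $\lambda(\{Y \in [\sigma] : \Phi^Y(n) = b\}) > \tfrac12 \lambda([\sigma])$. Measures of $\Sigma^1_1$ sets admit $\Sigma^1_1$ comparison with rationals (a strict lower bound is witnessed by a single Borel constituent $A_\alpha$, $\alpha < \CK$), so both ``$\ul\Omega(n) = 0$'' and ``$\ul\Omega(n) = 1$'' become $\Sigma^1_1$ conditions; consequently $\ul\Omega \in \Delta^1_1$, contradicting the $\Pi^1_1$-ML-randomness of $\ul\Omega$.

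Second, $A$ is $\Pi^1_1$ and null, so $A^c$ is a $\Sigma^1_1$ set of measure $1$. By the inner-approximation argument underlying Lemma~\ref{approx}, applied to $A^c$, one obtains a uniformly $\Sigma^1_1$ family of closed sets $\{F_n\}_{n\in\omega}$ with $\bigcup_n F_n \subseteq A^c$ and $\lambda(\bigcup_n F_n) = 1$.

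Finally, the strong $\Pi^1_1$-ML-randomness of $X$ --- equivalently, weak Solovay-$\Sigma^1_1$-genericity --- forces $X \in \bigcup_n F_n \subseteq A^c$, so that $\Phi^X \neq \ul\Omega$, a contradiction. The main obstacle is Step~1: verifying that the higher Lebesgue density / majority-vote argument genuinely places $\ul\Omega$ in $\Delta^1_1$ in the higher setting, rather than merely in $\Pi^1_1$; once that is secured, Step~2 is a direct instance of the inner-approximation technology already developed for Lemma~\ref{approx}, and Step~3 is just the definition of weakly Solovay-$\Sigma^1_1$-generic.
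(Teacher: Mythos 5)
The paper states this lemma without proof (it is simply attributed to Bienvenu, Greenberg and Monin), so your proposal has to be judged on its own. Your Step~1 is essentially sound: it is Sacks' theorem that $\{Y : Z \le_h Y\}$ is null for non-$\Delta^1_1$ $Z$, and the majority-vote argument is the standard proof. (A quibble: for an $hT$-functional, $\{Y : \Phi^Y(n)=b\}$ is $\Pi^1_1$ rather than $\Sigma^1_1$, so the majority condition is $\Pi^1_1$; since for each $n$ exactly one bit wins the vote, the graph of $\ul\Omega$ is still both $\Pi^1_1$ and $\Sigma^1_1$, hence $\Delta^1_1$, and the contradiction goes through.)

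The genuine gap is in Step~2. First, $A=\{Y:\Phi^Y=\ul\Omega\}$ is not shown to be $\Pi^1_1$: the bits of $\ul\Omega$ are not $\Delta^1_1$, so ``$\Phi^Y(n)=\ul\Omega(n)$'' decomposes into a $\Pi^1_1$ clause (the correct value appears) and a $\Sigma^1_1$ clause (no wrong value appears / $\ul\Omega$ does not overtake the output), and on its face $A$ is only $\Delta^1_2$. Second, and fatally, Lemma~\ref{approx} is not an inner-approximation principle for arbitrary $\Sigma^1_1$ sets of measure one: it applies only to sets of the special form $\exists n\,\forall\alpha<\CK\,S_{\alpha,n}(X)$ with $S_{\alpha,n}$ uniformly $\Delta^1_1$, and $A^c$ has no evident presentation of that form (the disjunct ``$\Phi^Y(n)$ outputs a wrong value'' is an $\exists\alpha<\CK$ condition, not a $\forall\alpha<\CK$ one). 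A general principle of the kind you invoke would prove far too much: your argument uses nothing about $\ul\Omega$ beyond its failing to be $\Delta^1_1$, so if it were valid it would show that no strongly $\Pi^1_1$-ML-random real $hT$-computes \emph{any} non-$\Delta^1_1$ $\Pi^1_1$ real; by the Bienvenu--Greenberg--Monin characterization proved later in the paper, this would imply that every strongly $\Pi^1_1$-ML-random real is $\Pi^1_1$-random, which the paper explicitly states is open. A correct proof must exploit that $\ul\Omega$ is higher left-c.e.: one certifies a computation $\Phi^Y\uhr m[\alpha]$ as consistent with $\Omega_\alpha$, observes that such a certificate can only later be falsified by an increase of $\Omega$, and bounds the measure of the certified-then-falsified reals by the total increase of $\Omega$ --- a higher Solovay/difference-test argument, which must then be packaged as a test of the weak-Solovay-generic form. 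That machinery is entirely absent from your proposal.
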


Now given ANY sequence open sets $\{V_n\}_{n\in \omega}$, let $\mathcal{D}_{V}=\{T\mid T\in \mathbf{P}\wedge [T]\cap \bigcap_{n\in \omega}V_n=\emptyset\}$.
\begin{lemma}\label{lemma: dense dv}
If $\bigcap_{n\in \omega}V_n$ only contains strongly $\Pi^1_1$-ML-random, then $\mathcal{D}_{V}$ is dense.
\end{lemma}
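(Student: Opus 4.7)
A natural split is into an easy case and a hard case. In the easy case $\mu(T \cap V_n^c)>0$ for some $n$: then $T':=T\cap V_n^c$ is a $\Sigma^1_1$ closed set of positive measure with $[T']\cap V_n=\emptyset$, so $T'\in \mathcal{D}_V$ and we are done.

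The real work will be to rule out the hard case $\mu(T\cap V_n^c)=0$ for all $n$. In that case $\bigcap_n V_n$ has measure $\mu(T)$ inside $T$, and by hypothesis $T$ is almost entirely composed of strongly $\Pi^1_1$-ML-random reals. By Lemma \ref{lemma: GLM} they are in particular $\Pi^1_1$-ML-random, so $\mu(T\cap \MLR)=\mu(T)$. Using the higher analogue of inner regularity (as in Lemma \ref{approx}) we pass to a $\Sigma^1_1$ closed $T^*\subseteq T\cap \MLR$ of positive measure, and then further restrict to the measure-theoretic support of $T^*$ (discard every cylinder $[\sigma]$ with $\mu([\sigma]\cap T^*)=0$; this condition on $\sigma$ is $\Sigma^1_1$, so the restriction equals $T^*$ intersected with a $\Pi^1_1$ closed set and is therefore still a $\Sigma^1_1$ closed set of positive measure). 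After this step every closed null subset of $[T^*]$ is nowhere dense in $[T^*]$.

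Now we apply Lemma \ref{lemma: tech for sml} to $T^*$ to obtain the uniformly $\Pi^1_1$ open sequence $\{U_n\}$ with $\mu(U_n\cap [T^*])<2^{-n}$ and $\bigcap_n U_n$ meeting every cylinder that meets $[T^*]$. To turn this into an ambient strong higher $\Pi^1_1$-ML-test we intersect each $U_n$ with a uniformly $\Sigma^1_1$ open neighbourhood $O_n$ of $[T^*]$ of measure $<\mu([T^*])+2^{-n}$, built from the cylinders at a sufficiently deep level that meet $[T^*]$. Setting $W_n:=U_n\cap O_n$ gives $\mu(W_n)<2^{-n+1}$, $W_n\cap [T^*]=U_n\cap [T^*]$, and $\mu(\bigcap_n W_n)=0$, so $\{W_n\}$ is a strong higher $\Pi^1_1$-ML-test and every point of $\bigcap_n U_n\cap [T^*]$ fails to be strongly $\Pi^1_1$-ML-random. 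On the other hand, the hard-case hypothesis forces $[T^*]\setminus \SR \subseteq [T^*]\setminus \bigcap_m V_m = \bigcup_m ([T^*]\cap V_m^c)$, a countable union of closed $[T^*]$-null sets that are nowhere dense by the support condition, hence a meager $F_\sigma$ subset of $[T^*]$. But then the dense $G_\delta$ set $\bigcap_n U_n\cap [T^*]$ --- which is comeager in the compact Polish space $[T^*]$ --- is contained in a meager set, contradicting the Baire category theorem.

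The main obstacle will be executing the two effective regularity steps cleanly at the $\Sigma^1_1$ level --- the inner approximation of the $\Pi^1_1$ set $T\cap \MLR$ by a $\Sigma^1_1$ closed subset of positive measure, and the uniformly $\Sigma^1_1$ open outer approximations $O_n$ of $[T^*]$ --- and then checking that the hybrid intersection $\{W_n\}$ is indeed a strong higher $\Pi^1_1$-ML-test. Both approximations are higher analogues of standard measure theory and fit the enumeration-by-stages pattern of Lemma \ref{approx}; the test-validity check reduces to its presentability as a uniformly $\Sigma^1_1$ open sequence with vanishing measures and null intersection.
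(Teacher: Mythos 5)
Your case split is in the wrong currency: it is measure-theoretic where it needs to be topological, and this breaks your ``easy case''. The sets $V_n$ in Lemma~\ref{lemma: dense dv} are \emph{arbitrary} open sets (in the application they come from an arbitrary $\mathbf{\Sigma}^0_3$ set), so $V_n^c$ carries no definability whatsoever; $T\cap V_n^c$ is a closed set of positive measure but is not a $\Sigma^1_1$ closed set, hence not a condition of $\mathbf{P}$, and an arbitrary closed set of positive measure need not contain any $\Sigma^1_1$ closed subset of positive measure. So even after the hard case is refuted, you are left with no legal extension of $T$. The correct dichotomy, which is the one the paper uses, is: either some basic cylinder $[\sigma]$ meets $[T]$ while $[\sigma]\cap[T]\cap\bigcap_n V_n=\emptyset$ --- and then $[\sigma]\cap[T]$ is a $\Sigma^1_1$ closed set and (after passing to the support of $T$) is the desired extension --- or else $\bigcap_n V_n\cap[T]$ is \emph{dense} in $[T]$, and the Baire category argument (intersecting this dense $G_\delta$ with the dense $G_\delta$ set $\bigcap_n U_n\cap[T]$ supplied by Lemma~\ref{lemma: tech for sml}) produces a strongly $\Pi^1_1$-ML-random point of $[T]\cap\bigcap_n U_n$, which is not $\Pi^1_1$-difference random, contradicting Lemma~\ref{lemma: GLM}. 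Your hard-case argument is essentially this Baire argument, but refuting ``full relative measure'' only yields that $\mu(T\cap V_n^c)>0$ for some $n$, which gives you nothing usable; refuting ``dense'' hands you exactly the cylinder you need.

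A secondary problem is the conversion of $\{U_n\}$ into an ambient strong higher $\Pi^1_1$-ML-test $W_n=U_n\cap O_n$: here $U_n$ is $\Pi^1_1$ open while your $O_n$ is $\Sigma^1_1$ open (the set of level-$k$ strings meeting $[T^*]$ is a $\Sigma^1_1$ set of strings), so $W_n$ is not presented as a uniformly $\Pi^1_1$ open set and the sequence is not a test of the required form. The detour is unnecessary: Lemmas~\ref{lemma: nies} and~\ref{lemma: GLM} are quoted precisely so that a point of $[T]\cap\bigcap_n U_n$ with $\mu(U_n\cap[T])<2^{-n}$ already fails a difference test and hence cannot be strongly $\Pi^1_1$-ML-random; no repackaging as a weak-2-style test is needed. (Your preliminary refinements --- shrinking $T$ so that it contains only $\Pi^1_1$-ML-randoms, which is needed to invoke Lemma~\ref{lemma: tech for sml}, and passing to the measure-theoretic support so that relatively null closed subsets are relatively nowhere dense and the resulting $[\sigma]\cap[T]$ has positive measure --- are sound and in fact patch small omissions in the paper's own write-up; the fatal step is only the easy case.)
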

\begin{proof}
Given any condition $T\in \mathbf{P}$. By Lemma \ref{lemma: tech for sml}, there is a uniformly $\Pi^1_1$ sequence open sets $\{U_n\}_{n\in \omega}$ as described. Then there must be some $\sigma$ so that $[\sigma]\cap [T]\neq \emptyset$ but $[\sigma]\cap [T]\cap (\bigcap_{n\in \omega}V_n)=\emptyset$ (Otherwise, there must be a real $x\in [T]\cap (\bigcap_{n\in \omega}V_n) \cap (\bigcap_{n\in \omega}U_n)$. Then $x$ would be a strongly $\Pi^1_1$-ML random but not $\Pi^1_1$-difference random, a contradiction to Lemma \ref{lemma: GLM}). Let $[S]=[\sigma]\cap [T]$. Then $S\leq T$ and $S\in \mathcal{D}_V$.
\end{proof}

Given any $\mathbf{\Sigma}^0_3$-set,  if $g$ is sufficiently generic, then $g$ is a strongly $\Pi^1_1$-ML-random  real not in this set.
This concludes the proof of the  proposition.

\section[Greenberg and Monin]{Yu based on Greenberg and Monin: a  lower bound on the Borel rank of the set of $\Pi^1_1$-random reals }

Input by Yu in  August, 2013. The result is essentially due to Greenberg and Monin. It is Greenberg who told me the result.

Let $\mathbb{P}=(\mathbf{P},\leq)$ where $\mathbf{P}$ is the collection of $\Sigma^1_1$ closed sets with a positive measure.

\begin{lemma}
Every $\mathbb{P}$-generic real $g$ is $\Pi^1_1$-random.
\end{lemma}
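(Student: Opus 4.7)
The strategy is to invoke Kechris's theorem that there is a largest $\Pi^1_1$ null class, call it $Q$; a real is $\Pi^1_1$-random iff it lies outside $Q$. It therefore suffices to show that
\[
D_Q = \{T \in \mathbf P : [T] \cap Q = \emptyset\}
\]
is dense in $\mathbb P$, for then any $\mathbb P$-generic $g$ meets $D_Q$, sits inside some $T \in D_Q$, and is automatically $\Pi^1_1$-random.

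For density, fix an arbitrary condition $T \in \mathbf P$. Since $\lambda(Q) = 0$ and $\lambda(T) > 0$, the intersection $T \cap Q^c$ has the same positive measure as $T$. Moreover, $T \cap Q^c$ is $\Sigma^1_1$ as a set of reals: $T$ is $\Sigma^1_1$ (being a $\Sigma^1_1$ closed set), and $Q^c$ is $\Sigma^1_1$ as the complement of the $\Pi^1_1$ set $Q$. The heart of the argument is then to extract from $T \cap Q^c$ a $\Sigma^1_1$ closed subset $S$ of positive measure; for any such $S$ we have $S \le T$ in $\mathbb P$ and $[S] \cap Q = \emptyset$, so $S \in D_Q$ and density is established.

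The main obstacle is precisely this extraction, which is a version of effective inner regularity for $\Sigma^1_1$ sets in the higher setting. One concrete route is to present $T \cap Q^c$ in the normal form $\exists n\, \forall \alpha < \omega_1^{ck}\, R_{\alpha,n}$ with $R_{\alpha,n}$ uniformly $\Delta^1_1$, and then apply Lemma~\ref{approx}: that lemma supplies a uniformly $\Sigma^1_1$ sequence of closed subsets of $T \cap Q^c$ whose union exhausts $T \cap Q^c$ up to a null set, so at least one term has positive measure and can serve as $S$. The normal form itself is standard once one writes $Q$ via Kleene's $\KO$ and intersects with the tree description of $T$. Granting this, the density of $D_Q$ follows and, since $g$ meets $D_Q$ by genericity, $g$ avoids $Q$ and is $\Pi^1_1$-random.
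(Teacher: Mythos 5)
There is a fatal gap: the dense set you propose is empty. Every nonempty $\Sigma^1_1$ closed set meets the largest $\Pi^1_1$ null class $Q$, because its leftmost path is either hyperarithmetic or hyperarithmetically equivalent to Kleene's $\KO$ (this is exactly the fact used in Yu's $\mathbf{\Sigma}^0_2$ lower-bound argument earlier in this part), and in either case that path is not $\Pi^1_1$-random: hyperarithmetic reals are not even $\Delta^1_1$-random, and a real hyperarithmetically above $\KO$ has $\omega_1^X>\CK$ and so lies in $Q$. Hence no condition $T\in\mathbf{P}$ satisfies $[T]\cap Q=\emptyset$, and $D_Q$ is not dense but void. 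For the same reason the ``extraction'' step you flag as the main obstacle is not an obstacle to be overcome but an impossibility: $T\cap Q^c$ has no $\Sigma^1_1$ closed subset of positive measure. Lemma~\ref{approx} does not rescue this, because a $\Sigma^1_1$ set is of the normal form $\exists n\,\forall\alpha<\CK\,S_{\alpha,n}$ only modulo the null set $\{X:\omega_1^X>\CK\}$; the closed sets that lemma produces sit inside the possibly larger set $\bigcup_n\bigcap_{\alpha}S_{\alpha,n}$ and necessarily contain reals with $\omega_1^X>\CK$ (again, their leftmost paths). Note also that the weakened conclusion $\lambda([S]\cap Q)=0$ is vacuous, since $Q$ is null and every condition already satisfies it.

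This is precisely why the paper does not diagonalize against $Q$ directly. Its proof reduces the lemma to showing that $g$ is Solovay-$\Sigma^1_1$-generic and then invokes Theorem~\ref{theorem: monin and greenberg}: for each uniformly $\Sigma^1_1$ sequence $\{F_n\}$ of closed sets of positive measure, the set consisting of the $F_n$ themselves together with the conditions disjoint from $\bigcup_n F_n$ is dense, so a sufficiently generic real is Solovay-$\Sigma^1_1$-generic. The genuinely hard content --- that such a real satisfies $\omega_1^g=\CK$ and therefore avoids $Q$ --- is Theorem~\ref{hard to find name}, and your argument silently assumes it. Any correct proof must handle the $\CK$-many Borel approximations of each $\Pi^1_1$ null set together with the collapse of $\omega_1^g$; that is what the Solovay-genericity machinery packages, and it cannot be replaced by a single dense set aimed at $Q$ itself.
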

\begin{proof}
By Theorem \ref{theorem: monin and greenberg}, it is sufficient to prove that $g$ is $\Sigma^1_1$-Solovay generic.

Given a uniformly $\Sigma^1_1$-closed sets $\{F_n\}_{n\in \omega}$ with positive measure. Let $$\mathcal{D}=\{F_n\mid n\in \omega\}\cup \{F\mbox{ is closed and }\Sigma^1_1 \mid F\cap (\bigcup_{n\in \omega}F_n)=\emptyset\}.$$

Obviously $\mathcal{D}$ is dense. So the lemma follows.
\end{proof}

Now suppose that $\{V_n\}_{n}$ is a sequence open sets so that $\bigcap_{n\in \omega}V_n$ only contains $\Pi^1_1$ random reals. Let $$\mathcal{D}_{V}=\{T\mid T\in \mathbf{P}\wedge [T]\cap \bigcap_{n\in \omega}V_n=\emptyset\}.$$

By Lemma \ref{lemma: dense dv}, $\mathcal{D}_{V}$ is dense.

In conclusion, the collection of $\Pi^1_1$-random reals is not $\mathbf{\Sigma}^0_3$.

{\bf Remark:} By the proof, for any set $\mathcal A$ of reals, if $\Pi^1_1$-random $\subseteq \mathcal A\subseteq \Pi^1_1$-difference random, then $A$ is not $\mathbf{\Sigma}^0_3$.

\section{Bienvenu, Greenberg, Monin: A $\Pi^1_1$-MLR set $X$ is not $\Pi^1_1$-random iff $X$ $hT$-computes a $\Pi^1_1$ sequence which is not $\Delta^1_1$.}

The $hT$-reductions are the most general version of Turing-reductions, as defined by Bienvenu, Greenberg and Monin in LogicBlog2012. We have that if $X$ $hT$-computes a $\Pi^1_1$ sequence which is not $\Delta^1_1$, then $X$ $h$-computes $\KO$ and is thus not $\Pi^1_1$-random, as $\omega_1^X > \CK \leftrightarrow X \geq_h \KO$. So all we need to prove is the following theorem:

\begin{theorem}
If $X$ is $\Pi^1_1$-MLR but not $\Pi^1_1$-random, then $X$ $hT$-computes a $\Pi^1_1$ sequence which is not $\Delta^1_1$.
\end{theorem}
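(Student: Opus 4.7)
My plan is to show that the hypothesis $\Pi^1_1$-MLR already forces $\omega_1^X > \omega_1^{CK}$ once we are told that $X$ is not $\Pi^1_1$-random, and then to invoke Sacks's theorem to exhibit $\KO$ itself as the desired $\Pi^1_1$ non-$\Delta^1_1$ sequence $hT$-below $X$.

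First I would verify that $\Pi^1_1$-MLR implies $\Delta^1_1$-randomness. Given a lightface $\Delta^1_1$ nullset $N$ together with a $\Delta^1_1$ Borel code for it, the measure of $N$ is hyperarithmetically computable from the code, so one can uniformly in $n$ produce an effectively open cover $U_n \supseteq N$ with $\lambda(U_n) < 2^{-n}$. The resulting sequence $(U_n)$ is then $\Delta^1_1$-uniform in $n$, hence $\Pi^1_1$-uniform, and so forms a $\Pi^1_1$-ML test capturing every point of $N$; in particular no $\Pi^1_1$-MLR real can lie in $N$. Combining this with the equivalence $\Pi^1_1\text{-random} \Leftrightarrow \Delta^1_1\text{-random} \wedge \omega_1^X = \omega_1^{CK}$ already used in the proof of Theorem~\ref{theorem: monin and greenberg}, the two hypotheses on $X$ jointly force $\omega_1^X > \omega_1^{CK}$.

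From here I would close via Sacks's theorem, cited in the paragraph preceding the statement: $\omega_1^X > \omega_1^{CK}$ is equivalent to $\KO \leq_h X$. The last remaining task is to realize this reduction at the level of $hT$-reducibility in the sense of Bienvenu--Greenberg--Monin from LogicBlog 2012, not merely of $h$-reducibility. For this, fix an $X$-computable well-order $\prec$ of length at least $\omega_1^{CK}$; then $e \in \KO$ is equivalent to the $\Pi^1_1(X)$-witness statement that the computable linear order $<_e$ admits an order-preserving embedding into $\prec$, which is precisely the kind of uniform witness-based reduction that the $hT$-framework is designed to certify. Since $\KO$ is $\Pi^1_1$ but not $\Delta^1_1$, it serves as the required witness. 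The first two steps are essentially bookkeeping; the main obstacle I anticipate is in this final $h$-to-$hT$ upgrade, which requires unpacking the LogicBlog 2012 definition of $\leq_{hT}$ but should go through routinely given how Turing-style reductions are typically set up in the higher setting.
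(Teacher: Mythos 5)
Your steps (1)--(3) are fine: $\Pi^1_1$-MLR does imply $\Delta^1_1$-randomness, so by the characterization of $\Pi^1_1$-randomness as $\Delta^1_1$-randomness plus $\omega_1^X=\CK$ you correctly get $\omega_1^X>\CK$, and hence $\KO\le_h X$ by Sacks. The genuine gap is your step (4), which you dismiss as routine bookkeeping but which is in fact the entire content of the theorem. An $hT$-reduction in the Bienvenu--Greenberg--Monin sense is given by a $\Pi^1_1$ set of axioms $(\sigma,n,i)$ with $\sigma$ a \emph{finite} string: each output bit must be certified by a finite initial segment of the oracle, enumerated along the stages below $\CK$. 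Your proposed reduction --- ``$e\in\KO$ iff $<_e$ embeds into an $X$-computable well-order $\prec$'' --- is a $\Sigma^1_1(X)$ definition whose witness is an embedding depending on all of $X$; it has unbounded use and is exactly an $h$-reduction, not an $hT$-reduction. The gap between the two is precisely what the theorem is about (the surrounding text is careful to say that $hT$-computing a non-$\Delta^1_1$ $\Pi^1_1$ set implies only $h$-computing $\KO$), and nothing in your argument closes it. A telling symptom is that your step (4) never uses the hypothesis that $X$ is $\Pi^1_1$-MLR beyond deriving $\Delta^1_1$-randomness, whereas ML-randomness must enter essentially in producing a finite-use computation.

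The paper's proof does something quite different: from Theorem~\ref{hard to find name} it extracts a $\Pi^0_2(\Pi^1_1)$ set $\bigcap_n O_n$ containing $X$ such that no $\Delta^1_1$ closed subset of it contains $X$, and then \emph{constructs} a bespoke $\Pi^1_1$, non-$\Delta^1_1$ set $A$ (not $\KO$) by meeting requirements $R_e$, with the enumeration of $m$ into $A$ timed against the ordinal stages at which the measure of $O_m[\gamma]$ stabilizes relative to $\Delta^1_1$ closed approximations. The finite-use computation of $A(m)$ from $X$ (wait for the least $\alpha$ with $X\in O_m[\alpha]$, then read off $A[\alpha]$) is certified by a higher Solovay test $\{U_{\langle m,e\rangle}\}$; the hypothesis that $X$ is $\Pi^1_1$-MLR is used exactly here, to guarantee $X$ lands in only finitely many test components and hence that the computation gives the right answer for almost all $m$. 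To repair your proposal you would need to supply this construction (or an equivalent one); merely unpacking the definition of $\leq_{hT}$ will not turn the $\Sigma^1_1(X)$ embedding criterion into a finite-use functional.
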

\begin{proof}
Suppose that $X$ is $\Pi^1_1$-MLR but not $\Pi^1_1$-random. Then from theorem \ref{hard to find name} there is a uniform intersection of $\Pi^1_1$ open sets $\bigcap_n O_n$ so that $X \in \bigcap_n O_n$ and so that no $\Delta^1_1$ closed set $F \subseteq \bigcap_n O_n$ of positive measure contains $X$ (and thus no $\Delta^1_1$ closed set $F \subseteq \bigcap_n O_n$ contains $X$). Let $\{W_e\}_{e \in \omega}$ be an enumeration of the $\Pi^1_1$ subsets of $\omega$. We will construct a $\Pi^1_1$ sequence $A$ which is not $\Delta^1_1$ and so that $X$ can hT-compute $A$. We use the usual way to make $A$ not $\Delta^1_1$, by meeting each requirement 
$$R_e : W_e \mbox{ infinite } \rightarrow A \cap W_e \neq \emptyset$$
making sure in the meantime that $A$ is co-infinite.\\

In what follows, to speak of ordinal stages and finite substages in a clean way, we use the ordinal version of the euclidian division: For an ordinal $\alpha$, there is a unique pair of ordinal $\langle \beta, n \rangle$ so that $\alpha=\omega \times \beta + n$. Furthermore one can uniformly find $\beta$ and $n$ from $\alpha$ (a simple research within the ordinals smaller than $\alpha$). Then, the stage $\omega \times \beta + n$ should be understood as substage $n$ of stage $\alpha$.\\

\noindent \textbf{Construction of $A$}:\\
\\
First, for each $e$ let $b_e$ be a boolean initialized to 'false'. At stage $\gamma=\omega \times \alpha + \langle e, m, k \rangle$ (At stage $\alpha$, at substage $\langle e, m, k \rangle$), if $b_e$ is marked 'true', go to the next stage (next substage), otherwise if $m \in W_e[\alpha]$ with $m>2e$, then consider the $\Delta^1_1$ set $\bigcap_n O_n[\alpha]$ and compute an increasing union of $\Delta^1_1$ closed sets $\bigcup_n F_n$ with $\bigcup_n F_n \subseteq \bigcap_n O_n[\alpha]$ and $\lambda(\bigcup_n F_n)=\lambda(\bigcap_n O_n[\alpha])$. \\

If $\lambda(F_k^c \cap O_m[\gamma]) \leq 2^{-e}$ then enumerate $m$ into $A$ at stage $\gamma$, mark $b_e$ as 'true' and let $U_{\langle m, e \rangle}=F_k^c \cap O_m[\gamma]$ (the $U_{\langle m, e \rangle}$ are intended to form a higher Solovay test).\\

\noindent \textbf{Verification that $A$ is not $\Delta^1_1$}:\\
\\
$A$ is co-infinite because for each $e$ at most one $m$ is enumerated into $A$ and this $m$ is bigger than $2e$. Now suppose that $W_e$ is infinite. There exists then $\alpha<\CK$ so that $W_e[\alpha]$ is infinite (otherwise the function which to $n$ associates the first ordinal time at which the $n$-th element enters $W_e$ would have its range cofinal in $\CK$, which is not possible). Then there exists $\beta>\alpha$ so that $\lambda(\bigcap_n O_n - \bigcap_n O_n[\beta]) < 2^{-e}$. Thus there is a $\Delta^1_1$ closed set $F_k \subseteq \bigcap_n O_n[\beta]$ so that $\lambda(\bigcap_n O_n - F_k) < 2^{-e}$. Then there exists $a$ for that for all $b \geq a$ we have $\lambda(O_b - F_k) < 2^{-e}$ and in particular $\lambda(O_b[\omega \times \beta + \omega] - F_k) < 2^{-e}$. But as $W_e[\alpha]$ is already infinite we have for some $m \in W_e[\beta]$ with $m>2e$ that $\lambda(O_m[\omega \times \beta + \langle e, m, k \rangle] - F_k) < 2^{-e}$ and then at stage $\omega \times \beta + \langle e, m, k\rangle$, $m$ is enumerated into $A$, if $R_e$ is not met yet.\\

\noindent \textbf{Verification that $\{U_{\langle m, e \rangle}\}_{m,e \in \omega}$ is a higher Solovay test}:\\
\\
We put an open set in the Solovay test only when $R_e$ is 'actively' met, and this open set has measure smaller than $2^{-e}$. As each $R_e$ is 'actively' met at most once, we have a Solovay test.\\

\noindent \textbf{Computation of $A$ from $X$}:\\
\\
Note that we now just describe the algorithm to compute $A$ from $X$. The verification that the algorithm works as expected is given in the next section. Let $p$ be the smallest integer so that for any $m>p$, $X$ is in no $U_{\langle m, e \rangle}$. To decide whether $m>p$ is in $A$, look for the smallest $\alpha$ such that $X \in O_m[\alpha]$. Then decide that $m$ is in $A$ iff $m$ is in $A[\alpha]$.\\

\noindent \textbf{Verification that $X$ computes $A$}:\\
\\
Let $p$ be the smallest integer so that for any $m>p$, $X$ is in no $U_{\langle m, e \rangle}$. Suppose for $m>p$ that $X \in O_m[\alpha]$ and $m \notin A[\alpha]$. Suppose also that at latter stage $\gamma=\omega \times \beta + \langle e, m, k \rangle >\alpha$, the integer $m$ is enumerated into $A$. By construction, it means we have $\lambda(O_m[\gamma] - F_k) < 2^{-e}$ for some $\Delta^1_1$ closed set $F_k \subseteq \bigcap_n O_n$ and that $U_{\langle m, e \rangle}=O_m[\gamma] - F_k$ (Note that $U_{\langle m, e \rangle}$ cannot be replaced latter because of a different $k$, as $R_e$ is now met).

As $X$ does not belong to $U_{\langle m, e \rangle}$ and does not belong to $F_k$, it does not belong to $O_m[\gamma]$ which contradicts the fact that it belongs to $O_m[\alpha] \subseteq O_m[\gamma]$.

\end{proof}

\section{Bienvenu, Greenberg, Monin: For any $n\geq 4$ we have $\Pi^1_1$-random $\leftrightarrow P^0_n(\Pi^1_1)$-random}

We say that a set is $P^0_2(\Pi^1_1)$ if it is equal to $\bigcap_n O_n$ where each $O_n$ is a $\Pi^1_1$ open set uniformly in $n$. The $P^0_3(\Pi^1_1)$ sets are those of the form $\bigcap_m \bigcup_n F_{n,m}$ where each $F_{n,m}$ is a $\Sigma^1_1$ closed set uniformly in $n$ and $m$. The $P^0_n(\Pi^1_1)$ sets and $S^0_n(\Pi^1_1)$ sets are then defined for any $n \in \omega$, following the same logic.\\

Let $\KO_1$ be a $\Pi^1_1$ set of unique computable ordinal notations. For $o \in \KO_1$ we denote by $|o|$ the corresponding ordinal. We will consider in this section an extension of the notion of functionals, which seems more adapted to work in the higher world. Some recent work of Bienvenu, Greenberg and Monin, still unpublished, says that we can have for some $X,Y$ that $X \geq_{hT} Y$, but with the impossibility of having a computation of $Y$ from $X$ with functional consistent everywhere. This is why we decide here to make of inconsistency something 'normal' by defining $\Pi^1_1$ ``relationals" which are intended to be to $\Pi^1_1$ relations what $\Sigma^0_1$ functionals are to $\Sigma^0_1$ functions.\\

A $\Pi^1_1$ relational $\varphi_P$ is given by a $\Pi^1_1$ predicate $P \subseteq 2^{<\omega} \times \omega \times \KO_1$. We write $\varphi_P^X(n) \downarrow$ the predicate $\exists o\ \exists \s\ \prec X\ P(\s, n, o)$. If  $p \in \omega$ we write $\varphi_P^X(n) \downarrow = p$ for   $\exists \s \prec X\ P(\s, n, p)$. Note that we can have distinct values $p_1, p_2 \in \KO_1$ so that $\varphi_P^X(n) \downarrow = p_1$ and $\varphi_P^X(n) \downarrow = p_2$. We write $\dom \varphi_P$ for  the set of $X$ such that any $n$ is in relation with at least one element of $p$ : $\{X\ |\ \forall n\ \exists p\ \varphi_P^X(n) \downarrow = p\}$.

\begin{fact} \label{fact}
Each $\Pi^1_1$ relational $\varphi_P$ corresponds to the higher $\Pi^0_2$ set $\dom \varphi_P$. Conversely each higher $\Pi^0_2$ set $\bigcap_n O_n$ corresponds to the $\Pi^1_1$ relational 
$\varphi_P^X(n)\downarrow=p \leftrightarrow p \in \KO_1 \wedge \exists \s \prec X\ \s \in O_n[|p|]$. 
\end{fact}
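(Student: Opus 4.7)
The plan is to verify the two directions of the correspondence separately, using the standard fact that a higher $\Pi^0_2$ set is an intersection $\bigcap_n O_n$ of $\Pi^1_1$ open sets, each of which admits a $\Delta^1_1$-in-$\alpha$ stage-by-stage approximation $O_n[\alpha]$ for $\alpha < \CK$ with $O_n = \bigcup_{\alpha < \CK} O_n[\alpha]$, and every string entering $O_n$ does so at some stage with notation in $\KO_1$.

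For the first direction, given a $\Pi^1_1$ relational $\varphi_P$, I would define, for each $n$, the set of strings $Q_n = \{\s \in 2^{<\omega} : \exists p\, P(\s, n, p)\}$. Since $P$ is $\Pi^1_1$ and the existential quantifier over $p \in \omega$ preserves $\Pi^1_1$, the set $Q_n$ is $\Pi^1_1$ (the condition $p \in \KO_1$ is already folded into $P$). Therefore the basic cylinder union $O_n = \bigcup_{\s \in Q_n} [\s]$ is a $\Pi^1_1$ open set, uniformly in $n$. Unfolding the definition of $\dom \varphi_P$ gives $\dom \varphi_P = \{X : \forall n\, X \in O_n\} = \bigcap_n O_n$, which is higher $\Pi^0_2$.

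For the converse direction, given $\bigcap_n O_n$ with each $O_n$ a $\Pi^1_1$ open set, I would define
\[
P(\s, n, p) \;\LR\; p \in \KO_1 \,\wedge\, \s \in O_n[|p|].
\]
The predicate $p \in \KO_1$ is $\Pi^1_1$, and once $p \in \KO_1$ is granted the condition $\s \in O_n[|p|]$ is $\Delta^1_1$ in $(\s, n, p)$ by the standard approximation of a $\Pi^1_1$ open set. Hence $P$ is $\Pi^1_1$. Now $\varphi_P^X(n)\downarrow$ iff there exist $p \in \KO_1$ and $\s \prec X$ with $\s \in O_n[|p|]$, which is equivalent to $X \in \bigcup_{\alpha < \CK} O_n[\alpha] = O_n$. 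Thus $\dom \varphi_P = \bigcap_n O_n$.

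The only mild subtlety is the interplay between the notation set $\KO_1$ (which is $\Pi^1_1$) and the stage approximation of $\Pi^1_1$ open sets: one has to remember that existentially quantifying an element of $\KO_1$ out of a $\Pi^1_1$ predicate keeps it $\Pi^1_1$ (since the quantifier ranges over $\omega$ and $\KO_1 \subseteq \omega$ is itself $\Pi^1_1$), and that the standard approximations $O_n[\alpha]$ are uniformly $\Delta^1_1$ in any notation for $\alpha$. Both points are routine consequences of the development of higher computability used throughout the excerpt.
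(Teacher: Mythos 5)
Your verification is correct and follows exactly the correspondence written into the statement of the Fact; the paper itself offers no proof, treating both directions as routine, and your unfolding (closure of $\Pi^1_1$ under number quantification for the forward direction, and the $\Pi^1_1$-ness of $p\in\KO_1$ combined with the $\Delta^1_1$ stage approximations $O_n[|p|]$ for the converse) is precisely the intended argument.
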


\begin{lemma} \label{relationals}
If $\omega_1^Z > \CK$ and $Z$ is $\Delta^1_1$ random then there is a $\Pi^1_1$ relational $\varphi_P$ such that $Z \in \dom \varphi_P$ and such that $\sup_n \{\min \{|o|\ |\ \varphi^Z_P(n) \downarrow = o\}\} = \CK$.
\end{lemma}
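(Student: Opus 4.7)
The plan is to realize $Z$ inside some higher $\Pi^0_2$ null set $T = \bigcap_n O_n$, use $\Delta^1_1$-randomness of $Z$ to force the arrival stages of $Z$ into the $O_n$'s to be cofinal in $\CK$, and then apply the Fact above to convert $T$ into the required $\Pi^1_1$ relational.

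For the construction of $T$, I would use that $\omega_1^Z > \CK$ gives $Z \geq_h \KO$, so there is a higher Turing reduction $\Gamma$ such that $\Gamma^Z$ enumerates an $\omega$-sequence of $\KO_1$-notations with ranks cofinal in $\CK$ (for instance, notations for the order types of initial segments of a $Z$-recursive well-order of length $\geq \CK$, sampled at a cofinal sequence of milestones). Let
\[ O_n = \{X : \exists \sigma \prec X\ \Gamma^\sigma(n) \in \KO_1\}, \]
a higher $\Pi^1_1$-open set enumerated at stage $\alpha < \CK$ by those cylinders $[\sigma]$ for which $\Gamma^\sigma(n) \downarrow = o$ is witnessed with $|o| \leq \alpha$. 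Arranging $\Gamma$ so that convergence of $\Gamma^X(n)$ into $\KO_1$ for every $n$ forces a $\Delta^1_1(X)$-definable cofinal $\omega$-sequence in $\CK$, the set $T = \bigcap_n O_n$ is contained in $\{X : \omega_1^X > \CK\}$ (otherwise $\CK$ fails to be $L^X$-admissible) and is hence null by Sacks; by design $Z \in T$.

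Next, let $\beta_n < \CK$ be the arrival stage of $Z$ into $O_n$. If $\alpha^* := \sup_n \beta_n$ were $<\CK$, then $Z \in \bigcap_n O_n[\alpha^*]$; this set has a uniform $\Delta^1_1$ presentation (as a countable intersection of uniformly $\Delta^1_1$ open sets) and is contained in the null set $T$, giving a $\Delta^1_1$ null set containing $Z$ and contradicting $\Delta^1_1$-randomness. Since $\CK$ has external cofinality $\omega$, an unbounded $\omega$-sequence in $\CK$ is cofinal, so $\sup_n \beta_n = \CK$. By the Fact, $T$ corresponds to the $\Pi^1_1$ relational $\varphi_P$ with $P(\sigma, n, o) \leftrightarrow o \in \KO_1 \wedge \sigma \in O_n[|o|]$; then $\dom \varphi_P = T \ni Z$, and $\min\{|o| : \varphi_P^Z(n) \downarrow = o\} = \beta_n$, so $\sup_n \min = \CK$.

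The hard part is the design of $\Gamma$ in the second paragraph. Because $hT$-reductions may fail to be consistent everywhere (as noted just before the lemma), one has to work with a genuine relational rather than a functional, and care is needed to ensure that convergence of $\Gamma^X$ on every $n$ really forces $\omega_1^X > \CK$ rather than merely producing a bounded sequence of notations. The $\Delta^1_1$-randomness hypothesis enters essentially in the bounded-stage argument, ruling out $Z$ being trapped in any $\Delta^1_1$ approximation to the $\Pi^1_1$ null set $T$.
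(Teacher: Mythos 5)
Your overall architecture (realize $Z$ in a higher $\Pi^0_2$ set $\bigcap_n O_n$, use $\Delta^1_1$-randomness to show the arrival stages $\beta_n$ of $Z$ are cofinal in $\CK$, then convert via Fact \ref{fact}) matches the paper's, and your last two steps are sound \emph{given} the set you promise in the first step. The gap sits exactly where you flag ``the hard part'': you need $T=\bigcap_n O_n$ to be \emph{null}, and you propose to secure this by arranging $T\subseteq\{X\colon \omega_1^X>\CK\}$, i.e.\ by designing $\Gamma$ so that totality of the relational on $X$ forces $\omega_1^X>\CK$. No such design is given, and there is a structural obstruction to it: for \emph{any} higher $\Pi^0_2$ set, an $X$ in it with $\omega_1^X=\CK$ has bounded arrival stages by admissibility, so the domain of a relational built from an $hT$-reduction will in general contain many such $X$ (reals on which $\Gamma$ converges to bounded garbage). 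Excluding them is precisely what the remainder of that section labors over with the predicates $C_1,C_2$, and the authors state explicitly that they can only trap $Z$ in a $P^0_4(\Pi^1_1)$ null set, not a $P^0_2(\Pi^1_1)$ one. So the null higher $\Pi^0_2$ set your argument rests on is not known to exist, and the method you sketch cannot produce it.

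The paper's proof sidesteps nullity entirely. Since $\omega_1^Z>\CK$, the contrapositive of Theorem \ref{hard to find name} says $Z$ is not Solovay $\Sigma^1_1$-generic: there is a uniformly $\Sigma^1_1$ sequence of closed sets of positive measure whose complement is a higher $\Pi^0_2$ set $\bigcap_n O_n$ containing $Z$ such that $Z$ lies in \emph{no $\Sigma^1_1$ closed subset of $\bigcap_n O_n$} (positive-measure ones by the genericity failure, null ones by $\Delta^1_1$-randomness). Then for each $\alpha<\CK$ the $\Delta^1_1$ set $\bigcap_n O_n[\alpha]$ is, up to a null set, an increasing union of $\Delta^1_1$ closed subsets of $\bigcap_n O_n$; since $Z$ avoids all of these and avoids every $\Delta^1_1$ null set, $Z\notin\bigcap_n O_n[\alpha]$, which is exactly the cofinality of the arrival stages. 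If you want to repair your write-up, replace your null $T$ by this set and replace ``$\bigcap_n O_n[\alpha^*]$ is a $\Delta^1_1$ subset of a null set'' by this approximation-from-below argument; the rest of your proof then goes through unchanged.
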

\begin{proof}
From theorem \ref{hard to find name} there is a higher $\Pi^0_2$ set $\bigcap_n O_n$ containing $Z$ so that $Z$ is in no $\Sigma^1_1$ closed set of positive measure included in $\bigcap_n O_n$ (and then in no $\Sigma^1_1$ closed set included in $\bigcap_n O_n$). Consider for each $\alpha$ computable the set $\bigcap_n O_n[\alpha]$. We can approximate $\bigcap_n O_n[\alpha]$ from below by a union of $\Delta^1_1$ closed sets of the same measure and as $Z$ is in no $\Delta^1_1$ nullset and in no $\Delta^1_1$ closed sets included in $\bigcap_n O_n[\alpha]$, $Z$ cannot be in $\bigcap_n O_n[\alpha]$. This implies that for $\varphi_P$ defined from $\bigcap_n O_n$ in fact \ref{fact}, we have $\sup_n \{\min \{o\ |\ \varphi_P(n) \downarrow = o\}\} = \CK$.
\end{proof}

\noindent We now have the following lemma:

\begin{lemma}\label{plouf}
From any $\Pi^1_1$ relational $\varphi_P$ one can obtain effectively in $\varepsilon$ a $\Pi^1_1$ relational $\varphi_Q$ so that:
\renewcommand{\arraystretch}{1.5}
$$
\begin{array}{l}
1:\ \ \dom{\varphi_P}=\dom{\varphi_Q}\\
2:\ \ \forall X\ \forall n\ (\exists! o\ \varphi_P^X(n)=o) \rightarrow \varphi_Q^X(n)=o\\
3:\ \ \forall X\ \forall n\ \min \{|o|\ |\ \varphi_P^X(n)=o\} \leq \min \{|o|\ |\ \varphi_Q^X(n)=o\}\\
4:\ \ \lambda(\{X\ |\ \exists n\ \exists o_1 \neq o_2\ \varphi_Q^X(n)\downarrow=o_1 \wedge \varphi_Q^X(n)\downarrow=o_2\}) \leq \epsilon
\end{array}
$$
\renewcommand{\arraystretch}{1}
\end{lemma}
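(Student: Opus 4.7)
The plan is to define $Q$ by an ordinal-stage enumeration tracking $P$, using a length threshold $L_n$ for each input $n$ chosen so that $2^{-L_n}\leq \varepsilon\cdot 2^{-n-2}$, which makes $\sum_n 2^{-L_n}\leq \varepsilon/2$.

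At each ordinal stage $\alpha<\CK$, we process every triple $(\sigma,n,o)\in P[\alpha]\setminus P[<\alpha]$ by two rules. \emph{Long-string rule:} if $|\sigma|\geq L_n$ and no proper prefix of $\sigma$ has any witness for $n$ in $P[<\alpha]$, enumerate $(\sigma,n,o)$ into $Q$, using a canonical choice of $o$ when several witnesses for $\sigma$ appear at stage $\alpha$. \emph{Extension rule:} if $|\sigma|<L_n$, enumerate $(\tilde\sigma,n,o)$ into $Q$ for every length-$L_n$ extension $\tilde\sigma\succeq\sigma$ for which no triple $(\tilde\sigma,n,o')$ is already in $Q[<\alpha]$. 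Both conditions are decidable at stage $\alpha$ from $P[\alpha]$ and $Q[<\alpha]$, so $Q$ is $\Pi^1_1$.

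Conditions 1--3 follow by inspection. For (1), any $X\in\dom\varphi_P$ has a unique minimal prefix $\tau_X^*\prec X$ in the higher $\Pi^1_1$ open set $O_n^P=\{Y:\varphi_P^Y(n)\downarrow\}$: if $|\tau_X^*|\geq L_n$ the long-string rule commits $\tau_X^*$ at its first stage of entry, otherwise the extension rule commits $X\uh L_n$ at the same stage; the reverse inclusion holds because every $Q$-witness is pulled from $P$. Condition (2) holds since whenever $\varphi_P^X(n)$ has the unique value $o$, any $P$-witness along $X$'s prefix chain is $o$, so every $Q$-commit must use $o$. Condition (3) follows because the value set of $\varphi_Q^X(n)$ is contained in that of $\varphi_P^X(n)$.

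The main obstacle is Condition (4). Inconsistency at input $n$ arises only from ``shortening events'': a string $\sigma\in Q$ committed by the long-string rule is later found to have a committed proper prefix $\sigma'$ carrying a different witness. By the construction, at $\sigma$'s commit stage the final-minimal prefix $\tau$ of $\sigma$ in $O_n^P$ has not yet entered, so the inconsistent measure inside $[\tau]$ is bounded by $\lambda([\tau]\cap O_n^P[<\beta_\tau])$, where $\beta_\tau$ is the stage at which a witness for $\tau$ first appears in $P$. A careful bookkeeping, exploiting that the threshold $L_n$ restricts which strings can be prematurely committed and that the pre-entry coverage in each $[\tau]$ totals at most $2^{-|\tau|}$, lets one bound the per-$n$ inconsistent measure by $\varepsilon\cdot 2^{-n-1}$. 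Summing over $n$ then yields the required total bound $\leq\varepsilon$.
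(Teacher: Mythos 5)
Your construction of $Q$ (a fixed length threshold $L_n$ plus the long-string and extension rules) is a genuinely different design from the paper's, but it does not achieve condition (4); the ``careful bookkeeping'' step is exactly where the argument breaks. Concretely, take $P$ which for input $n$ first enumerates, at one stage, a witness $(\sigma,n,o_1)$ for \emph{every} string $\sigma$ of length $L_n+1$, and at a later stage enumerates $(\langle\rangle,n,o_2)$ with $o_2\neq o_1$. At the first stage every $\sigma$ of length $L_n+1$ is committed by the long-string rule (no proper prefix has yet appeared in $P$). At the second stage the extension rule fires for the empty string and commits every string of length $L_n$ with value $o_2$ --- none of these strings is itself in $Q$, since only their children are. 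Now every $X$ has two committed prefixes carrying different values, so the inconsistency set has measure $1$, not $\leq\varepsilon$. The threshold $L_n$ gives no control here: what matters is not the length of the committed strings but the measure of the overlap between the newly committed region and the already committed region, and with a resolution fixed in advance of the construction that overlap can be pushed arbitrarily close to $\lambda([\sigma])$. (Patching the extension rule to also skip $\tilde\sigma$ whose proper extensions are committed does not help: then the points of $[\tilde\sigma]$ not yet covered never receive a value, and condition (1) fails.)

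The paper's proof resolves this tension by making the new cover adaptive rather than of fixed resolution: when $(\sigma,n,o)$ appears at stage $\gamma$, one lets $A_\gamma$ be the union of the cylinders already committed for $n$ and chooses a clopen $B_\gamma\subseteq[\sigma]$, at whatever resolution the current $A_\gamma$ requires, such that $B_\gamma\cup A_\gamma$ covers $[\sigma]$ while $\lambda(B_\gamma\cap A_\gamma)<\varepsilon_n 2^{-p(\gamma)}$ for an injection $p\colon\CK\to\omega$ and $\sum_n\varepsilon_n\leq\varepsilon$. Only the overlaps $B_\gamma\cap A_\gamma$ can produce two distinct values, and they sum to at most $\varepsilon_n$ for each $n$, hence to at most $\varepsilon$ in total. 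To salvage your scheme you would need to let the commitment resolution at each stage depend on the already committed set rather than on $n$ alone, which essentially reproduces the paper's construction.
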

\begin{proof}
{.}\\
\\
\textbf{The construction}:\\
\\
In what follows, to speak of ordinal stages and finite substages in a clean way, we use the ordinal version of the euclidian division: For an ordinal $\alpha$, there is a unique pair of ordinal $\langle \beta, n \rangle$ so that $\alpha=\omega \times \beta + n$. Furthermore one can uniformly find $\beta$ and $n$ from $\alpha$ (a simple research within the ordinals smaller than $\alpha$). Then, the stage $\omega \times \beta + n$ should be understood as substage $n$ of stage $\alpha$.\\

Take a computable sequence of rationals $\varepsilon_n$ so that $\sum_n \varepsilon_n \leq \varepsilon$. For each $n$ and uniformly in $n$ we do the following:\\

At stage $\gamma=\omega \times \alpha + \langle \s, o \rangle$ let $A_{\gamma}=\bigcup \{[\tau]\ |\ \exists \beta < \gamma\ \varphi_Q^\tau(n)[\beta]\downarrow\}$. If $\varphi_P^\s(n)[\alpha]\downarrow=o$, we effectively find a clopen set $B_{\gamma}=\bigcup_{i<m} [\tau_i] \subseteq [\s]$ so that $B_{\gamma} \cup A_{\gamma}$ covers $[\s]$ and such that $\lambda (B_{\gamma} \cap A_{\gamma}) < \varepsilon_n 2^{-p(\gamma)}$, where $p$ is an injection from $\CK$ to $\omega$. We then set $\varphi_Q^{\tau_i}(n)[\gamma]=o$ for any of the $\tau_i$ such that $B_{\gamma}=\bigcup_{i<m} [\tau_i]$.\\
\\
\textbf{Verifcation}:
\\
\begin{enumerate}
\item Let us prove $\dom{\varphi_Q} \subseteq \dom{\varphi_P}$. Suppose that $\varphi_Q^X(n)\downarrow$. Then by construction we have a stage $\gamma$ with $X$ in a clopen set $B_{\gamma} \subseteq [\s]$ with $\varphi_P^\s(n)\downarrow$. Then we also have $\varphi_P^X(n)\downarrow$ which gives us $\dom{\varphi_Q} \subseteq \dom{\varphi_P}$. For the other inclusion, suppose that $\varphi_P^\s(n)\downarrow$ with $\s \prec X$. Then by construction we have a stage $\gamma$ and an open set $B_\gamma \cup A_\gamma$ covering $[\s]$ with $\varphi_Q^Y(n)[\gamma]\downarrow$ for any $Y$ in $B_\gamma \cup A_\gamma$. Then we have that $\varphi_Q^X(n)\downarrow$ and then $\dom{\varphi_P} \subseteq \dom{\varphi_Q}$.

\item Suppose that $\exists! o\ \varphi_P^X(n)=o$. Consider the smallest $\gamma = \omega \times \alpha + \langle \s, o \rangle$ so that $\s \prec X$ and $\varphi_P^\s(n)[\alpha]\downarrow=o$. By hypothesis $\{\tau\ |\ \exists \beta < \gamma\ \varphi_Q^\tau(n)[\beta]\downarrow\}=\emptyset$ and then $\varphi_Q^\s(n)[\gamma]\downarrow=o$.

\item This is true because the images of $n$ via $\varphi_Q^X$ are a subset of the images of $n$ via $\varphi_P^X$.

\item For a given $n$ we have that $\{X\ |\ \exists o_1 \neq o_2\ \varphi_Q^X(n)\downarrow=o_1 \wedge \varphi_Q^X(n)\downarrow=o_2\}$ is included in $\bigcup_\gamma B_\gamma \cap A_\gamma$. Also we have $\lambda(\bigcup_\gamma B_\gamma \cap A_\gamma) \leq \sum_\gamma \varepsilon_n 2^{-p(\gamma)} \leq \varepsilon_n$.
\end{enumerate}

\end{proof}

\begin{lemma}\label{good}
If $\omega_1^Z > \CK$ and $Z$ is $\Pi^1_1$-ML-random then there is a $\Pi^1_1$ relational $\varphi_P$ such that $Z \in \dom \varphi_P$, such that $\forall n\ \exists!o\ \varphi_P^Z(n)=o$ and such that $\sup_n |\varphi_P(n)| = \CK$.
\end{lemma}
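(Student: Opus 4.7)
The plan is to start with the $\Pi^1_1$ relational provided by Lemma \ref{relationals} (applicable since $Z$ is $\Pi^1_1$-ML-random, hence $\Delta^1_1$-random), and then iterate Lemma \ref{plouf} with shrinking error parameter $\varepsilon_k = 2^{-k}$ to suppress multi-valuedness on a set of measure $\leq 2^{-k}$. Higher ML-randomness of $Z$ then forces $Z$ into the single-valued part for some $k$, while property (3) of Lemma \ref{plouf} guarantees that the sup of the output ordinals is not damaged by the cleanup.

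More precisely, let $\varphi_{P_0}$ be the relational given by Lemma \ref{relationals}; for each $k \in \omega$, apply Lemma \ref{plouf} to $\varphi_{P_0}$ with $\varepsilon = 2^{-k}$ to obtain a $\Pi^1_1$ relational $\varphi_{Q_k}$, uniformly in $k$. Let
\[ N_k = \{ X : \exists n\ \exists o_1 \ne o_2\ (\varphi_{Q_k}^X(n)\downarrow = o_1 \wedge \varphi_{Q_k}^X(n)\downarrow = o_2) \}. \]
Property (4) of Lemma \ref{plouf} gives $\lambda(N_k) \leq 2^{-k}$, and the $N_k$ are uniformly $\Sigma^1_1$ open (each is a union of the clopen sets $B_\gamma \cap A_\gamma$ enumerated through the ordinals in the construction of Lemma \ref{plouf}). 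Thus $(N_k)_{k \in \omega}$ is a higher $\Pi^1_1$-ML-test, and since $Z$ is $\Pi^1_1$-ML-random there is some $k_0$ with $Z \notin N_{k_0}$.

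Take $\varphi_P$ to be $\varphi_{Q_{k_0}}$. The condition $Z \notin N_{k_0}$ says that $\varphi_P^Z$ is at most single-valued at every $n$, and property (1) of Lemma \ref{plouf} gives $Z \in \dom \varphi_P = \dom \varphi_{P_0}$, so in fact $\forall n\ \exists ! o\ \varphi_P^Z(n) = o$. Property (3) then yields, for each $n$,
\[ |\varphi_P^Z(n)| = \min\{|o| : \varphi_P^Z(n)\downarrow = o\} \geq \min\{|o| : \varphi_{P_0}^Z(n)\downarrow = o\}, \]
whose supremum over $n$ is $\CK$ by Lemma \ref{relationals}; the reverse inequality $\sup_n |\varphi_P^Z(n)| \leq \CK$ is automatic since each $|o|$ is a computable ordinal. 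The main obstacle is verifying that Lemma \ref{plouf} holds \emph{uniformly} in $\varepsilon$ so that the $N_k$ genuinely form a uniformly $\Sigma^1_1$ family of decaying measure: this amounts to threading the parameter $\varepsilon_k = 2^{-k}$ through the already-effective stagewise construction of $\varphi_Q$, which is a routine bookkeeping matter.
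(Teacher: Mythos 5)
Your proposal is correct and is essentially the paper's own argument: apply Lemma \ref{plouf} to the relational from Lemma \ref{relationals} uniformly in $\varepsilon$, use $\Pi^1_1$-ML-randomness of $Z$ to escape the multi-valuedness sets, and invoke property (3) to preserve $\sup_n|\varphi^Z(n)|=\CK$. The only quibble is a label: the sets $N_k$ are uniformly \emph{$\Pi^1_1$} open (unions of cylinders enumerated along ordinal stages below $\CK$), not $\Sigma^1_1$ open, which is exactly what is needed for them to constitute a $\Pi^1_1$-ML-test.
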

\begin{proof}
Suppose that $\omega_1^Z > \CK$ and $Z$ is $\Delta^1_1$ random, from lemma \ref{relationals} we have $\varphi_P$ such that $Z \in \dom \varphi_P$ and such that $\sup_n \{\min \{o\ |\ \varphi_P(n) \downarrow = o\}\} = \CK$. But from lemma \ref{plouf} one can obtain uniformly in $\varepsilon$ a functional $\varphi_Q$ with $\dom \varphi_Q=\dom \varphi_P$ and so that the $\Pi^1_1$ open set:
$$\{X\ |\ \exists n\ \exists o_1 \neq o_2\ \varphi_Q^X(n)\downarrow=o_1 \wedge \varphi_Q^X(n)\downarrow=o_2\}$$ 
has measure smaller than $\varepsilon$. Since $Z$ is $\Pi^1_1$-ML random there exists a relational $\varphi_Q$ with $Z$ in its domain, which is functionnal on $Z$ and (using 3 of lemma \ref{plouf}) such that $\sup_n |\varphi_Q(n)| = \CK$.
\end{proof}

We now assume that we have $Z$ $\Pi^1_1$-ML-random with $\omega_1^Z > \CK$ and that we have a relational $\varphi_P$ with the properties of lemma \ref{good}. In order to put $Z$ in a $\P^0_2(\Pi^1_1)$ nullset, we would like $\dom \varphi_P$ to have measure 0. In order to do so we would like $\dom \varphi_P$ to contain no $X$ with $\omega_1^X = \CK$. This is what we are trying to achieve now. Note that we eventually won't be able to put $Z$ in a $\P^0_2(\Pi^1_1)$ nullset, but only in a $\P^0_4(\Pi^1_1)$ nullset.\\

For any $e \in \omega$ we define $R_e \subseteq \omega \times \omega$ by $R_e(n,m) \leftrightarrow \langle n, m \rangle \in W_e$. We define then $R_e \rest k$ to be $R_e \rest k (n,m) \leftrightarrow \langle m, k \rangle \in W_e \wedge R_e(n, m)$. Note that $R_e \rest k$ is well defined for any $e$. Also in what follows, a morphisms from a relation $R_a$ to another relation $R_b$ is a function $f$ total on $\dom{R_a}$, with $f(\dom{R_a}) \subseteq \dom{R_b}$ and $R_a(x, y) \rightarrow R_b(f(x), f(y))$. Let us consider the two following predicates on $2^\omega \times \omega$:

\renewcommand{\arraystretch}{1.5}
$$
\begin{array}{rcl}
C_1(X,e)&\leftrightarrow&\exists n\ \exists o_n\ \varphi^X_P(n)\downarrow = o_n \wedge \forall f\ f \mbox{ is not a morphism from } R_{o_n} \mbox{ to } R_e\\
C_2(X,e)&\leftrightarrow&\exists m\ \forall n\ \exists o_n\ \varphi^X_P(n)\downarrow = o_n \wedge \forall f\ f \mbox{ is not a morphism from } R_e \rest m \mbox{ to } R_{o_n}
\end{array}
$$
\renewcommand{\arraystretch}{1}

We will now join them into one predicate. Let us define $G$ to be the $\Pi^0_2$ set of $e$ so that $R_e$ is a linear order of $\omega$. We then define:
$$C(X)\leftrightarrow X \in \dom\varphi_P \wedge (\forall e \in G\ \ C_1(X,e) \vee C_2(X,e))$$

Let us first make sure that $\{X\ |\ C(X)\}$ is $P^0_4(\Pi^1_1)$. We have that $\dom \varphi_P$ is $P^0_2(\Pi^1_1)$, $\{X\ |\ C_1(X,e)\}$ is $S^0_1(\Pi^1_1)$ uniformly in $e$ and the set $\{X\ |\ C_2(X,e)\}$ is $\Sigma^0_3(\Pi^1_1)$ uniformly in $e$. Then the set $\dom \varphi_P \cap (\{X\ |\ C_1(X,e)\} \cup \{X\ |\ C_2(X,e)\})$ is $S^0_3(\Pi^1_1)$ uniformly in $e$. As $G$ has a $\Pi^0_2$ description, we have that $\{X\ |\ C(X)\}$ is a $P^0_4(\Pi^1_1)$ set.

The goal is now to prove that $Z \in C$ and that for all $X \in \dom \varphi$ if $\varphi_P$ is functionnal on $X$ and $\omega_1^X=\CK$ then $X \notin C$.

Let us first prove that $Z \in C$. By hypothesis we have $Z \in \dom\varphi_P$. Take any $e \in G$. Suppose that $R_e$ is a well-founded relation. As $e$ is already in $G$ we have that $R_e$ is a well-ordered relation. Then $|R_e| < \CK$. But then there is some $n$ so that $|\varphi^Z_P(n)| > |R_e|$ and we cannot have a morphism from $R_{\varphi^Z_P(n)}$ to $R_e$. Then $C_1(Z, e)$ is true. Suppose now that $R_e$ is a ill-founded relation. There is then some $m$ so that $R_e \rest m$ is already ill-founded. But as $R_{\varphi^Z_P(n)}$ is well-founded for every $n$, then for every $n$ we cannot have a morphism from $R_e \rest m$ to $R_{\varphi^Z_P(n)}$. Then $C_2(Z, e)$ is true.

Let us first prove that $\forall X \in \dom \varphi$ if $\varphi_P$ is functional on $X$ and $\omega_1^X=\CK$ then $X \notin C$. Take any $Y \in \dom \varphi_P$ so that $\varphi_P$ is functional on $Y$ and $\omega_1^Y=\CK$. Then $\sup_n |\varphi^Y_R(n)| = \alpha < \CK$. Thus there exists some code $e \in G$ so that $R_e$ is a well-order of order-type $\alpha$. For this $e$ we certainly have for all $n$ a morphism from $R_{\varphi^Y_R(n)}$ into $R_e$. Then we do not have $C_1(Y, e)$. Let us now prove that we do not have $C_2(Y, e)$. For any $m$ we have $|R_e \rest m| < \alpha$ (even if $\alpha$ is successor). But because $\alpha=\sup_n \varphi^Y_R(n)$ there is necessarily some $n$ so that $|\varphi^Y_R(n)|>|R_e \rest m|$. Thus there is a morphism from $R_e \rest m$ into $R_{\varphi^Y_R(n)}$. Then we do not have $C_2(Y, e)$ and then we do not have $C(Y)$.

\noindent Thus the measure of $\{X\ |\ C(X)\}$ is bounded by the measure of 
$$H=\{X \in \dom \varphi_P\ |\ \varphi_P \mbox{ is not functional on } X\}$$
But we can obtain uniformly in $\varepsilon$ some predicate $C_\varepsilon(X)$ with $C_\varepsilon(Z)$ and with the measure of $H$ bounded by $\varepsilon$. As each $C_\varepsilon$ is $P^0_4(\Pi^1_1)$ uniformly in $\varepsilon$ we have that $\bigcap_\epsilon C_\varepsilon$ is a $P^0_4(\Pi^1_1)$ set of measure 0 and containing $Z$. Thus $P^0_4(\Pi^1_1)$ nullsets are enough to capture anything that a $\Pi^1_1$ nullset can capture. In particular the $P^0_n(\Pi^1_1)$ sets do not capture anything more for $n > 4$.


\newpage

\part{Reverse Mathematics}

\section{Nies: The strength of Jordan decomposition \\ for functions of bounded variation}

\label{s:GMNS reverse}
Written by Nies based on work with N.\ Greenberg, J.S.\ Miller,  and T.\ Slaman.

All real valued functions  will have domain    $[0,1]$ unless otherwise mentioned. Variables $f,g,h$ denote functions. Recall that  for a function $f$, one defines the variation function $V_f$ by
 $$V_f(x)= \sup \sum_{i=1}^{n-1}   | f(t_{i+1}) - f(t_i)| < \infty,$$ 
where the sup is taken over all collections $ t_1 \le t_2 \le  \ldots \le  t_n$ in $[0,x]$.  One says that $f$ is of bounded variation if $V_f(1)< \infty$.

 The Jordan decomposition theorem states that every function $f$ of bounded variation can be written in the form  $f = g-h$ where $g,h$ are nondecreasing. Moreover, if  $f$ is continuous, we can ensure that $g,h$ are continuous as well. This is easily proved: let $g = V_f$, and observe that $h=g-f$ is nondecreasing. So we have a  Jordan decomposition $f = g- (g-f)$.
Jordan proved this theorem in his  lectures at the Ecole Polytechnique 1882-7. Today it  is often  treated in a first course on real analysis. Its strength in the sense of reverse mathematics is not obvious. We will see that, depending on whether we want $g,h$ to be continuous or not,  gives rise to  principle   equivalent to $\mathtt{ACA}_0$,  or to $\mathtt{WKL_0}$. 

Let us say that   \bc  $f \le_{\mathtt{slope}} g$ $:\LR$ $\fa x< y  \, [ f(y) -f(x) \le g(y)- g(x)]$.  \ec 
That is, the slopes of $g$ are  at least as large as the slopes of $f$. Clearly, this is equivalent to saying that $h:= g-f$ is nondecreasing. Thus, the problem of finding a Jordan decomposition of $f$ is equivalent to finding a nondecreasing function $g$ with $f \le_{\mathtt{slope}}g $. This was already pointed out in \cite{Rettinger.Zheng:04}. (Sometimes one of the functions is partial; then we only look at slopes in the domain.)

\subsection{Jordan decomposition  via continuous functions}

We first work in the usual setting of reverse mathematics,  which only deals with continuous functions, suitably encoded, for instance, by the values at rationals, together with a modulus of uniform continuity (see e.g.\ Simpson's book II.6).
It is equivalent (over $\mathtt{RCA_0}$) to describe $f$ as the limit of a Cauchy name $\seq{p_s}$ with respect to the sup norm,  where the $p_s$ are   polygonal functions   with rational breakpoints. Thus $||p_t-  p_s||_{sup} \le \tp{-s}$ for $t> s$.

The principle $\mathtt{Jordan_{cont}}$ says that for each (continuous) function $f$ of bounded variation, there are continuous nondecreasing functions $g,h$ such that $f= g-h$.   

In Proposition~\ref{prop:ACA} and Theorem~\ref{thm:WKL} below, technique  and some  writing involving     reverse mathematics  was provided by Keita Yokoyama. 

\begin{proposition}\label{prop:ACA} Over $\mathtt{RCA_0}$, we have \bc $\mathtt{Jordan_{cont}} \lra \mathtt{ACA_0}$. \ec
\end{proposition}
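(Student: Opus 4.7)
The plan is to prove the two directions of the equivalence separately.

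$(\Leftarrow)$ Assuming $\mathtt{ACA_0}$: given a continuous $f$ of bounded variation, I will use arithmetical comprehension to construct the variation function $V_f(x) = \sup_P \sum_i |f(t_{i+1}) - f(t_i)|$, where $P$ ranges over finite partitions $0 = t_0 \leq t_1 \leq \cdots \leq t_n = x$. The set of partition sums is arithmetically definable from the Cauchy name of $f$, so the supremum exists in $\mathtt{ACA_0}$. One then verifies that $V_f$ is continuous (since $f$ is continuous and of bounded variation, the local variations of $f$ tend to zero with interval length, a standard analysis argument that goes through in $\mathtt{ACA_0}$) and nondecreasing, and that $V_f - f$ is also nondecreasing (from $V_f(y) - V_f(x) \geq f(y) - f(x)$ for $x < y$). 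Setting $g = V_f$ and $h = V_f - f$ gives the desired Jordan decomposition.

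$(\Rightarrow)$ Assuming $\mathtt{Jordan_{cont}}$: I will derive $\mathtt{ACA_0}$ using the characterization that the range of every injective $e \colon \NN \to \NN$ exists, which is equivalent to $\mathtt{ACA_0}$ over $\mathtt{RCA_0}$. Given such $e$, I construct a computable continuous $f$ of bounded variation as follows: fix pairwise disjoint closed intervals $J_n \subset (0, 1)$ (for instance around $2^{-n-1}$ with rapidly decreasing lengths); let $b_n$ be a triangle bump supported on $J_n$ with peak $1$ at the midpoint and vanishing at the endpoints; set $f = \sum_s 2^{-s} b_{e(s)}$. The partial sums converge uniformly, so $f$ is continuous and computable from $e$, with total variation bounded by $4$. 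Note that $f$ has a bump of height $2^{-s_n}$ on $J_n$ iff $n = e(s_n) \in \range(e)$, and $f \equiv 0$ on $J_n$ otherwise.

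Given a Jordan decomposition $f = g - h$, the proposed decoding is: $n \in \range(e)$ iff $g$ strictly increases on the left half of $J_n$. The forward implication is immediate, since the presence of a bump together with $h$ nondecreasing and $g - h = f$ forces $g(m_n) - g(\ell_n) \geq 2^{-s_n}$, where $\ell_n, m_n$ denote the left endpoint and midpoint of $J_n$. The main obstacle is the converse: an arbitrary Jordan decomposition may have $g$ increase on $J_n$ via ``excess'' growth arising from the non-uniqueness of the decomposition, since any common continuous nondecreasing function may be added to both $g$ and $h$ without changing $f$. To address this, I plan to augment the construction with strictly monotonic scaffolding on the intervals between the $J_n$'s, with carefully calibrated slopes, so that any Jordan decomposition must concentrate all of its growth at the locations where $f$ varies; combined with the uniform continuity of $g + h$ and a careful choice of bump heights, this should force the excess to vanish on each $J_n$ and render the decoding effective in $\mathtt{RCA_0}$.
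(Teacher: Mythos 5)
Your $(\Leftarrow)$ direction is fine and matches the paper's (the jump of a name for $f$ computes a name for $V_f$). The $(\Rightarrow)$ direction has a genuine gap, and the fix you sketch cannot work. The problem is exactly the one you identify: for \emph{any} Jordan decomposition $f=g-h$ and \emph{any} continuous nondecreasing $u$, $f=(g+u)-(h+u)$ is again a Jordan decomposition. So no amount of "scaffolding" built into $f$ can prevent $g$ from strictly increasing on an interval where $f$ is flat; your proposed decoding "$n\in\range(e)$ iff $g$ increases on the left half of $J_n$" is simply false for some admissible $g$, and no construction of $f$ can repair it. Any correct argument must use only the \emph{one-sided} implication (membership forces growth of $g$), never its converse.

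The paper's proof does exactly this. The gadget coding "$h(k)=n$" is placed on an interval $[q_{n,k},q_{n,k+1}]$ where the $q_{n,k}$ accumulate at a single point $q_n$, and it is a sawtooth whose \emph{total rise} is $2^{-n}$ (independent of $k$: height $2^{-k}$ times $2^{k-n}$ teeth), while its sup-norm $2^{-k}$ tends to $0$ so that $f$ is a genuine continuous function with a computable modulus. Summing the slope inequality over the rising edges of the teeth shows that if $h(k)=n$ then $g(q_{n,k+1})-g(q_{n,k})\ge 2^{-n}$. Continuity of $g$ at $q_n$ then yields (by a $\Sigma^0_1$ search, available in $\RCA$) a function $\eta$ with $g(q_n)-g(q_{n,\eta(n)})<2^{-n}$, whence any witness $k$ with $h(k)=n$ must satisfy $k<\eta(n)$. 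The decoding is then $n\in\range(h)\Leftrightarrow \exists k<\eta(n)\,[h(k)=n]$, a $\Delta^0_1$ definition. Note that excess growth of $g$ is harmless here: it can only make $\eta(n)$ larger, never destroy the equivalence. This "accumulate the gadgets at a point and use continuity to bound the witness" idea is the missing ingredient in your argument; without it, or some equally robust one-sided decoding, the direction $\mathtt{Jordan_{cont}}\to\mathtt{ACA_0}$ does not go through.
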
  

\begin{proof}  $\leftarrow$: given $f$, from the jump of a representation of $f$ as a continuous function we can compute a representation of $V_f$ as a continuous function. This works in $\mathtt{ACA_0}$.
	
	\n $\rightarrow$: Suppose we are given  a model of  $\mathtt{Jordan_{cont}}$. Let 
	
	\bc $q_n = 1-\tp{-n}$, and $q_{n,s} = q_n - \tp{-n-s-1}$. \ec  
	
  Instead of proving $\ACA$, we will show the existence of the range of any one-to-one functions on $\NN$ within $\RCA$. (See \cite[Lemma~III.1.3]{Simpson:09}). 
 
 Let $h:\NN\to\NN$ be a one-to-one function.
 Define continuous functions $f_{s}:[0,1]\to\RR$ as follows:
define $f_{s}$ on $[q_{n,k},q_{n,k+1}]$ to be a sawtooth function of height $2^{-k}$ with $2^{k-n}$ many teeth if $k\le s$ and $h(k)=n$, and $f_{s}=0$ otherwise.
 Then, the limit $f=\lim_{s\to\infty}f_{s}$ exists.

Let $f\le_{slope}g$. Take a function $\eta:\NN\to\NN$ so that $g(q_{n})-g(q_{n,\eta(n)})<2^{-n}$.
This can be done by the following argument: since $g$ is continuous and $\lim_{k\to\infty}q_{n,s}=q_{n}$, we have \bc  $\forall n \exists k \theta(n,k)\equiv\exists m\theta_{0}(n,m,k)$ \ec where $\theta(n,k)$ is a $\Sigma^{0}_{1}$-formula which expresses $g(q_{n})-g(q_{n,k})<2^{-n}$, and $\theta_{0}(n,m,k)$ is a $\Sigma^{0}_{0}$-formula.
Define $\eta_{0}$ as $\eta_{0}(n)$ to be the least $\langle m,k\rangle$ where $\langle\cdot,\cdot\rangle$ is a standard pairing function, and $\eta(n)=(\eta_{0}(n))_{1}$.

Now, if $h(k)=n$, then $g(q_{n})-g(q_{n,k})\ge g(q_{n,k+1})-g(q_{n,k})\ge 2^{-n}$, and hence $k<\eta(n)$.
Thus, $n\in \mathrm{rng}(h)\Leftrightarrow\exists k<\eta(n)\  h(k)=n$, which means that the range of $h$ exists by $\Delta^{0}_{1}$-comprehension.
\end{proof}
%
	

\begin{cor} Over $\mathtt{RCA_0}$, the statement that every (continuous) function $f$  of bounded variation has a variation function
is equivalent to $\mathtt{ACA_0}$. \end{cor}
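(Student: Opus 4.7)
The plan is to reduce the corollary directly to Proposition~\ref{prop:ACA} by showing that, within $\mathtt{RCA_0}$, the existence of the variation function for every continuous $f$ of bounded variation is equivalent to the Jordan decomposition principle $\mathtt{Jordan_{cont}}$. The implication from $\mathtt{ACA_0}$ is already contained in the forward direction of Proposition~\ref{prop:ACA}: the argument there showed that from the jump of a Cauchy name for $f$ one can effectively extract a Cauchy name (together with a modulus of uniform continuity) for $V_f$, and this construction is formalizable in $\mathtt{ACA_0}$.

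For the reverse implication, I would argue as follows. Assume that every continuous $f$ of bounded variation has a continuous variation function $V_f$. Given such an $f$, define $g := V_f$ and $h := V_f - f$. Both are continuous (as differences of continuous functions, which is available in $\mathtt{RCA_0}$). The function $g = V_f$ is nondecreasing by construction, and $h$ is nondecreasing because for $x \le y$ one has $V_f(y) - V_f(x) \ge |f(y) - f(x)| \ge f(y) - f(x)$, whence $h(y) \ge h(x)$. Since $f = g - h$, this yields a continuous Jordan decomposition of $f$. Hence $\mathtt{Jordan_{cont}}$ holds, and by Proposition~\ref{prop:ACA} we obtain $\mathtt{ACA_0}$.

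There is essentially no obstacle here: all the real work has been absorbed into Proposition~\ref{prop:ACA}, and the only things to check in $\mathtt{RCA_0}$ are the elementary facts that differences of continuous functions are continuous, and that $V_f \ge_{\mathtt{slope}} f$, both of which are routine. If one preferred a self-contained argument, one could instead run the sawtooth construction from the proof of Proposition~\ref{prop:ACA} and observe that a continuous representative of $V_f$ directly supplies the function $\eta$ controlling the sums of the teeth, allowing the range of a given injection $h : \NN \to \NN$ to be recovered by $\Delta^0_1$-comprehension; this gives the reverse direction without invoking $\mathtt{Jordan_{cont}}$ as a black box.
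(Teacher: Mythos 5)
Your argument is correct and is essentially the paper's own: the paper likewise gets $\mathtt{ACA_0}\Rightarrow$ (existence of $V_f$) from the forward direction of Proposition~\ref{prop:ACA}, and for the converse applies the decomposition $f = V_f - (V_f - f)$ to the sawtooth function constructed there, so that $V_f$ recovers the range of the given injection. The only cosmetic difference is that you first derive the full principle $\mathtt{Jordan_{cont}}$ and then cite Proposition~\ref{prop:ACA} as a black box, whereas the paper applies the decomposition only to the specific $f$ from that proof; both routes are fine.
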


\begin{proof} Given $X$, let $f$ be the function constructed above. If $V_f$ exists then $V_f- (V_f-f)$ is a continuous Jordan decomposition. So $V_f$ computes $X'$. \end{proof}

\subsection{Nies, Yokoyama: Jordan decomposition  without  continuity}

A weaker principle is obtained if we admit non continuity of $g,h$ in a Jordan decomposition $f = g-h$.  We only require that $g,h$ are defined in $I_\QQ:=\QQ \cap [0,1]$. An $\RR$ -valued  function $g$ defined on $I_\QQ$ is given by a path $Z_f$  through a  binary tree. Let $\la p_n, q_n \ra$ be a list of all pairs of rationals $\la p, q \ra$ with $0 \le p \le 1$.  We let $Z_f(2n)= 1$ iff $g(p_n) < q_n$. We let $Z_f(2n+1)= 1$ iff $g(p_n) > q_n$.  We often identify $f$ and $Z_f$. It is clear that the nondecreasing functions form a $\PPI$ class.

 The principle $\mathtt{Jordan_{\QQ}}$ says that for each (necessarily continuous by the encoding) function $f$ of bounded variation, there are   nondecreasing functions $g,h$ defined on $I_\QQ$  such that $f= g-h$ on $I_\QQ$. Letting $\widehat g(x) = \sup\{ g(q) \mid \, q \le x \lland q \in I_\QQ\}$, we obtain an actual Jordan decomposition because $f \le_\mathtt{slope} \hat g$.   
 
 We first prove a purely computability theoretic result. Yokoyama has given the extension to reverse mathematics- see below.
 
 \begin{theorem}[Greenberg, Miller, Nies, Slaman, 2013] \label{thm:PA} An oracle  $B$ is PA-complete $\lra$ for  each computable function $f$ on $[0,1]$ of bounded variation,   $B$ computes a function $g\colon \, I_\QQ \to \RR$ with $f \le_\mathtt{slope} g$. 
\end{theorem}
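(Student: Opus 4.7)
The plan is to reformulate each direction as a $\Pi^0_1$-class problem in $\cantor$ and invoke the characterisation of PA-complete oracles as those computing a member of every non-empty $\PPI$ subclass of Cantor space.

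For $(\Leftarrow)$, suppose $B$ is PA-complete and $f$ is computable of bounded variation. Under the $Z_g$-encoding introduced just before the theorem, the set $\+C_f$ of nondecreasing $g\colon I_\QQ\to\RR$ with $f\le_{\mathtt{slope}}g$ is a $\PPI$ class: monotonicity is $\PPI$ by the earlier remark, and the slope inequality $\fa p<q\in I_\QQ\ [f(q)-f(p)\le g(q)-g(p)]$ is $\PPI$ because $f$ is computable (a violation is c.e., witnessed by rationals $u>g(q)$, $v<g(p)$ together with a rational approximation to $f$ certifying $u-v<f(q)-f(p)$). Non-emptiness is witnessed by $g=V_f$, since $V_f(q)-V_f(p)\ge|f(q)-f(p)|\ge f(q)-f(p)$ whenever $p<q$. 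A $\PPI$-basis theorem for PA-complete oracles then yields a $B$-computable $g\in\+C_f$.

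For $(\Rightarrow)$, I argue by contraposition: suppose $B$ is not PA-complete and fix a computable infinite tree $T\sub\fs$ with no $B$-computable infinite branch. The plan is to construct a computable BV function $f=f_T$ such that from any Jordan witness $g$ of $f$, an infinite branch of $T$ can be read off uniformly in $g$. For each $\sigma\in T$, reserve the dyadic subinterval $I_\sigma=[0.\sigma,\,0.\sigma+2^{-|\sigma|})$ and place on it an oscillating ``choice gadget'' $f_\sigma$ of total variation at most $2^{-|\sigma|-1}$, so that $f_T=\sum_{\sigma\in T}f_\sigma$ is computable of bounded variation at most $2$. The gadget is rigged so that the positive variation of $f_\sigma$ localised on $I_{\sigma 0}$ (resp.\ $I_{\sigma 1}$) is at least a prescribed threshold $\tau_{|\sigma|+1}$ when $\sigma 0\in T$ (resp.\ $\sigma 1\in T$), and is $0$ otherwise.

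The hardest step will be handling the slack in Jordan witnesses: every $g\in\+C_{f_T}$ can be written $g=V_{f_T}^{+}+h$ for an arbitrary nondecreasing ``noise'' $h$, which may add extra monotone increase on intervals that carry no in-tree signal. My plan to tame this is to calibrate the gadget amplitudes and thresholds so that, at every level, the signal prescribed on in-tree subintervals quantitatively dominates any slack that could plausibly spill over from sibling or descendant gadgets. The reading procedure then proceeds inductively down the tree, selecting at each node $\sigma$ already on the path the child $i$ whose subinterval carries $g$-increase exceeding $\tau_{|\sigma|+1}$; by construction at least one such child must lie in $T$. Iterating produces an infinite branch of $T$ computable from $g$ alone, contradicting the choice of $T$ and forcing $B$ to be PA-complete.
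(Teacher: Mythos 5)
Your left-to-right direction is correct and is exactly the paper's argument: for computable $f$ the Jordan witnesses form a nonempty $\PPI$ class under the $Z_g$-coding (nonempty via $V_f$), and a PA-complete oracle computes a member. The converse, however, has a gap that is not just the unfinished ``calibration'' step --- the decoding strategy cannot work for the $f_T$ you describe. First, your gadget puts signal on $I_{\sigma i}$ exactly when $\sigma i\in T$, but membership in $T$ is computable anyway; what a branch-reading procedure must detect is membership in $\mathrm{Ext}(T)$, which is only $\Pi^0_1$ and is invisible to a computable gadget placement. As written, the reader happily descends into a dead branch (a non-extendible child in $T$ carries just as much prescribed signal as an extendible one), so it need not produce an infinite path. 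Second, and more fatally, your $f_T$ is an explicit, computably convergent sum of gadgets at computable locations, so its variation function is computable (truncate the sum at level $n$; the tail perturbs $V_{f_T}(q)$ by at most the tail's total variation), and $V_{f_T}$ is then a \emph{computable} Jordan witness. Any uniform procedure reading an infinite branch of $T$ off an arbitrary witness would thus yield a computable branch --- impossible. A hard instance must have a noncomputable variation function, which forces nonuniformity into the construction. (There is also an arithmetic slip: $\sum_{\sigma\in T}2^{-|\sigma|-1}$ diverges when $T$ has exponentially many nodes per level, so the per-gadget budget must shrink like $4^{-|\sigma|}$.)

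The paper's construction is genuinely different and avoids both problems. It fixes a $\PPI$ class $\+ P$ of PA completions and a computable prefix-free sequence $\seq{\sss_s}$ with $|\sss_s|=s$ and $[\sss_s]\cap\+ P_s\neq\ES$, and when $n$ enters $\ES'$ at stage $s$ it places a sawtooth of height $\tp{-s}$ with $\tp{s-n}$ teeth on $I_{\sss_s}$; the variation thus codes the enumeration of $\ES'$ and is not computable. The decoding is a dichotomy rather than a branch-reading: either $\widehat g$ is discontinuous at some $0.Y$ with $Y\in\+ P$, in which case $g$ computes the PA completion $Y$ directly, or $\widehat g$ is continuous on $\+ P$, in which case compactness lets $g$ find, for each $n$, a stage $s$ after which no sawtooth for $n$ could have contributed its increase, so $\ES'\leT g$. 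Some mechanism of this kind --- coding into the \emph{values} of the variation rather than into its computable support pattern --- is what your proposal is missing.
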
  
\begin{proof} $\leftarrow$:   Given $f$, via the encoding above, the functions $g$ defined on $I_\QQ$ with  $f \le_\mathtt{slope} g$    form a  nonempty $\PPI(f)$ class.  Then $B$ computes a member of  this class.

\n $\rightarrow$:
We define a computable function $f$ of bounded variation on $[0,1]$ such that each function $g\colon \, I_\QQ \to \RR$ with $f \le_\mathtt{slope} g$ has PA degree. 

Let $\+ P \sub \cantor$ be a nonempty $\PPI$ class of  sets of PA degree, such as the (binary encoded) completions of Peano arithmetic. As usual $\+ P_s$ is a clopen set computable from $s$ approximating  $\+ P$ at stage $s$. So $\+ P = \bigcap_s \+ P_s$.   By standard methods there is a computable prefix-free sequence $\seq {\sss_s} \sN s$ of strings of length $s$ such that $[\sss_s] \cap \+ P_s \neq \ES$ for each $n$.

Given $\sss \in \strcantor$, let $I_\sss = [0.\sss, 0.\sss + \tp{-\sssl}]$ be the corresponding closed subinterval of  $[0,1]$. 
By stage $s$ we determine $f$ up to a precision of $\tp{-s}$. Suppose  $n$ enters  $\ES'$ at stage $s$. Let $\sss = \sss_s$.  
We define $f$ on $I_\sss$ to be a sawtooth function of height $\tp{-s}$ with $\tp{s-n}$ many teeth.   It is clear that this adds at most $\tp {-n+1}$ to the variation of $f$. So $f$ is of bounded variation. (It is in fact AC since it can be written as an integral.)

Now suppose $g\colon \, I_\QQ \to \RR$ is a function such that  $f \le_\mathtt{slope} g$.  As before for $x \in [0,1]$ let $\widehat g(x) = \sup\{ g(q) \mid \, q \le x \lland q \in I_\QQ\}$.

\n {\it Case 1.}  $\widehat g$ is discontinuous at the real $y= 0.Y$ for some $Y \in \+ P$. Then $Y \leT g$, so $g$ is of PA degree. (To see this, fix rational $r$ with $\widehat g( y) < r < g^+(y)$. Then $p< y  \lra g(p) < r$, and $p > y \lra g(p ) > r$. )

\vsp

\n {\it Case 2.} Otherwise. Then $\ES' \leT g$: given $n$, using $g$ compute stage $s$ such that for each $\sss$ of length $s$  with $[\sss] \cap \+ P_s \neq \ES$, we have $g(\max I_\sss)  - g(\min I_\sss ) < \tp{-n}$. This $s$ exists by case assumption using compactness of Cantor space. (This part of the argument cannot be adapted to reverse mathematics.) Then as before we have    $n \in \ES' \lra n \in \ES'_s$.
\end{proof}

%
%
%

%
%
%
%
%
%
%
%
%

 Yokoyama, starting from the proof of Theorem~\ref{thm:PA} above, has provided a proof that works in reverse mathematics.
\begin{theorem}   \label{thm:WKL} Over $\mathtt{RCA_0}$, we have \bc $\mathtt{Jordan_\QQ} \lra \mathtt{WKL_0}$. \ec
\end{theorem}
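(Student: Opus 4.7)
I would prove both directions by adapting the computability-theoretic argument of Theorem~\ref{thm:PA}.

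For $\mathtt{WKL_0} \Rightarrow \mathtt{Jordan_\QQ}$, the plan is a direct $\Pi^0_1$-class argument. Given a continuous $f$ of bounded variation presented with an integer bound $M \ge V_f(1)$, the Dedekind encodings $Z_g$ of nondecreasing $g\colon I_\QQ \to [-M{-}1, M{+}1]$ satisfying $f \le_\mathtt{slope} g$ form a $\Pi^0_1$ class in the polygonal data for $f$: consistency of the cut, monotonicity, the range bound, and the slope inequality are each uniformly co-c.e.\ in $f$. Inside $\mathtt{RCA_0}$ I would verify that the associated tree is infinite by exhibiting at each level $n$ an explicit consistent rational-valued extension: list the finitely many decided rationals as $r_1 < \dots < r_k$ and set $g(r_j) = -M/2 + \sum_{i<j}\max(0, f(r_{i+1}) - f(r_i))$. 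This is $\Delta^0_1$ in $f$, sits inside $[-M, M]$, and dominates $f$'s slopes on the decided rationals. Applying $\mathtt{WKL_0}$ yields $g$, and $h = g - f$ on $I_\QQ$ is the required nondecreasing companion.

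For $\mathtt{Jordan_\QQ} \Rightarrow \mathtt{WKL_0}$, I would work inside a countable model $\mathcal M$ of $\mathtt{RCA_0} + \mathtt{Jordan_\QQ}$ and run the sawtooth construction of Theorem~\ref{thm:PA} with a given infinite recursive binary tree $T \in \mathcal M$ replacing the $\Pi^0_1$ class of PA-completions. By bounded search choose, uniformly in $s$, a prefix-free string $\sss_s \in T \cap 2^s$, and enumerate the universal $\Sigma^0_1$ set $\ES'$ internally: when $n$ enters $\ES'$ at stage $s$, place the sawtooth of height $\tp{-s}$ with $\tp{s-n}$ teeth on $I_{\sss_s}$ as in Theorem~\ref{thm:PA}. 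This presents $f$ in $\mathcal M$ by a uniformly $\Delta^0_1$ sequence of polygonal approximations, with $V_f(1)$ bounded by an absolute constant. Applying $\mathtt{Jordan_\QQ}$ produces $g\colon I_\QQ \to \RR$ nondecreasing with $f \le_\mathtt{slope} g$; the envelope $\widehat g(x) = \sup\{g(q) : q \le x, q \in I_\QQ\}$ is nondecreasing on $[0,1]$ in $\mathcal M$, and because $\widehat g(1) - \widehat g(0)$ is a genuine real of $\mathcal M$, the total variation of $g$ in $\mathcal M$ is finite.

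The main obstacle is Case~2 of Theorem~\ref{thm:PA}, which the authors explicitly flag as not adapting to reverse mathematics because it invokes compactness of $\cantor$ to produce a uniform stage $s(n)$ bounding the $g$-increments over all surviving length-$s(n)$ strings of $T$. My proposed replacement exploits that $g$ has finite total variation inside $\mathcal M$. For each $n$, the prefix-closed set $S_n = \{\sss \in T : g(\max I_\sss) - g(\min I_\sss) \ge \tp{-n}\}$ is a $\Delta^0_1(g)$ subtree of $T$ whose level sections satisfy $|S_n \cap 2^s| \le \tp n\,(\widehat g(1) - \widehat g(0))$, since the intervals $I_\sss$ at level $s$ are pairwise disjoint and $g$ is nondecreasing. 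Arguing contrapositively, if $T$ has no path in $\mathcal M$ then Case~1 (a $\widehat g$-discontinuity at $0.Y$ for some $Y \in [T] \cap \mathcal M$) is impossible, and I would show, using the level-width bound on $S_n$ together with $\Sigma^0_1$-induction inside $\mathcal M$, that each $S_n$ must be internally finite; then the uniform stage $s(n) = \min\{s : S_n \cap 2^s = \ES\}$ becomes $\Sigma^0_1(g)$-definable and the Case~2 reasoning of Theorem~\ref{thm:PA} goes through, giving $\ES' \leq_T g$ inside $\mathcal M$, hence $\mathcal M \models \mathtt{ACA_0} \supseteq \mathtt{WKL_0}$, contradicting the standing assumption. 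The technical heart of the plan---where I expect Yokoyama's modification to lie---is the step upgrading ``$S_n$ has no internal path'' to ``$S_n$ is internally finite'' without itself secretly invoking compactness; the level-width bound on $S_n$ is what should make this step go through in $\mathtt{RCA_0}$ alone.
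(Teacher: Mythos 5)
Your left-to-right direction is fine and matches the paper's (which simply notes that the $\Pi^0_1$-class argument of Theorem~\ref{thm:PA} carries over to $\mathtt{WKL_0}$); your explicit rational-valued witnesses for infiniteness of the tree are a reasonable way to make that precise inside $\mathtt{RCA_0}$. The reversal, however, fails at its very first step: a prefix-free sequence $\seq{\sss_s}$ with $\sss_s\in T\cap 2^s$ for every $s$ need not exist. If $T$ is the downward closure of a single branch, any two nodes of $T$ are comparable, so no such antichain can be chosen; and without prefix-freeness the sawteeth overlap, the variation bookkeeping and the decoding step $g(\max I_{\sss_t})-g(\min I_{\sss_t})\ge\tp{-n}$ both break down. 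This is precisely where Yokoyama's construction departs from Theorem~\ref{thm:PA}: the sawteeth are placed not on nodes of $T$ but on $\tilde T=\{\tau\notin T:\tau\uhr{(|\tau|-1)}\in T\}$, the immediate off-tree extensions, which is automatically an antichain; and the construction is keyed not to $\emptyset'$ but to a function $h$ enumerating the dead ends of $T$ by increasing length, so that the object recovered from $g$ is $\mathrm{rng}(h)$, hence $\mathrm{Ext}(T)=T\setminus\mathrm{rng}(h)$, from which a path is read off without any detour through the jump.

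Your replacement for the non-adaptable compactness step is nonetheless the right idea in substance: the lemma you defer to --- an infinite binary tree all of whose prefix-free subsets have size at most some $K$ has a path, provably in $\mathtt{RCA_0}$ --- is exactly the Claim on which the paper's proof turns, established there by two applications of $\Sigma^0_1$-induction. Three further points would still need repair. First, your $S_n$, defined with ``$\ge\tp{-n}$'', is $\Pi^0_1(g)$, not $\Delta^0_1(g)$; to run the induction one must pass to a strict rational threshold $q$, as the paper does with $\hat T=\{\sss\in T: g(r_\sss)-g(l_\sss)>q\}$, so that the relevant formulas stay $\Sigma^0_1$. Second, the paper avoids your contrapositive framing altogether: it splits on whether $\Delta(k)=\max\{g(r_\sss)-g(l_\sss):\sss\in T,\ |\sss|=k\}$ tends to $0$; if so, a $\Sigma^0_1$ search yields a modulus $\eta(n)$ and one computes the set of dead ends (your termination-of-search idea, used positively), and if not, the Claim applied to $\hat T$ produces a path outright. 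Third, the endgame ``$\emptyset'\le_T g$, hence $\ACA$, contradicting the standing assumption'' is incorrect as stated: one instance of the jump does not yield $\ACA$, and $\ACA$ would not contradict $\mathtt{Jordan_\QQ}$ in any case; the contradiction has to be with ``$T$ has no path,'' reached by relativizing to $T$, obtaining $T'$, hence $\mathrm{Ext}(T)$, hence a path. These last points are repairable, but the prefix-freeness failure means the construction as written cannot be carried out.
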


\begin{proof}[Proof of Theorem~\ref{thm:WKL}.]
 $\leftarrow$: the  original proof can be carried out within $\WKL$.
 
 $\rightarrow$: we reason within $\RCA$.
 Let $T\subseteq2^{<\NN}$ be an infinite binary tree. We will show that $T$ has a path.
Let $\tilde{T}=\{\tau\notin T\mid \tau\upharpoonright(|\tau|-1)\in T\}$. Without loss of generality, we may assume that $\tilde{T}$ is infinite.
Define $h:\NN\to2^{<\NN}$ as $h(n)$ to be (one of) the shortest leaf (dead end) of $T\setminus\{h(k)\mid k<n\}$.
(Note that $h$ can be considered as $h:\NN\to\NN$ by the usual coding.)
Then, we can easily see that $T\setminus \mathrm{rng}(h)=\mathrm{Ext}(T)=\{\sigma\in T\mid \sigma$ has infinitely many extensions in $T\}$.
Let $\langle \tilde{\sigma}_{k}\mid k\in\NN\rangle$ be an enumeration of $\tilde{T}$ such that $|\tilde{\sigma}_{i}|\le |\tilde{\sigma}_{i+1}|$. By an easy calculation, $|\tilde{\sigma}_{k}|\le l$ implies $k\le 2^{l}$.
For given $\sigma\in 2^{<\NN}$, define $I_{\sigma}=[l_{\sigma},r_{\sigma}]=[0.\sigma,0.\sigma+2^{|\sigma|}]$.
Now, define continuous functions $f_{s}:[0,1]\to\RR$ as follows:
define $f_{s}$ on $I_{\tilde{\sigma}_{k}}$ to be a sawtooth function of height $2^{-k}$ with $2^{k-n}$ many teeth if $k\le s$ and $h(k)=n$, and $f_{s}=0$ otherwise.
Then, the limit $f=\lim_{s\to\infty}f_{s}$ exists.

Now suppose $g:I_{\QQ}\to\RR$ is a function such that $f\le_{slope}g$. Define $\Delta:\NN\to\RR$ as
\[\Delta(k)=\max\{g(r_{\sigma})-g(l_{\sigma})\mid \sigma\in T\wedge |\sigma|=k\}.\]
Note that $\Delta(k)<2^{-n}$ can be expressed by a $\Sigma^{0}_{1}$-formula.

\noindent \textit{Case 1}: $\lim_{n\to\infty}\Delta(n)=0$.
In this case, using the same argument as the above proof, take $\eta:\NN\to\NN$ so that $\Delta(\eta(n))<2^{-n}$.
If $h(k)=n$, then, $g(r_{\tilde{\sigma}_{k}})-g(l_{\tilde{\sigma}_{k}})\ge 2^{-n}$, hence, $|\tilde{\sigma}_{k}|\le \eta(n)$.
Thus, $n\in \mathrm{rng}(h)\Leftrightarrow\exists k\le 2^{\eta(n)}\  h(k)=n$, thus, $T\setminus\mathrm{rng}(h)=\mathrm{Ext}(T)$ exists. Hence, we can easily find a path of $T$.

\noindent \textit{Case 2}: $\lim_{n\to\infty}\Delta(n)>0$.
In this case, take $q\in\QQ$ such that $\lim_{n\to\infty}\Delta(n)>q>0$, and define $\hat{T}$ as $\hat{T}=\{\sigma\in T\mid g(r_{\sigma})-g(l_{\sigma})>q\}$.
Then, $\hat{T}$ is an infinite subtree of $T$, and there exists $K\in\NN$ such that for any prefix-free $P\subseteq \hat{T}$, $|P|\le K$. For this, take $K$ so that $Kq>g(1)-g(0)$. Then, for any prefix-free $P\subseteq \hat{T}$,
\[|P|q<\sum_{\sigma\in P}g(r_{\sigma})-g(l_{\sigma})\le g(1)-g(0)<Kq.\]
Thus, we have $|P|\le K$.
Now, we can find a path of $\hat{T}$ by the following claim.

\noindent\textit{Claim}  ($\RCA$).
If $T$ is an infinite binary tree, and there exists $K\in\NN$ such that for any prefix-free $P\subseteq T$, $|P|\le K$, then $T$ has a path.

By $\Sigma^{0}_{1}$-induction, take \bc $k=\max\{i\le K\mid \exists P\subseteq T$ such that $P$ is prefix-free and $|P|=i\}$, \ec and let $P_{k}\subseteq T$ be its witness.
Then, any long enough $\sigma\in T$ is an extension of a member of $P_{k}$, and it has at most one successor. Thus, by $\Sigma^{0}_{1}$-induction, there exists $\tau\in P_{k}$ such that $\tau\in \mathrm{Ext}(T)$. Since each extension of $\tau$ has exactly one successor, we can easily find a path of $T$ extending $\tau$.
\end{proof}

 A    stronger variant     $\mathtt{strong \ Jordan_{\QQ}}$ would   require that   $f$ is only defined on $I_\QQ$, and of bounded variation for partitions consisting of rationals. By the proof above, this principle  is also equivalent to $\mathtt{WKL_0}$ over $\mathtt{RCA_0}$.

%

%
%
%
%
%
%
%
%
%

\section{Yokoyama : Notes on BVDiff and reverse mathematics}

The following was contributed by {Keita Yokoyama%
\footnote{
School of Information Science,
Japan Advanced Institute of Science and Technology,
1-1 Asahidai, Nomi, Ishikawa, 923-1292, JAPAN,
E-mail: y-keita@jaist.ac.jp}
}
following a talk of Nies at JAIST (Kanazawa, Japan) suggesting the principles studied below.   The principle $\mathrm{BVDiff}$  was introduced by Greenberg, Nies and Slaman in Auckland, Nov 2012.

A  function $f: \sub [0,1]\to \mathbb{R}$  with domain containing $\QQ \cap[0,1]$  is said to be pseudo-differentiable at $z\in (0,1)$ if   $f(z):=\lim_{x\in \mathbb{Q}, x\to z} f(x)$ exists  and for every $\varepsilon>0$, there exists $\delta>0$ such that for every $0<|h|,|h'|<\delta$,
\[\left| \frac{f(z+h)-f(z)}{h}- \frac{f(z+h')-f(z)}{h'}\right|<\varepsilon.\]
Note that this  is equivalent (over $\mathsf{RCA_0}$) to the definition of pseudo-differentiability  in \cite[Section 7]{Brattka.Miller.ea:nd}.
In the reverse mathematics setting, note that we don't require that the derivative exists as a real  of the model.

\begin{theorem}
 The following are equivalent over $\mathsf{RCA_0}$.
 
\begin{enumerate}
 \item $\mathsf{WWKL_0}$.
 \item {$\mathrm{BVDiff}$:} every continuous bounded variation function $f:[0,1]\to \mathbb{R}$ has a pseudo-differentiable point.
 \item {$\mathrm{aeBVDiff}$:} every continuous bounded variation function $f:[0,1]\to \mathbb{R}$ is pseudo-differentiable almost surely in the following sense:
\begin{itemize}
 \item[] for any family of open intervals $\mathcal{U}=\{(u_{i},v_{i})\}_{i\in\mathbb{N}}$, if $\mathcal{U}$ covers any pseudo-differential points of $f$, then $\sum_{i\in \mathbb{N}}(v_{i}-u_{i})\ge 1$. 
\end{itemize}
\end{enumerate}
\end{theorem}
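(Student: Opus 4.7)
I will prove the cycle $(1)\Rightarrow(3)\Rightarrow(2)\Rightarrow(1)$. The implication $(3)\Rightarrow(2)$ is immediate by contraposition: if $f$ had no pseudo-differentiable point, then the empty family $\mathcal{U}=\emptyset$ would vacuously cover the (empty) set of pseudo-differentiable points of $f$, while $\sum_{i}(v_{i}-u_{i})=0<1$, contradicting $\mathrm{aeBVDiff}$.

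For $(1)\Rightarrow(3)$, I would work inside $\mathsf{WWKL_0}$ and formalize the classical Lebesgue theorem that a continuous function of bounded variation is differentiable almost everywhere. The measure-theoretic form of $\mathsf{WWKL_0}$ (Yu--Simpson) states that no uniformly $\Sigma^{0}_{1}$ open set of measure strictly less than $1$ covers every real. Given a cover $\mathcal{U}=\{(u_{i},v_{i})\}$ of the pseudo-differential points of $f$ with $\sum_{i}(v_{i}-u_{i})<1$, I would stratify the non-pseudo-differentiable points outside $\mathcal{U}$ by the rational oscillation gap $\varepsilon$ of their Dini derivates, and for each $\varepsilon$ apply the Riesz rising-sun lemma (a Vitali-type result provable in $\mathsf{WWKL_0}$) to produce a $\Sigma^{0}_{1}$ open capture of the $\varepsilon$-stratum whose length is bounded in terms of $V_{f}(1)$ and $\varepsilon$. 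A careful scaling and union of these, joined with $\mathcal{U}$, gives a $\Sigma^{0}_{1}$ cover of $[0,1]$ of total measure $<1$, contradicting $\mathsf{WWKL_0}$.

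For $(2)\Rightarrow(1)$, I reason contrapositively. If $\mathsf{WWKL_0}$ fails then by the Yu--Simpson characterization there is a Martin-L\"of test $\{V_{n}\}_{n\in\NN}$ -- a uniformly $\Sigma^{0}_{1}$ sequence of open sets with $\mu(V_{n})<2^{-n}$ -- such that every real in the model lies in $\bigcap_{n}V_{n}$. Define
$$f(x)\;=\;\sum_{n} \mu\bigl(V_{n}\cap[0,x]\bigr).$$
Each summand is $1$-Lipschitz in $x$, so the series converges uniformly to a continuous nondecreasing function of total variation $f(1)=\sum_{n}\mu(V_{n})<1$; thus $f$ is continuous and BV. Fix any real $z$ in the model; then $z\in V_{n}$ for every $n$, and since each $V_{n}$ is open there is a positive rational $d_{n}$ with $(z-d_{n},z+d_{n})\subseteq V_{n}$. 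For $h\in(0,d_{n})$ the $n$-th summand contributes exactly $h$ to $f(z+h)-f(z)$, so the right difference quotient satisfies
$$\frac{f(z+h)-f(z)}{h}\;\geq\;\#\{n:d_{n}>h\},$$
a step function of $h$ which is unbounded as $h\to 0^{+}$. Choosing $h,h'$ on either side of a jump of this step function yields arbitrarily small $h,h'$ whose difference quotients differ by at least $1$, so the Cauchy clause in the definition of pseudo-differentiability fails at $z$. Thus $f$ has no pseudo-differentiable point in the model, contradicting $\mathrm{BVDiff}$.

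The main obstacle is $(1)\Rightarrow(3)$: the familiar proof of almost-everywhere differentiability for a BV function passes through the Jordan decomposition into monotone parts, but this decomposition requires $\mathsf{ACA_0}$ in general by Proposition~\ref{prop:ACA}. The Vitali/rising-sun argument must therefore be carried out directly on $f$, using only the single real number $V_{f}(1)$ as an a priori bound on total oscillation, and ensuring that the Dini-derivate strata and the coverings they produce are $\Sigma^{0}_{1}$ in a way that lets the measure form of $\mathsf{WWKL_0}$ be invoked uniformly.
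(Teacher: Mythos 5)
Your cycle has the right shape and $(3)\Rightarrow(2)$ is fine, but both substantive implications have problems.

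In $(2)\Rightarrow(1)$ the function $f(x)=\sum_n\mu(V_n\cap[0,x])$ is not a legitimate continuous function of the model. Each value $\mu(V_n\cap[0,q])$ is the measure of a $\Sigma^0_1$ open set, i.e.\ the limit of an increasing computable sequence \emph{without} a modulus of convergence; it is a left-c.e.\ real relative to the model's data and need not exist as a point of the model, let alone be part of a code for a continuous function in the sense of $\mathsf{RCA_0}$. (Indeed, such interval-left-c.e.\ functions are exactly the variation functions of computable functions, and are typically non-computable.) So $\mathrm{BVDiff}$ cannot be applied to your $f$. The standard repair is the Brattka--Miller--Nies sawtooth construction: when an interval enters $V_n$ at stage $s$, superimpose a sawtooth of height about $2^{-s}$ with many teeth; the heights give an effective modulus in the sup norm, so the resulting $f$ is a coded continuous BV function, while the teeth still force the difference quotients to oscillate (not merely diverge to $+\infty$) at every point of $\bigcap_n V_n$. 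Your divergence computation for the uncorrected $f$ is fine classically, but the object it is about does not exist in the model.

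For $(1)\Rightarrow(3)$ you have only named the obstacle, not overcome it. The Riesz/Vitali route requires covering the exceptional sets of a non-monotone BV function by $\Sigma^0_1$ sets of controlled measure without first decomposing $f$ into monotone pieces, and carrying out the derivate stratification when the Dini derivates need not exist as reals of the model; none of this is supplied, and it is precisely the hard content of the theorem. The paper's proof avoids it entirely by a model-theoretic argument: by the Simpson--Yokoyama lemma, a countable model $(M,S)\models\mathsf{WWKL_0}$ extends to $(M,\bar S)\models\mathsf{WKL_0}$ such that every $A\in\bar S$ has a ground-model real ML-random relative to it. In $\bar S$ one applies the $\mathsf{WKL_0}$-provable Jordan decomposition $f=g-h$ into nondecreasing functions on the rationals, picks $z\in S$ ML-random relative to $g\oplus h$ (inside the complement of $\mathcal U$, which is a positive-measure closed set, for the aeBVDiff version), and invokes the Brattka--Miller--Nies fact that nondecreasing functions are pseudo-differentiable at relatively random points; since $z\in S$, the conclusion holds in the original model. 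If you want a correct proof of $(1)\Rightarrow(3)$ without essentially redoing Lebesgue's theorem in $\mathsf{WWKL_0}$, this conservation-plus-randomness argument is the missing idea.
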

This theorem follows from the Greenberg/Miller/Nies/Slaman results in Section~\ref{s:GMNS reverse},  Brattka/Miller/Nies \cite{Brattka.Miller.ea:nd},  and a new fact:

\begin{proposition}\label{propBVDiff}
$\mathrm{BVDiff}$ (or $\mathrm{aeBVDiff}$) is provable within $\mathsf{WWKL_0}$.
\end{proposition}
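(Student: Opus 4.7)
The plan is to prove $\mathrm{aeBVDiff}$ directly in $\mathsf{WWKL_0}$; $\mathrm{BVDiff}$ then follows by taking $\mathcal U = \emptyset$. Mirroring the computability-theoretic proof of Brattka/Miller/Nies that every Martin-L\"of random real is a pseudo-differentiable point of every computable continuous BV function, I would construct, uniformly in a continuous BV function $f$, a Martin-L\"of test $\{W_n\}$ (effective in $f$) whose intersection contains every non-pseudo-differentiable point of $f$, and then apply the measure-theoretic form of $\mathsf{WWKL_0}$: every $\Sigma^0_1$ open subset of $[0,1]$ of measure strictly less than $1$ has nonempty complement.

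The key technical step is, for each $m \in \NN$, to construct a uniformly $\Sigma^0_1$-in-$f$ sequence of open sets $U_{m,k}$ with $\mu(U_{m,k}) < 2^{-k}$ such that every $z$ whose difference quotients oscillate by at least $1/m$ at arbitrarily small scale lies in $\bigcap_k U_{m,k}$. The idea is that such a $z$ is witnessed by rationals whose values force a local contribution of at least $c/m \cdot 2^{-N}$ to the variation of $f$ on an interval of length $\le 2 \cdot 2^{-N}$ around $z$; a greedy disjoint selection among rational-endpointed witnessing intervals then yields, via the bounded total variation $V_f(1)$, a family of total length $\lesssim m V_f(1) \cdot 2^{-N}$, from which one chooses $N = N(m,k)$ to enforce $\mu(U_{m,k}) < 2^{-k}$. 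Amalgamating the double sequence into $W_n := \bigcup_{m+k = n+1} U_{m,k}$ gives the desired test with $\mu(W_n) < 2 \cdot 2^{-n}$.

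Given a putative cover $\mathcal U = \{(u_i, v_i)\}$ of all pseudo-differentiable points with $s := \sum_i (v_i - u_i) < 1$, choose $n$ with $s + 2 \cdot 2^{-n} < 1$, so that $\mathcal U \cup W_n$ is $\Sigma^0_1$ of measure strictly below $1$. By $\mathsf{WWKL_0}$ its complement is inhabited by some $z$. Since $\bigcap_k W_k \subseteq W_n$ and $z \notin W_n$, $z$ is pseudo-differentiable; but $z \notin \bigcup_i (u_i, v_i)$ contradicts the covering hypothesis, so $s \ge 1$ as required.

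The main obstacle is the covering-type measure bound on $U_{m,k}$. The clean classical proof uses a Vitali covering theorem; although a form of Vitali is available in $\mathsf{WWKL_0}$, one must avoid any implicit appeal to $\mathsf{WKL_0}$ (in particular, one cannot route through a Jordan decomposition of $f$, which by Theorem~\ref{thm:WKL} already requires $\mathsf{WKL_0}$). The planned workaround is to argue directly from the BV hypothesis by selecting witnessing intervals greedily from a fixed enumeration of rational-endpointed intervals and using only finite-additivity of measure together with $\Sigma^0_1$ induction on the number of selected intervals; the total-length bound then reduces to a straightforward accounting against $V_f(1)$, which itself exists as a real of the model as the supremum of a $\Sigma^0_1$-approximable monotone sequence of rational-partition sums under the continuity and BV hypothesis on $f$.
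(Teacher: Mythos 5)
Your route is genuinely different from the paper's, and it has a gap at its core. The paper does not effectivize the differentiation theorem for BV functions at all: it takes a countable model $(M,S)\models\mathsf{WWKL_0}$, uses the Simpson--Yokoyama extension lemma (Lemma~\ref{lem3}) to pass to $(M,\bar S)\models\mathsf{WKL_0}$ in which every $A\in\bar S$ admits some $B\in S$ Martin-L\"of random relative to $A$, carries out the Jordan decomposition $f=g-h$ in the \emph{extension} via Theorem~\ref{thm:WKL}, picks $z\in S$ ML-random relative to $g\oplus h$, and concludes with the $\mathsf{RCA_0}$-provable Lemma~\ref{lem1} for nondecreasing functions. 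So your worry that one "cannot route through a Jordan decomposition" is sidestepped rather than confronted: the decomposition lives in $\bar S$ while the differentiability point lives in $S$. Your plan instead builds, internally, an $f$-relative ML-test capturing every non-pseudo-differentiable point and then applies $\mathsf{WWKL_0}$ as a positive-measure basis principle; if such a test existed uniformly, this would indeed yield aeBVDiff (and BVDiff via $\mathcal{U}=\emptyset$), and would be a more informative, internal proof.

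The problem is the measure estimate for $U_{m,k}$, and it is mathematical, not merely a formalization issue. If each witnessing interval has length about $2^{-N}$ and contributes at least $(c/m)2^{-N}$ to the variation, then a disjoint subfamily has at most $(m/c)V_f(1)2^{N}$ members, hence total length $O(mV_f(1)/c)$ --- a constant independent of $N$, not $O(mV_f(1)2^{-N})$ as you wrote. So refining the scale $N$ does not drive $\mu(U_{m,k})$ below $2^{-k}$; indeed no single-scale selection can, because the only reason the scale-$N$ sets shrink at all is Lebesgue's differentiation theorem itself, which provides no modulus computable from $V_f(1)$, $m$ and $N$. The correct effective argument --- the one in Brattka--Miller--Nies --- counts \emph{iterated} alternations: the set of points near which the difference quotients cross a fixed gap $(c,d)$ at least $k$ times at nested scales has measure decaying in $k$ (geometrically in the monotone case, and via a careful variation accounting in the general BV case), and it is this decay in the number of upcrossings, not in the scale, that produces the test. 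That counting is precisely the delicate part of the BMN proof for non-monotone $f$, and your sketch omits it. A secondary point: you cannot take $V_f(1)$ to "exist as a real of the model" as the supremum of an increasing bounded sequence --- that principle, and indeed the existence of the variation function, is equivalent to $\mathsf{ACA_0}$ (see the corollary following Proposition~\ref{prop:ACA}); you must instead work with the a priori bound built into the reverse-mathematical definition of bounded variation.
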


\begin{lemma}[Brattka/Miller/Nies \cite{Brattka.Miller.ea:nd}, $\mathsf{RCA_0}$]\label{lem1}
Let $f:\mathbb{Q}\cap[0,1]\to \mathbb{R}$ be a non-decreasing function, and let $z\in [0,1]$ be Martin-L\"of (computably) random relative to $f$.
Then, $f$ is pseudo-differentiable at $z$. \end{lemma}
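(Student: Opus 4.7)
The plan is to cover the set $N \subseteq [0,1]$ of points at which $f$ fails to be pseudo-differentiable by a Martin-L\"of test relative to $f$; since such a test must miss every $z$ that is ML-random relative to $f$, this yields the lemma. The failure set decomposes into three pieces: (a) jumps of $f$, where $\lim_{q\to z,\, q\in\QQ} f(q)$ does not exist; (b) continuity points at which the upper derivative $\overline{D}f(z)=\limsup_{h\to 0}(f(z+h)-f(z))/h$ is $+\infty$; and (c) continuity points at which $\underline{D}f(z)<\overline{D}f(z)$ while both are finite. At any $z$ outside all three sets the limit $f(z)$ exists, $\overline{D}f(z)$ is finite, and $\underline{D}f(z)=\overline{D}f(z)$, so the difference quotients satisfy the Cauchy condition defining pseudo-differentiability.

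For (a), since $f$ is non-decreasing with bounded variation $V:=f(1)-f(0)$, the set $J_k$ of jumps of size $\ge 2^{-k}$ has at most $2^kV$ elements. For a budget $\varepsilon$ one $f$-enumerates all rational open intervals $(p,q)$ with $f(q)-f(p)>2^{-k}$ and $q-p < \varepsilon 2^{-k-1}/(2^kV+1)$; summing the resulting covers over $k$ gives an $f$-c.e.\ open cover of the jump set of measure at most $\varepsilon$. For (b), the standard Vitali estimate shows that $\{z:\overline{D}f(z)>M\}$ is contained in the $f$-c.e.\ open set $U_M=\bigcup\{(p,q):p,q\in\QQ,\ (f(q)-f(p))/(q-p)>M\}$, whose measure is at most $V/M$; then $M=V/\varepsilon$ suffices.

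The substantive step is (c). For each pair of rationals $0\le a<b$ set
\[E_{a,b}=\{z:\underline{D}f(z)<a<b<\overline{D}f(z)\},\]
so the $E_{a,b}$ together cover all continuity points at which the derivatives disagree. The plan is to make Riesz's rising-sun argument effective: starting from the whole interval $[0,1]$, $f$-enumerate all rational sub-intervals whose average slope exceeds $b$; inside their union, $f$-enumerate the rational sub-sub-intervals whose average slope falls below $a$; iterate. Monotonicity of $f$ together with the contraction inherent in the rising-sun estimate yields a geometric decay $\lambda(E_{a,b}^{(n+1)})\le (a/b)\lambda(E_{a,b}^{(n)})$ at each alternation, producing an $f$-c.e.\ cover of $E_{a,b}$ of arbitrarily small measure. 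Summing weighted versions over the countably many rational pairs $(a,b)$ gives an $f$-ML test covering all of (c).

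The main obstacle is the effectivity of the rising-sun iteration: identifying ``maximal rational sub-intervals whose average slope exceeds $b$'' is a $\Pi^0_1(f)$ condition, so in practice one works with the open union of all such sub-intervals and obtains the measure bound directly from monotonicity of $f$, without ever having to name the maximal ones explicitly. Combining the three $f$-ML tests from (a), (b), (c) into a single test completes the argument: a point $z$ that is ML-random relative to $f$ avoids $N$, so $f$ is pseudo-differentiable at $z$.
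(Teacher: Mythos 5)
The paper does not actually prove this lemma; it is quoted from Brattka--Miller--Nies, and the closest arguments the paper does give (the proof of Theorem~\ref{thm:CRd dyadic diff} and of Theorem~\ref{thm:interval left-c.e. MG and derivative}) take a different route from yours: they treat the dyadic slope process $M(\sigma)=S_f([\sigma])$ as a martingale, get $\utilde D_2f(z)=\widetilde D_2f(z)<\infty$ from martingale convergence/upcrossing inequalities, and then pass from dyadic to arbitrary intervals via the porosity Lemmas~\ref{lem:classic diff porous} and~\ref{lem:classic diff porous 2} and the $1/3$-shifted grid of Fact~\ref{fact:geom}. Your plan --- cover the failure set by an $f$-ML test, splitting it into jumps, infinite upper derivative, and the oscillation sets $E_{a,b}$ --- is the classical Riesz/Vitali proof made effective. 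Parts (a) and (b) are fine up to the usual covering-lemma constants. The problem is part (c).

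The gap is the claimed decay $\lambda(E_{a,b}^{(n+1)})\le (a/b)\lambda(E_{a,b}^{(n)})$. For the open sets you actually construct (unions of \emph{all} rational intervals satisfying the slope condition inside the previous level), this ratio is false. Two overlapping intervals, each with slope $>b$, can have a union whose slope is $<b$: take $f$ constant except for an increase of $\tfrac12 b(1+\epsilon)$ concentrated on $(0.4,0.5)$; then $S_f(0,0.5)>b$ and $S_f(0.4,0.5+\delta)>b$, but $S_f(0,0.5+\delta)<b$ once $\delta>\epsilon/2$. So the components of $\bigcup\{I: S_f(I)>b\}$ need not have slope $\ge b$, and the single-step bound $\lambda\bigl(\bigcup\{I\subseteq G: S_f(I)>b\}\bigr)\le \Delta f(G)/b$ fails by a factor (the sharp weak-type constant for the uncentered maximal function is $2$, and the dual ``slope $<a$'' step loses another factor). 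What survives ``directly from monotonicity'' is only a decay ratio of the form $C\,a/b$ with $C>1$, which is vacuous whenever $b\le Ca$ --- and you cannot avoid such pairs, since $\overline{D}f(z)/\underline{D}f(z)$ may be arbitrarily close to $1$ on the oscillation set. The exact ratio $a/b$ in the classical proof comes from the Vitali covering theorem's extraction of a \emph{disjoint} subfamily covering all but $\varepsilon$ of $E_{a,b}$ in measure; that extraction proves $\lambda^*(E_{a,b})=0$ but does not hand you an $f$-c.e.\ open cover, which is exactly what a test requires. To close the gap you need either (i) to restrict to dyadic intervals, where $S_f$ is a martingale and Dubins' upcrossing inequality (equation~(\ref{eqn: upcr}) in Remark~\ref{rem:Doob Madison}) gives the exact geometric decay, and then transfer to arbitrary intervals by the porosity/$1/3$-shift machinery; or (ii) to work with one-sided Dini derivates and a rising-sun lemma whose component estimates do hold, rather than with two-sided interval families.
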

Recall Theorem~\ref{thm:WKL} above:  The following are equivalent over $\mathsf{RCA_0}$.
\begin{enumerate}
 \item $\mathsf{WKL_0}$.
 \item For every  bounded variation function $f:\mathbb{Q}\cap[0,1]\to \mathbb{R}$, there exist non-decreasing functions $g,h:\mathbb{Q}\cap[0,1]\to\mathbb{R}$ such that $f=g-h$.
\end{enumerate}

Another crucial ingredient is the following, due to  Stephen G.~Simpson and Keita Yokoyama.
\begin{lemma}[\cite{Simpson.Yokoyama:11}, Lemma 3.6]   \label{lem3} For any countable model $(M,S)\models\mathsf{WWKL_0}$, there exists $\bar{S}\supseteq S$ such that $(M,\bar{S})\models\mathsf{WKL_0}$ and the following holds:
\begin{itemize}
 \item[$(\dag)$] for any $A\in \bar{S}$ there exists $B\in S$ such that $B$ is Martin-L\"of random relative to $A$.
\end{itemize}
\end{lemma}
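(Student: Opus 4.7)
The plan is to build $\bar{S}$ as an ascending union of countable second-order parts $S = S_0 \subseteq S_1 \subseteq \cdots$ with $\bar{S} = \bigcup_n S_n$. Externally enumerate all pairs $(e, A)$ with $e \in \omega$ and $A$ ranging over potential members of $\bar{S}$, repeating each cofinally often. At stage $n$, if $A_n \in S_n$ and $\Phi_{e_n}^{A_n}$ codes an infinite binary tree $T_n$ in the sense of $M$, choose a path $P_n$ through $T_n$ carefully as described below and let $S_{n+1}$ be the Turing closure of $S_n \cup \{P_n\}$ in $M$'s second-order universe. The bookkeeping guarantees that every infinite binary tree in $\bar{S}$ acquires a path, so $(M, \bar{S}) \models \mathsf{WKL_0}$.

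The crux is to select $P_n$ so as to preserve $(\dag)$. Every element of $S_{n+1}$ is Turing below $A \oplus P_n$ for some $A \in S_n$, and by $(\dag)$ applied to $S_n$ there is, for each such $A$, a witness $B_A \in S$ that is Martin-L\"of random relative to $A$. A dovetailing over the countably many $A \in S_n$ reduces matters to the following single-step claim:

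\emph{Given $A$ in the current $S_n$, an infinite binary tree $T \le_T A$, and $B \in S$ Martin-L\"of random relative to $A$, there exists a path $P$ through $T$ such that $B$ is ML-random relative to $A \oplus P$.}

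To prove the claim, observe that the ``bad'' class
\[
\mathcal{N} = \{P \in \cantor : B \text{ is not ML-random relative to } A \oplus P\}
\]
is a Lebesgue-null $\Pi^0_2(A \oplus B)$ set, covered by a uniform $A \oplus B$-c.e.\ Solovay test $(W_k)$ with $\lambda(W_k) \le 2^{-k}$. When $\lambda([T]) > 0$, $[T] \setminus W_k$ is a nonempty $\Pi^0_1(A \oplus B)$ class of positive measure for suitable $k$, and $\mathsf{WWKL_0}$ applied in $M$ with oracle $A \oplus B$ delivers a path. The main obstacle is the measure-zero case $\lambda([T]) = 0$, where $\mathsf{WWKL_0}$ cannot be invoked directly. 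The strategy is to rule out $[T] \subseteq \mathcal{N}$ by a compactness-plus-universal-test argument: from a hypothetical inclusion $[T] \subseteq \mathcal{N}$ one extracts, level by level, a uniform cover of $B$ by aggregating $(A \oplus \sigma)$-tests over $\sigma \in T$ at appropriately chosen depths $\ell_n$, giving a covering family whose measure bound is of the form $|T \cap 2^{\ell_n}| \cdot 2^{-n-\ell_n} \le 2^{-n}$; arranging for this family to be $A$-c.e.\ (rather than merely $(A \oplus B)$-c.e.) yields an $A$-ML-test catching $B$, contradicting the randomness of $B$ over $A$. The technical subtlety---ensuring the test really lives in the $A$-c.e.\ class and the search for $\ell_n$ terminates uniformly in $n$---is where the interaction between the $A$-computability of $T$ and properties of the universal ML-test must be carefully exploited. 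Once the single-step claim is established, induction on $n$ propagates $(\dag)$ from $S_n$ to $S_{n+1}$, and hence to $\bar{S}$.
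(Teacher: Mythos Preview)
The paper does not prove this lemma; it is quoted verbatim from \cite{Simpson.Yokoyama:11} and used as a black box in the proof of Proposition~\ref{propBVDiff}. So there is no ``paper's proof'' to compare against. Your outline follows the standard shape of such preservation arguments, but two steps are not actually carried out.

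First, the ``dovetailing over the countably many $A\in S_n$'' is not a reduction to your single-step claim as stated. That claim produces \emph{one} path $P$ good for a fixed pair $(A,B)$; to get a single $P_n$ good for all $A\in S_n$ simultaneously you need each step to produce a nonempty $\Pi^0_1$ \emph{subclass} of candidate paths (namely $[T]\cap\{P: B\notin U_{m}^{A\oplus P}\}$ for some $m$), so that the countably many requirements can be met by a nested intersection and external compactness. This is repairable but is not what you wrote.

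Second, the heart of the matter --- the single-step claim when $\lambda([T])=0$ --- is left as a promissory note. Your proposed test with bound $|T\cap 2^{\ell_n}|\cdot 2^{-n-\ell_n}$ does not obviously capture $B$: knowing $B\in U_m^{A\oplus P}$ for every \emph{path} $P$ does not force $B\in U_{n+\ell_n}^{A\oplus\sigma}$ for some $\sigma$ at a prescribed level $\ell_n$. The clean argument avoids the measure dichotomy altogether: set $W_n=\{X:\exists s\ \forall\sigma\in T_s\text{ of length }s,\ X\in U_{n,s}^{A\oplus\sigma}\}$, check this is $\Sigma^0_1(A)$, and observe that $W_n\subseteq U_n^{A\oplus P_0}$ for any path $P_0$ through $T$, giving $\lambda(W_n)\le 2^{-n}$; if no good $P$ existed, compactness would put $B\in\bigcap_n W_n$, contradicting randomness of $B$ over $A$. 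Making this precise inside a possibly nonstandard $M$ (where ``infinite tree'' and the measure bound must be interpreted internally) is exactly the work done in \cite{Simpson.Yokoyama:11}. Also note your appeal to ``$\mathsf{WWKL_0}$ in $M$ with oracle $A\oplus B$'' is illegitimate once $A\in S_n\setminus S$.
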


\begin{proof}[Proof of Proposition~\ref{propBVDiff}.]
 We will show that BVDiff holds in any countable model of $\mathsf{WWKL_0}$.
 Let $(M,S)$ be a countable model of $\mathsf{WWKL_0}$, and let  $f:[0,1]\to \mathbb{R}$ be a continuous bounded variation function in $(M,S)$.
 By Lemma~\ref{lem3}, take an extension $\bar{S}\supseteq S$ such that $(M,\bar{S})\models\mathsf{WKL_0}$ and satisfies $(\dag)$.
 Then, by Theorem~\ref{thm:WKL}, there exist non-decreasing functions $g,h\in \bar{S}$, $g,h:\mathbb{Q}\cap[0,1]\to\mathbb{R}$ such that $f=g-h$.
 By $(\dag)$, take $z\in S$, $z\in [0,1]$ which is Martim-L\"of random relative to $g\oplus h$ (in $(M,\bar{S})$).
 Then, by Lemma~\ref{lem1}, both of $g$ and $h$ are pseudo-differentiable at $z$ (in $(M,\bar{S})$), thus, $f$ is pseudo-differentiable at $z$ (in $(M,{S})$).
 
 To show aeBVDiff, let $\mathcal{U}=\{(u_{i},v_{i})\}_{i\in\mathbb{N}}$ be a family of open intervals such that $\sum_{i\in \mathbb{N}}(v_{i}-u_{i})< 1$. Then, $[0,1]\setminus \bigcup_{i\in\mathbb{N}}(u_{i},v_{i})$ is a closed set which has a positive measure. Thus, in the above proof, we can find a Martin-L\"of random point $z$ in $[0,1]\setminus \bigcup_{i\in\mathbb{N}}(u_{i},v_{i})$.
\end{proof}

\begin{remark} {\rm 
Within $\mathsf{WWKL_0}$ one cannot assure the existence of the value of the derivative, in other words, the pseudo-differentiability in BVDiff cannot be replaced with the (usual) differentiability.
In fact, Jason Rute showed that the following are equivalent over $\mathsf{RCA_0}$.}
\begin{enumerate}
 \item Every continuous bounded variation function $f:[0,1]\to \mathbb{R}$ has a derivative somewhere, \textit{i.e.}, there exists $z\in (0,1)$ such that $f'(z)=\lim_{x\to z}{(f(x)-f(z))}/{(x-z)}$ exists.
 \item $\mathsf{ACA_0}$.
\end{enumerate}

\end{remark}

%

%

\section{Randomness notions as principles of reverse mathematics}
Written by Nies based on work with Greenberg and Slaman in Cambridge, June  2012 and Auckland, Dec 2012.

Let $\mathsf{C}$ denote a randomness notion. For instance $\mathsf{MLR}$ is ML-randomness, $\mathsf{CR}$ is computable randomness and $\mathsf{SR}$ is Schnorr randomness. We study the strength of the system 

\bc $\mathsf{C_0}  = \mathsf{RCA_0} + \fa X \ex Y \, [Y \in \mathsf{C}^X]$. \ec

Note that $\mathsf{MLR_0}$ is equivalent to   weak weak K\"onigs lemma at least for $\omega$-models. 
\begin{proposition} \label{prop: ML CR}  $\mathsf{CR_0}$ does not imply $\mathsf{MLR_0}$, as shown by a suitable $\omega$-model. \end{proposition}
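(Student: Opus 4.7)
The first step is to translate: $\omega$-models of $\mathsf{RCA_0}$ are Turing ideals $\mathcal I\subseteq 2^\omega$ (nonempty families closed downward under $\leq_T$ and under $\oplus$); such an $\mathcal I$ satisfies $\mathsf{CR_0}$ exactly when, for every $X\in\mathcal I$, some $Y\in\mathcal I$ is computably random relative to $X$, and analogously for $\mathsf{MLR_0}$. So the plan is to build a countable Turing ideal $\mathcal I$ in which the CR-closure holds but \emph{no} element computes any ML-random real. Taking $X=\emptyset\in\mathcal I$ then at once witnesses $\mathsf{MLR_0}$ failing in $\mathcal I$.

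I would realise $\mathcal I$ as the downward closure of an ascending Turing tower $\emptyset=X_0\leq_T X_1\leq_T X_2\leq_T\cdots$ maintaining two invariants: (i) $X_{n+1}$ is computably random relative to $X_n$, and (ii) no $X_n$ computes an ML-random real. Because every $Z$-computable martingale is $X_n$-computable whenever $Z\leq_T X_n$, CR relative to $X_n$ implies CR relative to each such $Z$; hence $X_{n+1}$ serves as a CR-witness above every $Z\leq_T X_n$, and $\mathcal I:=\{Z:\exists n\;Z\leq_T X_n\}$ is then a Turing ideal satisfying $\mathsf{CR_0}$, while (ii) transfers to every element of $\mathcal I$.

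The entire weight of the proof then falls on the inductive lemma, which I expect to be the main obstacle: for every $X$ that computes no ML-random real, there exists $Y$ that is computably random relative to $X$ such that $X\oplus Y$ still computes no ML-random real. I would attack it by forcing $Y$ with positive-measure $\Pi^{0,X}_1$ classes (Kautz/Solovay forcing relativised to $X$). Since the set of $X$-CR reals has full measure and an effective $X$-definition, a standard density argument secures CR-ness relative to $X$ for a sufficiently generic $Y$. The harder requirements, one for each Turing functional $\Phi_e$, assert that $\Phi_e^{X\oplus Y}$ is not ML-random. To make these dense, I would combine diagonalisation against a fixed $X$-universal ML-test with a measure-preserving refinement: given a positive-measure condition $P$ on which $\Phi_e$ threatens to produce an ML-random image, one shrinks $P$ either so as to force $\Phi_e^{X\oplus\cdot}$ partial, or to confine its image into the complement of a sufficiently late level of the test (exploiting that $X$ itself passes every $X$-ML-test by invariant (ii), together with the preservation-of-measure properties of Turing functionals). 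The crux, and the place the proof must actually do work, is verifying this density simultaneously with CR-genericity while keeping the refinement uniform in $\Phi_e$; iterating the lemma $\omega$ times then produces the tower, and hence the desired $\omega$-model separating $\mathsf{CR_0}$ from $\mathsf{MLR_0}$.
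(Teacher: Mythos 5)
Your global architecture (reduce to a one-step lemma, iterate it $\omega$ times to get a Turing ideal, note that CR relative to $X_n$ automatically covers all $Z\leT X_n$) matches the paper's, but the method you propose for the one-step lemma cannot work. You want to obtain $Y$ by forcing with positive-measure $\Pi^{0,X}_1$ classes and then meet, for every functional $\Phi_e$, the requirement that $\Phi_e^{X\oplus Y}$ is not ML-random. Applied to the functional that simply outputs $Y$, this requirement says that $Y$ itself is not ML-random. But the non-ML-random reals form a null set, so \emph{no} positive-measure condition can force it: every condition $P$ has $\lambda(P)>0$ and hence almost all of its members are ML-random, and indeed any sufficiently generic real for this forcing \emph{is} ML-random (relative to $X$, a fortiori unrelativized). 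So the density you need fails for the very first instance, and the generic $Y$ you produce is itself an ML-random real computed by $X\oplus Y$. Measure-theoretic forcing is intrinsically the wrong tool for manufacturing a computably random real that is not ML-random.

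The paper's route avoids this by going through highness rather than measure. The one-step lemma it uses is: for each $B$ not of d.n.c.\ degree there is an $X$ that is high (even LR-hard) relative to $B$ with $B\oplus X$ still not of d.n.c.\ degree (the proof of Lemma 4.11 of Cholak--Greenberg--Miller). Since every set high relative to $B$ computes a set computably random relative to $B$ (Nies--Stephan--Terwijn), this $X$ supplies the CR-witness; and since every ML-random is of d.n.c.\ degree, preserving ``not of d.n.c.\ degree'' along the tower guarantees that no element of the resulting ideal computes an ML-random. Note that the invariant carried through the iteration is ``not of d.n.c.\ degree,'' which is strictly stronger than your ``computes no ML-random'' and is exactly what the cited lemma preserves. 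If you want to salvage your outline, replace the forcing step by this highness-based lemma.
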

This suggests   to call the   principle $\mathsf{CR_0}$ weak weak weak K\"onigs lemma. Recall that every high set is Turing above a computably random set by \cite{Nies.Stephan.ea:05} (also see \cite[Ch.\ 7]{Nies:book}).
\begin{proof} By the proof of \cite[Lemma 4.11]{Cholak.Greenberg.ea:06}, for each set $B$ of non-d.n.c.\ degree there is a set $X$ high (even LR-hard) relative to $B$ such that $B \oplus X$ is also not of d.n.c.\ degree. Iterating this in the standard way, we build an $\omega$-model of  $\mathsf{CR_0}$ without a set of d.n.c.\ degree. In particular, there is no ML-random set.  \end{proof} 

Recall from  \cite{Nies.Stephan.ea:05} that every non high Schnorr random set is already ML-random. This only requires $\Sigma_1$-induction. The following is somewhat surprising and maybe explains why these two randomness notions are harder to separate than  other pairs of notions. 
\begin{proposition} \label{prop: CR SR} $\mathsf{CR_0}$ is equivalent to  $\mathsf{SR_0}$. \end{proposition}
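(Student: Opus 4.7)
The forward direction $\mathsf{CR_0} \Rightarrow \mathsf{SR_0}$ is immediate, since every computably random real is Schnorr random: a $Y$ witnessing $\mathsf{CR}^X$ automatically witnesses $\mathsf{SR}^X$.

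For $\mathsf{SR_0} \Rightarrow \mathsf{CR_0}$ my plan is a case analysis driven by the two relativized Nies--Stephan--Terwijn facts recalled just above. Working inside $\mathsf{RCA_0}+\mathsf{SR_0}$, fix $X$ and apply $\mathsf{SR_0}$ to obtain $Y$ Schnorr random over $X$. If $Y$ is not high over $X$, then by the theorem cited in the paragraph preceding Proposition~\ref{prop: CR SR} (every non-high Schnorr random is ML-random, provable using only $\Sigma_1$-induction), $Y$ is ML-random over $X$ and hence computably random over $X$; take $Z=Y$. If instead $Y$ is high over $X$, apply the other cited NST result---every high set Turing-computes a computably random set---relativized to $X$: from $X\oplus Y$ we extract some $Z\leq_T X\oplus Y$ that is computably random over $X$. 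Either way we produce a computably random set over $X$ inside the model, so $\mathsf{CR_0}$ holds.

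The main point to verify is that both NST results formalize in $\mathsf{RCA_0}$. The excerpt explicitly flags this for the first; the second is a direct recursive construction of a computably random real from any function dominating all computable functions, and this construction relativizes and transfers to $\mathsf{RCA_0}$ without difficulty. The classical case split on highness causes no problem since we need not decide it effectively. It is worth noting that this argument is consistent with Proposition~\ref{prop: ML CR}: in the high case we obtain a computably random set but not in general an ML-random one, which is precisely why the argument does not strengthen to $\mathsf{MLR_0}$.
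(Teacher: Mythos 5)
Your proposal is correct and follows essentially the same route as the paper: the trivial forward direction, and for the converse a case split on highness over $X$, using that a non-high Schnorr random relative to $X$ is already ML-random relative to $X$ (hence computably random), while in the high case the Nies--Stephan--Terwijn result gives a set computably random relative to $X$ computable from $X\oplus Y$. The only cosmetic difference is that the paper splits on whether \emph{any} set of the model is high over $X$, whereas you split on whether the particular Schnorr random $Y$ is; both versions work identically.
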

\begin{proof} Let $\+ M$ be a model of $\mathsf{SR_0}$. Let $X$ be a set of $\+ M$. Arguing within $\+ M$, if no set $Y$ is high in $X$, then $\SR^X = \MLR^X$, so by assumption on $\+ M$ there is a $Z$ in $\CR^X$. Otherwise, some set $Y$ is  high in $X$, i.e.,  $X'' \leT (Y\oplus X)'$, and so $Y \oplus X$ computes a set in $\CR^X$. 
\end{proof}

We note that analytical equivalents such as $\mathsf{BVDiff}$ of a randomness axiom are harder to come by in the absence of a universal test. However, we note that $\CR_0$ is equivalent,  over  $\mathsf{RCA_0} + $ sufficient  induction, to  the statement that for each $X$ there is real $z$ such that each nondecreasing function $f$ computable from $X$ is differentiable at $z$.

\begin{question} {\rm Find other pairs of randomness notions close to each other where the corresponding  principles are equivalent. For instance, consider pairs among $\mathsf{MLR_0}$, difference random reals,    ML-random density one points   Oberwolfach random reals.}\end{question} 

$\mathsf{MLR_0}$ and $\mathsf{DiffR_0}$ should be equivalent over $\mathsf{RCA_0} + $ by an argument similar to  \ref{prop: CR SR}.


\part{Randomness and computable analysis}

\section{Nies: Density and differentiability: dyadic versus full}

\label{s:diff and porous}

\newcommand{\cdf}{{\sf cdf}}

For $U,V$ measurable subsets   of a  measure space $(X, \mu)$, if $\mu(V) >0$ we let
\bc $\mu_V(U) = \mu(U \cap V) / \mu (V)$. \ec
This is the local, or conditional, measure of $U$ with respect to $V$.

The definitions  below   follow \cite{Bienvenu.Hoelzl.ea:12a}. Let~$\lambda$ denote Lebesgue measure. 
\begin{definition}
We define the (lower Lebesgue) density of a set $\sC \subseteq \R$ at a point~$z$ to be the quantity $$\varrho(\sC | z):=\liminf_{\gamma,\delta \rightarrow 0^+} \frac{\lambda([z-\gamma,z+\delta] \cap \sC)}{\gamma + \delta}.$$
\end{definition} 
\n (If we let $I= [z-\gamma, z+\delta]$, the expression above turns into the local measure $\leb_I(\sC)$.)

Intuitively, this measures the  fraction of space  filled by~$\sC$ around~$z$ if we ``zoom in'' arbitrarily close. Note that  $0 \le \varrho(\sC | z) \le 1$.
 We will  first discuss the Lebesgue density theorem.
\begin{theorem}[Lebesgue density theorem]
Let $\sC \subseteq \R$ be a measurable set. Then  $\varrho(\sC | z)=1$ for almost every $z\in \sC$.
\end{theorem}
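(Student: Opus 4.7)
The plan is to show, for each rational $\alpha < 1$, that the set
$$E_\alpha := \{z \in \sC : \varrho(\sC|z) < \alpha\}$$
has Lebesgue measure zero. Since $\{z \in \sC : \varrho(\sC|z) < 1\} = \bigcup_n E_{1-1/n}$, this yields the theorem. I would first reduce to bounded $\sC$: density is a local property, so it suffices to prove the result for $\sC \cap [-N,N]$ for each $N \in \NN$. Measurability of $E_\alpha$ follows since, for each pair of positive rationals $\gamma,\delta$, the map $z \mapsto \lambda([z-\gamma,z+\delta] \cap \sC)/(\gamma+\delta)$ is continuous in $z$, and $\varrho(\sC|\cdot)$ is the $\liminf$ of this countable family.

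The central tool is the Vitali covering lemma: given a family $\mathcal V$ of closed intervals such that for each $z \in E$ there are arbitrarily short members of $\mathcal V$ containing $z$, one can extract a countable disjoint subfamily $\{I_k\}$ with $\lambda(E \setminus \bigcup_k I_k) = 0$. I would apply this as follows: fix $\epsilon > 0$ and use outer regularity of Lebesgue measure to choose an open set $U \supseteq E_\alpha$ with $\lambda(U) \le \lambda(E_\alpha) + \epsilon$. For each $z \in E_\alpha$, the definition of $E_\alpha$ supplies arbitrarily short intervals $I = [z-\gamma, z+\delta]$ with $\lambda(I \cap \sC) < \alpha \lambda(I)$; keeping only those short enough to lie inside $U$ still produces a Vitali cover of $E_\alpha$.

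Let $\{I_k\}$ be the resulting disjoint subfamily. Since $E_\alpha \subseteq \sC$ and $E_\alpha$ is covered by $\bigcup_k I_k$ up to a null set, the $I_k$ are disjoint and contained in $U$, and each satisfies $\lambda(I_k \cap \sC) < \alpha \lambda(I_k)$, we obtain
$$\lambda(E_\alpha) \,=\, \sum_k \lambda(E_\alpha \cap I_k) \,\le\, \sum_k \lambda(\sC \cap I_k) \,<\, \alpha \sum_k \lambda(I_k) \,\le\, \alpha \lambda(U) \,\le\, \alpha\bigl(\lambda(E_\alpha) + \epsilon\bigr).$$
Rearranging gives $(1-\alpha)\lambda(E_\alpha) < \alpha \epsilon$, and since $\epsilon > 0$ was arbitrary, $\lambda(E_\alpha) = 0$.

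The main obstacle is the Vitali covering lemma itself, which I would cite as a classical fact; once it is in hand the argument reduces to the one-line chain of inequalities above. A minor subtlety lies in the choice of outer approximation: approximating $\sC$ rather than $E_\alpha$ by an open set would only yield $\lambda(E_\alpha) \le \alpha(\lambda(\sC)+\epsilon)$, which is not a contradiction; approximating $E_\alpha$ itself lets one absorb the estimate back into the left-hand side to force $\lambda(E_\alpha)=0$.
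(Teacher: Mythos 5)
Your argument is correct: it is the standard Vitali-covering proof of the Lebesgue density theorem. The paper itself gives no proof of this statement --- it is quoted as a classical fact (with a facsimile of Lebesgue's 1910 original) and used as background --- so there is no in-paper argument to compare against; your write-up supplies exactly the textbook route. Two small remarks. First, the chain $\lambda(E_\alpha)=\sum_k\lambda(E_\alpha\cap I_k)$ uses measurability of $E_\alpha$; you justify this, but you could sidestep the issue entirely by running the whole estimate with outer measure $\lambda^*$ and countable subadditivity, which is all the final inequality needs. Second, your closing observation about approximating $E_\alpha$ rather than $\sC$ by the open set $U$ is indeed the one place where the argument would silently fail if done wrong, and it is good that you flagged it.
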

It is interesting to compare this modern formulation  with the original in \cite[page 407]{Lebesgue:1910}:

\

  	  \scalebox{2.4}{\includegraphics[width=5cm,bb=0 0 380 47]{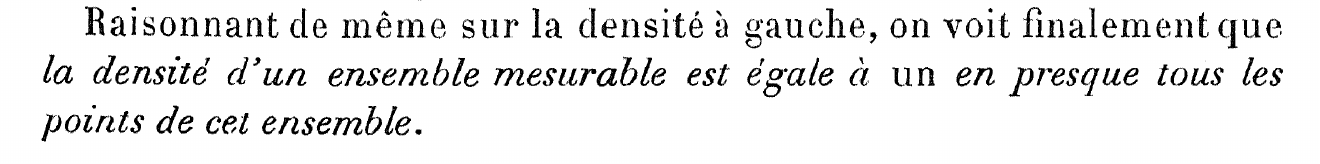}}
 
In 1910, mathematical writing was rather different from what it is today. The statement above is at the \emph{end} of a long argument. None of the statements in the 90-page monograph is labelled; so there is no cross referencing.

A (closed) basic dyadic interval has the form $[r \tp{-n}, (r+1) \tp{-n}]$ where $r \in \mathbb Z, n \in \mathbb N$.  The lower dyadic density of a set $\sC \subseteq \R$ at a point~$z$ is the variant one obtains  when only considering basic dyadic intervals containing~$z$: 
 $$\varrho_2(\sC | z):=\liminf_{z \in I \lland |I| \rightarrow 0} \frac{\lambda(I \cap \sC)}{|I|}, $$
where $I$ ranges over basic dyadic intervals containing $z$. Clearly $\varrho_2(\sC | z) \ge \varrho(\sC | z)$. 
Sometimes we use  \emph{open} basic dyadic intervals; for the definition above the distinction does not matter. 

Suppose that a real $z$ is not a dyadic rational. Let $0.Z$ be its binary expansion. Note that $\varrho_2(\sC|z)$ is the same as 

 $$\liminf_{\sss \prec Z  \lland \sssl  \rightarrow \infty} \frac{\lambda(  [\sss] \cap \sC )}{\tp{-\sssl}}, $$
  when we view $\sC$ as a subclass of $\cantor$. This is the density in Cantor space.

\begin{definition}[\cite{Bienvenu.Hoelzl.ea:12a}]
Consider $z\in[0,1]$.
\begin{itemize}
\item We say that $z$ is a \emph{density-one point}   if $\varrho(\sC | z)=1$ for every effectively closed class $\sC$ containing $z$.  

\item We say that $z$ is a \emph{positive density point} if $\varrho(\sC | z)>0$ for every effectively closed class $\sC$ containing $z$.
\end{itemize}
\end{definition}

By the Lebesgue density theorem and the fact that there are only countably many effectively closed classes, almost every real $z$ is a density-one point. 
Note that 
we can form similar  definitions   with dyadic density.
The distinction between positive and full density is typical for the setting of effective  analysis. In classical analysis, everything is settled by Lebesgue's theorem. In effective analysis, more randomness is required to ensure a real is a  full density-one point.  Day and Miller~\cite{Day.Miller:nd} have built a ML-random real which is a positive density point but not density-one point. They can in fact ensure this real is  $\DII$.

A closely related   notion, \emph{non-porosity},  originates in the work of Denjoy. See for instance \cite[5.8.124]{Bogachev.vol1:07} (but note the typo in the definition there). We say that a set $\sC\sub\mathbb R$ is \emph{porous at} $z$ via  the constant $\varepsilon >0$ if   there exists  arbitrarily small $\beta>0$  such that $(z-\beta, z+ \beta)$ contains an open interval of length $\varepsilon\beta $ that is disjoint from $\sC$. We say that $\sC$ is \emph{porous at} $z$ if it is porous  at $z$ via some~$\varepsilon>0$.

\begin{definition}[\cite{Bienvenu.Hoelzl.ea:12a}]
We call $z$ a \emph{porosity point} if some  effectively closed class to which it belongs is porous at $z$. Otherwise, $z$ is  a \emph{non-porosity point}.
\end{definition}

Clearly, if  $\+C$ is porous at $z$ then  $\varrho(\sC | z)<1$, so $z$ is not   a density-one point. Therefore, almost every point of 
$\+ C$ is a non-porosity point.

\subsection{Dyadic density 1 is equivalent to  full density 1 for ML-randoms reals}

\cite[Remark 3.4]{Bienvenu.Hoelzl.ea:12a}~show that    a  ML-random real which is a \emph{dyadic} positive density point already is a full positive density point; both notions coincide with difference randomness, or equivalently, being ML-random and Turing incomplete~ \cite{Franklin.Ng:10}. Mushfeq Khan and  Joseph Miller have recently proved the analog of this result for density-one points.

\begin{theorem} \label{thm:ddensity vs full density} Let $z$ be a ML-random dyadic density-one point. Then $z$ is a full density-one point.  \end{theorem}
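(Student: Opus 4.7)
The plan is to argue by contradiction. Suppose $z$ is ML-random and a dyadic density-one point, yet not a full density-one point. Then there is a $\Pi^0_1$ class $\mathcal{C}\ni z$ and a rational $\varepsilon>0$ such that, setting $V=[0,1]\setminus\mathcal{C}$, we have arbitrarily small intervals $I=[z-\gamma,z+\delta]$ with $\lambda(V\cap I)\ge\varepsilon(\gamma+\delta)$. For each such bad $I$, let $[\tau(I)]$ denote the smallest dyadic interval containing $I$; it necessarily contains $z$. Because $[\tau(I)]$ is minimal, $I$ must straddle the midpoint $m_{\tau(I)}$ of $[\tau(I)]$ (otherwise $I$ would fit in a smaller dyadic interval), and so $|z-m_{\tau(I)}|\le|I|$, with $m_{\tau(I)}$ being a dyadic rational of level $|\tau(I)|+1$.

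Next, I would use ML-randomness to control the scale of $[\tau(I)]$. A Borel--Cantelli argument shows that the set of reals lying within $2^{-n}/n^2$ of any dyadic rational of level $n$ has measure $O(1/n^2)$, hence forms a summable test; so for ML-random $z$ one has $|z-d|\ge 2^{-n}/n^2$ for every dyadic $d$ of level $n$ and all sufficiently large $n$. Applied with $d=m_{\tau(I)}$, this gives $|I|\ge 2^{-|\tau(I)|-1}/(|\tau(I)|+1)^2$, which forces $|[\tau(I)]|\to 0$ as $|I|\to 0$. Now invoke the dyadic density-one hypothesis at $z$: since $[\tau(I)]\ni z$ is dyadic and $|[\tau(I)]|\to 0$, we have $\lambda(V\cap[\tau(I)])/|[\tau(I)]|\to 0$. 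Combining this with $\lambda(V\cap[\tau(I)])\ge\lambda(V\cap I)\ge\varepsilon|I|$ gives $|I|/|[\tau(I)]|\to 0$. Thus $z$ is very close to $m_{\tau(I)}$ \emph{relative} to $|[\tau(I)]|$, and the $V$-mass $\ge\varepsilon|I|/2$ on (at least) one side of $m_{\tau(I)}$ concentrates in a thin end-sliver of the corresponding dyadic half $[\sigma(I)]$ of $[\tau(I)]$.

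The final step is to encode this recurring concentration as a Solovay test that $z$ fails. For each dyadic interval $[\sigma]$, side $\iota\in\{L,R\}$, and depth parameter $k$, let $E_{\sigma,\iota,k}$ denote the end-sliver of $[\sigma]$ on the $\iota$-side of length $2^{-k}|[\sigma]|$, and declare the $\Sigma^0_1$ event
\[
U_{\sigma,\iota,k}=E_{\sigma,\iota,k}\quad\text{if}\quad\lambda(V\cap E_{\sigma,\iota,k})\ge(\varepsilon/4)\,|E_{\sigma,\iota,k}|,
\]
and $\emptyset$ otherwise. By the analysis above, each sufficiently small bad $I$ places $z$ in $U_{\sigma(I),\iota(I),k(I)}$ with $k(I)\approx\log_2(|[\sigma(I)]|/|I|)$, so $z$ is caught by infinitely many of these sets. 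A Solovay-style summability estimate then aims to bound $\sum\lambda(U_{\sigma,\iota,k})$ using the $V$-mass lower bound together with $\lambda(V)\le 1$, contradicting ML-randomness of $z$.

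The \textbf{main obstacle} is the measure control. A naive sum over all $(\sigma,\iota,k)$ diverges because the end-slivers overlap extensively across scales. The fix must exploit two additional ingredients: first, the ML-random lower bound $|I|\ge 2^{-|\tau(I)|}/|\tau(I)|^2$ restricts the admissible depth $k$ as a function of $|\sigma|$ to a logarithmic window; second, the dyadic density-one hypothesis makes $\lambda(V\cap[\sigma])$ decay along $\sigma\prec z$, further pruning the non-empty $U_{\sigma,\iota,k}$ whose $\sigma$ sits on the path of $z$. Organizing the test by generation and pairing the $V$-mass witnesses with a bounded-multiplicity covering argument (or a Vitali-type extraction) should turn $\lambda(V)\le 1$ into a summable upper bound on $\sum\lambda(U_{\sigma,\iota,k})$. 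Calibrating these constants simultaneously so that infinite capture of $z$ survives while the total measure sums is the technical crux.
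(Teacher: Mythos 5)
Your geometric reductions are sound as far as they go: the minimal dyadic interval $[\tau(I)]$ around a bad interval $I$ does contain $z$ and is straddled at its midpoint, and the Borel--Cantelli bound $|z-d|\ge 2^{-n}/n^2$ for dyadic $d$ of level $n$ is a legitimate use of ML-randomness. But the proof has a genuine gap exactly at the point you call the technical crux, and neither of the repairs you propose closes it. A Solovay test must be a $z$-independent c.e.\ family with summable total measure; the family $\{U_{\sigma,\iota,k}\}$ is not summable, and restricting $k$ to a logarithmic window in $|\sigma|$ does not make it so: for fixed $n=|\sigma|$ and fixed $k$ the slivers are pairwise disjoint, so the sharpest available bound is $\sum_{\sigma,\iota}\lambda(U_{\sigma,\iota,k})\le (4/\varepsilon)\lambda(V)$, and summing over the $O(\log n)$ admissible $k$ and then over $n$ still diverges. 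The other repair --- pruning components using the decay of $\lambda(V\cap[\sigma])$ along $\sigma\prec Z$ --- is not available at all: the dyadic density-one hypothesis is a property of the particular point $z$, so it can be used to argue that $z$ is \emph{captured} by a test, but it cannot be used to decide which components the ($z$-independent) test \emph{enumerates}. The only effective way to build ``enumerate $U_{\sigma,\iota,k}$ only while $\lambda(V\cap[\sigma])$ is small'' into a test is to gate a $\Sigma^0_1$ enumeration by a $\Pi^0_1$ side condition; that is a difference test, not a Solovay test, and passing it requires $z$ to be difference random --- which is precisely the nontrivial input (Franklin--Ng, Bienvenu--H\"olzl--Miller--Nies) that a from-scratch argument would have to reprove. (A smaller, fixable issue: the $V$-mass may concentrate on the side of the midpoint \emph{opposite} to $z$, so as defined $z$ need not lie in the triggered sliver; one must enumerate both mirror slivers whenever one of them is heavy.)

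For comparison, the paper's proof (Khan--Miller) dispenses with tests entirely. Given $\rho(\mathcal{P}\mid z)<1-\epsilon$ and $\rho_2(\mathcal{P}\mid z)=1$, each bad interval $I\ni z$ is covered by three consecutive dyadic intervals $A,B,C$ of length comparable to $|I|$ with $z$ in the middle one; thickness of $\mathcal{P}$ in dyadic intervals containing $z$ forces $\mathcal{P}$ to be thin in $A$ or in $C$, so the $\Pi^0_1$ class $\mathcal{Q}$ obtained by deleting every dyadic interval where $\mathcal{P}$ is thin has holes of relative size about $1/3$ arbitrarily close to $z$, i.e.\ $\mathcal{Q}$ is porous at $z$. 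One then quotes that a ML-random dyadic positive-density point is Turing incomplete, hence difference random, hence a non-porosity point --- contradiction. Everything your sketch is struggling to encode in a single summable test is absorbed there into the cited porosity/difference-randomness machinery; without invoking (or reproving) that machinery, your construction does not go through.
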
  

The actual statement Joe and Mushfeq proved is the following.

\begin{thm} \label{prop:denseporous} Suppose $z$ is a non-porosity point. Let $\+ P$ be a $\PPI$ class, $z\in \+ P$, and $\rho(\+ P\mid z) < 1$. Then already $\rho_2 (P \mid z) < 1$. \end{thm}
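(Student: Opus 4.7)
The plan is to argue the contrapositive: assuming $\rho_2(\P\mid z)=1$ and $\rho(\P\mid z)<1$, I will construct a $\PPI$ class $\Q\ni z$ that is porous at $z$, contradicting non-porosity. A symmetrization step (any witness $[z-\gamma,z+\delta]$ of $\P^c$-density $\geq\eta$ sits in $[z-\beta,z+\beta]$ with $\beta=\max(\gamma,\delta)$, $|I'|\leq 2|I|$, and $\P^c$-density $\geq \eta/2$) produces symmetric witnessing intervals $I_k=[z-\beta_k,z+\beta_k]$ with $\beta_k\to 0$ and $\lambda(I_k\cap\P^c)\geq 2\eta\beta_k$ for some $\eta>0$.

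\emph{Locating bad dyadic intervals.} For each $k$ pick $N_k$ with $\beta_k\in[2^{-N_k-1},2^{-N_k})$ and cover $I_k$ by the (at most three) consecutive dyadic intervals $J^L_k,J^M_k,J^R_k$ of length $2^{-N_k}$, with $z\in J^M_k$. Dyadic density $1$ at $z$ gives $\lambda(J^M_k\cap\P^c)=o(\beta_k)$, so by symmetry and passing to a subsequence I may assume $J^R_k$ carries most of the $\P^c$-mass of $I_k$. Writing $b_k$ for the common boundary of $J^M_k$ and $J^R_k$ and $\mu_k=b_k-z$, a short computation (using that $J^R_k\cap I_k=[b_k,z+\beta_k]$ has length $\beta_k-\mu_k$) yields $\beta_k-\mu_k\geq(\eta/2)\beta_k$ and $\lambda(J^R_k\cap\P^c)/|J^R_k|\geq\eta/4$ for large $k$.

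\emph{Assembling $\Q$.} Since $\P^c$ is $\SI{1}$, the collection $\mathbf{Bad}$ of open dyadic intervals $J$ with $\lambda(J\cap\P^c)>(\eta/16)|J|$ is c.e., and every $J^R_k$ for large $k$ belongs to it. Dyadic density $1$ at $z$ also supplies $N_0$ so that no dyadic $J\ni z$ of length $\leq 2^{-N_0}$ lies in $\mathbf{Bad}$. Choose dyadic rationals $\hat a<z<\hat b$ with $\hat b-\hat a\leq 2^{-N_0}$ (possible since $z$ is non-dyadic---any dyadic point is itself a porosity point via the $\PPI$ singleton), and set
\[
\Q \;=\; \P\cap[\hat a,\hat b]\,\cap\,\bigcap_{\substack{J\in\mathbf{Bad}\\ J\subseteq[\hat a,\hat b]}}(\mathrm{int}\,J)^c.
\]
This is a $\PPI$ class (thanks to the $\SI{1}$-enumerability of $\bigcup_{J\in\mathbf{Bad},J\subseteq[\hat a,\hat b]}\mathrm{int}\,J$) and it contains $z$. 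For each large $k$, $J^R_k\subseteq[\hat a,\hat b]$ and $J^R_k\in\mathbf{Bad}$, so $\Q^c\supseteq(b_k,z+\beta_k)$, an open interval of length $\beta_k-\mu_k\geq(\eta/2)\beta_k$ lying in $(z-\beta_k,z+\beta_k)$. This witnesses porosity of $\Q$ at $z$ via the constant $\eta/2$ at scales $\beta_k\to 0$, contradicting $z$ being a non-porosity point.

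The main obstacle is the uniform, $\PPI$-valid construction of $\Q$: the scale-dependent witnesses $J^R_k$ cannot be packaged as a $\PPI$ class one by one, since the indexing sequence is not a priori effective. The key idea is to replace this individual enumeration by the single c.e.\ family $\mathbf{Bad}$ of \emph{all} dyadic intervals of sufficiently bad $\P^c$-density, and to trim by a rational window $[\hat a,\hat b]$ chosen small enough (via dyadic density $1$ at $z$) that $z$'s own small dyadic neighbourhoods remain outside $\mathbf{Bad}$, thereby keeping $z$ inside $\Q$.
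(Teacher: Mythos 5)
Your proposal is correct and follows essentially the same route as the paper's proof: both build the $\PPI$ class obtained by deleting, from a small dyadic neighbourhood of $z$, every basic dyadic interval in which $\+ P^c$ is relatively dense (a c.e.\ family, so the result is $\PPI$), keep $z$ inside via dyadic density one, and convert each full-density witness $I$ into a deleted dyadic interval of length comparable to $|I|$ adjacent to the one containing $z$. Your symmetrization and overlap computation merely produce the porosity constant $\eta/2$ in place of the paper's $1/3$, and your explicit window $[\hat a,\hat b]$ spells out the restriction that the paper leaves implicit in its definition of $\+ Q$.
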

Thus, we have the same class $\+ P$ both times. This implies Theorem \ref{thm:ddensity vs full density} since $z$ is Turing incomplete, and hence by the result of Bienvenu et al.\ \cite{Bienvenu.Hoelzl.ea:12a} a non-porosity point.    In Remark~\ref{rem:right-c.e.} we will see that in fact $\rho(\+ P\mid z) =\rho_2 (P \mid z) $ for each non-porosity point $z$.
\begin{proof}[Proof of Thm.\ \ref{prop:denseporous}]
Let $\epsilon >0$ be such that $\rho(P\mid z) < 1-\epsilon$. Assume that $\rho_2 (P \mid z) =1$. Let $n^*$ be sufficiently large so that $\leb_L(\+ P) \ge 1 - \epsilon/4 $ for each basic dyadic interval of length $\le \tp{-n^*}$  containing $z$.

Consider now an arbitrary interval $I$ of length $\le \tp{-n^*}$  with $z \in I$ and $\leb_I(\+ P)< 1-\epsilon$. Let $n$ be such that $\tp{-n+1} > |I| \ge \tp{-n}$; thus, $n \ge n^*$.  We may  cover  $I$ with three consecutive basic dyadic intervals $A,B,C$ of length $\tp{-n}$. Say $z \in B$. Since $\+ P$  is relatively thin in $I$, but thick in $B$, this means that $\+ P$ must be thin in $A$ or $C$. This   leads to    large `holes'  arbitrarily close to $z$   in an appropriate $\PPI$ class $\+ Q$, which shows that $z$ is   a  porosity point. This class   $\+ Q$  consists of the basic dyadic  intervals where   $\+ P$ is thick:

\bc $\+ Q = [0,1] - \bigcup \{ L \colon \, \leb_L(\+ P) < 1- \delta\}$
\ec
where $\delta = \epsilon/4$ and $L$ ranges over \emph{open} basic dyadic intervals.  We obtain that $\+Q$ is porous at $z$ via porosity constant $1/3$.

\n
\emph{Technical detail:} 
We have

 $$\leb(\+ P \cap ( A \cup B \cup C))< 3 \cdot \tp{-n} - \epsilon |I| \le (3 - \epsilon) \tp{-n},$$

while $$\leb ( \+ P \cap B ) \ge (1-\delta ) \tp{-n}.$$ Therefore

$$\leb(\+ P \cap ( A  \cup C))<(2 - (\epsilon -\delta)) \tp{-n},$$
 and so  

\bc 
 $\leb(\+ P \cap A)<(1 - (\epsilon -\delta)/2) \tp{-n}$ 
or   $\leb(\+ P \cap C)<(1 - (\epsilon -\delta)/2) \tp{-n}.$ \ec
Thus, since $\frac 3 8 \epsilon > \delta$ one of $A$, $C$ will be removed from $\+ Q$.

The case that $z \in A$ or $z \in C$ is similar. \end{proof}

\subsection{Background from analysis, and two lemmas on comparing derivatives}
We need notation and  a few definitions, mostly taken  from   \cite{Brattka.Miller.ea:nd} or  \cite{Bienvenu.Hoelzl.ea:12a}. 
For a function~$f\colon\subseteq\R\to\R$, the \emph{slope} at a pair $a,b$ of distinct reals in its domain is
\[
S_f(a,b) = \frac{f(a)-f(b)}{a-b}.
\]
For an interval $A$ with endpoints $a,b$, we also write $S_f(A)$ instead of $S_f(a,b)$. For a string  $\sss$ by $[\sss]$ we denote the closed basic dyadic interval $[0.\sss, 0.\sss + \tp{-\sssl}]$.  The open basic dyadic interval is denoted $(\sss)$.  We write $S_f([\sss])$    with the expected meaning.

If $z$ is in an open neighborhood of the domain of~$f$, the \emph{upper} and \emph{lower derivatives} \label{def_upper_lower_deriv} of $f$ at $z$ are
\[
\ol D f(z)  =  \limsup_{h\ria 0} S_f(z, z+h) \quad  \textnormal{and}    \quad
\underline D f(z)  =  \liminf_{h\ria 0} S_f(z, z+h),
\]
where as usual, $h$ ranges over positive and negative values. The derivative $f'(z)$ exists if and only if these values are equal and finite.
We can also consider the upper and lower \emph{pseudo}-derivatives   defined by:   
\begin{align*}
\utilde Df(x) &= \liminf_{h \to 0^+} \, \{S_f(a,b)  \mid   \, a\le x \le b \lland\, 0 <  b-a\le h\}, \\
\widetilde Df(x) &= \limsup_{h \to 0^+} \,  \{S_f(a,b)     \mid    \, a\le x \le b \lland\, 0 <  b-a\le h\}.
\end{align*}
where $a,b$ range over rationals in $[0,1]$.  We only use them because in our arguments it is often convenient  to consider (rational) intervals containing $x$, rather than intervals with $x$ as an endpoint. Also,  we want to be able to discuss pseudo-differentiability  for partial functions that are defined on all rationals in $[0,1]$, such as in the last section of \cite{Brattka.Miller.ea:nd}.

Brattka et al.\ \cite[after Fact 2.4 ]{Brattka.Miller.ea:nd}  check that $\ul Df(z) \le \utilde Df(z) \le \widetilde Df(z) \le \ol Df(z)$ for any real~$z\in [0,1]$; in 
   \cite[Fact 7.2]{Brattka.Miller.ea:nd}  they  verify    that for continuous functions with domain $[0,1]$, the lower and upper pseudo-derivatives of $f$ coincide with the usual lower and upper derivatives. 
   
   They also  coincide if $f$ is nondecreasing: for instance, to show  $ \utilde Df(z) \le \ul Df(z)$, fix  an arbitrarily small $\epsilon >0$. Given $h > 0$,  choose rationals $a \le z $, $z+h \le b$ such that $(b-a) \le (1+\epsilon ) h$. Then $S_f(z, z+h) \le (1+\epsilon) S_f(a,b)$.

We will use the subscript $2$ to indicate that all the limit operations  are restricted to the case of  basic dyadic intervals containing   $z$. For instance, 

\[\widetilde D_2f(x) = \limsup_{|A| \to 0} \,  \{S_f(A)     \mid    \, x \in A \lland A \text{ is basic dyadic interval}\}. \]

\subsubsection{A pair of   analytical lemmas}
Similar to Theorem~\ref{thm:ddensity vs full density}, we show that  discrepancy of dyadic and full upper/lower derivatives at $z$ implies that some closed set is porous at $z$. 
\begin{lemma}\label{lem:classic diff porous}
	Suppose $f \colon \, [0,1] \to \RR$ is a nondecreasing function. Suppose for a real $z \in [0,1]$, with binary representation  $z = 0.Z$,  there is rational $p$     such that  
	\[\widetilde D_2 f(z) < p  < \widetilde Df(z).\]
	Let $\sss^* \prec Z$ be any string such that  $\forall \sss \, [ Z \succ \sss  \succeq \sss^*   \RA S_f([\sss]) \le p]$.  Then the closed set  
	\begin{equation} \+ C = [\sss^*] - \bigcup \{ (\sigma) \colon \, \sss \succeq \sss^* \lland   S_f([\sss]) >p\},\label{eqn: def C porous} \end{equation}
which contains  $z$,    	is porous at $z$.
\end{lemma}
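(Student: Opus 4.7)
The plan is to prove the porosity claim by contradiction.  Suppose $\+ C$ is not porous at $z$ via some constant $\epsilon^*>0$ to be fixed below.  Then there is a $\beta_0>0$ such that for every $\beta\in(0,\beta_0)$, no open sub-interval of $(z-\beta,z+\beta)$ of length $\ge \epsilon^*\beta$ is disjoint from $\+ C$.  Since $z$ lies in the interior of $(\sigma^*)$, by shrinking $\beta_0$ we may assume $(z-\beta_0,z+\beta_0)\subseteq (\sigma^*)$.  Then every open basic dyadic interval $(\sigma)\subseteq(z-\beta,z+\beta)$ with $|(\sigma)|\ge \epsilon^*\beta$ satisfies $\sigma\succeq\sigma^*$ and must have $S_f([\sigma])\le p$, for otherwise $(\sigma)$ would be contained in $\+ C^c$ and furnish a forbidden hole.

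Using $\widetilde Df(z)>p$, fix $\delta>0$ with $\widetilde Df(z)>p+\delta$ and choose an interval $A=[a,b]$ with rational endpoints, $a\le z\le b$, $|A|<\beta_0/3$, and $S_f(A)\ge p+\delta$.  Set $\beta=2|A|<\beta_0$, so $A\subseteq(z-\beta,z+\beta)$, and let $n$ be the unique integer with $\epsilon^*\beta\le 2^{-n}<2\epsilon^*\beta$, so $2^{-n}<4\epsilon^*|A|$.  Cover $A$ by the consecutive basic dyadic intervals of length $2^{-n}$ meeting $A$; there are at most $|A|/2^{-n}+2$ of them.  Assuming $\epsilon^*<1/4$, the cover is contained in $(z-\beta,z+\beta)$, and since each covering interval has length $\ge \epsilon^*\beta$, the first paragraph forces $S_f\le p$ on each.

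Because $f$ is nondecreasing, $f(b)-f(a)$ is at most the sum of the $f$-increments on the covering intervals:
\[
f(b)-f(a)\ \le\ p\cdot(\text{count})\cdot 2^{-n}\ \le\ p\bigl(|A|+2\cdot 2^{-n}\bigr)\ <\ p|A|(1+8\epsilon^*).
\]
Hence $S_f(A)<p(1+8\epsilon^*)$.  Choosing $\epsilon^*$ so that $8p\epsilon^*\le\delta$ --- for instance $\epsilon^*=\min(\delta/(8p),1/8)$ --- contradicts $S_f(A)\ge p+\delta$, and thus $\+ C$ is porous at $z$ via this $\epsilon^*$.

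The principal difficulty is the calibration of the dyadic scale: $2^{-n}$ must be large enough for the non-porosity hypothesis to force $S_f\le p$ on the covering intervals, yet small enough that covering $A$ by them overcounts $|A|$ only by a factor $1+O(\epsilon^*)$.  Taking $2^{-n}$ to be the largest dyadic power strictly below $2\epsilon^*\beta$ balances the two requirements and fixes the porosity constant.
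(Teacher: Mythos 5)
Your proof is correct, and it is essentially the paper's own argument run in contrapositive form: both cover an interval $A \ni z$ witnessing $S_f(A) > p + \delta$ by a dyadic grid whose mesh is a fixed small fraction of $|A|$, and both use monotonicity of $f$ to control the overshoot of the grid over $A$ and conclude that at least one grid cell must have slope $> p$ — which is exactly a hole in $\mathcal{C}$ of length proportional to $|A|$. The differences are only presentational (proof by contradiction versus a direct pigeonhole, and your porosity constant $\min(\delta/(8p),1/8)$ in place of the paper's $2^{-k-2}$).
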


  \begin{proof}    Suppose $k\in \NN $ is such that  $p(1+\tp{-k+1})<  \widetilde Df(z)$. We show that there exists arbitrarily large $n$ such that some basic dyadic interval  $[a, \dot a]$     of length $\tp{-n-k}$ is disjoint from $\+ C$, and contained in $[z- \tp{-n+2}, z + \tp{-n+2}]$.  In particular,   we can choose $\tp{-k-2}$  as  a porosity constant.

By choice of $k$  there is  an interval $I \ni z$ of  arbitrarily short  positive length such that $  p(1+\tp{-k+1})< S_f(I) $. Let $n$ be such that $\tp{-n+1} > |I| \ge \tp{-n}$. Let $a_0$ be greatest of the form $v \tp{-n-k}$, $v \in \ZZ$, such that $a_0 <  \min I$. 
 Let $a_v = a_0 + v \tp{-n-k}$. Let $r$ be least such that $a_r \ge \max I$. 

Since $f$ is nondecreasing and $a_r - a_0 \le |I| + \tp{-n-k+1} \le  (1+ \tp{-k+1}) |I|$, we have 
\[ S_f (I)	 \le S_f(a_0, a_r) (1+ \tp{-k+1}  ),\]
 and therefore $S_f(a_0,a_r)>p$. Then,  by the averaging property of  slopes at consecutive intervals of equal length, there is an $u<r$ such that  $$S_f(a_u,a_{u+1})>p.$$ Since $(a_u, a_{u+1}) = (\sss)$ for some string $\sss$, this gives the required `hole' in $\+ C$ which is near $z \in I$ and large on the scale of $I$: in the definition of porosity let $\beta = \tp{-n+2}$ and note that  we have   $[a_u, a_{u+1}]  \sub [z- \tp{-n+2}, z + \tp{-n+2}]$ because $z \in I$ and $|I| < \tp{-n+1}$. 
  \end{proof}

There also is a \textbf{dual lemma} for   lower derivatives. Note that it can \emph{not} simply  be obtained from the first by taking $-f$ because the function in the dual lemma  is still non\emph{de}creasing. In fact, now the shortish dyadic intervals we choose in the proof  are all \emph{contained in}~$I$. (So in fact we can get a porosity constant $\tp{-k-1}$.)

\begin{lemma}\label{lem:classic diff porous 2}
	Suppose $f \colon \, [0,1] \to \RR$ is a nondecreasing function. Suppose for a real $z \in [0,1]$, with binary representation  $z = 0.Z$,  there  a rational $q$     such that  
	\[\utilde D f(z) <  q < \utilde D_2f(z).\]
		Let $\sss^* \prec Z$ be any string such that  $\forall \sss \, [ Z \succ \sss  \succeq \sss^*   \RA S_f([\sss]) \ge q]$. Then the   closed set 
	\[ \+ C = [\sss^*] - \bigcup \{ (\sigma) \colon \, \sss \succeq \sss^* \lland   S_f([\sss]) < q\},\]
 which contains $z$, is porous at $z$. \end{lemma}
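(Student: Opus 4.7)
The plan is to mirror the proof of Lemma~\ref{lem:classic diff porous}, but with the roles of ``contains'' and ``is contained in'' swapped: to push the bound on the slope of $I$ down to a dyadic subinterval, the short dyadic intervals must lie \emph{inside} $I$ rather than cover it. I would first fix $k \in \NN$ large enough that $q(1 - \tp{-k+1}) > \utilde Df(z)$ and $k \ge 3$. The first condition guarantees arbitrarily short intervals $I \ni z$ with $S_f(I) < q(1 - \tp{-k+1})$, and the second ensures at least two full dyadic subintervals of length $\tp{-n-k}$ fit inside $I$ whenever $\tp{-n} \le |I| < \tp{-n+1}$.

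For each such $I$, let $[a_0, a_1], \dots, [a_{r-1}, a_r]$ be the maximal chain of consecutive basic dyadic intervals of length $\tp{-n-k}$ contained in $I$. Then $[a_0, a_r] \subseteq I$ and $a_r - a_0 > |I| - \tp{-n-k+1} \ge |I|(1 - \tp{-k+1})$, so monotonicity of $f$ yields
\[
S_f(a_0, a_r) \le \frac{S_f(I)\, |I|}{|I|(1 - \tp{-k+1})} < q.
\]
By the averaging property for slopes of equal-length consecutive intervals, $S_f(a_0, a_r)$ is the mean of the $r$ slopes $S_f(a_u, a_{u+1})$. For $n$ sufficiently large, $I \subseteq [\sss^*]$ and the unique subinterval $[a_{u_0}, a_{u_0+1}]$ containing $z$ equals $[\sss]$ for some $\sss \succeq \sss^*$ with $\sss \prec Z$, so by hypothesis $S_f([\sss]) \ge q$. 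Since the average of all $r$ slopes is $<q$ and one of them is $\ge q$, some index $u \ne u_0$ must satisfy $S_f([a_u, a_{u+1}]) < q$; the corresponding string extends $\sss^*$, so the open interval $(a_u, a_{u+1})$ is deleted in the definition of $\+C$.

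This hole has length $\tp{-n-k}$ and lies in $I \subseteq (z - \tp{-n+1}, z + \tp{-n+1})$. Setting $\beta = \tp{-n+1}$, we obtain an open interval of length $\tp{-k-1} \beta$ disjoint from $\+C$ inside $(z-\beta, z+\beta)$. Since $|I|$ ranges over arbitrarily small positive values, so does $\beta$, establishing porosity of $\+C$ at $z$ with constant $\tp{-k-1}$.

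The delicate point, which is the genuine asymmetry with Lemma~\ref{lem:classic diff porous}, is that we must ensure a subinterval \emph{other} than the $z$-containing one has slope $<q$. In Lemma~\ref{lem:classic diff porous} the hole arose directly because $\+P$ could not be thick in all three of $A, B, C$ at once; here the hole is isolated by an averaging argument, using that the one subinterval at $z$ already has large slope while the whole chain has small slope, forcing another subinterval to compensate.
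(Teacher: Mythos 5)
Your proof is correct and follows essentially the same route as the paper's: choose $k$ with $\utilde D f(z) < q(1-\tp{-k+1})$, take arbitrarily short intervals $I \ni z$ with $S_f(I) < q(1-\tp{-k+1})$, pass to the maximal chain of consecutive basic dyadic intervals of length $\tp{-n-k}$ contained in $I$, deduce $S_f(a_0,a_r)<q$ from monotonicity, and extract a subinterval of slope $<q$ by averaging, yielding a hole of relative size $\tp{-k-1}$. The only difference is your extra step ensuring the hole avoids the subinterval containing $z$; the paper omits this because the hypothesis on $\sss^*$ already guarantees that subinterval is never removed, so the observation is harmless but not needed for porosity.
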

  \begin{proof}  
The argument is very similar to the previous one. We will show that    we can choose as  a porosity constant $\tp{-k-1}$ where $k\in \NN $ is such that  $\utilde D f(z) < q(1- \tp{-k+1})$.
There is an  interval $I \ni z$ of  arbitrarily short  positive length such that $S_f(I) < q(1- \tp{-k+1})$. As before, let $n$ be such that $\tp{-n+1} > |I| \ge \tp{-n}$. Let $a_0$ be least of the form $v \tp{-n-k}$, $v \in \ZZ$, such that $a_0 \ge  \min (I)$. 
 Let $a_v = a_0 + v \tp{-n-k}$. Let $r$ be greatest  such that $a_r \le \max (I)$. 

Since $f$ is nondecreasing and $a_r - a_0 \ge  |I| - \tp{-n-k+1} \ge  (1- \tp{-k+1}) |I|$, we have 
\[ S_f (I)	 \ge S_f(a_0, a_r) (1- \tp{-k+1}  ),\]
 and therefore $S_f(a_0,a_r)< q$. Then there is an $u<r$ such that  $$S_f(a_u,a_{u+1})< q.$$ As before,  this gives the required hole in $\+ C$ which is near $z \in I$. 
  \end{proof}

 	\subsubsection{Basic dyadic intervals shifted by $1/3$}
	For $m \in \NN$ let $\+D _m $ be the collection of intervals of the form $$[k \tp{-m}, (k+1)\tp{-m}]$$ where $k \in \ZZ$. Let $  \+ D'_m$ be the set of  intervals $(1/3)  +I $ where $I \in \+ D_m$. 
	 We use a  `geometric' fact from  Morayne and Solecki~\cite{Morayne.Solecki:89}:
	\begin{fact} \label{fact:geom} Let $m \ge 1$.  If  $I \in \+ D_m$ and $J \in \+ D'_m$, then the distance between an  endpoint of $I$ and an endpoint of $J$ is at least $1/(3 \cdot 2^m)$.
	\end{fact}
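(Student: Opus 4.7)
The plan is to reduce this geometric claim to a one-line number-theoretic observation about the residue of $2^m$ modulo $3$. First I would parameterize: every endpoint of an interval in $\+D_m$ has the form $k \cdot 2^{-m}$ for some $k \in \ZZ$, and since $\+D'_m$ is simply $\+D_m$ shifted by $1/3$, every endpoint of an interval in $\+D'_m$ has the form $1/3 + j \cdot 2^{-m}$ for some $j \in \ZZ$. The distance between two arbitrary such endpoints is therefore $|(k-j) \cdot 2^{-m} - 1/3|$.

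Setting $n = k - j \in \ZZ$ and multiplying the desired bound $|n \cdot 2^{-m} - 1/3| \ge 1/(3 \cdot 2^m)$ through by $3 \cdot 2^m > 0$, the inequality becomes $|3n - 2^m| \ge 1$. Both $3n$ and $2^m$ are integers, so it suffices to verify they are distinct; since $m \ge 1$, the integer $2^m$ is not divisible by $3$, hence $2^m \ne 3n$ for every $n \in \ZZ$, and two distinct integers differ by at least $1$.

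The main obstacle is essentially non-existent: once the geometry is stripped away, the argument is pure arithmetic. No case analysis on the signs of $k,j$ or on the relative positions of $I$ and $J$ is required, because the conclusion depends only on the fact that $2$ and $3$ are coprime. The role of the shift by $1/3$, as opposed to by a dyadic rational, is precisely to exploit this coprimality; this is the reason $\+D'_m$ is useful alongside $\+D_m$ in the Morayne--Solecki style arguments that will follow.
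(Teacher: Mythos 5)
Your proof is correct and is essentially the paper's own argument: the paper likewise writes the difference of endpoints as $(3k-3p-2^m)/(3\cdot 2^m)$ and concludes that a smaller distance would force $3\mid 2^m$, a contradiction. The only cosmetic difference is that you phrase it as $|3n-2^m|\ge 1$ for distinct integers rather than as a proof by contradiction.
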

	To see this: assume  that  $k \tp{-m} - ( p \tp{-m} +1/3) < 1/(3 \cdot 2^m)$. This yields $(3k-3p-2^m)/ (3 \cdot2^m) < 1/(3 \cdot 2^m)$, and hence $3| 2^m$, a contradiction.

	In the following we need values of functions at endpoints of any such intervals. So  we think of nondecreasing functions $f \colon \, [0,1] \to \RR$ extended to all of $\RR$ via $f(x) = f(0)$ for $x< 0$ and $f(y) = f(1)$ for $y>1$.  Effectiveness properties, such as computable or interval-c.e. (defined below), are preserved by this because it suffices to compute values of the function in question at rationals.

\subsection{Differentiability of nondecreasing computable functions} \ 
We give a short proof of  the following.
\begin{theorem}[\cite{Brattka.Miller.ea:nd}, Thm.\ 4.1]\label{thm:CRd dyadic diff}
		Suppose $f \colon \, [0,1] \to \RR$ is a nondecreasing computable function. Let $z \in [0,1]$ be computably random. Then $f'(z)$ exists. 
\end{theorem}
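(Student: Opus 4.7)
The plan is to combine the computable martingale structure of the slope function $S_f$ with the $1/3$-shifted dyadic system, and then use Lemmas \ref{lem:classic diff porous} and \ref{lem:classic diff porous 2} to pass from dyadic to full derivatives. First I would introduce the nonnegative computable martingale
\[
d(\sigma) \;=\; 2^{|\sigma|}\bigl(f(0.\sigma + 2^{-|\sigma|}) - f(0.\sigma)\bigr) \;=\; S_f([\sigma]).
\]
The identity $d(\sigma) = \tfrac12(d(\sigma 0) + d(\sigma 1))$ is immediate from the additivity of $f$-increments along $[\sigma 0]\cup[\sigma 1] = [\sigma]$, and $d$ is computable because $f$ is. Since $z = 0.Z$ is computably random, $d$ does not succeed on $Z$, so $\limsup_n d(Z\upharpoonright n) < \infty$. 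An auxiliary computable upcrossing martingale that bets on oscillations of $d$ between two fixed rationals would itself succeed on $Z$ if $\liminf_n d(Z\upharpoonright n) < \limsup_n d(Z\upharpoonright n)$; so in fact $\alpha := \lim_n d(Z\upharpoonright n)$ exists and is finite. This yields $\widetilde D_2 f(z) = \utilde D_2 f(z) = \alpha$.

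Second, I would repeat the argument in the $1/3$-shifted system by applying it to $g(x) = f(x + 1/3)$ (defined on $[0,2/3]$ and extended by $f(1)$ on $[2/3,1]$), which is again computable and nondecreasing. Since computable randomness is preserved under rational translation, $z - 1/3$ is computably random, so the associated martingale for $g$ converges along its binary expansion. Translating back yields a finite $\tilde\alpha$ with $\widetilde D_2^{(1/3)} f(z) = \utilde D_2^{(1/3)} f(z) = \tilde\alpha$, the superscript indicating that the dyadic intervals are now taken from the shifted system $\+ D'_m$.

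For the final step, suppose for contradiction that $\widetilde D f(z) > \alpha$ (the case $\utilde D f(z) < \alpha$ is symmetric via Lemma \ref{lem:classic diff porous 2}), and fix a rational $p$ with $\alpha < p < \widetilde D f(z)$. Lemma \ref{lem:classic diff porous} produces a $\Pi^0_1$ class $\+ C$ as in (\ref{eqn: def C porous}) that is porous at $z$: there are arbitrarily small $\beta > 0$ such that $(z-\beta, z+\beta)$ contains an open sub-interval $J$ of length $\varepsilon\beta$ covered by the standard dyadic intervals $(\tau)$ with $S_f([\tau]) > p$, so that the $f$-mass of $J$ exceeds $p\varepsilon\beta$. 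By Fact \ref{fact:geom}, at the scale $2^{-n}$ comparable to $\beta$ the point $z$ is at distance at least $2^{-n}/6$ from the endpoints of its level-$n$ neighbourhood $B$ in either the standard or the shifted system, so $B$ contains $J$. Consequently $S_f(B) \ge p\varepsilon\beta/|B|$ is bounded away from zero, which forces one of $\alpha$ or $\tilde\alpha$ to be at least a positive constant multiple of $p$ — a contradiction once $p$ is chosen close enough to $\widetilde D f(z)$.

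The main obstacle is precisely this final balancing: the porosity constant $\varepsilon$ from Lemma \ref{lem:classic diff porous} degrades as $p \to \widetilde D f(z)$, so the crude lower bound $p\varepsilon\beta/|B|$ shrinks in parallel. Producing a clean contradiction that rules out $\widetilde D f(z) > \alpha$ in full generality (rather than merely $\widetilde D f(z) < \infty$) requires choosing $p$, $\varepsilon$, and the scale $\beta$ in coordination — likely by iterating the $1/3$-shift comparison across nested scales so that the concentrated $f$-mass witnessed by the porosity hole is captured sharply by a dyadic or shifted dyadic interval containing $z$ of matching length.
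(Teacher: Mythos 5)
Your first two steps are sound and coincide with the paper's: the slope martingale $M(\sigma)=S_f([\sigma])$ is computable and converges along $Z$ by computable randomness (Doob/upcrossing), and the same holds in the $1/3$-shifted system because $z-1/3$ is also computably random; this yields finite limits $\alpha$ and $\tilde\alpha$ for the dyadic and shifted-dyadic slopes. The gap is in your final step. The single-scale mass estimate $S_f(B)\ge p\varepsilon\beta/|B|$ cannot produce a contradiction: with $|B|$ comparable to $\beta$ it only gives $\alpha\ge c\,\varepsilon p$ (or the same for $\tilde\alpha$) with $c\,\varepsilon<1$, which is entirely consistent with $\alpha<p$. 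No coordination of $p$, $\varepsilon$ and $\beta$ repairs this, because a hole occupying a fraction $\varepsilon$ of $B$ on which the slope exceeds $p$ raises $S_f(B)$ by at most about $\varepsilon p$ above the contribution of the rest of $B$, and monotonicity of $f$ gives no useful lower bound on that rest. You correctly identify this obstacle, but ``iterating the $1/3$-shift comparison across nested scales'' is not an argument; the contradiction genuinely cannot be extracted from the limit values $\alpha,\tilde\alpha$ by a measure estimate at any single scale.

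What is missing is the conversion of the infinitely many porosity holes into an \emph{accumulating betting strategy}. The paper's proof defines two further computable rational-valued martingales $L$ (read along $Z$) and $L'$ (read along $Y$, the expansion of $z-1/3$): whenever the current dyadic (resp.\ shifted-dyadic) interval is seen to contain a sub-interval of relative length $2^{-(k+4)}$ on which the approximated slope exceeds a rational $r$ chosen with $\widetilde D_2f(z)<r<p$, the martingale bets all its capital against that sub-interval, multiplying its capital by $2^{k+4}/(2^{k+4}-1)$ along the remaining extensions. Fact~\ref{fact:geom} guarantees that each hole produced by Lemma~\ref{lem:classic diff porous}, together with $z$, lies inside a single interval of $\+ D_{n-4}$ or of $\+ D'_{n-4}$, so at least one of $L,L'$ profits by a fixed factor infinitely often, and it never loses along its real because no dyadic interval containing $z$ of length at most $2^{-n^*}$ has slope exceeding $r$. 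This contradicts computable randomness of $z$ or of $z-1/3$. The multiplicative accumulation of capital across infinitely many scales is exactly the mechanism your proposal lacks, and it is where the shifted system is actually used --- not to compare the two limits $\alpha$ and $\tilde\alpha$, but to ensure every hole is seen by one of the two betting strategies.
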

\begin{proof}   We  may assume $z> 1/2$, else  we work with $f(x+1/2)$ instead of $f$.

 Recall that a  Cauchy name is a sequence $(p_i) \sN i$,  $p_i \in \QQ$, such that  $\fa k > i  \,  |p_i - p_k | \le \tp{-i}$.
 Consider the computable martingale \bc $M(\sss) = S_f(0. \sss, 0. \sss+ \tp{-\sssl})$. \ec  Computability of~$M$ means that $M(\sss) $ is given by a  uniformly in $\sss$ computable Cauchy name. We denote by  $M(\sss)_u$ the $u$-th term of this Cauchy name, so that $|M(\sss) - M(\sss)_u | \le \tp{-u}$.

Note that  $\lim_n M(Z\uhr n)$ exists and is finite for each computably random real~$Z$. This is a version of Doob martingale convergence; see, for instance \cite{Downey.Hirschfeldt:book}.
Returning  to the language of slopes, the convergence of $M$ on $Z$ means that  $\utilde D_2 f(z)= \widetilde D_2 f(z) < \infty$.

 Assume for a contradiction that  $f'(z)$ fails to exist.

First suppose that  $\widetilde D_2 f(z) <  \widetilde Df(z)$.
Choose rationals $r,p$ such that $\widetilde D_2 f(z) <r  < p  < \widetilde Df(z)$. Choose $u \in \NN$ so large that $\widetilde D_2 f(z) <r - \tp{-u}$ and $r+\tp{-u} <p$.
	As usual let $Z\in \cantor $ be such that $z = 0.Z$. Let  $n^*$ be sufficiently large so that   $  [S_f(A) \le r -\tp{-u}]$ for each basic dyadic interval $A$ containing $z$ and of length $\le \tp{-n^*}$. Choose $k$ with  $p(1+\tp{-k+1})<  \widetilde Df(z)$. Then  Lemma~\ref{lem:classic diff porous} applies via the   string  $\sss^*= Z \uhr {n^*}$.  
	
	We define  a  computable rational-valued martingales $L, L'$ such that $L$ succeeds on $Z$, or $L'$ succeeds on~$Y$ where $0.Y$ is the  binary expansion of  $z-1/3$. 
	
	\vsp

	\n \fbox{\emph{Defining $L$.}} It suffices to consider   strings $\sss \succeq \sss^*$. 
	Let $L(\sss^*)=1$. Suppose $\eta \succeq \sss^* $ and $L(\eta)$ has been defined. 
Check  if there is a string $\alpha$ of length $k+4$ such that $M(\eta \alpha)_u> r$.  (Note we have an algorithm for that because $f$ is computable.) 

If so, bet $0$ on $\eta \alpha$ (we know that $\eta \alpha \not \prec Z$, so this won't make us lose along~$Z$). In return, increase the capital by a factor of $\tp{k+4}/(\tp{k+4}-1)$ along all strings $\eta \widehat \alpha$ such that  $|\widehat \alpha| = k+4$ and $\widehat \alpha \neq \alpha$. Continue the strategy  with all strings $\eta \widehat \alpha$.

If no such $\alpha $ exists, don't bet, that is, let  $L(\eta 0) = L(\eta 1) = L(\eta)$. Continue  with the strings $\eta 0$ and $\eta 1$.

\vsp

	\n \fbox{\emph{Defining $L'$.}} 	Let $\rho^*  = Y\uhr{n^* +1}$. It suffices to consider   strings $\rho \succeq \rho^*$. 

	Let $L'(\rho^*)=1$. Suppose $\rho \succeq \rho^*$ and $L(\rho)$ has been defined. 
Check if there is a string $\beta$ of length $k+5$ such that $[\rho \beta]+1/3 \sub [\tau]$ for a  string $\tau$ of length $|\rho \beta| -1$, and $M(\tau)_u> r$.  

If so, bet $0$ on $\rho \beta$ (we know that $\rho \beta \not \prec Y$). In return, increase the capital by a factor of $\tp{k+5}/(\tp{k+5}-1)$ along all strings $\rho  \widehat \beta$ such that  $|\widehat \beta| = k+5$ and $\widehat \beta \neq \beta$. Continue the strategy with all strings $\rho \widehat \beta$.

If no such $\beta $ exists, don't bet, that is, let  $L(\rho 0) = L(\rho 1) = L(\rho)$. Continue with the strings $\rho 0$ and $\rho 1$.

We show that 
	$L$ succeeds on $Z$, or $L'$ succeeds on $Y$. Let $\+ C$ be the class from (\ref{eqn: def C porous}) in Lemma~\ref{lem:classic diff porous}.  
Consider $n \ge n^*+4$  and a hole $[a,\dot a] \cap \+ C = \ES$ where $[a,\dot a]$ is a basic dyadic interval of length $\tp{-n-k}$, and $[a,\dot a] \sub [z - \tp{-n+2}, z+ \tp{-n+2}]$.

   By Fact~\ref{fact:geom} we have
\begin{claim}
	One of the following is true.
	\bi \item[(i)]  $z, a, \dot a $ are   all  contained in a single interval $A$ taken from $\+  D_{n-4}$. 
	
	\item[(ii)]  $z,a, \dot a $   are  all  contained in a single   interval $A'$ taken from $\+ D'_{n-4}$. \ei
\end{claim}
	In case (i) let $A = [\eta]$, so that $\eta \prec Z$ (recall $Z \not \in \QQ$ so there is no problem with the end points). Let $[a,\dot a] = \eta \alpha$ where $|\alpha|= k+4$. We have $z \not \in [a,\dot a]$, and $L$ increases its   capital by a factor of   $\tp{k+4}/(\tp{k+4}-1)$ along all  strings $\eta \hat \alpha$ as above. 
	
	Now suppose case (ii) applies. Let $\rho$ be the string such that $A'= [\rho]+ 1/3$. There is $[b, \hat b]$ from $\+ D'_{n+k+1}$ with $[b, \dot b] \sub [a, \dot a]$. Since (ii) holds we have $[b, \dot b] = [\rho  \beta]$ for some string $\beta$ of length $k+5$. We have $z \not \in [b,\dot b]$ and $L'$ increases its   capital by a factor of  $\tp{k+5}/(\tp{k+5}-1)$ along all  strings $\rho \widehat \beta$ as above. 
	
	Suppose now that $L$ fails on $Z$. Then   for  all sufficiently long $\gamma \prec Y$ we can   find $\rho$ with $\gamma \preceq \rho \prec Z$ and $L'$ increases its capital by a fixed factor $>1$ on the next $k+5$ bits of $Y$. Also the capital of $L'$ along $Y$   never decreases, because there is no basic dyadic interval $[\tau] \ni z$ with $|\tau| \ge n^*$ and $S_f(\tau)_u \ge r$. So $L'$ succeeds on $Y$.

The case  $\utilde D f(z) <  \utilde D_2f(z)$  is  analogous, using Lemma \ref{lem:classic diff porous 2} instead of Lemma~\ref{lem:classic diff porous}.
\end{proof}

The method of the proof has an interesting consequence. See e.g.~\cite[7.6.2]{Nies:book} or  \cite{Downey.Hirschfeldt:book} for the definition of Church  (or computable) stochasticity. By \cite{Ambos.Mayordomo.ea:96},  also see \cite[6.4.11]{Downey.Hirschfeldt:book},  $X \in \cantor$ is Church stochastic iff no computable  martingale that uses only finitely many, positive rational betting factors can win on $X$. The martingales $L$, $L'$ constructed above are of this kind (in fact we have to modify them slightly in order to avoid betting 0). 
\begin{corollary} Suppose that $z$ is Church stochastic. Then for each   nondecreasing computable function $f \colon \, [0,1] \to \RR$, we have  $\widetilde D_2 f(z) =  \widetilde Df(z)$ and  $ \utilde D_2f(z) = \utilde Df(z)$. \end{corollary}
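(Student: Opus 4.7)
The plan is to revisit the proof of Theorem~\ref{thm:CRd dyadic diff} and observe that the martingales $L, L'$ constructed there are, up to a harmless perturbation, Church-stochasticity martingales. Assume for contradiction that $\widetilde D_2 f(z) < \widetilde D f(z)$; the dual case $\utilde D f(z) < \utilde D_2 f(z)$ is analogous, using Lemma~\ref{lem:classic diff porous 2} in place of Lemma~\ref{lem:classic diff porous}. With $r, p, u, k$ chosen as in that proof, the construction yields computable martingales $L$ (on prefixes of $Z$) and $L'$ (on prefixes of $Y$, where $0.Y = z - 1/3$), and guarantees the dichotomy that at least one of them succeeds on its target.

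Unpack the aggregate length-$(k+4)$ bet of $L$ into $k+4$ consecutive single-step bets, and similarly for $L'$ at scale $k+5$. A direct computation shows that every non-trivial single-step factor has the form $(2^\ell - 2)/(2^\ell - 1)$ on a ``bad'' direction or $2^\ell/(2^\ell - 1)$ on its sibling, for some $1 \le \ell \le k+5$, with the single exception that at the very last step of a bad path the factors are $(0, 2)$. Replace, in that one offending single-step bet, the factor $0$ by a positive rational $\varepsilon < 2^{-k-3}$ and the factor $2$ by $2 - \varepsilon$. The resulting computable martingales use only finitely many positive rational single-step betting factors. Since the target $Z$ (respectively $Y$) never takes the bad leaf itself, a short calculation shows that the modified aggregate growth factor along the target at each firing is still strictly greater than $1$ (the two possible aggregate factors are $2^{k+4}/(2^{k+4}-1)$ and $2^{k+3}(2-\varepsilon)/(2^{k+4}-1)$, both exceeding $1$ for $\varepsilon$ in the chosen range). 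The theorem's dichotomy carries over, so at least one of the modified martingales succeeds on its target.

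Success of the modified $L$ on $Z$ directly contradicts Church stochasticity of $z$. Success of the modified $L'$ on $Y$ contradicts Church stochasticity of $y = z - 1/3$, and so the remaining step is to invoke preservation of Church stochasticity under translation by a computable real. This is the main technicality: it follows by pulling back computable martingales of the restricted form along the computable measure-preserving bijection $z \mapsto z + 1/3 \pmod 1$, which sends finitely many positive rational betting factors to finitely many positive rational betting factors up to a routine renormalization. Granting this, we obtain the desired contradiction $\widetilde D_2 f(z) = \widetilde D f(z)$, and the symmetric argument yields $\utilde D_2 f(z) = \utilde D f(z)$.
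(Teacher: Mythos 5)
Your overall strategy is the one the paper itself uses: the corollary is read off from the proof of Theorem~\ref{thm:CRd dyadic diff} by observing that the martingales $L$ and $L'$ built there use only finitely many positive rational betting factors once the zero bets are removed. Your explicit repair of the zero bets --- unpacking the aggregate length-$(k+4)$ (resp.\ $(k+5)$) bet into single steps, replacing the one factor pair $(0,2)$ by $(\varepsilon, 2-\varepsilon)$, and checking that the surviving aggregate factors $2^{k+4}/(2^{k+4}-1)$ and $2^{k+3}(2-\varepsilon)/(2^{k+4}-1)$ still exceed $1$ when $\varepsilon < 2^{-k-3}$ --- is correct and fills in the detail the paper only gestures at with ``we have to modify them slightly in order to avoid betting $0$''.

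The gap is precisely in the step you flag as the main technicality. The dichotomy from Theorem~\ref{thm:CRd dyadic diff} only yields that $L$ succeeds on $Z$ \emph{or} $L'$ succeeds on $Y$, where $0.Y$ is the expansion of $z-1/3$; in the second case you need Church stochasticity to transfer from $Z$ to $Y$. Your justification --- pulling back a restricted martingale along $x \mapsto x + 1/3 \pmod 1$ --- does not work: this translation does not carry basic dyadic intervals to basic dyadic intervals, so it induces no dyadic martingale at all in any direct sense (the natural candidate $\sigma \mapsto 2^{|\sigma|}\mu_{L'}([\sigma]+1/3)$ is not even computable in general, and its single-step betting ratios certainly do not lie in a finite set of rationals after any ``renormalization''). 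Contrast the analogous points in the paper, where genuine invariance results are available and cited for computable and polynomial-time randomness (e.g.\ \cite[Thm.\ 4]{Figueira.Nies:13}); no such preservation theorem for Church stochasticity under adding $1/3$ is established by your argument, and it is not a routine fact. To close the proof you must either prove that preservation statement by other means, or weaken the statement so that the hypothesis applies to the binary expansions of both $z$ and $z-1/3$ --- the paper's own two-sentence justification is silent on this point rather than resolving it.
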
 
This means that on the rather generous class of Church stochastic reals $z$, the lower/upper derivative of a nondecreasing computable $f$ is completely given by the slopes at basic  dyadic intervals containing $z$. In particular, the derivative  at $z$ equals the  dyadic derivative.

\subsection{Polynomial time randomness and differentiability}

Recall that we represent a real $x$ by a Cauchy name $(p_i) \sN i$. We have $p_i \in \QQ$, and $\fa k > i  |p_i - p_k | \le \tp{-i}$. For feasible analysis, we  use  a compact set of Cauchy names:  the signed digit representation of a real. Such Cauchy names, called \emph{special}, have the form $p_i = \sum_{k=0}^i b_k \tp{-k}$, where $b_k \in \{-1,0,1\}$. (Also, $b_0=0, b_1 =1$.)  So they are given by  paths through $\{-1,0,1\}^\omega$, something a resource bounded TM can process. We call the $b_k$ the \emph{symbols} of the special Cauchy name.

\begin{definition}  A function  $g \colon [0,1] \to \RR$ is  polynomial time  computable if there is a  polynomial time TM   turning every special Cauchy name for $x \in [0,1]$ into a  special Cauchy name for $g(x)$.  \end{definition}
 This means that the first $n$ symbols of $g(x)$ can be computed in time poly(n), thereby  using polynomially many symbols of the oracle tape holding~$x$.
Functions such as $e^x, \sin x$ are polynomial time computable, essentially because analysis gives us rapidly converging approximation sequences, such as $\sum x^n/n!$.

The argument given above can be adapted to polynomial time randomness. 
A martingale $M \colon \strcantor \to \RR$ is called polynomial time computable if from  string $\sss$  and $i \in \NN$ we can  in time  polynomial in $\sssl + i$   compute  the $i$-th component of a special Cauchy name for $M(\sss)$. In this case we can compute a polynomial time rational valued martingale dominating $M$ (Schnorr / Figueira-N). We say $Z$ is \emph{polynomial  time random} if no polynomial time martingale succeeds on $Z$. For definitions omitted here  see~\cite{Figueira.Nies:13}.

\begin{theorem}[]\label{thm: poly dyadic diff}
		Let $z \in [0,1]$. Then $z$ is polynomial time  random $\LR$
		
		\hfill   $f'(z)$ exists  for each nondecreasing polynomial time computable 
		
		\hfill    function  $f\colon  [0,1] \to \RR$. 
\end{theorem}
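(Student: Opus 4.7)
The plan is to adapt the proof of Theorem~\ref{thm:CRd dyadic diff} and its converse (in the spirit of \cite{Brattka.Miller.ea:nd}) to the polynomial time setting, checking at each step that the auxiliary martingales and functions constructed are themselves polynomial time computable rather than merely computable.

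For $(\Rightarrow)$, assume $z$ is polynomial time random and let $f$ be a nondecreasing polynomial time computable function. The dyadic slope martingale $M(\sigma) = S_f([\sigma])$ is polynomial time computable: to produce the $u$-th symbol of the special Cauchy name for $M(\sigma)$ we only need $f$ at two rationals to precision $2^{-u-|\sigma|}$, doable in time polynomial in $|\sigma|+u$. Doob convergence along polynomial time randoms (obtained from the standard up-crossing martingale, which remains polynomial time) yields $\utilde D_2 f(z) = \widetilde D_2 f(z) < \infty$. Assuming for contradiction that $f'(z)$ fails to exist, we get either $\widetilde D_2 f(z) < \widetilde D f(z)$ or $\utilde D f(z) < \utilde D_2 f(z)$; in each case apply Lemma~\ref{lem:classic diff porous} or Lemma~\ref{lem:classic diff porous 2} to obtain a closed set porous at $z$. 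Now run the constructions of the martingales $L$ and $L'$ from the proof of Theorem~\ref{thm:CRd dyadic diff} verbatim. The point is that each is polynomial time: at every string $\eta$ the construction inspects at most $2^{k+4}$ values of $M$ for a fixed constant $k$, with rational betting factor $2^{k+4}/(2^{k+4}-1)$. One of $L$, $L'$ then succeeds on $Z$ or on the binary expansion $Y$ of $z-1/3$, and $Y$ is polynomial time random because bitwise subtraction by $0.\overline{01}$ is polynomial time computable on binary expansions of polynomial time randoms. This contradicts polynomial time randomness of $z$.

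For $(\Leftarrow)$, given a polynomial time martingale $B$ that succeeds on $Z$, apply a multiplicative savings preprocessing to replace $B$ by a polynomial time martingale whose values are polynomially bounded in $|\sigma|$, while preserving success on $Z$. Then define $f\colon[0,1]\to\RR$ by the telescoping cumulative distribution
\[
f(0.b_1 b_2 \ldots b_n) = \sum_{i \le n,\, b_i = 1} B(b_1 \ldots b_{i-1} 0)\cdot 2^{-i},
\]
extending by continuity. This $f$ is nondecreasing with $S_f([\sigma]) = B(\sigma)$ on every basic dyadic interval, and evaluating $f$ at a dyadic rational of denominator $2^n$ requires only $n$ evaluations of $B$; the polynomial bound on $B$ then gives a polynomial modulus of continuity, so $f$ is polynomial time computable in the signed digit representation. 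Since $B$ succeeds on $Z$, $\widetilde D_2 f(z) = \infty$, so $f'(z)$ cannot exist.

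The main obstacle is the $(\Leftarrow)$ direction: the naive cumulative distribution of $\mu_B$ need not be polynomial time computable on signed digit reals, because a priori $B(\sigma)$ can reach $2^{|\sigma|}B(\varepsilon)$, wrecking the modulus of continuity and forcing exponentially many input symbols per output symbol. The savings-trick preprocessing that tames $B$ while preserving success is the key technical step; the secondary technical point in the $(\Rightarrow)$ direction is verifying that the auxiliary strategies $L$, $L'$ (and the implicit transfer of poly-time randomness under the shift $z \mapsto z - 1/3$) really are polynomial time, so that the porosity-based argument from Theorem~\ref{thm:CRd dyadic diff} ports over without loss.
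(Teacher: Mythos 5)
Your proposal is correct and follows essentially the same route as the paper: the forward direction checks that the slope martingale and the auxiliary strategies $L,L'$ from Theorem~\ref{thm:CRd dyadic diff} are polynomial time and that Doob convergence holds for polynomial time randoms, while the converse tames a successful polynomial time martingale via the savings trick and takes the cumulative distribution function of the associated measure, exactly as in the paper. The only place your justification is too quick is the claim that $z-1/3$ stays polynomial time random because subtraction of $0.\overline{01}$ is a bitwise operation --- borrow propagation makes this non-local, and the paper instead invokes the base invariance of polynomial time randomness \cite[Thm.\ 4]{Figueira.Nies:13}; similarly, the almost-Lipschitz modulus of the cdf after the savings preprocessing is quoted from \cite{Figueira.Nies:13} rather than re-derived.
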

The implication $\RA$  and other  results  were independently proved by A.\ Kawamura, who directly adapted the proof of \cite{Brattka.Miller.ea:nd}, Thm.\ 4.1] to the polynomial time setting. 

\begin{proof}  \lapf Suppose $z$ is not polynomial time random. Then some polynomial time martingale $L$ succeeds on the binary expansion $Z$  of~$z$. By \cite[Lemma 3]{Figueira.Nies:13}, there is a polynomial time martingale $M$ with the savings property that succeeds on $Z$. Let $\mu_M$ be the corresponding measure given by $\mu_M([\sss])= \tp{-\sssl}M(\sss)$. Let $\cdf_M$ be the cumulative distribution function of $\mu_M$ given by $ \cdf_M(x) = \mu_M[0,x)$. By \cite[Lemma 3]{Figueira.Nies:13}, for each dyadic rational $p$, $\cdf_M(p)$ is a dyadic rational that can be computed from $p$ in polynomial time.   Since $ M$ has the savings property, by \cite[Prop.\ 5]{Figueira.Nies:13}, $\cdf_M$ satisfies the `almost Lipschitz condition': there is    $ \epsilon>0$ such that for every
$x,y\in[0,1]$, if $y-x\leq\epsilon$ then
$$
\cdf_M(y)-\cdf_M(x) = O(-(y-x)\cdot\log(y-x)).
$$
This implies that $f=\cdf_M$ is polynomial time computable: Suppose we are   given a special  Cauchy name $(p_i)\sN i$  for a  real $z$. We know that $|z- p_{n+ \log n}| = O(\tp{-n-log n})$. So by the pseudo Lipschitz condition, we have $|f(z)- f(p_{n + \log n})| = O(\tp{-n})$. So  a TM can determine  in polynomial time   from the first $n + \log n$ symbols of the special Cauchy name for $z$    the first $n$ symbols of a special Cauchy name for $f(z)$.

	\rapf
	Since $f$ is polynomial time computable,   all the martingales involved in the proof of  Theorem~\ref{thm:CRd dyadic diff} are  computable in polynomial time. The usual proof of Doob martingale convergence can be turned into a polynomial time construction, and hence  shows that any polynomial time martingale converges on every polynomial random real. Thus we have $\utilde D_2 f(z)= \widetilde D_2 f(z) < \infty$.   Furthermore, by the base invariance of polynomial time randomness~\cite[Thm.\ 4]{Figueira.Nies:13}, if $z$ is polynomially random then so is $z-1/3$.  So $\widetilde D_2 f(z) = \widetilde Df(z)$ and $\utilde D f(z) =  \utilde D_2f(z)$   by the argument given above. 
\end{proof} 
\subsection{Interval  c.e.\ functions} 

\subsubsection{Background} 
We quote from 
\cite{Bienvenu.Greenberg.ea:OWpreprint}. 
Let $g\colon [0,1] \ria \R$. For $0 \le x< y \le 1$ define the \emph{variation} of $g$ in $[x,y]$ by 
$$V(g,[x,y]) = \sup \left\{\sum_{i=1}^{n-1} \bigl| g(t_{i+1}) - g(t_i)\bigr| : x \le t_1 \le t_2 \le \ldots \le t_n \le y\right\}.$$ The function $g$ is of bounded variation if $V(g,[0,1])$ is finite. 
If $g$ is a continuous function of bounded variation then the function $f(x) = V(g, [0,x])$ is  also continuous. If $g$ is computable then the function $f(x) = V(g, [0,x])$ is lower semicomputable (but may fail to be computable). A further property of this ``variation function'' comes from the observation that $V(g,[x,y]) + V(g, [y,z]) = V (g, [x,z])$ for $x< y< z$ (see \cite[Prop.\ 5.2.2]{Bogachev.vol1:07}). 

Identifying the variations of computable functions, Freer, Kjos-Hanssen, Nies and Stephan \cite{Freer.Kjos.ea:nd} studied a class of monotone, continuous, lower semicomputable functions which they called \emph{interval-c.e.}

\begin{definition} \label{def:intervalce}  
A non-decreasing, lower semicontinuous function $f\colon [0,1]\to \R$ is \emph{interval-c.e.}\ if $f(0)=0$, and $f(y)-f(x)$ is a left-c.e.\ real, uniformly in rationals $x<y$. 
\end{definition} 

Thus, the variation function of each computable function of bounded variation is interval-c.e. Freer et al.\ \cite{Freer.Kjos.ea:nd}, together with Rute, showed that conversely, every continuous interval-c.e.\ function is the variation of a computable function.  (End quote.)

Note that the better term would be \emph{interval-left-c.e.} There is also  a dual concept, being \emph{interval right-c.e.}, where $f(y)-f(x)$ is a uniformly a right-c.e.\ real. For instance, the function $f(x) = \leb ([0,x] \cap \+ P)$ for an effectively closed class $\+ P$ is interval right-c.e. There is a curious break of symmetry that the variations of  computable functions are the continous  interval \emph{left}-c.e.\ functions vanishing at~$0$. This seems to say the left-c.e.\ version is the cooler one.

(We note that either  class is closed under the `double mirror' transformation: if $f$ is interval left-c.e. [right c.e.] then so is $\hat f(x)= 1- f(1-x)$. The slopes  $S_{\hat f}(x,y)= S_f(1-y,1-x)$.)

\subsubsection{Interval  (left)-c.e.\ functions: upper dyadic  equals upper full derivative for non-porosity points} 

\begin{proposition}\label{pro:interval c.e.}
Let $f \colon \, [0,1] \to \RR$ be interval-c.e. Then  $\widetilde D_2 f(z) = \widetilde Df(z)$ for each non-porosity point $z$.
\end{proposition}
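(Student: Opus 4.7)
The plan is to argue by contradiction using Lemma~\ref{lem:classic diff porous}; the only new ingredient is the observation that the interval-c.e.\ hypothesis on $f$ makes the closed set produced by that lemma effectively closed.

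Suppose $z$ is a non-porosity point and, toward a contradiction, that $\widetilde D_2 f(z) < \widetilde D f(z)$. Note that any computable real $z$ is automatically a porosity point: the singleton $\{z\}$ is a $\Pi^0_1$ class porous at $z$ with any constant $<1$. So $z$ is non-computable, in particular non-dyadic, and so has a unique binary expansion $z = 0.Z$. Pick a rational $p$ with $\widetilde D_2 f(z) < p < \widetilde D f(z)$. By definition of the upper dyadic pseudo-derivative there exists $n^*$ such that $S_f(A) \le p$ for every basic dyadic interval $A \ni z$ of length $\le 2^{-n^*}$. Setting $\sss^* = Z \uhr{n^*}$, the hypothesis $\forall \sss\, [Z \succ \sss \succeq \sss^* \RA S_f([\sss]) \le p]$ of Lemma~\ref{lem:classic diff porous} is satisfied, and the lemma then yields a closed set $\+ C \ni z$ porous at $z$.

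The only step that uses the interval-c.e.\ hypothesis is the verification that $\+ C$ is effectively closed. Since $f(y) - f(x)$ is a left-c.e.\ real uniformly in rationals $x < y$, the dyadic slope
\[ S_f([\sss]) \; = \; 2^{\sssl}\bigl( f(0.\sss + 2^{-\sssl}) - f(0.\sss)\bigr) \]
is a left-c.e.\ real uniformly in $\sss$, and hence the predicate ``$S_f([\sss]) > p$'' is $\Sigma^0_1$ uniformly in $\sss$. Therefore
\[ \+ C \; = \; [\sss^*] - \bigcup \bigl\{ (\sss) : \sss \succeq \sss^* \lland S_f([\sss]) > p\bigr\} \]
is a $\Pi^0_1$ class, the finite parameter $\sss^*$ contributing only a clopen component. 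This contradicts the assumption that $z$ is a non-porosity point.

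No real obstacle appears; everything reduces to the uniform left-c.e.-ness of the dyadic slopes, which is exactly what the interval-c.e.\ hypothesis delivers. I would add a remark that the companion equality $\utilde D_2 f(z) = \utilde D f(z)$ is \emph{not} accessible by this argument for interval-c.e.\ $f$: applying Lemma~\ref{lem:classic diff porous 2} instead would require the predicate ``$S_f([\sss]) < q$'' to be $\Sigma^0_1$, i.e., $f$ to be interval \emph{right}-c.e.
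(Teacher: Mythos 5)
Your proof is correct and follows exactly the paper's argument: apply Lemma~\ref{lem:classic diff porous} and observe that the interval-c.e.\ hypothesis makes $S_f([\sss])$ uniformly left-c.e., so the class $\+ C$ from \eqref{eqn: def C porous} is $\Pi^0_1$ and its porosity at $z$ contradicts non-porosity. Your closing remark about the dual case needing interval \emph{right}-c.e.\ is also exactly what the paper records in Remark~\ref{rem:right-c.e.}.
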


\begin{proof} Assume $\widetilde D_2 f(z) < \widetilde Df(z)$.  Since $f$ is interval c.e., we can view $S_f(\sss)$ as a left-c.e.\ martingale. In particular, the class $\+ C$ defined in (\ref{eqn: def C porous}) in Lemma~\ref{lem:classic diff porous} is effectively closed. This class is porous at $z$ for a contradiction.
\end{proof}

\subsubsection{Dual fact for interval right-c.e.\ functions}
\begin{remark}
	\label{rem:right-c.e.}
	  If $f$ is interval \emph{right}-c.e.\ we can   apply the dual  Lemma~\ref{lem:classic diff porous 2} to conclude that,  $\utilde D f(z) = \utilde D_2f(z)$ for each non-porosity point $z$. For instance, let $f$ be the Lipschitz function given by $f(x) = \leb ([0,x] \cap \+ P)$ for an effectively closed class $\+ P$. Then we may conclude  that (lower) dyadic density of $\+ P$ at  a non-porosity point $x$  coincides with the (lower)  full density, thereby obtaining a strengthening of Proposition~\ref{prop:denseporous}. 
\end{remark}

\subsubsection{Interval  c.e.\ functions: dyadic  equals   full derivative for reals at  which all left-c.e.\ martingales converge}

Consider a  real $z \in [0,1] - \QQ$.  If a martingale $M$ converges to a finite value at the binary expansion of $z$, we write $M(z)$ for this finite value. We say that $z$  is  a \emph{convergence point for c.e.\ martingales} if $M(z)$ exists for each  c.e.\ martingale $M$.

 Convergence points for c.e.\ martingales coincide with the  ML-random (dyadic) density one points. This was obtained by 2012 work of  a group in Madison consisting of Uri Andrews, Mingzhong Cai, David Diamondstone, Steffen Lempp, and Joseph S.\ Miller. The implication 
\bc martingale convergence $\RA$ density  one \ec 
 was already pointed out in \cite{Bienvenu.Greenberg.ea:OWpreprint}. The hard implication is 
\bc dyadic density one  $\RA$ martingale convergence. \ec 
See Theorem~\ref{thm:Madison} below.

\vsp

%
\begin{theorem}\label{thm:interval left-c.e. MG and derivative} Let $f \colon \, [0,1] \to \RR$ be interval-c.e. Let $z$ be a convergence point for c.e.\ martingales. Then    $f'(z)$ exists.  
\end{theorem}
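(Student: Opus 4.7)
The plan is to establish $\utilde D f(z) = \widetilde D f(z) = D$ for a finite value $D$, by first pinning down the dyadic derivative via c.e.\ martingale convergence, then the upper full derivative via Proposition~\ref{pro:interval c.e.}, and finally obtaining the lower full derivative from density-one; the last step is the main obstacle.

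First I would set up the canonical martingale associated with $f$. Since $f$ is interval-c.e., the slope function $M(\sigma) := S_f([\sigma]) = 2^{\sssl}(f(0.\sigma + 2^{-\sssl}) - f(0.\sigma))$ is left-c.e.\ uniformly in $\sigma$, and the additivity $f([\sigma]) = f([\sigma 0]) + f([\sigma 1])$ makes it a martingale. By hypothesis $M$ converges at $z$, so $D := \lim_n S_f([z \uhr n])$ is a finite real with $\utilde D_2 f(z) = \widetilde D_2 f(z) = D$. In particular $f$ is continuous at $z$ (otherwise $M$ would diverge to $+\infty$).

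Next I would invoke Theorem~\ref{thm:Madison} to translate the convergence hypothesis into the statement that $z$ is an ML-random density-one point. Since a porous $\Pi^0_1$ class containing $z$ would have $\varrho(\cdot\mid z)<1$, the point $z$ is a non-porosity point, and Proposition~\ref{pro:interval c.e.}\ yields $\widetilde D f(z) = \widetilde D_2 f(z) = D$. Combined with the chain $\utilde D f(z) \le \utilde D_2 f(z) = D$, we already have three of the four quantities equal to $D$ and only need to check $\utilde D f(z) \ge D$.

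The final step, and main obstacle, is to upgrade the lower derivative from dyadic to full, i.e.\ to show $\utilde D f(z) \ge D$. The naive dualization of Proposition~\ref{pro:interval c.e.}\ via Lemma~\ref{lem:classic diff porous 2} fails because for interval left-c.e.\ $f$, the set $\{\sigma : S_f([\sigma]) < q\}$ is only $\Pi^0_1$, not c.e., so the relevant porosity class is not effectively closed. I would instead argue from density-one directly. Assume for contradiction $\utilde D f(z) < q < D$ and pick rational $r > D$, large $N$, and the $\Pi^0_1$ class
\[
\+C_{r,N} = [0,1] \setminus \bigcup\{(\sigma) : \sssl \ge N,\ S_f([\sigma]) > r\},
\]
which contains $z$ (since $S_f([z\uhr n]) \to D < r$) and satisfies $\varrho(\+C_{r,N}\mid z)=1$ by density-one. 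Take a rational interval $I \ni z$ with $|I|$ small and $S_f(I) < q$; partition $I$ into dyadic subintervals $[\tau_j]$ of length $2^{-M}$ for $M \gg N$. By density-one, all but an $\epsilon$-fraction of the $[\tau_j]$ lie inside $\+C_{r,N}$ and hence have $S_f([\tau_j]) \le r$, while the ``central'' pieces around $z$ have $S_f([\tau_j]) \ge D - \epsilon$ by dyadic convergence along $z$. Writing $f(b)-f(a) = \sum_j S_f([\tau_j])\,2^{-M} + O(2^{-M})$ and comparing a lower bound from the central pieces (whose total length is at least a positive constant fraction of $|I|$, by a Morayne--Solecki-style alignment of dyadic and shifted-dyadic intervals, using translation invariance of the density-one property to import the shifted argument) with the upper bound $S_f(I) < q$ yields a contradiction, establishing $\utilde D f(z) = D$. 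The hard part is making this covering argument quantitatively tight enough to handle $q$ arbitrarily close to $D$, which likely requires iterating across standard and shifted dyadic bases or using a strengthened density-one statement for $\Pi^0_2$ sets derived from the Madison equivalence.
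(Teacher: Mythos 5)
Your setup is sound and matches the paper for three of the four quantities: the left-c.e.\ slope martingale $M(\sigma)=S_f([\sigma])$ converges at $z$ to a value $D$, giving $\utilde D_2f(z)=\widetilde D_2f(z)=D$; the Madison equivalence plus Khan--Miller makes $z$ a non-porosity point, so Proposition~\ref{pro:interval c.e.} yields $\widetilde Df(z)=D$; and you correctly isolate $\utilde Df(z)\ge D$ as the real obstacle, for exactly the right reason (the condition $S_f([\sigma])<q$ is only co-c.e.\ when $f$ is interval \emph{left}-c.e., so the natural porosity class is not $\PPI$).

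The proposed resolution of that last step has a genuine gap. Your class $\+C_{r,N}$ with $r>D$ removes intervals of \emph{high} slope, so density-one at $z$ only tells you that most of a small interval $I\ni z$ is covered by dyadic pieces of slope $\le r$ --- an upper bound, which is perfectly consistent with $S_f(I)<q$ and produces no contradiction. To lower-bound $S_f(I)$ by this route you would need most of $I$ to consist of pieces of slope $\ge q'$ for $q'$ close to $D$, i.e.\ density-one of the class obtained by removing $\{(\sigma): S_f([\sigma])<q'\}$; but that union is not effectively open (same obstruction you already identified), so no $\PPI$ class delivers it. Your fallback --- that the ``central pieces'' around $z$ have slope $\ge D-\epsilon$ and occupy a positive fraction of $I$ --- is not justified: dyadic convergence of $M$ along $Z$ controls only the nested intervals $[Z\uhr n]$, and at scale $2^{-M}\ll|I|$ exactly one cell of the partition contains $z$, contributing negligible length. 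Even granting a constant fraction $c<1$, you would get $S_f(I)\gtrsim c\,D$, which cannot reach $q$ arbitrarily close to $D$, as you yourself note. The paper's actual fix is different: it exploits the martingale averaging property to convert the non-c.e.\ event ``some child has slope $<q$'' into the c.e.\ event ``some sibling has slope $>v$'' (choosing $q<u<M(z)<v$ with $v-u\le\tp{-k-3}(u-q)$ and using that the parent's slope is $\ge u$), so the porous class can be taken as the genuinely $\PPI$ set $\+E=\{X:\fa n\ge n^*\, M(X\uhr n)\le v\}$, together with its $1/3$-shifted counterpart $\+E'$; non-porosity of both $z$ and $z-1/3$ (the latter also being a convergence point, via Khan--Miller and the Madison theorem, which is also needed to see $M(z)=M'(z-1/3)$) then gives the contradiction. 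That sibling-overshoot trick is the missing idea in your plan.
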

\begin{proof}   We  may assume $z> 1/2$, else  we work with $f(x+1/2)$ instead of $f$.  
	The real $z$ is a  a dyadic  density one point, hence a (full) density one point by the
	 Khan-Miller Theorem~\ref{thm:ddensity vs full density}. Then $z-1/3$ is also a ML-random density-one point, so  using the  work of the Madison group discussed in Section~\ref{s:Andrews}, $z-1/3$ is also a c.e.\ martingale convergence point. In particular, both  $z$ and $z-1/3$ are    non-porosity points.

	For a nondecreasing function $g\colon [0,1] \to \RR$  recall that $M_g $ is  the (dyadic) martingale associated with the slope $S_g$ evaluated at intervals  of the form $[  i \tp{-n},  (i+1) \tp{-n}]$. Thus, \bc $M_g(\sss) = S_g(0. \sss, 0. \sss+ \tp{-\sssl})$. \ec
	Let $M= M_f$. 
Note that $M$ converges on $z$ by hypothesis.  Thus  $\utilde D_2f(z)= \widetilde D_2f(z) =M(z)$. 

By Proposition~\ref{pro:interval c.e.} again, we have $\widetilde D_2f(z) = \widetilde Df(z)$. It remains to show that 
	\begin{equation} \label{eqn: lower dyadic} \utilde Df(z)= \utilde D_2f(z). \end{equation} Since $f$ is nondecreasing,  this will establish that $f'(z)$ exists.

Let $\widehat f(x) = f(x+1/3)$, and let $M' = M_{\widehat f}$. We now show that  $M'$ converges on $z-1/3$, and the limits   coincide.
\begin{claim}  $M(z) = M'(z-1/3)$. 
\end{claim} 
As pointed out above, $z-1/3$ is also a convergence point for c.e.\ martingales. So    $M'$ converges on $z-1/3$. 
If   $M(z)  < M'(z-1/3)$ then $\widetilde D_2 f(z) <  \widetilde Df(z)$.
However $z$ is a non-porosity point, so this contradicts Proposition~\ref{pro:interval c.e.}. 
	If   $M'(z-1/3)  < M(z) $  we argue similarly using that $z-1/3$ is a non-porosity point. This establishes the claim. Hooray!

To show (\ref{eqn: lower dyadic}), we extend the method in the proof of Lemma~\ref{lem:classic diff porous 2}, taking into account both dyadic intervals, and dyadic intervals shifted by $1/3$. Recall that  $\utilde D_2f(z) = M(z)$. Assume for a contradiction that (\ref{eqn: lower dyadic}) fails. Then we can choose rationals $p,q$ such that

	\[\utilde D f(z) < p < q < M(z) = M'(z-1/3).\]
	Let $k\in \NN$ be such that $p< q(1- \tp{-k+1})$. Let $u,v$ be rationals such that 
	\bc $ q< u <  M(z)  <v$ and $v-u\le \tp{-k-3}(u-q)$. \ec
	
	Let $n^* \in \NN$ be such that for each $n \ge n^*$ and any interval $A\in \+ D_n \cup  \+ D'_n$, we have $S_f(A) \ge u$.

	Let 
	\begin{eqnarray*} \+ E &=&  \{ X \in \cantor \colon \, \fa n \ge n^* M(X\uhr n)\le v \}\\
		\+ E' &=&  \{ W \in \cantor \colon \, \fa n \ge n^* M'(W\uhr n) \le v \} \end{eqnarray*}
		Since $f$ is interval c.e., these classes are $\PPI$. In Cantor space we can apply notions of porosity    via the usual transfer to $[0,1]$ given by the binary expansion.

		Let  $0.Z$ be as usual the binary expansion of $z$.   By the choice of $n^*$ we  have  $Z \in \+ E $. Let  $0.Y$ be the binary expansion of $z-1/3$. We have   $Y \in  \+ E'$.

We will show that $\+ E $ is porous at~$Z$,  or $\+ E'$ is porous at $Y$.

 Consider   an interval  $I \ni z$   of  positive length $\le \tp{-n^*-3}$ such that $S_f(I) \le p$. Let $n$ be such that $\tp{-n+1} > |I| \ge \tp{-n}$. Let $a_0$ [$b_0$] be least of the form $w \tp{-n-k}$  [$w \tp{-n-k} +1/3$], where $w \in \ZZ$, such that $a_0  [b_0] \ge  \min (I)$.  
	 Let $a_i = a_0 + i \tp{-n-k}$ and $b_j = b_0 + j \tp{-n-k}$. Let $r,s$ be greatest  such that $a_r \le \max (I)$ and $b_s \le \max(I)$. 

As before, 	since $f$ is nondecreasing and $a_r - a_0 \ge  |I| - \tp{-n-k+1} \ge  (1- \tp{-k+1}) |I|$,  we have 
$S_f (I)	 \ge S_f(a_0, a_r) (1- \tp{-k+1}  )$,
	 and therefore $S_f(a_0,a_r)< q$. Then there is an $i<r$ such that  $S_f(a_i,a_{i+1})< q$. Similarly, there is $j< s$ such that $S_f(b_j,b_{j+1})< q$.
	
	\begin{claim}
	One of the following is true.
	\bi \item[(i)]  $z, a_i,a_{i+1} $ are   all  contained in a single interval taken from $\+  D_{n-3}$. 
	
	\item[(ii)]  $z, b_j,b_{j+1} $   are  all  contained in a single   interval taken from $\+ D'_{n-3}$. \ei
	\end{claim}
For suppose that (i) fails. Then there   an  endpoint of an $A\in \+ D_{n-3}$ (that is, a number of the form $w\tp{-n+3}$ with $w\in \ZZ$) between $\min (z, a_i) $ and $\max (z, a_{i+1})$. Note that $\min (z, a_i) $ and $\max (z, a_{i+1})$ are  in $I$. By Fact~\ref{fact:geom} and  $|I| < \tp{-n+1}$,  there can be no endpoint of an interval $A' \in \+ D'_{n-3}$ in $I$. Then, since $b_j, b_{j+1} \in I $,  (ii) holds. This establishes  the claim.

Suppose $I$ is an interval as above and  $\tp{-n+1} > |I| \ge \tp{-n}$, where $n \ge n^*+3$. Let $\eta = Z \uhr {n-3}$ and $\eta' = Y \uhr {n-3}$. 
	
	If (i) holds for this $I$ then there is  a string $\alpha$ of length $k+3$ (where $[\eta \alpha]=[a_i, a_{i+1}]$) such that $M( \eta \alpha) < q$. So by the choice of $q< u< v$ and since $M(\eta) \ge u$  there is $\beta$  of length $k+3$  such that $M(\eta \beta)> v$. This yields  a   hole in $\+ E$,  large and near $Z$ on the scale of $I$, which is  required  for porosity of $\+ E$ at $Z$.

 		Similarly, if (ii) holds for this $I$, then there is a string $\alpha$ of length $k+3$ (where $[\eta' \alpha]=[b_j, b_{j+1}]$) such that $M(\eta' \alpha) < q$. So by the choice of $q< u< v$ and since $M'(\eta')\ge u$  there is a string  $\beta$  of length $k+3$  such that $M'(\eta' \beta)> v$.  This yields  a   hole large and near $Y$ on the scale of $I$  required  for porosity of $\+ E'$ at $Y$. 

 Thus, if case (i) applies for arbitrarily short intervals $I$, then $\+ E$ is porous at $Z$, whence $z$ is a porosity point. Otherwise (ii) applies   for intervals below a certain length. Then   $\+ E'$ is porous at $Y$, whence $z-1/3$ is a porosity point. 
\end{proof}

\subsubsection{Interval  c.e.\ functions: characterizing ML-randomness}  \

\n Nies and Stephan have shown that there is an interval c.e.\ function~$h$ whose points of differentiability coincides with the ML-randoms. The same is true for the convergence points of the left-c.e.\  martingale $S_h(\sss)$. All this is obtained from the following stronger statement, which also strengthens \cite[Cor.6.6]{Bienvenu.Greenberg.ea:nd}:

\begin{theorem}\label{thm:interval c.e. char ML-random} There is a  continuous  interval c.e.\ function $h$  such that $h'(x)$ exists for each ML-random real $x$, and $\utilde Dh(x) = \infty$ whenever $x$ is not ML-random.
\end{theorem}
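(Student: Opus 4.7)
The plan is to construct $h$ directly from a universal Martin-L\"of test. Let $(U_n)_{n\in\NN}$ be a universal ML-test: each $U_n$ is uniformly c.e.\ open in $[0,1]$, $\lambda(U_n)\le 2^{-n}$, and a real is not ML-random iff it lies in $\bigcap_n U_n$. Define
$$h(x) = \sum_{n=0}^\infty \lambda\bigl(U_n \cap [0,x]\bigr).$$
The first step is to check that $h$ has the stated regularity. Each summand $g_n(x) = \lambda(U_n\cap [0,x])$ is nondecreasing and $1$-Lipschitz with $\|g_n\|_\infty \le 2^{-n}$, so the series converges uniformly and $h$ is continuous, nondecreasing, with $h(0)=0$. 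For rationals $x<y$, $h(y)-h(x) = \sum_n \lambda(U_n\cap [x,y])$ is a uniformly convergent sum of uniformly left-c.e.\ reals, hence a left-c.e.\ real uniformly in $(x,y)$; so $h$ is interval c.e.

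The second step handles the non-ML-random case. Suppose $x\in\bigcap_n U_n$; I claim $\utilde D h(x)=\infty$. Fix $N\in\NN$. Since each $U_n$ is open and contains $x$, there is $\varepsilon_N>0$ with $(x-\varepsilon_N, x+\varepsilon_N)\subseteq \bigcap_{n\le N} U_n$. For any rationals $a\le x\le b$ with $b-a<\varepsilon_N$ we then have $\lambda(U_n\cap[a,b])=b-a$ for $n\le N$, so
$$S_h(a,b) \;\ge\; \sum_{n=0}^N \frac{\lambda(U_n\cap [a,b])}{b-a} \;=\; N+1.$$
Since $N$ was arbitrary, $\utilde D h(x)=+\infty$.

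The third step handles the ML-random case. By the differentiability theorem of Brattka, Miller and Nies~\cite{Brattka.Miller.ea:nd}, every nondecreasing interval c.e.\ function is pseudo-differentiable at every ML-random real; continuity of $h$ then identifies the pseudo-derivative with the usual derivative (their Fact~7.2), so $h'(x)$ exists at every ML-random $x$. Alternatively one could reprove this by combining Doob-style convergence of the left-c.e.\ martingale $M_h(\sigma)=S_h([\sigma])$ at ML-randoms with Proposition~\ref{pro:interval c.e.} (giving $\widetilde D_2 h(z)=\widetilde D h(z)$ at any ML-random, since ML-randoms are non-porosity points), and an analogous $1/3$-shift argument as in Theorem~\ref{thm:interval left-c.e. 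MG and derivative} to upgrade lower dyadic to lower full derivative.

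The only step requiring any care is the non-ML-random direction: one needs $\utilde D h(x)=\infty$, not merely non-existence of $h'(x)$. But this is delivered automatically by the telescoping structure of the universal test, because membership of $x$ in the open sets $U_0,\ldots,U_N$ forces the slope of $h$ over all sufficiently short intervals around $x$ to be at least $N+1$. Every other part of the construction is bookkeeping.
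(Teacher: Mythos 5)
Your steps 1 and 2 are correct and clean: $h(x)=\sum_n \lambda(U_n\cap[0,x])$ is continuous, nondecreasing and interval c.e., and the telescoping argument showing $\utilde D h(x)=\infty$ for $x\in\bigcap_n U_n$ is exactly right. The gap is in step 3. There is no theorem of Brattka--Miller--Nies saying that every nondecreasing interval c.e.\ function is (pseudo-)differentiable at every ML-random real; their ML-randomness characterization concerns \emph{computable} functions of bounded variation, and differentiability does not transfer from a computable $f$ to its variation function. In fact the claim is false: take a Day--Miller real $z$ that is ML-random but not a density-one point, witnessed by a $\PPI$ class $\+P\ni z$ with $\ul\varrho_2(\+P\mid z)<1$. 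Then $g(x)=\lambda(\+P^c\cap[0,x])$ is interval (left-)c.e., $\widetilde D_2 g(z)=1-\ul\varrho_2(\+P\mid z)>0$, while $\utilde D_2 g(z)=1-\ol\varrho_2(\+P\mid z)=0$ by \cite[Prop.\ 5.4]{Bienvenu.Greenberg.ea:preprint}; so $g'(z)$ does not exist. Your fallback argument fails for the same reason: left-c.e.\ martingales need not converge at ML-randoms (the paper's own example $L(\sss)=\leb_\sss([0,\Om))$ diverges on $\Om$), and convergence of \emph{all} left-c.e.\ martingales at a ML-random is precisely density randomness (Theorem~\ref{thm:Madison}), which is what Theorem~\ref{thm:interval left-c.e. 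MG and derivative} actually requires.

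This is not a cosmetic problem with the citation but with the choice of $h$. Your $h$ is the cumulative distribution function of the integral test $t=\sum_n \mathbf{1}_{U_n}$, and $h'(z)$ exists exactly when $z$ is a weak Lebesgue point for $t$; by the Miyabe section of the paper, differentiability of such "test-generic" interval c.e.\ functions is the signature of density randomness, not of ML-randomness, so nothing forces the tail $\sum_{n\ge N_0}\lambda(U_n\cap I)/|I|$ to tend to $0$ at an ML-random $z$ that is not density random. The whole content of the theorem is to build a \emph{special} $h$ that evades this: the paper takes $h=V(f,[0,\cdot])$ for the Brattka--Miller--Nies function $f$, a superposition of sawtooth functions supported on intervals $C_{m,i}$ enumerated into a universal test, arranged so that near every ML-random $x$ the function is polygonal with $x$ not a breakpoint -- differentiability then holds for the trivial local reason that $h$ is affine near $x$, while the accumulating sawteeth above a non-ML-random force $\utilde Dh(x)=\infty$. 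To repair your approach you would need an analogous local-triviality mechanism (e.g.\ components of $U_n$ of rapidly shrinking diameter, so that an ML-random eventually lies in the interior of the complement at every relevant scale), which is a substantial change, not bookkeeping.
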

\begin{proof} 
	Brattka  el at.\  \cite[Lemma 6.5]{Brattka.Miller.ea:nd} show that there is a computable function $f$ of bounded variation  (in fact,  absolutely  continuous) such that $f'(z)$ exists only for \ML{} random reals $z$. Let \bc $h(x) = V(f,[0,x])$. \ec
	To see that $h$ is as required, we have to look at the construction of $f$,	which is actually given in \cite[proof of Thm.\  6.1]{Brattka.Miller.ea:nd}, a result on weak 2-randomness. The function $f$ is a superposition of steeper and steeper sawtooth functions based on intervals $C_{m,i}$ of length  rapidly decreasing  in $m$,  which are enumerated into a universal ML test $\la \+ G_m \ra$.  If $x$ is ML-random then $x \not \in  \+ G_m$ for almost every $m$, and hence for each $i$ we have  $x \not \in C_{m,i}$. This means that $h$ is polygonal in a sufficiently small neighbourhood  of $x$, and $x$ is not a break point. So $h'(x)$ exists.
	
	On the other hand, if $x$ is not ML-random then the change in variation due to the infinite superposition of sawteeth above $x$ adds up, and so $\utilde Dh(x) = \infty$. (Save the amazon.) For detail see the hopefully forthcoming paper~\cite{Greenberg.Hoelzl.ea:nd}.
\end{proof}

	\section{Khan: A dyadic density-one point that is not  full density-one}
		(Submitted by Mushfeq Khan, with acknowledgements to Joe Miller for many helpful discussions.)
				
		It seems intuitively likely that being full density-one is a stronger property than being dyadic density-one (see   Section~\ref{s:diff and porous} for definitions). After all, in the case of the latter, we are severely limiting the types of intervals with which we can witness drops in density. In this section, we construct a dyadic density-one point which is not a full density-one point. 
		
		We use the symbol $\mu$ to refer exclusively to the standard Lebesgue measure on Cantor space. If $\sigma$ is a string, and $C$ a measurable set, the shorthand $\mu_\sigma(C)$ denotes the relative measure of $C$ in the cone above $\sigma$. The following lemma, which is a critical part of the argument, is a special case of the Kolmogorov inequality for martingales (see for example, ~\cite[7.1.9]{Nies:book}, and consider the martingale $S(\sigma) = \mu_\sigma(W)$).
		
	\begin{lemma}\label{vicinity_cantor_space}
		Suppose $W \subseteq 2^\omega$ is open. Then for any $\varepsilon$ such that $\mu(W) \le \varepsilon \le 1$, let $U_\varepsilon$ denote the set $\dset{X \in 2^\omega}{ \mu_\rho(W) \ge \varepsilon \textrm{ for some $\rho \prec X$}}$. We call $U_\varepsilon$ the \emph{$\varepsilon$-vicinity} of $W$. Then $\mu(U_\varepsilon) \le \mu(W)/\varepsilon$.
	\end{lemma}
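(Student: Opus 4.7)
The plan is to exhibit $U_\varepsilon$ as a disjoint union of basic clopen sets and then compare $\mu(W)$ to $\varepsilon \mu(U_\varepsilon)$ directly.

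First I would note that $U_\varepsilon$ is open, and moreover has a canonical decomposition. Let $F \subseteq 2^{<\omega}$ be the set of $\preceq$-minimal strings $\sigma$ such that $\mu_\sigma(W) \ge \varepsilon$. By minimality, $F$ is prefix-free, and by definition of $U_\varepsilon$ we have
\[
U_\varepsilon = \bigsqcup_{\sigma \in F} [\sigma],
\]
the union being disjoint. So $\mu(U_\varepsilon) = \sum_{\sigma \in F} 2^{-|\sigma|}$.

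Next, since $F$ is prefix-free, the sets $W \cap [\sigma]$ for $\sigma \in F$ are pairwise disjoint subsets of $W$, giving
\[
\mu(W) \;\ge\; \mu\bigl(W \cap U_\varepsilon\bigr) \;=\; \sum_{\sigma \in F} \mu(W \cap [\sigma]) \;=\; \sum_{\sigma \in F} \mu_\sigma(W)\, 2^{-|\sigma|} \;\ge\; \varepsilon \sum_{\sigma \in F} 2^{-|\sigma|} \;=\; \varepsilon\, \mu(U_\varepsilon).
\]
Dividing by $\varepsilon$ (which is positive since $\mu(W) \le \varepsilon$ and the case $\mu(W)=0$ is trivial) yields $\mu(U_\varepsilon) \le \mu(W)/\varepsilon$, as required.

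There is essentially no obstacle here: the only subtle point is to make sure one picks the \emph{minimal} witnesses $\sigma$ so that $F$ is prefix-free, which is what lets the key inequality $\sum_{\sigma \in F} \mu_\sigma(W)\, 2^{-|\sigma|} \le \mu(W)$ go through without double counting. This is precisely the content of Kolmogorov's inequality applied to the martingale $S(\sigma) = \mu_\sigma(W)$ with threshold $\varepsilon$, as indicated in the statement.
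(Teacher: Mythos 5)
Your proof is correct and takes essentially the same route as the paper: the paper simply cites Kolmogorov's inequality for the martingale $S(\sigma)=\mu_\sigma(W)$, and your argument --- passing to the prefix-free set $F$ of minimal witnesses, so that $U_\varepsilon=\bigsqcup_{\sigma\in F}[\sigma]$, and summing $\mu(W\cap[\sigma])=\mu_\sigma(W)2^{-|\sigma|}\ge\varepsilon 2^{-|\sigma|}$ --- is precisely the standard proof of that inequality unwound for this particular martingale. There are no gaps (the only degenerate cases, $\mu(W)=0$ or $\mu(W)=\varepsilon$, are handled correctly or trivially).
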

	
	\begin{theorem}\label{weak_vs_full}
		There is a dyadic density-one point that is not a density-one point.
	\end{theorem}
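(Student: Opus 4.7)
I would construct $z$ and the $\Pi^0_1$ class $\mathcal{P}$ simultaneously in stages, with $z$ realized as the limit of committed prefixes $\sigma_0\prec\sigma_1\prec\cdots$ and $\mathcal{P}=[0,1]\setminus\bigcup_m J_m$ for a c.e.\ sequence of small non-dyadic open intervals $J_m$. Two families of requirements drive the construction: ``bad-interval'' requirements $B_m$ producing non-dyadic intervals $I_m\ni z$ with $\lambda(I_m\cap\mathcal{P})/|I_m|\le 1/2$; and ``density'' requirements $R_e$, ensuring that for the $e$-th $\Sigma^0_1$ open set $W_e$, either $z\in W_e$ or $\mu_\sigma(W_e)\to 0$ along $\sigma\prec Z$. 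The $R_e$'s collectively force $\varrho_2(\mathcal{C}\mid z)=1$ for every $\Pi^0_1$ class $\mathcal{C}\ni z$, making $z$ a dyadic density-one point, while the $B_m$'s give $\varrho(\mathcal{P}\mid z)\le 1/2$, so $z$ fails full density-one.

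At a bad-interval stage $s_m$ I would extend $\sigma_s$ to $\sigma_s\cdot 0\cdot 1^{k_m}$ for a large $k_m$, forcing $z$ within distance $2^{-|\sigma_s|-k_m-1}$ of $r_m:=\max[\sigma_s\cdot 0]$. The prepended $0$ guarantees that $r_m=(2p+1)\cdot 2^{-(|\sigma_s|+1)}$ for some integer $p$, so $r_m$ has reduced dyadic depth exactly $|\sigma_s|+1$ and differs from every earlier $r_{m'}$ by at least $2^{-|\sigma_s|-1}$. I would then remove from $\mathcal{P}$ the open interval $J_m=(r_m,r_m+\delta_m)$ with $\delta_m$ chosen just above $2^{-|\sigma_s|-k_m-1}$ (so that $z\in I_m:=(r_m-\delta_m,r_m+\delta_m)$) but small enough that $J_m$ is disjoint from every earlier $J_{m'}$. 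Then $\lambda(I_m\cap\mathcal{P})/|I_m|\le 1/2$, witnessing $\varrho(\mathcal{P}\mid z)\le 1/2$. Taking $\delta_m=2^{-|\sigma_s|-k_m}$ with $k_m\to\infty$ also ensures that the mass removed from any prefix cone $[\sigma]$ of $Z$ of length $k$ is at most $\sum_{m:\,|\sigma_{s_m}|\ge k}\delta_m=o(2^{-k})$, so $\mu_\sigma(\mathcal{P})\to 1$ and the dyadic density of $\mathcal{P}$ itself at $z$ equals $1$.

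For the density requirements $R_e$ I would use Khan's vicinity Lemma~\ref{vicinity_cantor_space}: at the stage devoted to $R_e$, I monitor the approximation of $W_e$, and when $\mu_{\sigma_s}(W_e)$ crosses a prescribed threshold $\varepsilon_e$, I commit to a sub-cone $[\tau]\subseteq[\sigma_s]$ on which $\mu_\tau(W_e)<\varepsilon_e/2$; the vicinity lemma guarantees such $\tau$ exists, since the relative measure of bad sub-cones is at most $2\mu_{\sigma_s}(W_e)/\varepsilon_e<1$ for suitable $\varepsilon_e$. Alternatively, if $W_e$ is about to swallow $[\sigma_s]$ entirely (e.g.\ $\mu(W_e)=1$ and $\mathcal{C}_e:=2^\omega\setminus W_e$ is null), I commit to a sub-cone $[\tau]\subseteq W_e$, placing $z\in W_e$ and satisfying $R_e$ vacuously; this is always possible since a closed null set is nowhere dense in Cantor space. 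The principal obstacle is the interleaving of the two families: each bad-interval commitment shrinks the current cone by the factor $2^{-k_m-1}$ and each density commitment shrinks it further, so I would organize the construction by a priority argument with parameters $k_m$ and $\varepsilon_e$ at each stage chosen to dominate the accumulated cost of all higher-priority requirements, yielding a summable geometric measure loss. Verification then confirms $z\in\mathcal{P}$ (each $J_m$ is disjoint from $[\sigma_{s_m+1}]\ni z$), $\varrho(\mathcal{P}\mid z)\le 1/2$ via the $I_m$, and $\varrho_2(\mathcal{C}\mid z)=1$ for every $\Pi^0_1$ class $\mathcal{C}\ni z$, proving $z$ is dyadic density-one but not full density-one.
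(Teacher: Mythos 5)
Your overall architecture matches Khan's: approximate $z$ by nested strings, build a co-c.e.\ class $\mathcal{P}$, kill full density by steering $z$ to within $2^{-|\sigma|-k}$ of a dyadic point $r$ and deleting the half-interval on the far side of $r$ (your $\sigma_s 0 1^{k_m}$ / $J_m$ device is the mirror image of the paper's $\sigma_{k+1}=\nu 1 0^{j}$, $\nu 0 1^{j}\in B$ trick), and meet one requirement per c.e.\ open set to secure dyadic density one. But the core mechanism you propose for the requirements $R_e$ is backwards, and this is a genuine gap. Your primary move is: when $\mu_{\sigma_s}(W_e)$ exceeds $\varepsilon_e$, retreat to a sub-cone $[\tau]$ on which the \emph{current} relative measure of $W_e$ is below $\varepsilon_e/2$. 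Since $W_e$ is c.e., its measure above $[\tau]$ can later grow past $\varepsilon_e$ again, forcing another retreat, and so on; if this happens infinitely often, every committed prefix of $z$ eventually acquires $W_e$-measure at least $\varepsilon_e$, so $\limsup_{\sigma\prec Z}\mu_\sigma(W_e)\ge\varepsilon_e$ while nothing guarantees $z\in W_e$, and $R_e$ fails. Even when the action terminates you only get $\limsup\le\varepsilon_e$ for one fixed $\varepsilon_e$, not density zero. The correct move, and the one the paper uses, is the opposite: when $W_e$ becomes relatively thick above some $\rho\prec z$, reroute $z$ \emph{into} $W_e$, which satisfies $R_e$ permanently and makes the whole construction finite-injury; if $W_e$ never becomes thick, the level-dependent shrinking threshold ($\le 2^{-k}$ between $\sigma_k$ and $\sigma_{k+1}$) itself witnesses density zero. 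You mention jumping into $W_e$ only as an edge case for null complements, which does not cover the general situation and cannot be effectively recognized anyway.

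Relatedly, you apply the vicinity lemma to $W_e$, whereas its real job is to be applied to the bad set $B=[0,1]\setminus\mathcal{P}$: when you jump into $W_e$ above $\rho$, you must find $\nu\in W_e$ extending $\rho$ such that $B$ never gets relatively thick between $\rho$ and $\nu$ and $[\nu]\cap B=\emptyset$, so that $z$ stays in $\mathcal{P}$ and the density bookkeeping survives. Lemma~\ref{vicinity_cantor_space} provides this because the $\sqrt{\beta(k,s)}$-vicinity of $B$ above $\rho$ has relative measure less than $\mu_\rho(W_e)$; this is exactly why the paper's thresholds are square roots of the $B$-budget and why $\beta^*(k)$ must satisfy $(\beta^*(k))^{1/2^{k+1}}\le 2^{-k}$ to absorb the at most $k$ many squarings caused by injuries. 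Finally, each rerouting for an $R_e$ moves $z$ off the previously chosen string $\sigma_s 0 1^{k_m}$, so the bad-interval witness must be re-created after every injury and only the witnesses attached to the final values of the $\sigma_k$ matter; your appeal to ``parameters chosen to dominate the accumulated cost'' does not address either of these interactions.
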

	\begin{proof}
		We build the desired real $Y$ by computable approximation. At each stage $s$ of the construction, we have a sequence of finite strings $\sigma_{0, s} \prec \sigma_{1, s} \prec ... $ approximating $Y$. At the same time, we build a $\Sigma^0_1$ class $B$ whose complement witnesses the fact that $Y$ is not a density-one point. Let $W_e$ denote the upward closure of the $e$-th c.e. set. Each c.e.\ set represents a requirement that needs to be met by $Y$. In other words, for each $e$, if $Y$ is not in $[W_e]$, we require that $\lim_{\rho \prec Y} \mu_\rho([W_e]) = 0$. Priorities are assigned to c.e.\ sets in the usual manner, with $W_j$ having higher priority than $W_i$ for any $i > j$. We make use of the following shorthand: Let $C$ be a measurable set and $\tau$ and $\tau'$ two strings such that $\tau \prec \tau'$. If for every $\rho$ such that $\tau \preceq \rho \prec \tau'$, $\mu_\rho(C) < \alpha$, then we say that \emph{between $\tau$ and $\tau'$, $\mu(C) < \alpha$}.

		At any stage $s$, for each $k$, we will be working above $\sigma_{k, s}$ to define $\sigma_{k+1, s}$. We have two goals in mind: Firstly, for any $e < k$ such that $\sigma_{k, s}$ is not already a member of $W_e$, we must keep the measure of $W_e$ between $\sigma_{k, s}$ and $\sigma_{k+1, s}$ below a certain threshold. If the threshold is exceeded, say at a string $\rho$ between $\sigma_{k, s}$ and $\sigma_{k + 1, s}$, we shall reroute $\sigma_{k+1}$ above $\rho$ to enter $W_e$. Secondly, we must ensure that there is an interval $I \subseteq [\sigma_{k, s}]$ such that $[\sigma_{k + 1, s}] \subseteq I$ and $\mu_I(B) > 1/4$. Both goals must be satisfied while keeping $Y$ from entering $[B]$. Globally, we must maintain the fact that between $\sigma_{k, s}$ and $\sigma_{k + 1, s}$, the measure of $B$ remains \emph{strictly below} a threshold $\beta(k, s)$, which is updated each time we act above $\sigma_{k, s}$ by rerouting $\sigma_{k + 1}$.
		The construction begins by setting $\sigma_{0, 0}$ equal to the empty string.
		
\n \fbox{\emph{Process above $\sigma_{k, s}$.}} When we first start working above $\sigma_{k, s}$, say at stage $s_0$, we set $\beta(k, s_0) = \beta^*(k)$ (see below for how $\beta^*(k)$ is defined). If $k > 0$, then we start by choosing a $\nu \succ \sigma_{k, s_0}$ long enough so that between $\sigma_{k - 1, s_0}$ and $\sigma_{k, s_0}$, $\mu(B \cup [\nu]) < \beta_{k-1, s_0}$. We let $\sigma_{k+1, s_0} = \nu 1 0^j$ and enumerate the string $\nu 0 1^j$ into $B$, where $j$ is chosen large enough so that the measure of $[B]$ between $\sigma_{k, s_0}$ and $\sigma_{k+1, s_0}$ remains below $\beta^*(k)$. If $k = 0$, $\nu$ can be chosen to be the empty string.

		In a subsequent stage $s$, suppose that $C_0, ..., C_l$ are those among the first $k$ c.e.\ sets that $\sigma_{k, s}$ is not already a member of, in order of descending priority. Now if for some $\rho$ between $\sigma_{k, s}$ and $\sigma_{k+1, s}$ and some $j \le l$, $\mu_\rho([C_j])$ exceeds $\sqrt{\beta(k, s)}$ and no action has yet been taken for a higher priority $C_{j'}$, then we act by rerouting $\sigma_{k+1, s}$ above $\rho$. Let $\nu \succeq \rho$ be a string in $C_j$ long enough so that:
		\begin{enumerate}
			\item Between $\rho$ and $\nu$, $\mu([B]) < \sqrt{\beta(k, s)}$. 
			\item $[B] \cap [\nu] = \emptyset$.
			\item If $k > 0$, then between $\sigma_{k-1, s}$ and $\sigma_{k, s}$, $\mu(B \cup [\nu])$ must be strictly less than $\beta(k-1, s)$.
		\end{enumerate}
		Let $j$ be large enough so that between $\sigma_{k, s}$ and $\nu$, $\mu(B \cup [\nu 0 1^j])$ remains strictly below $\sqrt{\beta(k, s)}$. We set $\sigma_{k+1, s + 1} = \nu 1 0^j$ and enumerate $\nu 0 1^j$ into $B$. Finally, we set $\beta(k, s+1) = \sqrt{\beta(k, s)}$.

\n	\fbox{\emph{Choosing $\beta^*(k)$.}} We move $\sigma_{k+1, s+1}$ into $C_j$ when the following is seen to occur at some stage $s$: For some $\rho$ between $\sigma_{k, s}$ and $\sigma_{k+1, s}$, $\mu_\rho([C_j])$ exceeds the measure of the $\sqrt{\beta(k, s)}$-vicinity of $[B]$ above $\rho$, i.e., if $\mu_\rho([C_j]) > \beta(k, s)/\sqrt{\beta(k, s)} > \mu_\rho(B)/\sqrt{\beta(k, s)}$. If this does not occur, we wish to limit the measure of $C_j$ to $2^{-k}$ between $\sigma_{k, s}$ and $\sigma_{k+1, s}$. Each time we act above $\sigma_{k, s}$, the value of $\beta(k, s + 1)$ is magnified by a power of $1/2$, so we require that $\beta^*(k)$ satisfy \[(\beta^*(k))^{1/2^{k+1}} \le 2^{-k}.\]
		
\n \textbf{Verification.}
		\begin{claim}\label{weak_vs_full_meas_preservation}
			Unless we act immediately above $\sigma_{k, s}$, the measure of $B$ between $\sigma_{k, s}$ and $\sigma_{k+1, s}$ remains strictly below $\beta(k, s)$. 
		\end{claim}
		\begin{proof}
			Condition (2) above ensures that if $\sigma_{k, s}$ is redefined at stage $s$ due to an action above $\sigma_{l, s}$ for some $l < k$, then $\mu(B \cap [\sigma_{k, s}]) = 0$. If we act above $\sigma_{k+1, s}$, then condition (3) ensures that $\mu(B)$ remains below $\beta(k, s)$ between $\sigma_{k, s}$ and $\sigma_{k+1, s}$. Note that there is a string $\nu$ such that $\sigma_{k + 1, s} \prec \nu \prec \sigma_{k+2, s}$ and $\mu(B \cup [\nu]) < \beta(k, s)$ between $\sigma_{k, s}$ and $\sigma_{k+1, s}$. So if we act above $\sigma_{l, s}$ for some $l > k + 1$, then we add some measure to $B$, but this measure is contained entirely in $[\nu]$.
		\end{proof}
		\begin{claim}
			We can act above $\sigma_{k, s}$ while satisfying requirements (1) through (3) above.
		\end{claim}
		\begin{proof}
			By Claim~\ref{weak_vs_full_meas_preservation}, $\mu(B) < \beta(k, s)$ between $\sigma_{k, s}$ and $\sigma_{k+1, s}$. So if at stage $s$, for some $\rho$ between $\sigma_{k, s}$ and $\sigma_{k+1, s}$, $\mu(C_j)$ exceeds $\sqrt{\beta(k, s)}$ then by Lemma~\ref{vicinity_cantor_space} there is an $X \in C_j$ extending $\rho$ such that for every $\alpha$ such that $\rho \preceq \alpha \prec X$, $\mu_\alpha(B) < \sqrt{\beta(k, s)}$. Thus there are arbitrarily long strings extending $\rho$ satisfying condition (1). Conditions (2) and (3) are met by simply choosing a long enough such string.
		\end{proof}
		\begin{claim}
			For each $k \in \omega$, $\sigma_k = \lim_s \sigma_{k, s}$ exists, and $Y = \bigcup_{k} \sigma_k$ is total.
		\end{claim}
		\begin{proof}
			Assume that $\sigma_{k, s}$ has stabilized by stage $s$. Then $\sigma_{k+1}$ is redefined above $\sigma_{k, s}$ at most $k$ times.
		\end{proof}
		\begin{claim}
			$Y$ is a dyadic density-one point.
		\end{claim}
		\begin{proof}
			Suppose that $Y \notin [W_e]$. Let $k$ be large enough so that $k > e$ and for all $e' < e$, if $Y \in W_{e'}$, then $\sigma_k \in W_{e'}$. For any $k' > k$, let $s$ be large enough so that $\sigma_{k', s}$ has stabilized. By our choice of $k$, we never act above $\sigma_{k', s}$ for the sake of $W_{e'}$ for any $e' < e$, and by the assumption that $Y \notin [W_e]$, we never act for the sake of $W_e$. Let $t > s$ be such that $\sigma_{k'+1, t}$ has stabilized. For all $t' > t$, between $\sigma_{k', t'}$ and $\sigma_{k' + 1, t'}$, $\mu(W_e)$ does not exceed $\sqrt{\beta(k', t')}$, which is always bounded by $2^{-k}$. 
		\end{proof}
		\begin{claim}
			$Y$ is not a density-one point.
		\end{claim}
		\begin{proof}
			Let $\sigma_k$ and $\sigma_{k+1}$ be the final values of $\sigma_{k, s}$ and $\sigma_{k+1, s}$ respectively. Then by construction there is a string $\nu$ such that $\sigma_k \prec \nu \prec \sigma_{k+1} \prec Y$, and $\sigma_{k+1} = \nu 1 0^j$ for some $j$ and $\nu 0 1^j \in B$. Let $l = |\nu| + j + 1$ and let $I$ be the interval $(0.\nu 1 - 2^{-l}, 0.\nu 1 + 2^{-l})$. Since $Y$ is a dyadic density-one point, $Y$ is not a rational and so $Y \in [0.\nu 1, 0.\nu 1 + 2^{-l}) \subset I$, while the left half of $I$ belongs entirely to $B$. 
		\end{proof}
		This completes the proof of Theorem~\ref{weak_vs_full}. We note that the construction actually  ensures that $\cantor -{B}$ is  porous at $Y$.
	\end{proof}

\section{Nies: Upper density and partial computable randomness}

The \emph{upper}  (Cantor-space) density of a set $\sC \subseteq \cantor$ at a point~$Z$ is: 
 $$\ol \varrho_2(\sC | Z):=\limsup_{\sss \prec Z\lland |\sss| \rightarrow \infty} \frac{\lambda(I \cap \sC)}{|I|}, $$
where $I$ ranges over basic dyadic intervals containing $z$. Bienvenu et al.\ \cite[Prop.\ 5.4]{Bienvenu.Greenberg.ea:preprint} showed that for any effectively closed set $\+ P$ and   ML-random $Z \in \+ P$, we have $\ol \varrho_2(\+ P\mid Z) =1$; this implies of  course that the    upper density in $\RR$  also equals $1$. 

The following shows that ML-randomness was actually too strong an assumption. The right level seems to be  given by the ``ugly duckling'' notion of partial computable randomness. See \cite[Ch.\ 7]{Nies:book} for background. If the measure $\leb \+P$ is a  computable real, then in fact computable randomness of $Z$ suffices. In that case the full dyadic density is $1$.

\begin{proposition} \label{prop:PC random upper density} Let $\+ P \sub \cantor$ be effectively closed. Let  $Z \in \+ P$ be partial computably random. Then $\ol \varrho_2(\+ P \mid Z) =1$. \end{proposition}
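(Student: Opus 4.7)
The plan is to prove the contrapositive by constructing a partial computable martingale that succeeds on $Z$. Assume for contradiction that $Z \in \+P$ is PC random but $r := \limsup_n \lambda_{Z\uh n}(\+P) < 1$. Fix rationals $r < q < q' < 1$ and $\delta > 0$ with $q' + \delta < 1$, and set $\alpha = 1/(q'+\delta) > 1$. Then there is $n_0$ such that $\lambda_{Z\uh n}(\+P) < q$ for all $n \geq n_0$, and in particular $\lambda_{Z\uh n}(\+P^c) > 1 - q' - \delta$ for such $n$. Recall also that $\+P^c$ is c.e.\ open.

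For each $T \in \NN$ I will define a partial computable martingale $N_T$ with $N_T(\sigma) = 1$ for $|\sigma| \leq T$, and for longer $\sigma$ defined by a greedy betting procedure alternating between ``idle'' and ``committed'' phases. Starting idle at length $T$: at each idle prefix $\tau$, enumerate $\+P^c$ and wait for a finite union $C$ of basic cylinders contained in $\+P^c \cap [\tau]$ with $\lambda_\tau(C) \geq 1 - q' - \delta$. This search halts iff $\lambda_\tau(\+P) < q' + \delta$. Once such $C$ is found (say with relative measure $c$), extend $N_T$ above $\tau$ by the standard martingale ``bet against $C$'', which assigns value $0$ to cylinders contained in $C$ and value $N_T(\tau)/(1-c) \geq \alpha \cdot N_T(\tau)$ to cylinders at the depth of $C$ that are disjoint from $C$. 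When the algorithm passes the depth of $C$, revert to the idle state and repeat.

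Now take $T = n_0$. Every prefix $\tau \prec Z$ at which the algorithm is idle has $|\tau| \geq n_0$, so $\lambda_\tau(\+P) < q < q' + \delta$ and the engagement search at $\tau$ halts; hence $N_{n_0}(Z\uh n)$ is defined for every $n$. Since $Z \in \+P$, $Z$ lies outside every engagement clopen $C \sub \+P^c$, so each engagement multiplies the capital of $N_{n_0}$ along $Z$ by at least $\alpha$. Because infinitely many engagements occur along $Z$, one concludes $N_{n_0}(Z\uh n) \to \infty$, contradicting the PC randomness of $Z$.

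The main obstacle is that $n_0$ is not effectively accessible, so $T$ cannot depend on $Z$. The resolution is that $N_T$ is a partial computable martingale for each \emph{fixed} integer $T$, and only the existence of some $T$ for which $N_T$ succeeds on $Z$ is needed to contradict PC randomness; this is witnessed by $T = n_0$. A secondary point worth verifying is that the bet-against-$C$ transformation genuinely preserves the martingale identity where it is defined, which is routine since it is the Radon--Nikodym transformation to ``$\lambda$ conditioned on being outside $C$'' rescaled to agree with $N_T(\tau)$ at $\tau$.
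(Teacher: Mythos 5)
Your proposal is correct and follows essentially the same route as the paper's proof: a partial computable martingale, non-uniformly parametrized by the threshold length (the paper fixes $n^*$ and $q$ from the hypothesis; you range over $T$ and take $T=n_0$), which at each active node waits until a clopen subset of the complement of $\+ P$ of sufficiently large relative measure has been enumerated and then bets against it, gaining a fixed multiplicative factor along $Z$ at every engagement. The only cosmetic point is that the capital may dip at intermediate levels during a committed phase, so the correct conclusion is $\limsup_n N_{n_0}(Z\uh n)=\infty$ rather than convergence to $\infty$ --- which is exactly what success of a martingale means, so nothing is lost.
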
 

\begin{proof} Suppose there is $q < 1$ and $n^*$ such  that $\leb_\sss(\+ P) < q$ for each $\eta \prec Z$ with $|\eta| \ge n^*$. We will define a partial computable martingale $M$ that succeeds on $Z$. Let $M(\eta)=1 $ for all strings $\eta$ with $|\eta| \le n^*$. Now suppose that $M(\eta)$ has been defined for a string $\eta$ of length at least $n^*$, but $M$ is as yet undefined on extensions of $\eta$. Search for $t > |\eta|$ such that 
\[  \tp{-(t -|\eta|)}  \# \{\tau \mid \, |\tau| = t   \lland [\tau] \cap \+ P _t = \ES\} > 1- q. \]
If $t$ is found, bet all the capital existing at $\eta$ on the strings $\sss \succ \eta$ with $\sssl = t $ that are not $\tau$'s as above, thereby multiplying the capital by $1/q$. Now repeat with all such strings $\sss \succ \eta$  of length $t$. 

The formal definition of  $M$ is  as follows (supplied by Jing Zhang). For all $|\tau|\leq n^*$, $M(\tau)=1$. Next we define $M$ inductively on $2^{<\omega}$. Suppose $M$ has been defined on $\alpha$ and $M(\alpha)=\beta$, let $t\in \omega$ such that $t>|\alpha|$ and let $S=\{\tau\in 2^t: [\tau]\cap P_t\}$ and $r=|S|>2^{t-|\tau|}(1-q)$. For each $\sigma \in 2^t\backslash S$, define $M(\sigma)=\frac{1}{q}\alpha$, and let $\tau^*\in S$ be the leftmost element and define 
\[M(\tau^*)=2^{t-|\tau|}\alpha - \frac{1}{q}\alpha (2^{t-|\tau|}-r)\]

For any $\sigma\succcurlyeq \alpha$ and $|\sigma|<t$, define $M$ accordingly to make $M$ a martingale. 

Next is the verification. First we check that $\forall \tau\preccurlyeq Z$, $M(\tau)$ is defined. We verify this inductively. Suppose $\eta \preccurlyeq Z$ is already defined. Then by assumption, $\lambda_\eta(\bar{P})>(1-q)$. Therefore, there exists a stage $t\in \omega$ such that $\lambda_{\eta}(\bar{P}_t)>(1-q)$. Thus we have $\Sigma_{\tau\in 2^t, \eta \preccurlyeq \tau}\lambda_{\tau}(\bar{P}_t)=2^{t-|\eta|} \lambda_\eta(\bar{P}_t)>2^{t-|\eta|}(1-q)$;  here we use the fact that for   any measurable class $Q\subset 2^\omega$, the function $\sigma \mapsto \lambda_\sigma(Q)$  is a martingale. Therefore, we have found such a $t$ to define a proper extension of $\eta$. By induction, $M$ is defined on $Z$. It is easy to see $Z$ succeeds on $M$ since every time a new string is defined, the capital becomes $\frac{1}{q}>1$ times of the original capital. 

Note that $M$ succeeds on $Z$ because \emph{all}  strings $\sss \prec Z$  of length $\ge n^*$ qualify as possible $\eta$'s where $t$ exists. On the other hand, if $\eta$ is off $Z$ then  there may be no $t$, so $M$ can be partial. 
\end{proof}

\begin{question} Is there  a computably random $Z$ in some $\PPI$ class $\+ P$ so that $\ol \varrho_2(\+ P \mid Z) < 1$ ? \end{question}
\begin{proposition} Let $\+ P \sub \cantor$ be effectively closed with $\leb \+ P$ computable. Let  $Z \in \+ P$ be   computably random. Then $\varrho_2(\+ P \mid Z) =1$. \end{proposition}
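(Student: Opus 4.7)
The plan is to strengthen the argument of Proposition~\ref{prop:PC random upper density} so that the resulting martingale is \emph{total}, using the extra hypothesis that $\leb \+P$ is computable. The first step I would carry out is to note that $\leb([\sss] \cap \+P)$, and hence the conditional measure $\leb_\sss(\+P) = 2^{|\sss|}\leb([\sss] \cap \+P)$, is computable uniformly in $\sss \in \strcantor$. Indeed, for each $n$ we have $\leb \+P = \sum_{|\sss|=n} \leb([\sss] \cap \+P)$; each summand is right-c.e.\ uniformly in $\sss$, and given $\epsilon>0$, computing $\leb \+P$ to precision $\epsilon/3$ and finding a stage at which the total upper approximation of the sum lies within $\epsilon/3$ of $\leb\+P$ pins every individual summand to within $\epsilon$ of its true value.

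Next, I would assume toward a contradiction that $\varrho_2(\+P\mid Z)<1$ and fix rationals $q<q'<1$ with $\leb_\eta(\+P)<q$ for infinitely many $\eta\prec Z$. I build a computable martingale $M$ inductively, starting with $M(\emptyset)=1$. At an ``active'' node $\eta$ where $M(\eta)$ has been determined but $M$ is not yet defined on any proper extension, I compute $\leb_\eta(\+P)$ to precision $(q'-q)/3$. If the rational approximation is below $q+(q'-q)/2$ --- a decidable condition satisfied whenever $\leb_\eta(\+P)<q$ and only when $\leb_\eta(\+P)<q'$ --- then \emph{bet}: search for $t>|\eta|$ such that
\[ |A_t|/2^{t-|\eta|} < q', \qquad A_t=\{\tau\succ\eta:|\tau|=t,\ [\tau]\cap\+P_t\neq\emptyset\}. \]
Such a $t$ must exist since $|A_t|/2^{t-|\eta|}$ decreases to $\leb_\eta(\+P)<q'$. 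Define $M(\tau)=M(\eta)\cdot 2^{t-|\eta|}/|A_t|$ for $\tau \in A_t$ and $M(\tau)=0$ for the remaining length-$t$ extensions of $\eta$, interpolating at intermediate strings by the usual martingale averaging, and declare the length-$t$ strings in $A_t$ to be the new active nodes. If the trigger fails, set $M(\eta 0)=M(\eta 1)=M(\eta)$ and keep both children active.

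For the verification, $M$ is total and computable by construction. Along $Z$ the capital is nondecreasing, and each bet multiplies it by $2^{t-|\eta|}/|A_t|>1/q'$. Since $Z\in\+P$, every prefix $Z\uhr t$ lies in $A_t$, so $M$ stays strictly positive on initial segments of $Z$. The infinitely many $\eta\prec Z$ with $\leb_\eta(\+P)<q$ guarantee that the betting trigger fires infinitely often along $Z$ (each time a deeper such $\eta$ becomes active after the previous bet resolves), so $M(Z\uhr n)\to\infty$, contradicting computable randomness of $Z$. The main obstacle is keeping the decision step total and computable; this is precisely where the hypothesis of computable $\leb\+P$ is essential, converting the possibly divergent search for $t$ in the partial-computable argument into a guaranteed terminating one.
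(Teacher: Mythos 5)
Your proof is correct, but it takes a genuinely different route from the one in the blog. The posted argument is in two steps: it first makes the betting strategy of Proposition~\ref{prop:PC random upper density} total (via a pair of simultaneous $\SI 1$ queries about $\leb_\eta(\+ P)$, one of which must answer) to conclude only that the \emph{upper} dyadic density $\ol\varrho_2(\+ P\mid Z)$ is $1$, and then upgrades to lower density by observing that $\eta\mapsto\leb_\eta(\+ P)$ is itself a computable martingale --- this is exactly where computability of $\leb \+ P$ enters, via the pinching argument you spell out at the outset --- which, by Doob convergence for computable martingales, cannot oscillate along the computably random $Z$, so $\liminf=\limsup=1$. You instead bet directly against $\varrho_2(\+ P\mid Z)<1$: the decidable trigger obtained from a fixed-precision approximation of $\leb_\eta(\+ P)$ keeps the strategy total, guarantees that the search for $t$ terminates whenever it is launched, and, since a failure to trigger certifies $\leb_\eta(\+ P)>q$, forces infinitely many bets along $Z$ (if bets stopped, every sufficiently long prefix of $Z$ would be active and non-triggering, contradicting the existence of infinitely many prefixes of conditional density below $q$). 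Your route is self-contained and avoids the martingale convergence theorem altogether; the blog's decomposition, in exchange, isolates the upper-density statement, which holds under weaker hypotheses (partial computable randomness, no computability of $\leb\+ P$).

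Two small points to tidy up. First, the interpolated values strictly between an active $\eta$ and length $t$ need not dominate $M(\eta)$, so the capital along $Z$ is nondecreasing only along the \emph{active} prefixes; this suffices, since success only requires $\limsup_n M(Z\uhr n)=\infty$ and the active prefixes of $Z$ carry unboundedly growing capital. Second, to make $M$ total on all of $\strcantor$ you should explicitly extend it by $0$ above the length-$t$ strings outside $A_t$.
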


\begin{proof} First we show $\ol \varrho_2(\+ P \mid Z) =1$.  The easy, but not quite accurate, argument would be that in the construction above,  before searching for $t$, we ask whether $\leb_\eta (\+ P) < q$; only then do we attempt to find $t$. 

This isn't quite right because ``$\leb_\eta (\+ P) < q$''  is merely $\SI 1$,  even though $\leb_\eta (\+ P) $ is a uniformly in $\eta$ computable real. To amend this, fix $q'< q$ such that in fact $\leb_\sss(\+ P) < q'$ for each $\eta \prec Z$ with $|\eta| \ge n^*$.  We ask simultaneously \bi \item[(1)] whether  $\leb_\eta (\+ P) > q'$; if the positive answer  to this $\SI 1$ question turns up first we don't bet on extensions of $\eta$
\item[(2)]  $\leb_\eta (\+ P) < q$; in this case we bet. \ei
One of the queries must yield an answer.

The computable martingale $\eta \to \leb_\eta(\+ P)$ cannot oscillate along  the computably random $Z$. Thus, the dyadic density $\rho_2(\+ P \mid Z)$ is 1.
\end{proof}

In fact, Schnorr randomness of $Z$ is sufficient as a hypothesis in the preceding proposition by deeper work of \cite{Pathak.Rojas.ea:12} and \cite{Freer.Kjos.ea:nd}. The characteristic function $1_P$ is $L_1$-computable because there is a  sequence $\seq{1_{P_{g(n)}}}\sN n$, where $g$ is a computable function such that $\leb (P_{g(n)}-P) \le \tp{-n}$. Now use e.g.  \cite[Theorem 3.15]{Pathak.Rojas.ea:12}.

\section{Density-one points and Madison tests (written by Nies)}

\label{s:Andrews}
\newcommand{\weight}{\mbox{\rm \textsf{wt}}}

\newcommand{\rk}{\mbox{\rm \textsf{rk}}}

The following is 2012 work of a group at Madison, consisting of U.\ Andrews,  M.\ Cai, D.\  Diamondstone, S.\ Lempp, and l.n.l.\ J.\ S.\ Miller. The writeup below, due to Nies, is based on discussions with Miller, and  Miller's  slides for his talks at the Buenos Aires Semester 2013. Technical details in the verifications have been added.  No proof by the Madison group has appeared so far (June 2014).

A martingale $L \colon \strcantor \to \RR^+_0$ is called \emph{left-c.e.} if $L(\sss)$ is a left-c.e.\ real uniformly in $\sss$. We focus on convergence of such a martingale along a real $Z$, which means that $\lim_n L(Z \uhr n)$ exists in $\RR$. Unlike the case of computable martingales, convergence requires more randomness than boundedness. For instance, let $\+ U = [0, \Om)$, and let  $L(\sss) = \leb (\+ U \mid [\sss])$ (as a shorthand we use $\leb_\sss(\+ U)$ for this conditional measure); then the left-c.e.\ martingale $L$ is bounded by 1 but  diverges on $\Omega$ because $\Om$ is Borel normal.
\begin{theorem}[Andrews, Cai, Diamondstone, Lempp and Miller, 2012] \label{thm:Madison} The following are equivalent for a ML-random real $z \in [0,1]$.

\bi 

\item[(i)] $z$ is a dyadic density-one point.   

\item[(ii)] Every left-c.e.\ martingale converges along $Z$, where $0.Z$ is the binary expansion of $z$. \ei 
\end{theorem}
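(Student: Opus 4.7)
My plan is to establish the two implications separately.

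The direction (ii) $\Rightarrow$ (i) is direct. Given a $\Pi^0_1$ class $\+P$ containing $z$, the function $M(\sigma) := \leb_\sigma(\+P^c) = 1 - \leb_\sigma(\+P)$ is a left-c.e.\ martingale bounded by~$1$, since $\+P^c$ is $\Sigma^0_1$ and the conditional measure of a $\Sigma^0_1$ class is a left-c.e.\ martingale. By hypothesis (ii), $M$ converges along $Z$. The Bienvenu--Greenberg--H\"olzl--Miller--Nies result \cite[Prop.\ 5.4]{Bienvenu.Greenberg.ea:preprint} cited just before the theorem gives $\limsup_{\sigma \prec Z} \leb_\sigma(\+P) = 1$ whenever $Z$ is ML-random and $Z \in \+P$, so $\liminf M(Z \uhr n) = 0$; combined with convergence this forces $\lim M(Z \uhr n) = 0$, i.e.\ $\lim_{\sigma \prec Z} \leb_\sigma(\+P) = 1$, so $z$ is a dyadic density-one point.

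For the harder direction (i) $\Rightarrow$ (ii), let $L$ be a left-c.e.\ martingale. First I would reduce to a bounded situation: the classes $\+U_k := \{X : \sup_n L(X \uhr n) > 2^k\}$ are uniformly $\Sigma^0_1$ with $\leb(\+U_k) \le L(\emptyset)\,2^{-k}$ by Doob's maximal inequality, so after a shift they form a Martin-L\"of test. ML-randomness of $Z$ then supplies a rational $c$ with $L(Z \uhr n) \le c$ for all $n$; equivalently $Z$ lies in the $\Pi^0_1$ class $\+P_c := \{X : L(X \uhr m) \le c \text{ for all } m\}$. Dyadic density-one now yields $\leb_\sigma(\+P_c) \to 1$ along $Z$. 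Writing $V_c := \+P_c^c$ and $\tau_c(X) := \min\{k : L(X \uhr k) > c\}$, optional stopping expresses
\[
L(\sigma)\,\leb([\sigma]) \;=\; \int_{[\sigma] \cap \+P_c} L^\infty \, d\leb \;+\; \int_{[\sigma] \cap V_c} L(X \uhr \tau_c) \, d\leb,
\]
where $L^\infty$ is the a.s.\ limit of the bounded-by-$c$ martingale $L$ restricted to $\+P_c$, and $L(X \uhr \tau_c) \le 2c$ by the one-step martingale bound. The second term is bounded by $2c \cdot \leb([\sigma] \cap V_c)$, whose average over $[\sigma]$ tends to $0$ along $Z$ by dyadic density-one. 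Convergence of $L(Z \uhr n)$ thus reduces to convergence of the $\leb$-averages of $L^\infty \cdot 1_{\+P_c}$ over $[Z \uhr n]$ to $L^\infty(Z)$.

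The main obstacle is this final Lebesgue-differentiation statement at the specific point $Z$. Standard a.s.\ convergence (automatic for bounded martingales) is insufficient; I need an effective Lebesgue differentiation theorem at the ML-random, dyadic density-one point $Z$, applied to $L^\infty \cdot 1_{\+P_c}$. Since $L^\infty$ arises as the $L^1$-limit of the bounded left-c.e.\ step functions $X \mapsto L(X \uhr n)\cdot 1_{\+P_c}(X)$, it ought to be $L^1$-computable in a sense sufficient to invoke an effective differentiation theorem in the spirit of Pathak--Rojas--Simpson or Rute. Alternatively, and more in keeping with the paper's suggestion of \emph{Madison tests}, one may work by contrapositive: assuming $L$ diverges along $Z$, extract rationals $q < r$ witnessing oscillation, and translate this oscillation data into a uniformly $\Sigma^0_1$ construction that exhibits a $\Pi^0_1$ class $\+P'$ with $Z \in \+P'$ and $\liminf_{\sigma \prec Z} \leb_\sigma(\+P') < 1$. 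Identifying the right test formalism---one that converts martingale oscillation into a density deficit at $Z$ while respecting the left-c.e.\ structure of $L$ and the fact that $L$ is bounded along $Z$---is where I expect the real work.
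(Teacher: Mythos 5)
Your direction (ii) $\Rightarrow$ (i) is correct and is essentially the argument behind the citation the paper uses: $M(\sigma)=1-\leb_\sigma(\+P)$ is a bounded left-c.e.\ martingale, convergence plus $\liminf M(Z\uhr n)=0$ (from the upper-density result for ML-randoms) forces the limit to be $0$. No issue there.

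The hard direction (i) $\Rightarrow$ (ii) is, however, not proved: you describe two candidate routes and complete neither, and you say so yourself. The first route (optional stopping plus an effective Lebesgue differentiation theorem applied to $L^\infty\cdot 1_{\+P_c}$) does not go through as stated: $L^\infty$ is the pointwise limit of the left-c.e.\ approximations $L(X\uhr n)$, but there is no computable rate of $L^1$-convergence, so $L^\infty\cdot 1_{\+P_c}$ is not $L^1$-computable in the sense required by Pathak--Rojas--Simpson or Rute (those results need effectively converging step-function approximations and yield Schnorr-random Lebesgue points). Worse, the statement that density-random points are Lebesgue points for integral tests is itself proved in this paper \emph{from} Theorem~\ref{thm:Madison}, so that route is circular. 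The second route is the one the paper takes, but the entire mathematical content lies in the construction you defer. Concretely, what is missing is: (1) the definition of a Madison test --- a computable sequence $\seq{U_s}$ of sets of strings of uniformly bounded weight satisfying (a) a string leaves $U_s$ only when a proper prefix enters, and (b) if $\weight(\sss^\prec\cap U_s)>\tp{-\sssl}$ then $\sss\in U_s$; (2) the lemma that a ML-random dyadic density-one point passes every Madison test, proved by assembling a ML-test $\+S^k$ occupying relative measure $\ge\tp{-k}$ above every string of $U$, where condition (b) is exactly what lets a newly entering string inherit the clopen mass of the extensions it displaces without overshooting; and (3) the conversion of oscillation of $L$ along $Z$ into a Madison test, which requires the stage-labelling function $\gamma_s$ and an induction via Kolmogorov's inequality (Claim~\ref{cl:tech}) to verify the weight bound and condition (b). The essential obstacle your sketch does not confront is that for a \emph{left-c.e.} martingale an apparent upcrossing between $q$ and $r$ can later disappear because $L(\sss)$ increases; a naive upcrossing count (Dubins/Doob) therefore fails, and it is precisely this that forces the removal-and-inheritance conditions (a) and (b). One also needs the intermediate step that passing all Madison tests implies computable randomness of $Z$, so that each approximating martingale $L_s$ already converges along $Z$ before the oscillation of $L$ itself is analysed.
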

Note that by  Theorem~\ref{thm:ddensity vs full density},  $z$ is a  full density-one point iff $z$ is a dyadic density-one point. A ML-random satisfying any of these equivalent conditions will be called \emph{density random}.
\begin{proof}

(ii) $\to$ (i) is   \cite[Cor 5.5]{Bienvenu.Greenberg.ea:preprint}. 

\n (ii) $\to$ (i) is   \cite[Corollary 5.5]{Bienvenu.Greenberg.ea:preprint}. 

\n (i) $\to$ (ii).  We can   work within Cantor space because the  dyadic density  of a class $\+ P \sub [0,1]$ at $z$ is the same as the density of $\+ P$ viewed as a subclass of   Cantor space  at $Z$.  We use the  technical  concept of ``Madison tests''. They are  also called density tests, even though they   actually seem to be  motivated by oscillation of martingales. As a first step, Lemma \ref{lem: density to Madison} shows that if $Z \in \cantor$ is a ML-random dyadic density-one point, then $Z$   passes all Madison tests. As a second step, Lemma~\ref{Madison to  MG convergence} shows  that if $Z$ passes all Madison tests, then every left-c.e.\ martingale converges along $Z$.

  We will now  introduce and motivate this technical test concept. We define the weight  of a set   $X \sub \cantor$  as $${\weight} ( X) = \sum_{\sss \in X} \tp{-\sssl}.$$ Let $\sss^\prec = \{ \tau \in \strcantor \colon \, \sss \prec \tau \}$.

\begin{definition}  A \emph{Madison test} is a computable sequence $\seq {U_s}\sN s$ of computable subsets of $\strcantor$  such that $U_0 = \ES$,      there is  a  constant $c$ such that for each  stage $s$ we have $\weight (U_s)  \le c$, and for  all strings $\sss, \tau$, 
\bi \item[(a)]  $\tau \in U_s - U_{s+1} \to \ex \sss \prec \tau \, [ \sss \in U_{s+1} - U_s]$
\item [(b)] $\weight (\sss^\prec \cap U_s) > \tp{-\sssl} \to \sss \in U_s$. 
\ei
Note that by (a),  $U(\sss) := \lim_s U_s(\sss)$ exists for each $\sss$; in fact, $U_s(\sss)$ changes at most $\tp{\sssl}$ times.

We say that $Z$ \emph{fails} $\seq {U_s}\sN s$ if $Z \uhr n \in U$ for infinitely many $n$; otherwise $Z$ \emph{passes} $\seq {U_s}\sN s$.
\end{definition}
 
We show    that $\weight(U_s)\leq \weight(U_{s+1})$, so that  $\weight(U) = \sup_s \weight(U_s) < \infty $  is a left-c.e.\ real.   Suppose  that $\sss$ is  minimal under the prefix relation such that  $\sigma\in U_{s+1}-U_s$.  By (b) and since $\sigma\not \in U_{s}$, we have  $\weight(\sigma^\prec \cap U_s)\leq 2^{-|\sigma|}$. So enumerating $\sss$ adds $\tp{-\sssl}$ to the weight, while the weight of strings  above $\sigma$  removed from $U_s$  is at most $2^{-|\sigma|}$.

\begin{remark} \label{rem:Doob Madison} {\rm The definition of a Madison test  is closely related to Dubins' inequality,  which limits the amount of oscillation a martingale can have; see, for instance, \cite[Exercise 2.14 on pg.\ 238]{Durrett:96}. Note that this inequality implies a version of  the better-known Doob  upcrossing inequality by taking the sum over all $k$. 
	
We only need to discuss these inequalities in the restricted setting of martingales on $\strcantor$.	 Consider a computable rational-valued martingale $B$; that is, $B(\sss)$ is a rational uniformly computed (as a single output)   from~$\sss$. Suppose that $c,d$ are rationals, $0< c< d$,  $B(\estring)< c$, and  $B$  oscillates between values less than $c$ and greater than $d$ along a bit sequence $Z$. An \emph{upcrossing} (for these values)  is a pair of strings $\sss \prec \tau$ such that $B(\sss)< c$, $B(\tau)>d$, and $B(\eta) \le d$ for each $\eta$ such that $\sss \preceq \eta \prec \tau$.  
By Dubins' inequality, for each $k$ we have \begin{equation} \label{eqn: upcr} \leb\{X \colon \, \ex k \, \text{upcrossings along} \,  X\} \le (c/d)^k.\end{equation}
(See \cite[Cor.\ b.7]{Bienvenu.Greenberg.ea:preprint} for a proof of this fact using the  notation of the present paper.)

Suppose  now that $2c< d$. We   define a Madison test that     $Z$ fails.  Strings never leave the computable  approximation of the  test, so (a) holds. 

 We put $\estring$ into $U_0$. If $\sss \in U_{s-1}$,  put into $U_s$ all strings $\eta$ such that $B(\tau) >d$ and $B(\eta) < c$ for some  $\tau \succ \sss$ chosen prefix minimal, and $\eta \succ \tau$ chosen prefix minimal. Let  $U = \bigcup U_s$ (which  is in fact computable). For each $\sigma$, by the upcrossing inequality (\ref{eqn: upcr}) localised to $[\sss]$, we have  $\weight (\sss^\prec \cap U) \le    \tp{-\sssl}  \sum_{k\ge 1} (c/d)^k   <  \tp{-\sssl}$, so (b) is satisfied vacuously.  }
  \end{remark}
  
  As already noted in  \cite{Bienvenu.Greenberg.ea:preprint}, if $B =\sup B_s$ is a left-c.e.\ martingale where the $B_s$ are uniformly computable martingales,  then an upcrossing  apparent at stage $s$ can later disappear because $B(\sss)$ increases.  In this case, as we will see in the proof of Lemma~\ref{Madison to  MG convergence},  the full power of the conditions (a) and (b) is needed to obtain a Madison test from the oscillating behaviour of $B$. 

  We make the first step of the argument outlined above.
\begin{lemma} \label{lem: density to Madison} Let $Z$ be a ML-random dyadic density-one point. Then $Z$ passes each Madison test. \end{lemma}

\begin{proof} Suppose that  a ML-random bit sequence $Z$ fails a Madison test $\seq {U_s}\sN s$. We will build a ML-test $\seq {\+ S^k} \sN k$ such that $\fa \sss \in U \, [ \leb_\sss(\+ S^k) \ge \tp{-k}]$, and  therefore $$\ul \varrho(\cantor - \+ S^k \mid Z) \le 1- \tp{-k}.$$ Since $Z$ is ML-random we have $Z \not \in \+ S^k$ for some $k$. So $Z$ is not a dyadic density-one point, as witnessed by  the $\PPI$ class $\cantor - \+ S^k$. 

To define the $\+ S^k$ we construct, for each $k, t \in \omega$ and each string $\sss \in U_t$,  clopen sets $\+ A^k_{\sss,t} \sub [\sss]$ given by  strong  indices for finite sets of strings computed from $k, \sss, t$, such that  $\leb (\+ A^k_{\sss,t} )= \tp{-\sssl -k}$ for each $\sss \in U_t$.   We  will let $\+ S^k$ be the union of these  sets over all  $\sss$ and $t$. The  clopen sets  for $k$ and a final string $\sss \in U$ will be disjoint from the    $\PPI$ class $\+ S^k$. Condition  (b) on Madison tests ensures that  during the construction,  a  string $\sss$ can  inherit the clopen sets belonging to   its extensions $\tau$,  without risking that  the $\PPI$ class becomes empty above $\sss$.

\vsp

\n \emph{Construction of clopen sets $\+ A^k_{\sss,t} \sub [\sss]$ for $\sss \in U_t$.}

\n  No    sets need to be defined at stage $0$ because  $U_0 = \ES$. 
Suppose at stage $t+1$, we have    $\sss \in U_{t+1} - U_t$. 
 For each $\tau \succ \sss$ such that  $\tau \in U_t - U_{t+1}$, put $\+ A^k_{\tau, t}$ into an  auxiliary clopen set  $ \wt {\+  A}^k_{\sss, t+1}$. Since $\sss \not \in U_t$, by  condition  (b) on Madison tests,  we have  $\weight (\sss^\prec \cap U_t) \le \tp{-\sssl}$, and so inductively \bc $\leb (\wt {\+ A}^k_{\sss,t+1}) \le  \tp{- \sssl -k}$. \ec Now,  to obtain $ \+ A^k_{\sss , t+1}$ we simply add mass from $[\sss]$ to   $\wt  {\+ A}^k_{\sss , t+1}$ in order to ensure equality as required. 

Let $$\+ S^k_t = \bigcup_{\sss \in U_t} \+ A^k_{\sss,t}.$$ Then $\+ S^k_t \sub \+ S^k_{t+1}$ by condition  (a) on Madison tests. Clearly \bc $\leb \+ S^k_t \le  \tp {-k} \weight (U_t)  \le \tp{-k}$. \ec So $\+ S^k = \bigcup_t \+ S ^k_t$ determines  a ML-test.  Since $Z$ is ML-random, we have  $Z \not \in \+ S^k$ for some $k$. If $\sss \in U$ then by construction  $\leb \+ A^k_{\sss,s} = \tp{-\sssl -k}$ for almost all $s$. Thus  $ \leb_\sss(\+ S^k) \ge \tp{-k}$  as required.  \end{proof}

 We begin  the second step of the argument with an intermediate fact. 
\begin{lemma} 
Suppose that $Z$ passes each Madison test. Then $Z$ is computably random.
\end{lemma}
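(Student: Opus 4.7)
The plan is to prove the contrapositive: if $Z$ is not computably random, then $Z$ fails some Madison test. Suppose there is a computable rational-valued martingale $M$ with $M(\estring) = 1$ and $\sup_n M(Z\uhr n) = \infty$ (by a standard approximation argument we may take $M$ to be rational-valued without loss of generality). My aim is to reduce to Remark~\ref{rem:Doob Madison}, which already produces a Madison test that $Z$ fails from any computable rational-valued martingale oscillating along $Z$ between values below $c$ and above $d$ with $2c < d$.

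If $\liminf_n M(Z\uhr n) < \infty$, I would choose rationals $c < d$ with $\liminf_n M(Z\uhr n) < c$ and $d > 2c$ (possible because $\limsup_n M(Z\uhr n) = \infty$ so $d$ may be taken as large as desired). Then $M$ dips below $c$ infinitely often and exceeds $d$ infinitely often along $Z$, so Remark~\ref{rem:Doob Madison} applied to $M$ furnishes the desired Madison test.

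The remaining case is that $M(Z\uhr n) \to \infty$. Here I plan to manufacture an oscillating martingale by a savings construction. For each $k \ge 1$ let $V^k$ be the computable antichain of prefix-minimal strings $\tau$ with $M(\tau) \ge 2^k$, and define the ``restarted'' martingale $B_k$ by $B_k(\sigma) = 1$ if no prefix of $\sigma$ lies in $V^k$, and $B_k(\sigma) = M(\sigma)/M(\tau_k(\sigma))$ where $\tau_k(\sigma) \in V^k$ is the (unique) prefix of $\sigma$ in $V^k$ otherwise. One checks that each $B_k$ is a computable martingale. A carefully-weighted combination $B = \sum_{k\ge 1} w_k B_k$ (where the $w_k$ are small enough to keep $B(\estring)$ finite and to make the individual $B_k$-contributions ``cash out'' at staggered stages rather than compounding) can be arranged so that $B$ is a computable rational-valued martingale with $\limsup_n B(Z\uhr n) = \infty$ and $\liminf_n B(Z\uhr n) < \infty$. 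Then the first case applies to $B$.

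The main obstacle is realizing the savings step in the monotone case: the most naive weighted sum (for example $w_k = 2^{-k}$ with $B_k$ as above) only yields another martingale that grows monotonically to infinity along $Z$, so it provides no oscillation. The correct construction must use ``stopped'' sub-martingales that lock in bounded gains and then contribute nothing, combined so that the total capital of $B$ repeatedly dips back below a fixed threshold while still growing without bound in its supremum.
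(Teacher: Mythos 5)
Your overall route coincides with the paper's: argue the contrapositive, reduce failure of computable randomness to the existence of a computable rational-valued martingale with infinitely many upcrossings of a fixed interval $[c,d]$, $2c<d$, along $Z$, and then invoke Remark~\ref{rem:Doob Madison}. Your Case 1 ($\liminf_n M(Z\uhr n)<\infty$) is handled correctly. The gap is Case 2, which is the only substantive case, and the mechanism you propose there cannot work as described. A nonnegative combination $\sum_k w_k B_k$ of ``stopped'' sub-martingales, each of which locks in a gain and thereafter contributes a constant, never forces the total capital to \emph{drop} along $Z$: a summand that has stopped never subtracts anything later, and a summand that has not stopped is just following $M$'s strategy. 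More fundamentally, a martingale loses capital along $Z$ only at nodes where it places a bet that the next bit of $Z$ refutes, and nothing in your sketch arranges such losing bets computably and infinitely often along an unknown $Z$. So the required oscillation is not produced; your final sentence states what the construction ``must'' do without supplying the idea that does it.

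The missing idea --- which the paper imports from the proof of Theorem 4.2 of Freer, Kjos-Hanssen, Nies and Stephan --- is the savings decomposition. Replace $M$ by a savings martingale $N=a+s$, where $s$ is a nondecreasing bank and the active part $a$ follows $M$'s betting ratios but transfers a fixed amount into $s$ whenever it reaches a threshold. The dips are then automatic: each transfer knocks $a$ down to a fixed low value, and since $N$ succeeds on $Z$ there must be infinitely many transfers along $Z$, so $a$ must climb back to the threshold infinitely often. Hence $a$ (suitably normalized into a genuine rational-valued computable martingale, or treated as a supermartingale, for which the upcrossing inequality still holds) has infinitely many upcrossings of a fixed interval along $Z$, and Remark~\ref{rem:Doob Madison} applies. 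Without this step, or an explicit citation replacing it, your argument is incomplete.
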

\begin{proof}  Rather than giving a direct proof, we will rely on Remark~\ref{rem:Doob Madison}. Suppose $Z$ is not computably random. The proof of \cite[Thm.\ 4.2]{Freer.Kjos.ea:nd} turns success of a rational-valued computable martingale $M$ with the savings property into oscillation of another such  martingale $B$. Slightly adapting the (arbitrary)  bounds for the oscillation given there, we may assume that  $B$ is as in Remark~\ref{rem:Doob Madison} for $a=2, b=5$:  if $M$ succeeds along $Z$,  then there are infinitely many  upcrossings $\tau \prec \eta \prec Z$,  $B(\tau) < 2$ and $B(\eta) >5$. Therefore $Z$ fails some Madison test. 
\end{proof}

\begin{lemma} \label{Madison to  MG convergence} Suppose that $Z$ passes each Madison test. Then every left-c.e.\ martingale $L$ converges along $Z$. \end{lemma}

\begin{proof}
The $L$ be a left-c.e.\ martingale. Then  $L(\sss) = \sup_s L_s(\sss)$ where $\seq {L_s}$ is a uniformly computable sequence of martingales and $L_0 = 0$ and $L_s(\sss) \le L_{s+1}(\sss)$ for each $\sss$ and~$s$.
Since $Z$ is computably random,   $\lim_n L_s(Z \uhr n)$ exists  for each $s$.
If $L$ diverges along $Z$,  there is $\varepsilon < L(\estring) $ such that  
\[\limsup_n L(Z \uhr n) - \liminf_n L(Z\uhr n) > \varepsilon.\]
Based on this fact  we  define a Madison test that    $Z$ fails. Along with the $U_s$ we define a uniformly computable labelling  function $\gamma_s \colon \, U_s \to \{0, \ldots,s\}$. 

\vsp

\leftskip 0.5cm
\n {\it  Let $U_0 = \ES$. For $s>0$ we put  the empty string  $\estring $ into $U_s$ and let $\gamma_s(\estring) = 0$. If already $\sss \in U_s$ with $\gamma_s(\sss) = t$, then we  also put into $U_s$ all  strings $\tau \succ \sss$ that are minimal under the prefix ordering  with $L_s(\tau) - L_t(\tau) > \varepsilon$. Let $\gamma_s(\tau) $ be the least $r$ with $L_r(\tau) - L_t(\tau) > \varepsilon$.   }

\leftskip 0cm

\vsp

\n Note that $\gamma_s(\tau)$ records    the greatest stage $r \le s $ at which $\tau$ entered~$U_r$. Intuitively, this construction  attempts to find   upcrossings  between the values $ \liminf_n L(Z\uhr n) < \limsup_n L(Z\uhr n)$. Clearly $\lim_n L_t(Z \uhr n \le  \liminf_n L(Z\uhr n)$. If a string $\tau $ as above  is sufficiently long,  then in fact $L_t(\tau) <  \liminf_n L(Z\uhr n)$ so we have an upcrossing.

We  verify that  $\seq {U_s}$ is a Madison test. For condition (a), suppose that  $\tau \in U_s - U_{s+1}$. Let $\sss_0 \prec \sss_1 \prec \ldots \prec \sss_n = \tau$ be the prefixes of $\tau $ in $U_s$. We can  choose a least  $i< n$  such that $\sss_{i+1}$ is no longer the minimal extension of $\sss_i$ at stage $s+1$. Thus there is $\eta$ with $\sss_i \prec \eta \prec \sss_{i+1}$ and $L_{s+1}(\eta) - L_{\gamma_s(\sss_i)}(\eta) > \varepsilon$. Then $\eta \in U_{s+1}$ and $\eta \prec \tau$,  as required.

To verify condition (b) requires more work. 
We fix $s$ and write  $M_t(\eta)$ for $ L_s(\eta ) - L_t(\eta)$.

\begin{claim}  \label{cl:tech} For each $\eta \in U_s$, where $\gamma_s( \eta) = r$, we have
\[\tp{- |\eta|}M_r(\eta) \ge \varepsilon \cdot  \weight (U_s \cap \eta^\prec). \]
\end{claim}
\n In particular, letting  $\eta = \estring$, we obtain that $\weight (U_s)$ is bounded by  a constant $c=1 + L(\estring)/\varepsilon$ as required.
\n 

For $\sss \in U_s$ and $k \in \NN$ let  $U_s^{\sss,k}$  be the strings strictly above  $\sss$ and at a distance  to  $\sss$ of at most $k$, that is, the set of strings $\tau$ such that   there is $\sss = \sss_0 \prec \ldots \prec \sss_m =\tau$ on $U_s$ with $m \le k$ and $\sigma_{i+1}$ a child of $\sss_i$ for each $i<m$.   To establish the claim, we show by induction on $k$ that 
\[\tp{- |\eta|}M_r(\eta) \ge \varepsilon \cdot  \weight (U_s^{\eta, k}). \]
If $k=0$ then $U_s^{\eta,k}$ is empty so   the right hand side is $0$. Now suppose that  $ k>0$. Let $F$  be the  set of immediate successors of $\eta$ on $U_s$. Let $r_\tau = \gamma_s(\tau)$.
By the inductive hypothesis, we have for each  $\tau \in F$%
\begin{eqnarray}   \label{eqn:taus} \tp{-|\tau|} M_r( \tau) & = & \tp{-|\tau|} [(L_{r_\tau}(\tau)- L_r(\tau)) + M_{r_\tau}(\tau)] \\
								& \ge  & \tp{-|\tau|} \cdot \varepsilon + \varepsilon \cdot \weight (U_s ^{ \tau, k-1}). \nonumber   \end{eqnarray}
								Then, taking the sum over all $\tau \in F$, 
 $$ 	\tp{-|\eta|} M_r(\eta) \ge \sum_{\tau \in F} \tp{-|\tau|}M_r(\tau)	\ge  \varepsilon \cdot \weight (U_s ^{\eta,k}). $$
 The first inequality is Kolmogorov's inequality for martingales, using that the $\tau$ form an antichain. For the second inequality we have used (\ref{eqn:taus}) and  that $U_s^{ \eta, k} =F \cup \bigcup_{\tau \in F} U_s ^{ \tau, k-1}$. This completes the induction and  shows the claim. 
 
 Now, to obtain (b), suppose that  $\weight (U_s \cap \sss^\prec)  > \tp{- |\sss|}$. We use   Claim~\ref{cl:tech} to show that $\sss \in U_s$. Assume otherwise. Let $\eta \prec \sss$ be   in $U_s$ with $|\eta|$ maximal, and let $r = \gamma_s(\eta)$. As before, let  $F$   be the prefix minimal extensions of $\sss$ in $U_s$, and $r_\tau = \gamma_s(\tau)$.   Then $L_{r_\tau}(\tau)- L_r(\tau)> \varepsilon$ for $\tau \in F$. Since  $\tau \in U_s$,  we can apply the claim to $\tau$,  so (\ref{eqn:taus}) is valid.  
 
 Arguing as before, but with $\sss$ instead of $\eta$, we have 
 $$ 	\tp{-|\sss|} M_r(\sss) \ge \sum_{\tau \in F} \tp{-|\tau|}M_r(\tau)	\ge  \varepsilon \cdot \weight (U_s \cap \sss^\prec) $$
  (that part of the argument did not use that $\eta \in U_s$).
  Since  $\weight (U_s \cap \sss^\prec)  > \tp{- |\sss|}$, this implies that $M_r(\sss) > \epsilon$. Hence  some $\sss'$  with	$\eta \prec \sss' \preceq \sss$ is in $U_s$, contrary to the maximality of $\eta$.	
  
  This concludes the verification that $\seq {U_s}$ is a Madison test. 		As explained  already, for each $r$ there are infinitely many $n$ with $L(Z\uhr n) - L_r(Z \uhr n) > \varepsilon$. This shows that $Z$ fails this test: suppose inductively that we have $\sss \prec Z$ such that there is a least  $r$ with $\sss \in U_t$ for all $t\ge r$ (so that $\gamma_t(\sss) = r$ for all such $t$). Choose $n > \sssl$ for this $r$. Then from some stage on $\tau = Z \uhr n$ is a viable extension of $\sss$, so $\tau$, or some prefix of it that is  longer than $\sss$, is in~$U$.  	
\end{proof} 

\n This concludes our proof of Thm.\ \ref{thm:Madison}. \end{proof}

 The Oberwolfach  group (Bienvenu, Greenberg, Kucera, Nies, and Turetsky) \cite[Cor.\ 5.5]{Bienvenu.Greenberg.ea:preprint} showed that every OW-random is density random.  The Madison group provided a direct  proof of this fact.   A   left-c.e.\ bounded test is a nested sequence  $\seq{\+ V_n}$ of uniformly  $\Sigma^0_1$ classes such that for some  computable sequence of rationals $\seq{\beta_n}$ and  $\beta = \sup_n \beta_n$ we have $\leb(\+ V_n)\le \beta-\beta_n$ for all $n$. $Z$ fails this test if $Z \in \bigcap_n \+ V_n$.  The OW group introduced  this  test notion and used it for one possible characterisation of OW randomness. The Madison group used these tests (formerly called Auckland tests) directly.
  
 \begin{prop} \label{prop: OW MSN} Every OW random $Z$  is density random. \end{prop}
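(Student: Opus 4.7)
The plan is to argue by contrapositive: given $Z$ that is not density random, we construct a left-c.e.\ bounded test which $Z$ fails, contradicting OW-randomness. By Theorem~\ref{thm:Madison}, ``not density random'' means either $Z$ is not ML-random or $Z$ fails some Madison test. In the first case we are done, since every ML-test is already a left-c.e.\ bounded test (take $\beta=1$, $\beta_n = 1-2^{-n}$), so we concentrate on the case that $Z$ is ML-random but fails a Madison test $\langle U_s\rangle$ with weight bound $c$.

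For parameters, set $\beta_n := \weight(U_n)$; this is a computable, non-decreasing rational sequence, so $\beta := \sup_n \beta_n = \lim_s \weight(U_s) \le c$ is a left-c.e.\ real. The telescoping identity
\[
  \beta - \beta_n \;=\; \sum_{s>n}\bigl(\weight(U_s)-\weight(U_{s-1})\bigr) \;=\; \sum_{s>n}\bigl(\weight(N_s) - \weight(R_s)\bigr),
\]
where $N_s = U_s\setminus U_{s-1}$ and $R_s = U_{s-1}\setminus U_s$, will be the measure budget for $\+V_n$ at stages past $n$. Condition (b) on Madison tests gives $\weight(R_s\cap\sigma^\prec) \le 2^{-|\sigma|}$ for each $\sigma\in N_s$, so each summand is nonnegative.

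The test $\+V_n$ will be enumerated stage by stage: at each stage $s>n$, for each $\sigma \in N_s$, we enumerate into $\+V_n$ a clopen set $D_\sigma^{(s)} \subseteq [\sigma]$ of measure $2^{-|\sigma|} - \weight(R_s\cap\sigma^\prec)$, chosen to be $[\sigma]\setminus[\![R_s\cap\sigma^\prec]\!]$ (the ``genuinely new'' portion added at stage $s$ under $\sigma$). By the telescoping identity, $\lambda(\+V_n) \le \beta - \beta_n$, and by construction the sequence is nested decreasing and uniformly $\Sigma^0_1$, so it is a left-c.e.\ bounded test.

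The hard part will be verifying that $Z \in \+V_n$ for every $n$. Here I will use that $Z$ has infinitely many stable prefixes in $U = \lim_s U_s$, and since each $U_s$ is finite, the first-entry stages of these stable prefixes into $U$ are unbounded. For any $n$, select a stable prefix $\sigma\prec Z$ with minimal first-entry stage $s(\sigma) > n$. Then $\sigma \in N_{s(\sigma)}$, and it remains to show $Z \in D_\sigma^{(s(\sigma))}$, i.e.\ that $Z$ has no prefix in $R_{s(\sigma)}\cap \sigma^\prec$. The main subtlety is that $Z$ may have longer stable prefixes $\rho \succ \sigma$ which happen to be transiently removed at exactly stage $s(\sigma)$ (and later re-enter permanently). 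Minimality of $s(\sigma)$ together with condition (a) — any $\rho \in R_{s(\sigma)}$ has a prefix in $N_{s(\sigma)}$, which must be $\sigma$ or an extension of $\sigma$ on $Z$ — means the obstruction can be moved one stable prefix up; iterating this descent along $Z$'s stable prefixes below $\sigma$ in $U$, we eventually land on a stable prefix whose cone genuinely sits in the clopen contribution, placing $Z$ in $\+V_n$. This combinatorial balancing between the measure budget (forcing us to remove $[\![R_s\cap\sigma^\prec]\!]$) and the need to contain $Z$ is the technical heart of the proof.
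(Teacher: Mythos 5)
Your overall route is genuinely different from the paper's: you reduce to ``$Z$ is ML-random and fails a Madison test'' and then try to convert the Madison test into a left-c.e.\ bounded test, whereas the paper never touches Madison tests here. It works directly from a divergent left-c.e.\ martingale $M=\sup_m D_m$, sets $\mathcal V_m=\{Y:\exists k\, (M-D_m)(Y\uhr k)>1\}$, gets $\lambda(\mathcal V_m)\le M(\langle\,\rangle)-D_m(\langle\,\rangle)=\beta-\beta_m$ immediately from Kolmogorov's inequality, and captures $Z$ because $Z$ is computably random, so each $D_m$ converges along $Z$ and all the oscillation must live in the ``rest'' martingale $M-D_m$. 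Your conversion, by contrast, has two genuine gaps. First, the measure bound $\lambda(\mathcal V_n)\le\beta-\beta_n$ does not follow from the telescoping identity: $\lambda(D_\sigma^{(s)})=2^{-|\sigma|}-\lambda(\mbox{open set generated by }R_s\cap\sigma^\prec)$, and that subtracted measure can be \emph{strictly smaller} than $\weight(R_s\cap\sigma^\prec)$ because $R_s$ need not be an antichain; moreover $N_s$ need not be an antichain either (double counting), and nothing ensures each removed string is debited against exactly one $\sigma\in N_s$. So the per-stage contribution can exceed $\weight(N_s)-\weight(R_s)$ and the inequality is at best unproved.

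Second, and more seriously, the verification that $Z\in\mathcal V_n$ — which you yourself identify as the technical heart — is only asserted, and its one concrete step is wrong. Condition (a) gives that any $\rho\in R_{s(\sigma)}$ has \emph{some} proper prefix in $N_{s(\sigma)}$; it does not give that this prefix is $\sigma$ or an extension of $\sigma$, and in general it may be strictly shorter than $\sigma$. Consequently the proposed ``descent along $Z$'s stable prefixes'' has no invariant keeping the relevant entry stage above $n$, no termination argument, and no final step actually exhibiting a pair $(\sigma',s')$ with $s'>n$, $\sigma'\in N_{s'}$, $\sigma'\prec Z$, and no prefix of $Z$ above $\sigma'$ in $R_{s'}$. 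This is precisely where the mismatch between Madison tests (strings enter and leave) and left-c.e.\ bounded tests (nested $\Sigma^0_1$ classes) bites, and it is why the paper avoids the conversion altogether and argues through the rest martingale instead. If you want to salvage your approach, you would need a careful charging scheme for the measure and a correct combinatorial argument for capture; as written, neither is in place.
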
  
 
 \begin{proof} Given left-c.e.\ martingale  $M$ we want to show that $M$ converges along $Z$. Let $M= \sup D_m$ where $D_m$ is a  computable rational valued martingale uniformly in $m$. Let $\beta = M(\la \ra) $ and $\beta_m = D_m( \la \ra)$ so that  $\beta = \sup_m \beta_m$. Let $L_m = M- D_m$ be the ``rest'' martingale at stage $m$.

 Assume that $M$ does not converge along $Z$. Multiplying $M$ by a sufficiently large integer we may then assume that \bc  $1 < \limsup_k  M(Z\uhr k) - \liminf_k M(Z\uhr k)$. \ec Define the left-c.e.\ bounded test by
 
 \[ \+ V_m= \{ Y \colon \, \ex k \, L_m(Y\uhr k) >1\}. \]
 Then by the usual Kolmogorov inequality, we have $\leb \+ V_m \le L_m( \la \ra) = \beta -  \beta_m$.  
 
To show   $Z$ fails $(\+ V_m)$:  $Z$ is computably random, so $l_m = \lim_k D_m(Z \uhr k)$ exists for each $m$. Furthermore, $l_m \le \inf_k M(Z \uhr k)$.  Thus $\ex k \, L_m(Y\uhr k) >1$; namely, the divergence is only due to the rest martingale at stage $m$.    \end{proof}
 We note that this proof fails in the higher setting of  randomness notions. See Section~\ref{s: higher density}.

\section{Nies: Density and higher randomness}

\label{s: higher density}
\newcommand{\QI}{\Pi^1_1}
\newcommand{\VI}{\Sigma^1_1}
\newcommand{\OMC}{\omega_1^{CK}}

By Nies (August).
The work of the Madison group described in Section~\ref{s:Andrews} can be lifted to the domain of higher randomness. Interestingly, density one now can be equivalently required for any $\VI$ class containing the real, not necessarily closed.

We use the following fact due  to  Greenberg. It is a higher analog of the original weaker version of  Prop.\ \ref{prop:PC random upper density}.

\begin{proposition}[N.\  Greenberg, 2013] \label{prop:higher ML random upper density} Let $\+ C \sub \cantor$ be $\VI$. Let  $Z \in \+ C$ be $\QI$-ML-random. Then $\ol \varrho_2(\+ C \mid Z) =1$. \end{proposition}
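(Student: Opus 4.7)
The plan is to assume for contradiction that $\ol\varrho_2(\+C \mid Z) < 1$ and to construct a $\Pi^1_1$ Martin-L\"of test that captures $Z$, contradicting $\Pi^1_1$-ML-randomness of $Z$. The argument proceeds in two stages: first reduce to the case where $\+C$ is $\Sigma^1_1$-closed via inner approximation, then adapt the partial-computable-martingale construction of Proposition~\ref{prop:PC random upper density} to the higher setting using clopen approximations indexed by computable ordinals.

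For the reduction, a higher analog of Lemma~\ref{approx} (equivalently, inner regularity of $\Sigma^1_1$ sets in Cantor space) produces a uniformly $\Sigma^1_1$ sequence of $\Sigma^1_1$-closed sets $F_n \subseteq \+C$ with $\lambda(\+C \setminus \bigcup_n F_n) = 0$. The leftover is a $\Sigma^1_1$ null set, hence contained in a $\Pi^1_1$ Martin-L\"of test obtained by inner-approximating its measure-one complement by $\Sigma^1_1$-closed sets. Since $Z$ is $\Pi^1_1$-ML-random, $Z \in F_n$ for some $n$, and because $F_n \subseteq \+C$ it suffices to show $\ol\varrho_2(F_n \mid Z) = 1$.

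Now assume $F$ is $\Sigma^1_1$-closed and $\ol\varrho_2(F \mid Z) \le q < 1$. Fix a rational $q' \in (q,1)$ and $n^*$ with $\lambda_{Z\uhr n}(F^c) \ge 1-q'$ for all $n \ge n^*$. Since $F^c$ is $\Pi^1_1$-open with clopen approximations $F^c_\alpha$ for $\alpha < \CK$, for each $\sigma \prec Z$ with $|\sigma| \ge n^*$ there is an ordinal $\alpha(\sigma) < \CK$ witnessing $\lambda_\sigma(F^c_{\alpha(\sigma)}) \ge 1-q'$. Mimicking the bet of Proposition~\ref{prop:PC random upper density}, I would define a sequence $(\+V_k)_k$ where $X \in \+V_k$ iff there exist nested prefixes $\sigma_1 \prec \cdots \prec \sigma_k \prec X$ of length $\ge n^*$ and ordinals $\alpha_1 < \cdots < \alpha_k < \CK$ such that $\lambda_{\sigma_i}(F^c_{\alpha_i}) \ge 1-q'$ and $X \notin F^c_{\alpha_i}$ for each $i$. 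Iterated conditioning across the nested $[\sigma_i]$ gives $\lambda(\+V_k) \le q'^k$. Along $Z$, each bet succeeds — every long-enough prefix admits a witness, and $Z \in F$ keeps $Z$ out of every clopen chunk $F^c_{\alpha_i}$ — so $Z \in \bigcap_k \+V_k$, the desired contradiction once $(\+V_k)$ is recast as a $\Pi^1_1$ Martin-L\"of test.

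The main obstacle is making the test components genuinely $\Pi^1_1$-open rather than $\Sigma^1_1$-open: as written, the existential quantifier over ordinals $\alpha_i < \CK$ introduces $\Sigma^1_1$ complexity. The natural resolution is to absorb the ordinal search into the construction of a single higher right-c.e.\ supermartingale $M$ starting at $1$, whose level sets $\{X : \sup_n M(X\uhr n) \ge 2^k\}$ are then $\Pi^1_1$-open of measure $\le 2^{-k}$ by Kolmogorov's inequality; equivalently, one exploits the admissibility of $\CK$ to bound the ordinal witnesses $\alpha_i$ uniformly so that $\+V_k$ can be expressed as the complement of a $\Sigma^1_1$-closed set, and verifies that the measure bound iterates properly via successive conditioning inside the nested $[\sigma_i]$.
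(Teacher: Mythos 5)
Your overall strategy matches the paper's: assume $\ol\varrho_2(\+C\mid Z)<1$, fix $q'$ and $n^*$, and convert the persistent density deficit along $Z$ into a $\Pi^1_1$ Martin-L\"of test by iterated covering with geometrically decaying measure, using the $\Delta^1_1$ stage approximations indexed by ordinals $<\CK$. Your preliminary reduction to a $\Sigma^1_1$-closed $F\subseteq\+C$ is sound (a $\Pi^1_1$-ML-random avoids every $\Sigma^1_1$ null set), but it is a detour: the paper works directly with the decreasing $\Delta^1_1$ \emph{outer} approximations $\+C_\alpha\supseteq\+C$ of the $\Sigma^1_1$ class, starts from the single string $Z\uhr{n^*}$, and at each level replaces each $\sigma$ in the current $\Pi^1_1$ antichain $U_n$ by an antichain covering $\+C_\alpha\cap[\sigma]$ with relative measure $<r$, found at the first stage $\alpha$ with $\leb_\sigma(\+C_\alpha)<q<r$; since $Z\in\+C\subseteq\+C_\alpha$, $Z$ stays in every $\Opcl{U_n}$, and $\leb\Opcl{U_n}\le r^n$. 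The genuine gap in your version is the central estimate $\lambda(\+V_k)\le (q')^k$, which is false for $\+V_k$ as you define it.

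The problem is that $\+V_k$ is a union over \emph{all} choices of nested prefixes of all lengths $\ge n^*$ and all ordinal witnesses, and "iterated conditioning" does not survive such a union. Already for $k=1$: with $q'=1/2$, if $F^c$ is enumerated so that at stage $\alpha_1$ exactly $[11]$ has appeared, at $\alpha_2$ exactly $[1]$, at $\alpha_3$ exactly $[1]\cup[01]$, and so on, then the pair $(\sigma,\alpha)=(1,\alpha_1)$ contributes $[10]$ to $\+V_1$ and $(\la\ra,\alpha_2)$ contributes $[0]$, already measure $3/4$; pushing the pattern to deeper levels drives $\lambda(\+V_1)$ toward $1$. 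The multiplicative bound requires the first-hitting/antichain discipline of the paper's $U_n$ (equivalently, of the partial martingale in Proposition~\ref{prop:PC random upper density} lifted to ordinal stages): the level-$(i{+}1)$ cylinders must be generated canonically from the level-$i$ antichain, anchored at the single root $Z\uhr{n^*}$, so that each level's loss is conditioned on the previous level's set rather than summed over all possible starting cylinders. Your closing paragraph names the correct repair — a single higher martingale defined by $\Sigma_1$ recursion over $L_{\CK}$, with Kolmogorov's inequality supplying the measure bound — but misdiagnoses the defect: a bounded existential over $\alpha<\CK$ applied to a uniformly $\Delta^1_1$ matrix yields a $\Pi^1_1$ set, which is the complexity you want; the real issues are that your $\+V_k$ is a $\Pi^1_1$ union of \emph{closed} sets rather than a $\Pi^1_1$ open set, and, above all, the measure bound. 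The alternative fix of bounding the $\alpha_i$ uniformly by admissibility also does not go through as stated, since the collection of strings $\sigma$ admitting a witness is only $\Pi^1_1$, so $\Sigma_1$-boundedness does not apply to it.
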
 
\begin{proof} If $\ol \varrho_2(\+ C \mid Z) < 1$ then there is    a positive rational  $q<1$ and $n^*$ such that for   all $n\ge n^*$ we have $\leb_{Z\uhr n}(\+ C)< q$.  Choose a rational $r$ with $q< r<1$. We define $\QI$-anti chains in $\strcantor$ $U_n$, uniformly in $n$. Let $U_0 = \{\la Z \uhr {n^*} \ra\}$. Suppose $U_n$ has been defined.  For each $\sss \in U_n$, at a stage $\alpha$ such that $\leb_\sss(\+ C_\alpha) < q$, we obtain effectively  a hyper-arithmetical  antichain $V$   of extensions of $\sss$ such that $\+ C _\alpha \cap [\sss] \sub \Opcl V$ and $\leb_\sss(\Opcl V) < r$. Put $V$ into $U_{n+1}$. 

Clearly $\leb U_n \le r^n$ for each $n$. Also, $Z \in \bigcap_n U_n$, so $Z$ is not $\QI$-ML-random.  \end{proof} 

A martingale $M\colon \strcantor \to \RR$ is called left-$\QI$ if $M(\sss)$ is a left-$\QI$ real uniformly in $\sss$. 
\begin{thm} Let $Z$ be  $\QI$-ML-random. The following are equivalent.

\bi \item[(i)] $\rho(\+ C \mid Z)=1 $ for each $\Sigma^1_1$-class $\+ C$ containing $Z$.
\item[(ii)] $\rho(\+ C \mid Z)=1 $ for each \emph{closed} $\Sigma^1_1$-class $\+ C$ containing $Z$.
\item[(iii)] each left-$\QI$ martingale converges on $Z$ to a finite value. \ei
\end{thm}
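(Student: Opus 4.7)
The implication (i) $\to$ (ii) is immediate. The substantive content lies in (ii) $\to$ (iii) and (iii) $\to$ (i).

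For (ii) $\to$ (iii), I would mirror the proof of Lemma~\ref{Madison to  MG convergence} in the higher setting. Given a left-$\Pi^1_1$ martingale $L = \sup_\alpha L_\alpha$, with $L_\alpha$ a hyperarithmetic martingale uniformly in $\alpha < \omega_1^{CK}$, assumed to diverge along $Z$, the plan is to construct a \emph{higher Madison test}: a transfinite sequence $\langle U_\alpha \rangle_{\alpha < \omega_1^{CK}}$ of uniformly hyperarithmetic finite subsets of $\strcantor$, bounded in weight by a constant, satisfying natural analogs of the conditions (a) and (b) on Madison tests, such that $Z$ fails this test. Then, adapting Lemma~\ref{lem: density to Madison}, one builds from this failing higher Madison test a $\Sigma^1_1$ closed class $\mathcal{F} \ni Z$ with $\rho(\mathcal{F}|Z) < 1$, contradicting (ii).

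For (iii) $\to$ (i), let $\mathcal{C}$ be a $\Sigma^1_1$ class with $Z \in \mathcal{C}$. Since any $\Sigma^1_1$ class is a union of uniformly hyperarithmetic closed sets indexed by $\alpha < \omega_1^{CK}$, the function $M(\sigma) = \leb_\sigma(\mathcal{C})$ is a left-$\Pi^1_1$ martingale bounded by $1$. By (iii), $M$ converges along $Z$ to some $\ell \le 1$; by Proposition~\ref{prop:higher ML random upper density}, $\overline{\rho}_2(\mathcal{C}|Z) = 1$, which forces $\ell = 1$ and hence $\rho_2(\mathcal{C}|Z) = 1$. To upgrade from dyadic to full density, I would lift the argument of Theorem~\ref{prop:denseporous}. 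Assuming $\rho(\mathcal{C}|Z) < 1$, the three-dyadic-interval-cover construction produces a $\Sigma^1_1$ closed class $\mathcal{Q} = [0,1] - \bigcup \{L : \leb_L(\mathcal{C}) < 1 - \delta\}$ (open basic dyadic $L$; the condition on $L$ is $\Pi^1_1$ since $\leb_L(\mathcal{C})$ is left-$\Pi^1_1$) that contains $Z$ and is porous at $Z$. But porosity at $Z$ produces basic dyadic intervals $[Z \uhr n]$ of arbitrarily large length with $\leb_{Z \uhr n}(\mathcal{Q})$ bounded away from $1$; this contradicts the fact that the left-$\Pi^1_1$ martingale $M_\mathcal{Q}(\sigma) = \leb_\sigma(\mathcal{Q})$ converges to $1$ along $Z$, which is ensured by (iii) together with Proposition~\ref{prop:higher ML random upper density} applied to $\mathcal{Q}$.

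The main obstacle is the transfinite bookkeeping in the higher Madison-test construction: conditions (a) and (b) must be preserved through limit ordinals $\alpha$, and the Kolmogorov-inequality calculation in the analog of Claim~\ref{cl:tech} needs to be verified at ordinal stages using the hyperarithmetic martingale approximations $L_\alpha$. A secondary but essential point is verifying that $\leb_\sigma(\mathcal{C})$ is left-$\Pi^1_1$ uniformly in $\sigma$ for any $\Sigma^1_1$ class $\mathcal{C}$; this is the engine that converts density statements about $\Sigma^1_1$ classes into convergence statements about left-$\Pi^1_1$ martingales and back.
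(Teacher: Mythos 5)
Your decomposition --- (i)$\to$(ii) trivial, (ii)$\to$(iii) via a higher Madison test lifted from Lemmas~\ref{lem: density to Madison} and~\ref{Madison to  MG convergence}, and (iii)$\to$(i) via martingale convergence combined with Proposition~\ref{prop:higher ML random upper density} --- is exactly the paper's route, and your description of the Madison-test half is faithful (one small correction: in the higher setting the components $U_\alpha$ are hyperarithmetic but need not be finite, and the antichains in the analogue of Claim~\ref{cl:tech} can be infinite). The substantive problem is the fact you single out as ``the engine'' of (iii)$\to$(i): for a $\Sigma^1_1$ class $\+ C$, the real $\leb_\sigma(\+ C)$ is left-$\Sigma^1_1$, \emph{not} left-$\Pi^1_1$. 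Indeed, if $\+ G$ is a component of a universal $\Pi^1_1$ ML-test, then $\cantor \setminus \+ G$ is a $\Sigma^1_1$ closed class whose measure $1-\leb(\+ G)$ is right-$\Pi^1_1$ but not left-$\Pi^1_1$ (otherwise it would be $\Delta^1_1$). So hypothesis (iii) does not apply to your $M(\sigma)=\leb_\sigma(\+ C)$. The repair is to pass to the complement, which is what the paper does: $M(\sigma)=1-\leb_\sigma(\+ C)=\leb_\sigma(\cantor\setminus\+ C)$ is left-$\Pi^1_1$ because $\cantor\setminus\+ C$ is $\Pi^1_1$ and measures of $\Pi^1_1$ sets are uniformly approximable from below along the computable ordinals; (iii) then gives convergence of $M$ along $Z$, Proposition~\ref{prop:higher ML random upper density} gives $\liminf_n M(Z\uhr n)=0$, hence $M\to 0$, i.e.\ density one. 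The same polarity slip reappears in your justification that $\+ Q$ is $\Sigma^1_1$ closed: the condition $\leb_L(\+ C)<1-\delta$ is $\Pi^1_1$ precisely \emph{because} $\leb_L(\+ C)$ is left-$\Sigma^1_1$ (equivalently, $\leb_L(\cantor\setminus\+ C)>\delta$ with $\cantor\setminus\+ C$ a $\Pi^1_1$ set); if it were left-$\Pi^1_1$ as you assert, the removed intervals would form a $\Sigma^1_1$ collection and $\+ Q$ would come out $\Pi^1_1$ rather than $\Sigma^1_1$ closed, the wrong category for invoking (ii).

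A second, non-fatal point: the dyadic-to-full upgrade via Theorem~\ref{prop:denseporous} is unnecessary here. In this section the classes are subclasses of Cantor space and $\rho(\+ C\mid Z)$ is Cantor-space density, which is exactly the dyadic density; the paper's proof of (iii)$\to$(i) therefore ends once the complement martingale is shown to converge to $0$ along~$Z$.
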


\begin{proof}  (iii) $\to$ (i): The measure of a $\VI$ set is left-$\VI$ in a uniform way (see e.g.\ \cite[Ch.\ 9]{Nies:book}). Therefore $M(\sss)=  1- \leb_\sss(\+ C)$ is a left-$\QI$ martingale. Since $M$ converges along $Z$, and since by Prop.\  \ref{prop:higher ML random upper density} $\liminf_n M(Z\uhr n) = 0$,  it converges along $Z$ to $0$.  This shows that $\rho(\+ C \mid Z)=1 $.

\n (ii) $\to$ (iii).   We follow the proof of the Madison Theorem~\ref{thm:Madison}  given above. All stages $s$ are now interpreted as computable ordinals.  Computable functions/  constructions, are now  functions   $\OMC \to L_{\OMC}$ with $\Sigma_1$ graph/  assignments of recursive ordinals to instructions.

\begin{definition}  A \emph{$\QI$-Madison test} is a  $\Sigma_1$ over $L_{\omega_1^{CK}}$ function  $\seq {U_s}_{ s< \omega_1^{CK}}$ mapping ordinals to (hyperarithmetical)  subsets of $\strcantor$  such that $U_0 = \ES$,    for each  stage $s$ we have $\weight (U_s)  \le c$ for some constant $c$, and for  all strings $\sss, \tau$, 
\bi \item[(a)]  $\tau \in U_s - U_{s+1} \to \ex \sss \prec \tau \, [ \sss \in U_{s+1} - U_s]$
\item [(b)] $\weight (\sss^\prec \cap U_s) > \tp{-\sssl} \to \sss \in U_s$. 
\ei 
Also $U_\gamma(\sigma) = \lim_{\alpha < \gamma} U_\alpha(\sss)$ for each limit ordinal $\gamma$. 
\end{definition}

The following well-known fact can be proved similar to \cite[1.9.19]{Nies:book}.
\begin{lemma} \label{lem:extend open} Let $\+ A \sub \cantor$ be a hyperarithmetical open. Given  a rational   $q$ with $q >  \leb A$, we can effectively determine from $\+ A, q$  a hyperarithmetical open $\+ S \supseteq \+ A$ with $\leb \+ S = q$. \end{lemma}

\begin{lemma} \label{lem: density to higher Madison} Let $Z$ be a $\QI$ ML-random such that $\rho(\+ C \mid Z)=1 $ for each \emph{closed} $\Sigma^1_1$-class $\+ C$ containing $Z$. Then $Z$ passes each $\QI$-Madison test. \end{lemma}
The proof follows  the proof of the analogous Lemma~\ref{lem: density to Madison}.  The sets $\+ A^k_{\sss,s}$  are now hyperarithmetical open sets computed from $k,\sss, s$.  Suppose $\sss \in U_{s+1} - U_s$. The set $\wt {\+ A}^k_{\sss,s}$  is defined as before. To effectively obtain $ \+ A^k_{\sss , s+1}$, we apply Lemma~\ref{lem:extend open} to  add mass from $[\sss]$ to   $\wt  {\+ A}^k_{\sss , s+1}$ in order to ensure $\leb (  {\+ A}^k_{\sss,s+1}) = \tp{- \sssl -k}$ as required. 

As before let   $\+ S^k_t = \bigcup_{\sss \in U_t} \+ A^k_{\sss,t}$. Then $\+ S^k_t \sub \+ S^k_{t+1}$ by property (a) of $\QI$ Madison tests. Clearly $\leb \+ S^k_t \le  \tp {-k} \weight (U_t)  \le \tp{-k}$. So $\+ S^k = \bigcup_{t< \OMC} \+ S ^k_t$ determines  a $\QI$ ML-test. 

By construction 
$\ol \rho(\cantor - \+ S^k \mid Z) \le 1- \tp{-k}$. Since $Z$ is ML-random we have $Z \not \in \+ S^k$ for some $k$. So $\ol \rho(\+ C \mid Z) < 1 $ for the   {closed} $\Sigma^1_1$-class $\+ C = \cantor - \+ S_k$ containing $Z$.

The analog of Lemma~\ref{Madison to  MG convergence} also holds.
\begin{lemma} Suppose that $Z$ passes each $\QI$-Madison test. Then every left-$\QI$  martingale $L$ converges along $Z$. \end{lemma}
The proof of \ref{Madison to  MG convergence} was already set up so that this works. The uniformly hyp  labelling functions $\gamma_s$ now map $U_s$ to $\OMC$. Note that the antichains $F$ can now be infinite.
\end{proof}

A $\QI$ ML-random satisfying any of the three conditions above will be called {$\QI$-density random}. We note the following implications, none of which are known to be proper.

\bc higher weak 2 random $\RA$ $\QI$ OW-random $\RA$ $\QI$ density random. \ec

The first implication is due to Bienvenu, Greenberg and Monin.
The second is the higher analog of Proposition \ref{prop: OW MSN}:

 \begin{prop} \label{prop: higher OW MSN} Every $\QI$ OW random $Z$  is  $\QI$ density random. \end{prop}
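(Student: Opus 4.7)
The plan is to transfer the argument of Proposition~\ref{prop: OW MSN} to the $\Pi^1_1$ setting, using the martingale characterisation (iii) of the preceding theorem: it will suffice to show that if $Z$ is $\Pi^1_1$ OW-random then every left-$\Pi^1_1$ martingale $L$ converges along $Z$ to a finite value. Since every $\Pi^1_1$ ML-test is in particular a $\Pi^1_1$ left-c.e.\ bounded test, $Z$ will automatically also be $\Pi^1_1$-ML-random, which is exactly what is needed to invoke (iii)$\to$(i).

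First I would choose an approximation $L(\sigma) = \sup_{\alpha < \omega_1^{CK}} D_\alpha(\sigma)$ where $(D_\alpha)_{\alpha < \omega_1^{CK}}$ is a uniformly hyperarithmetical, non-decreasing family of rational-valued martingales with $D_0 = 0$, and set $\beta = L(\langle\rangle)$, $\beta_\alpha = D_\alpha(\langle\rangle)$, so that $\beta = \sup_\alpha \beta_\alpha$. Assume for a contradiction that $L$ fails to converge along $Z$; after scaling $L$ by a positive integer I may arrange
\[ \limsup_k L(Z \upharpoonright k) - \liminf_k L(Z \upharpoonright k) > 1. \]
Then for each $\alpha < \omega_1^{CK}$ I form the rest martingale $L_\alpha = L - D_\alpha$ and the class
\[ \mathcal{V}_\alpha = \{Y \in 2^\omega : \exists k \ L_\alpha(Y \upharpoonright k) > 1\}, \]
which is uniformly $\Sigma^1_1$, nested, and by Kolmogorov's inequality satisfies $\lambda(\mathcal{V}_\alpha) \le L_\alpha(\langle\rangle) = \beta - \beta_\alpha$. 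This yields a $\Pi^1_1$ left-c.e.\ bounded test.

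Next I would show $Z$ fails this test. For each fixed $\alpha$ the hyperarithmetical martingale $D_\alpha$ diverges only on a $\Sigma^1_1$ null class, so $\Pi^1_1$-ML-randomness of $Z$ forces $l_\alpha := \lim_k D_\alpha(Z \upharpoonright k)$ to exist, and $D_\alpha \le L$ gives $l_\alpha \le \liminf_k L(Z \upharpoonright k)$. Hence
\[ \limsup_k L_\alpha(Z \upharpoonright k) \ge \limsup_k L(Z \upharpoonright k) - l_\alpha > 1, \]
so $Z \in \mathcal{V}_\alpha$ for every $\alpha < \omega_1^{CK}$, contradicting $\Pi^1_1$ OW-randomness.

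The hard part is precisely the very first step, the decomposition $L = \sup_\alpha D_\alpha$ by genuine martingales. In the classical setting this is a routine ``martingalification'' of a uniformly computable pointwise approximation, and this is where Nies remarks that the original proof ``fails in the higher setting''. In the $\Pi^1_1$ case one must exhibit, uniformly in a notation for $\alpha$, a hyperarithmetical rational-valued martingale $D_\alpha \le L$ with $\sup_\alpha D_\alpha = L$; at successor stages one can average the current guess against the values at children to restore the martingale identity, while at limit ordinals $\lambda$ the pointwise supremum $D_\lambda = \sup_{\alpha < \lambda} D_\alpha$ is automatically a martingale (bounded by $L$) but requires a hyperarithmetical code for $\lambda$ to preserve uniformity. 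Once this decomposition is in hand, the rest of the argument transfers verbatim from the classical proof.
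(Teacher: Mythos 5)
There is a genuine gap, and it is exactly the one the paper flags when it says that ``the left-c.e.\ bounded tests don't make sense in the higher setting.'' Your decomposition $L=\sup_{\alpha<\omega_1^{CK}}D_\alpha$ is not the problem: left-$\Pi^1_1$ martingales do admit uniformly $\Delta^1_1$, nondecreasing martingale approximations along ordinal stages, by the same martingalification used elsewhere in this section, so your closing paragraph misdiagnoses where the difficulty lies. The problem is the object you build from the decomposition. The family $(\mathcal{V}_\alpha)_{\alpha<\omega_1^{CK}}$ is indexed by the computable ordinals, not by $\omega$, and it cannot be effectively re-indexed: by admissibility of $L_{\omega_1^{CK}}$ there is no $\Delta^1_1$ cofinal $\omega$-sequence in $\omega_1^{CK}$, and a left-$\Pi^1_1$ real such as $\beta=L(\langle\,\rangle)$ is in general not the supremum of any $\Delta^1_1$ sequence of rationals $(\beta_n)_{n\in\omega}$ --- if it were, its left Dedekind cut would be $\Delta^1_1$, whereas for the higher $\Omega$ it is properly $\Pi^1_1$. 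So the clause ``for some computable sequence of rationals $\beta_n$ with $\beta=\sup_n\beta_n$'' in the definition of a bounded test has no higher analogue, the phrase ``$\Pi^1_1$ left-c.e.\ bounded test'' does not denote a test that $\Pi^1_1$ OW-randomness is required to avoid, and the final contradiction does not follow.

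The intended argument is different: one returns to the original Oberwolfach tests of Bienvenu, Greenberg, Ku\v{c}era, Nies and Turetsky (the proof of their Corollary~5.5), in which the $n$-th component is obtained by counting $n$ oscillations (upcrossings) of the martingale between two fixed rationals. That test is genuinely $\omega$-indexed, its measure bounds come from Dubins' inequality combined with the left-$\Pi^1_1$ approximation rather than from an $\omega$-approximation of $\beta$ from below, and it is this formulation that adapts to the higher setting. If you want to salvage your write-up, you would need to replace the family $(\mathcal{V}_\alpha)$ by such an oscillation-counting test and verify that $Z$ lies in every component.
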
  
 However,  we need to go back to the original proof  \cite[Cor.\ 5.5]{Bienvenu.Greenberg.ea:preprint}. The reason is that the left-c.e.\ bounded tests don't make sense in the higher setting; Oberwolfach tests, in contrast, can be suitably adapted. The $n$-th test component is obtained by counting $n$ oscillations.

%

\section{Miyabe: Being a Lebesgue point for each integral tests}

Input by Kenshi Miyabe.

\begin{theorem}\label{th:density-one-converge}
The following are equivalent for a ML-random real $z \in [0,1]$.
\begin{enumerate}
\item $z$ is a density-one point.
\item Every left-c.e.\ martingale converges on $z$.
\end{enumerate}
\end{theorem}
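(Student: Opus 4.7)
The plan is to observe that this statement is an immediate combination of two results already established in the excerpt: the Madison Theorem~\ref{thm:Madison} and the Khan--Miller Theorem~\ref{thm:ddensity vs full density}. The Madison theorem gives the equivalence of (2) with ``$z$ is a \emph{dyadic} density-one point'' for ML-random $z$. What remains is therefore purely to bridge the gap between dyadic density-one and full density-one, which is exactly what Khan--Miller provides.

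In more detail, I would argue as follows. Assume $z$ is ML-random. For the implication (1) $\Rightarrow$ (2), note that a full density-one point is trivially a dyadic density-one point, since every basic dyadic interval containing $z$ is among the intervals considered in the definition of full density. Therefore, if $z$ satisfies (1), it is a ML-random dyadic density-one point, and so by Theorem~\ref{thm:Madison} every left-c.e.\ martingale converges on $Z$ (where $z=0.Z$), giving (2).

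For the implication (2) $\Rightarrow$ (1), suppose every left-c.e.\ martingale converges on $z$. Then by the easy direction of Theorem~\ref{thm:Madison} (which is Corollary 5.5 of \cite{Bienvenu.Greenberg.ea:preprint} as cited in the excerpt), $z$ is a dyadic density-one point. Since $z$ is ML-random, Theorem~\ref{thm:ddensity vs full density} (Khan--Miller) upgrades this to full density-one, yielding (1).

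There is no serious obstacle here; the two nontrivial inputs are both already proved in the preceding sections. The only subtlety to flag is that the notion of density-one point is given in terms of $\Pi^0_1$ classes in $\RR$ (see the definition in Section~\ref{s:diff and porous}), whereas the Madison argument is phrased in Cantor space; but as noted in the proof of Theorem~\ref{thm:Madison}, dyadic density in $[0,1]$ and in $\cantor$ coincide via the binary expansion, and the Khan--Miller theorem is stated precisely in the form needed to pass between dyadic and full density in the ML-random setting.
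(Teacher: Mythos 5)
Your proposal is correct and is essentially identical to the paper's own proof: Miyabe derives Theorem~\ref{th:density-one-converge} by citing exactly the same three ingredients (the trivial implication from full to dyadic density-one, the Madison Theorem~\ref{thm:Madison} equivalence of dyadic density-one with left-c.e.\ martingale convergence for ML-randoms, and the Khan--Miller Theorem~\ref{thm:ddensity vs full density} upgrading dyadic to full density-one). Nothing further is needed.
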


A ML-random satisfying one of these conditions will be called 
\emph{density random}.
This theorem follows from the following theorems.

\begin{theorem}[Mushfeq Khan and Joseph Miller]\label{th:ML-dyadic-full}
Let $z$ be a ML-random dyadic density-one point.
Then $z$ is a full density-one point.
\end{theorem}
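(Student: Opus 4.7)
The plan is to argue by contrapositive through a porosity argument, in two stages: (a) show that for a non-porosity point $z$, every $\Pi^0_1$ class $\+ P \ni z$ satisfying $\rho_2(\+ P \mid z) = 1$ automatically satisfies $\rho(\+ P \mid z) = 1$; (b) verify that an ML-random dyadic density-one $z$ is necessarily a non-porosity point, so (a) is applicable. Both ingredients are already in hand in the excerpt --- (a) is exactly Theorem~\ref{prop:denseporous}, whose proof I would reproduce along the lines of the three-interval trick below, and (b) is the observation that dyadic density-one implies dyadic positive density, together with the Bienvenu--Hoelzl equivalence of dyadic positive density + ML-randomness with difference randomness, hence Turing-incompleteness, hence non-porosity.

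The geometric heart of (a) is the following. Assume for contradiction that $\rho(\+ P \mid z) < 1 - \epsilon$ while $\rho_2(\+ P \mid z) = 1$, and pick $n^{*}$ so large that $\leb_L(\+ P) \ge 1 - \epsilon/4$ for every basic dyadic $L \ni z$ of length $\le 2^{-n^{*}}$. Given any interval $I \ni z$ with $|I| \le 2^{-n^{*}}$ and $\leb_I(\+ P) < 1 - \epsilon$, choose $n$ with $2^{-n+1} > |I| \ge 2^{-n}$ and cover $I$ by three consecutive basic dyadic intervals $A, B, C$ of length $2^{-n}$, with $z \in B$. A simple averaging computation using $\leb(\+ P \cap B) \ge (1 - \delta) 2^{-n}$ for $\delta = \epsilon/4$ forces $\leb(\+ P \cap A) < (1 - (\epsilon - \delta)/2) 2^{-n}$ or the analogous bound for $C$; the offending neighbour is the "hole" near $z$.

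I would package these holes into a single $\Pi^0_1$ witness by setting
$$\+ Q \;=\; [0,1] \setminus \bigcup \{\, L \text{ open basic dyadic} : \leb_L(\+ P) < 1 - \delta \,\},$$
noting $z \in \+ Q$ thanks to the dyadic density-one hypothesis applied to $\+ P$, and checking that the thin neighbour of $B$ extracted from each $I$ gives a hole in $\+ Q$ of length comparable to $|I|$. This yields porosity of $\+ Q$ at $z$ with constant $1/3$, contradicting that $z$ is a non-porosity point. Combining with (b) completes the proof.

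The main obstacle I anticipate is the measure bookkeeping in the three-interval step: one has to choose $\delta$ strictly between $0$ and $\epsilon/2$ so that the hole extracted has length $\gtrsim |I|/3$ uniformly as $|I| \to 0$, and simultaneously so that $z$ itself is not accidentally removed when forming $\+ Q$. A secondary subtlety, worth recording, is that the method actually proves the strictly stronger "same $\+ P$" version of the implication (Theorem~\ref{prop:denseporous}), which is what Remark~\ref{rem:right-c.e.} later exploits to conclude $\rho(\+ P \mid z) = \rho_2(\+ P \mid z)$ for all non-porosity $z$.
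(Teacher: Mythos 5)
Your proposal is correct and follows essentially the same route as the paper: the reduction to the non-porosity statement of Theorem~\ref{prop:denseporous} via the Bienvenu--H\"olzl--Miller--Nies equivalence with difference randomness, and the three-consecutive-dyadic-interval argument producing holes in the class $\+ Q$ with porosity constant $1/3$, are exactly the paper's proof. The bookkeeping subtleties you flag (choosing $\delta=\epsilon/4$ so that $(\epsilon-\delta)/2>\delta$, and restricting the removed intervals $L$ to lengths $\le 2^{-n^*}$ so that $z\in\+ Q$) are handled the same way there.
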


\begin{theorem}[Bienvenu et al.\ \cite{Bienvenu.Greenberg.ea:OWpreprint}]
If every left-c.e.\ martingale converges on $z$,
then $z$ is a dyadic density-one point.
\end{theorem}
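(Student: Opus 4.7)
The plan is to convert the membership statement ``$z \in \mathcal{C}$'' for a $\Pi^0_1$ class $\mathcal{C}$ into a statement about a naturally associated left-c.e.\ martingale, and then use convergence of that martingale plus ML-randomness of $z$ to deduce dyadic density one.

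First I would note that the hypothesis on $z$ implies that $z$ is ML-random: if a left-c.e.\ martingale $M$ succeeded along $Z$ (where $0.Z$ is the binary expansion of $z$) then $M(Z\uhr n) \to \infty$, contradicting convergence along $Z$ to a finite value. Since ML-randomness is equivalent to no left-c.e.\ martingale succeeding, $z$ is ML-random.

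Next, given any effectively closed class $\mathcal{C}$ with $z \in \mathcal{C}$, let $\mathcal{U} = \cantor - \mathcal{C}$, a $\Sigma^0_1$ class. Define
\[ M(\sigma) = \lambda_\sigma(\mathcal{U}) = 2^{|\sigma|} \lambda([\sigma] \cap \mathcal{U}). \]
Since the measure of a $\Sigma^0_1$ class is uniformly left-c.e., $M(\sigma)$ is a left-c.e.\ real uniformly in $\sigma$. Moreover $M$ is a martingale: $\lambda([\sigma]\cap \mathcal{U}) = \lambda([\sigma 0]\cap \mathcal{U}) + \lambda([\sigma 1]\cap \mathcal{U})$ gives
\[ M(\sigma) = \tfrac{1}{2}\bigl(M(\sigma 0) + M(\sigma 1)\bigr). \]
By hypothesis, $M(Z\uhr n)$ converges to some finite $\ell \ge 0$; equivalently, $\lambda_{Z\uhr n}(\mathcal{C}) \to 1 - \ell$.

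The final step is to show $\ell = 0$. This is where the upper-density result of Bienvenu, Greenberg et al.\ (Prop.\ 5.4 of \cite{Bienvenu.Greenberg.ea:preprint}, quoted at the start of the upper-density section of the excerpt) enters: since $z$ is ML-random and $z \in \mathcal{C}$, we have $\overline{\varrho}_2(\mathcal{C}\mid z) = 1$, which means $\liminf_n M(Z\uhr n) = 0$. Combined with convergence of $M$ along $Z$, this forces $\ell = 0$, so $\lim_n \lambda_{Z\uhr n}(\mathcal{C}) = 1$, i.e.\ $\varrho_2(\mathcal{C}\mid z) = 1$. Since $\mathcal{C}$ was an arbitrary $\Pi^0_1$ class containing $z$, $z$ is a dyadic density-one point. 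The only non-routine ingredient is the appeal to Prop.\ 5.4; everything else is a short check that the conditional measure of the complementary open set is a left-c.e.\ martingale.
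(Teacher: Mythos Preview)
Your proof is correct and matches the argument the paper has in mind: the paper simply cites this direction as \cite[Cor.~5.5]{Bienvenu.Greenberg.ea:preprint}, and when it later spells out the higher analog it does exactly what you do---form the left-c.e.\ martingale $M(\sigma)=\lambda_\sigma(\mathcal{U})$ for the complementary open set, use convergence, and invoke the upper-density result (Prop.~5.4 / Prop.~\ref{prop:higher ML random upper density}) to force the limit to~$0$. One tiny wording fix: success of a martingale means $\limsup_n M(Z\uhr n)=\infty$, not $\lim=\infty$, but your conclusion (convergence rules out success, hence $z$ is ML-random) is unaffected.
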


\begin{theorem}[Andrews, Cai, Diamondstone, Lempp and Miller, 2012]
If $z$ is a ML-random dyadic density-one point,
then every left-c.e.\ martingale converges on $z$.
\end{theorem}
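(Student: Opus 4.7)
The plan is to factor the implication through an intermediate test notion, call it a ``Madison test'' (or density test), which captures oscillation behavior of martingales on a combinatorial level. A Madison test would be a computable sequence $\seq{U_s}\sN s$ of finite subsets of $\strcantor$ with $U_0=\emptyset$ and uniformly bounded weight $\weight(U_s)=\sum_{\sss\in U_s}\tp{-\sssl}$, satisfying the two side conditions that (a) a string can leave $U_s$ only if some proper prefix simultaneously enters, and (b) if $\weight(\sss^\prec\cap U_s)>\tp{-\sssl}$ then $\sss$ itself is in $\sss$. A real $Z$ \emph{fails} the test if $Z\uhr n\in U:=\lim_s U_s$ infinitely often. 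The strategy is then to prove: (i) every ML-random dyadic density-one $Z$ passes every Madison test, and (ii) if $Z$ passes every Madison test, then every left-c.e.\ martingale converges along $Z$.

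For step (i), assume $Z$ fails a Madison test $\seq{U_s}$. I would build a ML-test $\seq{\+ S^k}$ by attaching to each string $\sss$ that enters $U_s$ a clopen set $\+ A^k_{\sss,s}\subseteq[\sss]$ of Lebesgue measure exactly $\tp{-\sssl-k}$. When $\sss$ enters $U$ at stage $s+1$, all clopen sets belonging to extensions $\tau\in U_s$ that are simultaneously discarded are absorbed into $\+ A^k_{\sss,s+1}$; condition (b) ensures the inherited mass is at most $\tp{-\sssl-k}$, so we can always top it up inside $[\sss]$. Condition (a) then guarantees $\+ S^k_t=\bigcup_{\sss\in U_t}\+ A^k_{\sss,t}$ is increasing in $t$, and the uniform weight bound gives $\leb\+ S^k\le c\tp{-k}$. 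ML-randomness of $Z$ places $Z$ outside some $\+ S^k$, and since $Z$ enters $U$ infinitely often, for each such entrance $\sss\prec Z$ we have $\leb_\sss(\+ S^k)\ge\tp{-k}$. This forces $\ul\varrho(\cantor\setminus\+ S^k\mid Z)\le 1-\tp{-k}$, contradicting the dyadic density-one property.

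For step (ii), suppose $L=\sup_s L_s$ is a left-c.e.\ martingale that fails to converge along $Z$, with oscillation exceeding some $\varepsilon>0$. Define $U_s$ inductively by starting with $\estring\in U_s$, and whenever $\sss\in U_s$ with labelling $\gamma_s(\sss)=t$, put into $U_s$ every prefix-minimal $\tau\succ\sss$ with $L_s(\tau)-L_t(\tau)>\varepsilon$, labelled by the least $r$ witnessing this. Condition (a) holds because if $\tau$ leaves $U_{s+1}$, some intermediate $\eta$ must itself witness an $\varepsilon$-jump and enter $U_{s+1}$. The nontrivial verification is the weight bound and condition (b), which I would establish simultaneously via Kolmogorov's martingale inequality: by induction on the depth of the subtree of $U_s$ above $\eta$, one shows $\tp{-|\eta|}(L_s(\eta)-L_{\gamma_s(\eta)}(\eta))\ge\varepsilon\cdot\weight(U_s^{\eta,k})$, so the total weight is bounded by $L(\estring)/\varepsilon$; the same inequality applied to a hypothetical maximal $\sss\notin U_s$ with $\weight(U_s\cap\sss^\prec)>\tp{-\sssl}$ forces $L_s(\sss)-L_r(\sss)>\varepsilon$ for some earlier stage, contradicting maximality.

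The hard part will be the book-keeping in step (ii): left-c.e.\ approximations can retroactively invalidate previous ``upcrossings'' because $L_s(\sss)$ grows in $s$, so one cannot simply mimic Doob's upcrossing argument. The labelling $\gamma_s$ and the unusual condition (b) are designed precisely to absorb such retraction events while still recording enough oscillation to catch $Z$. The combinatorial induction bounding $\weight(U_s^{\eta,k})$ is where all the work concentrates, and ensuring that the construction stays computable (rather than $\Delta^0_2$) is the final delicate point. I would also note in passing that $Z$ must be computably random, which could be extracted either directly from Madison-test-passing or by invoking that failure of computable randomness produces oscillating computable martingales of bounded variation, hence genuine upcrossings that already give rise to a Madison test.
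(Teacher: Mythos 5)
Your proposal follows essentially the same route as the paper's proof: the identical notion of Madison test with conditions (a) and (b), the same two-lemma factorization (ML-random dyadic density-one implies passing all Madison tests, via the clopen sets $\+ A^k_{\sss,s}$ assembled into a ML-test; passing all Madison tests implies convergence of left-c.e.\ martingales, via the labelling $\gamma_s$ and the Kolmogorov-inequality induction bounding $\weight(U_s^{\eta,k})$), and even the same observation that computable randomness of $Z$ is needed to get the stage-$s$ limits $\lim_n L_s(Z\uhr n)$. The plan is correct and the points you flag as delicate are exactly where the paper's verification concentrates.
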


Here, we give a characteriziation of density randomness
via the Lebesgue differentiation theorem.

\begin{theorem}\label{th:density-lebesgue}
The following are equivalent for $z\in[0,1]$:
\begin{enumerate}
\item $z$ is density random.
\item $z$ is a dyadic Lebesgue point for each integral test.
\item $z$ is a Lebesgue point for each integral test.
\end{enumerate}
\end{theorem}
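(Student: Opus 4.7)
\bigskip

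\noindent\textbf{Proof plan.} The plan is to establish the cyclic chain $(3) \Rightarrow (2) \Rightarrow (1) \Rightarrow (3)$. The implication $(3) \Rightarrow (2)$ is trivial, since dyadic intervals containing $z$ and shrinking to $z$ form a subfamily of all such intervals. For $(2) \Rightarrow (1)$, I apply $(2)$ to two canonical families of integral tests. First, take a universal integral test $u$: being a dyadic Lebesgue point of $u$ at $z$ forces $u(z)<\infty$, hence $z$ is ML-random. Second, for any $\Pi^0_1$ class $\+ C\ni z$, the indicator $1_{[0,1]\setminus\+ C}$ is lower semicomputable with integral at most $1$, so is (up to scaling) an integral test, and it vanishes at $z$; condition $(2)$ then gives $\lim_{\sigma\to z}\lambda([\sigma]\cap\+ C)/\lambda([\sigma])=1$. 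Thus $z$ is a dyadic density-one point for every $\Pi^0_1$ class containing it, and by Theorem~\ref{th:density-one-converge}, $z$ is density random.

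The substantive direction is $(1) \Rightarrow (3)$. Let $z$ be density random and $f$ an integral test; since $z$ is ML-random, $f(z)<\infty$. The map $\sigma\mapsto M_f(\sigma):=\lambda([\sigma])^{-1}\int_{[\sigma]}f\,d\lambda$ is a left-c.e.\ martingale, so by the Madison characterisation (Theorem~\ref{th:density-one-converge}) the limit $L_f(z):=\lim_\sigma M_f(\sigma)$ exists and is finite; lower semicomputability of $f$ gives $L_f(z)\ge f(z)$ via the c.e.\ open set $\{f>q\}$ for rational $q<f(z)$. I first treat the bounded case: if $h$ is lower semicomputable with $0\le h\le C$, then for any rational $r>h(z)$ the $\Pi^0_1$ class $\{h\le r\}$ contains $z$; dyadic density-one gives $\lambda_\sigma(\{h\le r\})\to 1$, and splitting the integral yields $M_h(\sigma)\le r\lambda_\sigma(\{h\le r\})+C(1-\lambda_\sigma(\{h\le r\}))\to r$, whence $L_h(z)=h(z)$. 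For unbounded $f$, the decomposition $f=\min(f,N)+(f-N)^+$ with $N>f(z)$ reduces matters to the tail estimate $L_{(f-N)^+}(z)=0$. Granting this, $L_f(z)=f(z)$ for every integral test; applying it to the integral test $(f-q)^+$ and, via the bounded case, to $q-\min(f,q)=(q-f)^+$ for each rational $q$ yields $\lim_\sigma|[\sigma]|^{-1}\int_{[\sigma]}|f-q|=|f(z)-q|$, and letting $q$ range over the rationals gives condition $(2)$ by the triangle inequality. Finally, the Khan--Miller Theorem~\ref{th:ML-dyadic-full} ensures that $z$ is a full density-one point, so the splitting argument replays over arbitrary shrinking intervals containing $z$ to upgrade $(2)$ to $(3)$.

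The main obstacle is the tail estimate $L_{(f-N)^+}(z)=0$ for $N>f(z)$, where $(f-N)^+$ is an integral test vanishing at $z$. A naive Hardy--Littlewood weak-$(1,1)$ argument via a uniform ML-test for $\{M^*_{(f-n)^+}>t\}$ is blocked because $\|(f-n)^+\|_1=\int f-\int\min(f,n)$ is a difference of left-c.e.\ reals and so is not computably bounded from above for a general integral test. I would circumvent this by routing through the interval-c.e.\ theory of Section~\ref{s:diff and porous}: the primitive $F(x):=\int_0^x f\,d\lambda$ is continuous and interval left-c.e., so by Theorem~\ref{thm:interval left-c.e. MG and derivative} $F$ is differentiable at every density random $z$, and Proposition~\ref{pro:interval c.e.} identifies $\widetilde D_2 F(z)$ with $\widetilde D F(z)=F'(z)$. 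Since $L_f(z)$ coincides with the dyadic upper derivative of $F$ at $z$, combining the bounded case for $\min(f,N)$ with the existence of $F'(z)$ forces the tail contribution $L_{(f-N)^+}(z)=F'(z)-f(z)$ to vanish; this is precisely where the full strength of density randomness (i.e., martingale convergence beyond ML-randomness) enters, and is the technical heart of the argument.
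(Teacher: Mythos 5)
Your outer structure $(3)\Rightarrow(2)\Rightarrow(1)\Rightarrow(3)$, the treatment of $(2)\Rightarrow(1)$ via a universal integral test and the indicators of complements of $\Pi^0_1$ classes, the use of Theorem~\ref{th:density-one-converge} to get convergence of the left-c.e.\ martingale $\sigma\mapsto D(f,\sigma)$ along $z$, the lower bound $L_f(z)\ge f(z)$ from lower semicontinuity, the bounded case, and the passage from dyadic to arbitrary intervals via non-porosity all agree with the paper. But the argument breaks down at exactly the point you flag as the technical heart. You write that "combining the bounded case for $\min(f,N)$ with the existence of $F'(z)$ forces the tail contribution $L_{(f-N)^+}(z)=F'(z)-f(z)$ to vanish," yet the results you invoke (Theorem~\ref{thm:interval left-c.e. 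MG and derivative} and Proposition~\ref{pro:interval c.e.}) only yield the \emph{existence} of $F'(z)$ and its agreement with the dyadic derivative; they say nothing about its \emph{value}. The identity $F'(z)=f(z)$ is precisely the Lebesgue-point statement being proved, so asserting that the existence of $F'(z)$ forces $F'(z)-f(z)=0$ is circular. What your argument actually establishes is only $f(z)\le L_f(z)=F'(z)<\infty$.

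The missing ingredient is the paper's Lemma~\ref{lem:weak-l}, specifically the case $r>f(z)$: assuming $\lim_n D(f,z\uh n)=r>f(z)$, one fixes a rational $q$ with $f(z)<q<r$ and builds an auxiliary integral test $g$ with $g(z)=\infty$. The construction enumerates nested prefix-free families $S_n$ of pairs $(\tau,t)$ with $D(f_t,\tau)>q$ and $|\tau|\ge u(s)$ for the predecessor $(\sigma,s)\in S_{n-1}$, and adds the increment $g_\tau=(q-D(f_s,\sigma))\mathbf{1}_{[\tau]}$; the key estimate is $\int g_\tau\,d\mu\le\int_{[\tau]}(f_t-f_s)\,d\mu$, so the total integral telescopes to at most $\int f<\infty$ and $g$ is genuinely an integral test, while along $z$ each increment is at least $q-f(z)>0$, giving $g(z)=\infty$ and contradicting ML-randomness of $z$. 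You correctly observe that the weak-$(1,1)$ maximal-function route is blocked because $\|(f-N)^+\|_1$ is only a difference of left-c.e.\ reals; but the interval-c.e.\ differentiability theory does not circumvent this, and some version of the above test construction is needed to close the argument.
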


Recall that an \emph{integral test} on $[0,1]$ with the Lebesgue measure
is an integrable lower semicomputable function
$f:[0,1]\to\overline{\mathbb{R}}^+$.

Note that one direction is easy.

\begin{proof}[Proof of (ii) $\Rightarrow$ (i) of Theorem \ref{th:density-lebesgue}]
Suppose that $z$ is a Lebesgue point for each integral test.
Then $f(z)$ is finite for each integral test $f$,
whence $z$ is ML-random.

Let $C$ be a $\Pi^0_1$ class containing $z$.
We define a function $f:[0,1]\to\overline{\mathbb{R}}^+$ by
\[f(x)=\begin{cases}1&\mbox{ if }x\not\in C\\
0&\mbox{ if }x\in C.\end{cases}\]
Then, $f$ is an integral test.
Since $z$ is a Lebesuge point for $f$,
$C$ has density-one at $z$.
\end{proof}

For the converse, we first show the following lemma.

\begin{lemma}\label{lem:weak-l}
If an ML-random set $z$ is a dyadic weak Lebesgue point for an integral test $f$,
then $z$ is a dyadic Lebesgue point for $f$.
\end{lemma}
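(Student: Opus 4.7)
The plan is to effectivize the classical deduction of strong Lebesgue points from weak Lebesgue points via approximation. The key input is the effective Lebesgue differentiation theorem for $L^1$-computable functions at Martin-L\"of random reals, due to Pathak, Rojas and Simpson, and already invoked earlier in this paper. First, since $f$ is lower semicomputable and integrable, I would fix a computable nondecreasing sequence $(f_n)$ of nonnegative bounded rational step functions with $f_n \nearrow f$ pointwise. Each $f_n$ is trivially $L^1$-computable, so every ML-random real---in particular $z$---is a dyadic (strong) Lebesgue point of $f_n$; in particular the weak form $\frac{1}{|I|}\int_I f_n\,d\lambda \to f_n(z)$ also holds along dyadic intervals $I \ni z$ shrinking to $z$.

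Next I would use the hypothesis that $z$ is a dyadic weak Lebesgue point of $f$ to deduce, by subtraction, that the nonnegative integral test $f - f_n$ satisfies the dyadic weak Lebesgue property at $z$ with limit $r_n := f(z) - f_n(z)$. Then the pointwise triangle inequality
\[
|f(x) - f(z)| \;\le\; (f(x) - f_n(x)) + |f_n(x) - f_n(z)| + r_n,
\]
valid because $f \ge f_n$ everywhere (so the first and third absolute values can be removed), bounds $\frac{1}{|I|}\int_I |f(x) - f(z)|\,d\lambda(x)$ by a sum of three terms whose $\limsup$ as $|I| \to 0$ through dyadic intervals containing $z$ is at most $r_n + 0 + r_n = 2r_n$. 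Since $z$ is ML-random we have $f(z) < \infty$, hence $f_n(z) \nearrow f(z)$ and $r_n \to 0$, yielding the desired dyadic Lebesgue property.

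The main technical point is to have the strong dyadic Lebesgue differentiation theorem available at ML-random points for bounded $L^1$-computable functions; once that is in hand, the rest is a routine effectivization of the classical rational-threshold argument (which replaces $f(z)$ by a dense set of rationals $q$ and uses $\tfrac{1}{|I|}\int_I|f-q|\to|f(z)-q|$ for each $q$). A mild subtlety is to avoid circularity with Theorem~\ref{th:density-lebesgue}: the Pathak--Rojas--Simpson result should be applied directly to each $f_n$ rather than via any detour through density randomness.
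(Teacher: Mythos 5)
There is a genuine gap, and it comes from the definitions in force in this section. Here ``dyadic weak Lebesgue point for $f$'' means only that $\lim_n D(f,z\uh n)$ exists (call the limit $r$), while ``dyadic Lebesgue point'' means that this limit equals $f(z)$; the content of the lemma is therefore to rule out $r\ne f(z)$. Your argument instead takes as its starting point that $\frac{1}{|I|}\int_I f\,d\mu\to f(z)$ and derives the averaged statement $\frac{1}{|I|}\int_I|f-f(z)|\,d\mu\to 0$; that starting point is exactly the conclusion to be proven. Concretely, the step ``deduce by subtraction that $f-f_n$ satisfies the weak Lebesgue property with limit $r_n=f(z)-f_n(z)$'' is where the circularity enters: subtraction only gives the limit $r-f_n(z)$, and since $f-f_n\ge 0$ this yields $r\ge f_n(z)$ for every $n$, hence $r\ge f(z)$ --- which is the easy half (Case 1 of the paper's proof, essentially lower semicontinuity of $f$). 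Nothing in your proposal excludes $r>f(z)$.

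That exclusion is the heart of the lemma and is where ML-randomness of $z$ must be used beyond the finiteness of $f(z)$. The paper fixes a rational $q$ with $f(z)<q<r$ and, from the infinitely many dyadic prefixes $\tau\prec z$ along which some stage approximation satisfies $D(f_t,\tau)>q$ while the previously recorded approximation stays below $q$ near $z$, assembles a new integral test $g=\sum g_\tau$ with $\int g\,d\mu\le\int f\,d\mu<\infty$ but $g(z)=\infty$, contradicting ML-randomness of $z$. Your appeal to Pathak--Rojas--Simpson applied to the step functions $f_n$ cannot substitute for this: a rational step function satisfies the dyadic Lebesgue property trivially at every non-dyadic point (it is locally constant there), so no randomness is actually consumed in that step, and the overshoot phenomenon $r>f(z)$ that the auxiliary test is designed to punish is never confronted. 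To repair the proof you would need to add an argument of this upcrossing type (or an equivalent savings-martingale argument) showing that $\lim_n D(f,z\uh n)>f(z)$ is impossible at an ML-random $z$.
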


\begin{proof}
As a notation, for a function $f:\subseteq [0,1]\to\mathbb{R}$ and $z\in[0,1]$,
let
\[D(f,\sigma)=\frac{\int_{[\sigma]}f\ d\mu}{2^{-n}}.\]
Then, $z$ is a dyadic Lebesgue point
iff $\lim_n D(f,z\uh n)=f(z)$.
If $f$ is a integral test, then $D(f,-)$ is a left-c.e.\ martingale.

Suppose that $z$ is not a dyadic Lebesgue point
for an integral test $f$
and $z$ is a dyadic weak Lebesgue point for $f$.
Then $\lim_n D(f,z\uh n)=:r$ exists and $f(z)\ne r$.

Let
\[f=\sup_s f_s\]
where $\{f_s\}$ is a computable sequence of rational step functions.
Then, there is a computable order $u$ such that
$D(f_s,\sigma)=D(f_s,\sigma0)=D(f_s,\sigma1)$
for each $\sigma$ satisfying $|\sigma|\ge u(s)$.
Unless $z$ is a dyadic rational,
we have
\[\lim_n D(f_s,z\uh n)=f_s(z).\]

Suppose that $r<f(z)$.
Since $\lim_s f_s(z)=f(z)$,
there is $t$ such that
\[r<f_t(z)\le f(z).\]
Then
\[r<f_t(z)=\lim_n D(f_t,z\uh n)\le\lim_n D(f,z\uh n).\]
This is a contradiction.

Suppose that $r>f(z)$.
Let $q$ be a rational such that $f(z)<q<r$.
We build a new integral test $g$ such that $g(z)=\infty$.

We prepare auxiliary uniformly c.e.\ sets $\{S_n\}$
where $S_n\subseteq2^{<\omega}\times\omega$ for each $n$.
Let $S_0=\{(\lambda,0)\}$ where $\lambda$ is the empty string.
For each $n\ge1$ and $(\sigma,s)\in S_{n-1}$,
computably enumerate $(\tau,t)$ into $S_n^\sigma$ so that
\begin{itemize}
\item $\sigma\prec\tau$,
\item $|\tau|\ge u(s)$,
\item $D(f_t,\tau)>q$,
\item $\{\tau\in2^{<\omega}\ :\ (\tau,t)\in S_n^\sigma\}$ is prefix-free,
\end{itemize}
We can further assume that
\[\bigcup\{[\tau]\ :\ \sigma\prec\tau,\ |\tau|\ge u(s),\ D(f,\tau)>q\}=\bigcup\{[\tau]\ :\ (\tau,t)\in S_n^\sigma\}.\]
Let $S_n=\bigcup_{(\sigma,s)\in S_{n-1}}S_n^\sigma$.

For each $(\tau,t)\in S_n$,
let
\[g_\tau=(q-D(f_s,\sigma))\mathbf{1}_{[\tau]}\]
where $(\sigma,s)\in S_{n-1}$ and $\sigma\prec\tau$.
We define $g$ by
\[g=\sum_n\sum_{(\tau,t)\in S_n}g_\tau.\]
Note that
\[\int g_\tau\ d\mu\le(D(f_t,\tau)-D(f_s,\sigma))2^{-|\tau|}
=\int_{[\tau]}(f_t-f_s)d\mu,\]
thus $\int g\ d\mu\le \int f\ d\mu<\infty$.
Hence, $g$ is an integral test.

Since $\lim_n D(f,z\uh n)=r>q$,
there exists $(\tau_n,t_n)\in S_n$ such that $\tau_n\prec z$ for each $n$.
Then,
\[g(z)=\sum_n (q-D(f_s,\sigma))
\ge\sum_n(q-f(z))=\infty.\]
\end{proof}

\begin{proof}[Proof of (i) $\Rightarrow$ (ii) of Theorem \ref{th:density-one-converge}]
Suppose that $z$ is density random.
Let $f$ be an integral test.
Then $D(f,-)$ is a left-c.e.\ martingale.
By Theorem \ref{th:density-one-converge},
$\lim_n D(f,z\uh n)$ exists,
whence $z$ is a dyadic weak Lebesgue point for $f$.
By Lemma \ref{lem:weak-l},
$z$ is a dyadic Lebesgue point for $f$.
\end{proof}

To drop ``dyadic'', we recall the following results.

\begin{proposition}
Let $f:[0,1]\to\mathbb{R}$ be interval-c.e.
Then $\widetilde{D}_2f(z)=\widetilde{D}f(z)$
and $\utilde{D}_2 f(z)=\utilde{D} f(z)$ for each non-porosity point $z$.
\end{proposition}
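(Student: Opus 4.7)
The first equality $\widetilde D_2 f(z) = \widetilde D f(z)$ is exactly Proposition~\ref{pro:interval c.e.}, which I would invoke directly: if $\widetilde D_2 f(z) < p < \widetilde D f(z)$, Lemma~\ref{lem:classic diff porous} produces a closed set $\+ C \ni z$ porous at $z$, and $\+ C$ is effectively closed because the condition $S_f([\sigma]) > p$ defining its open complement is $\Sigma^0_1$ (the slopes of an interval-c.e.\ $f$ are uniformly left-c.e.). Non-porosity of $z$ then forces the equality.

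For the second equality $\utilde D_2 f(z) = \utilde D f(z)$ the naive dual fails: the closed set produced by Lemma~\ref{lem:classic diff porous 2} would be defined using the condition $S_f([\sigma]) < q$, which is $\Pi^0_1$ rather than $\Sigma^0_1$ for left-c.e.\ slopes, so is not effectively closed. This is precisely why Remark~\ref{rem:right-c.e.} gives the analogue only for interval \emph{right}-c.e.\ functions. My plan is therefore to adapt the shifted-grid strategy from the proof of Theorem~\ref{thm:interval left-c.e. MG and derivative}. Assuming $\utilde D f(z) < p < q < \utilde D_2 f(z)$, take a short interval $I \ni z$ with $S_f(I) < p$; by the averaging argument of Lemma~\ref{lem:classic diff porous 2} there is, at scale $|I|\cdot 2^{-k}$, a dyadic-style sub-interval of small slope $< q$, while the hypothesis $\utilde D_2 f(z) > q$ forces sufficiently long dyadic cylinders through $z$ to have slope above $q$, so an adjacent sub-interval of comparatively large slope must also appear. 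By Fact~\ref{fact:geom}, taking $n$ with $2^{-n+1} > |I| \ge 2^{-n}$ and $I$ small enough, either the points $z$ and the endpoints of this sub-interval all lie in a common member of the standard grid $\+ D_{n-3}$, or they all lie in a common member of the shifted grid $\+ D'_{n-3}$. Using the $\Sigma^0_1$ enumeration of strings $\sigma$ with $S_f([\sigma]) > q$, I would build two effectively closed classes $\+ E \ni Z$ and $\+ E' \ni Y$ (where $0.Z = z$ and $0.Y = z - 1/3$) as complements of the open neighborhoods of the corresponding high-slope cylinders, in analogy with the construction in the proof of Theorem~\ref{thm:interval left-c.e. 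MG and derivative}; in one of the two grid cases $\+ E$ acquires arbitrarily short large holes near $Z$, in the other $\+ E'$ does near $Y$, contradicting non-porosity of $z$ (respectively of $z - 1/3$).

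The principal obstacle is exactly this asymmetry: the $\Sigma^0_1$ information available for an interval-left-c.e.\ $f$ detects only large slopes, whereas the lower-derivative discrepancy is about small slopes. The shift trick reroutes small-slope information on one grid into large-slope information on the companion grid, but forces the argument to engage both $z$ and $z - 1/3$, requiring that both be non-porosity points. In the intended application (Theorem~\ref{th:density-lebesgue}) $z$ is density random, in particular both $z$ and $z - 1/3$ are non-porosity points by the base-invariance of the underlying randomness notions, so the argument closes; under the abstract hypothesis that $z$ be merely a non-porosity point, the preservation of non-porosity under translation by $1/3$ would need a separate justification, and this is the delicate point that the plan above has to address.
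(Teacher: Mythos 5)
Your handling of the first equality is exactly the paper's own (Proposition~\ref{pro:interval c.e.}), and your diagnosis of why the naive dual fails for an interval \emph{left}-c.e.\ $f$ is also correct. But your proposed repair of the second equality via the shifted-grid argument of Theorem~\ref{thm:interval left-c.e. MG and derivative} has a genuine gap, and the delicate point is not the one you flag. Preservation of non-porosity under the shift is in fact the easy part: translation by $1/3$ carries effectively closed classes to effectively closed classes, so $z-1/3$ is a non-porosity point whenever $z$ is. The real obstruction is quantitative. In that argument the holes in the classes $\+ E$, $\+ E'$ are produced by martingale averaging: from a grid sub-interval of slope $<q$ sitting below a cylinder $[\eta]\ni z$ (resp.\ below a shifted cylinder containing $z-1/3$) one extracts a sibling of slope $>v$ \emph{only if} $M(\eta)\ge u$ (resp.\ $M'(\eta')\ge u$) with $v-u\le \tp{-k-3}(u-q)$, while membership of $Z$ in $\+ E$ (resp.\ of $Y$ in $\+ E'$) forces $v$ to be at least $\sup_{m\ge n^*}M(Z\uhr m)$, essentially the upper dyadic derivative, and likewise for the shifted grid. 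So the argument needs (i) a lower bound $u>q$ on the slopes of \emph{both} the dyadic and the shifted-dyadic intervals containing $z$, and (ii) the oscillation between the upper and lower dyadic slopes along each grid to be smaller than $\tp{-k-3}(u-q)$. Your hypothesis $\utilde D_2 f(z)>q$ supplies neither the shifted lower bound nor either oscillation bound. In the paper both are supplied by assuming $z$ is a convergence point for left-c.e.\ martingales, which yields $M(z)=M'(z-1/3)$ and lets one squeeze $q<u<M(z)<v$ with $v-u$ as small as desired; that hypothesis is strictly stronger than non-porosity.

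It is worth noting that the paper itself never proves the lower-derivative equality at the generality stated here: Remark~\ref{rem:right-c.e.} gives it only for interval \emph{right}-c.e.\ functions (where the relevant class is genuinely $\PPI$), and Theorem~\ref{thm:interval left-c.e. MG and derivative} gives it for interval left-c.e.\ functions only at c.e.-martingale convergence points; in Miyabe's application $z$ is density random, so the stronger hypothesis is available there. To make your plan work you must either strengthen the hypothesis on $z$ accordingly, or produce a genuinely new argument controlling the shifted-grid lower slopes and the dyadic oscillation from non-porosity alone; the sketch as written does not do this.
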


\begin{lemma}[ \cite{Brattka.Miller.ea:nd}; after Fact 2.4, Fact 7.2]
For each real $z$, 
\[\underline{D}f(z)\le\utilde{D}f(z)\le \widetilde{D}f(z)\le\overline{D}f(z).\]
If $f$ is continuous, then
\[\utilde{D}f(z)=\underline{D}f(z)\mbox{ and }
\widetilde{D}f(z)=\overline{D}f(z).\]
\end{lemma}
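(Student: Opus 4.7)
My plan is to prove the chain of inequalities using a single algebraic identity, and then handle the continuity case by rational approximation of the endpoints of intervals at which slopes are measured.

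For the outer inequalities $\underline D f(z) \le \utilde D f(z)$ and $\widetilde D f(z) \le \overline D f(z)$, the key observation is that, when $a < z < b$, a direct computation gives
$$S_f(a,b) = \frac{b-z}{b-a}\, S_f(z, b) + \frac{z-a}{b-a}\, S_f(z, a),$$
so $S_f(a,b)$ is a convex combination of $S_f(z,a)$ and $S_f(z,b)$, whence
$$\min\{S_f(z,a),\, S_f(z,b)\} \le S_f(a,b) \le \max\{S_f(z,a),\, S_f(z,b)\}.$$
This also holds in the degenerate cases $a = z$ or $b = z$ (which can occur only when $z$ itself is rational), where $S_f(a,b)$ is already of the form $S_f(z, z+h')$. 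Writing $S_f(z,a) = S_f(z, z+(a-z))$ and $S_f(z,b) = S_f(z, z+(b-z))$, with $0 < |a-z|,\, |b-z| \le h$ whenever $b-a \le h$, both the $\min$ and the $\max$ lie in $\{S_f(z, z+h') : h' \in \RR,\, 0 < |h'| \le h\}$. Taking the infimum (resp.\ supremum) over all rational pairs $a \le z \le b$ with $0 < b-a \le h$, and then letting $h \to 0^+$ (which is a monotone limit, since the sets shrink as $h$ does), yields $\underline D f(z) \le \utilde D f(z)$ and $\widetilde D f(z) \le \overline D f(z)$. The middle inequality $\utilde D f(z) \le \widetilde D f(z)$ is just $\inf \le \sup$ of a nonempty set.

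For the continuity case I only need the reverse directions $\utilde D f(z) \le \underline D f(z)$ and $\overline D f(z) \le \widetilde D f(z)$; the second is dual, so I sketch only the first. Fix $h > 0$ and any real $h'$ with $0 < |h'| < h$. If $h' > 0$, choose rational sequences $a_n \nearrow z$ and $b_n \searrow z+h'$; if $h' < 0$, choose $a_n \nearrow z+h'$ and $b_n \searrow z$. In either case $a_n \le z \le b_n$ are rational, $b_n - a_n \to |h'| < h$, and by continuity of $f$ at $z$ and $z+h'$ we have $S_f(a_n, b_n) \to S_f(z, z+h')$. Eventually $0 < b_n - a_n \le h$, so the infimum defining $\utilde D f(z)$ at scale $h$ is bounded above by $S_f(z, z+h')$. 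Taking the infimum over $h'$ with $0 < |h'| < h$, then passing to $h \to 0^+$, gives $\utilde D f(z) \le \underline D f(z)$.

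There is no genuine obstacle in this argument; the only care required is in handling the degenerate cases $a=z$ or $b=z$ in the convex-combination identity, and in justifying that the nominal $\liminf$ and $\limsup$ as $h \to 0^+$ of the inf/sup over a shrinking family of sets are in fact monotone limits, so that the outer limit and the inf or sup may be freely interchanged.
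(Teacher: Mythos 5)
Your proof is correct: the convex-combination identity $S_f(a,b)=\frac{b-z}{b-a}S_f(z,b)+\frac{z-a}{b-a}S_f(z,a)$ yields the chain of inequalities (with the degenerate endpoint cases handled as you note), and rational approximation of $z$ and $z+h'$ under continuity yields the reverse inequalities, the monotonicity of the inf/sup in $h$ justifying the passage to the limit. The paper does not actually prove this lemma but only cites Brattka--Miller--Nies (after Fact 2.4, and Fact 7.2), and your argument is essentially the standard one used there, so the approaches coincide.
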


\begin{lemma}[Lemma 3.8 in \cite{Bienvenu.Hoelzl.ea:12a}]
Let $C$ be a $\Pi^0_1$ class.
If $z\in C$ is difference random, then $C$ is not porous at $z$.
\end{lemma}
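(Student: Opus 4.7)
The plan is to prove the contrapositive: assuming $C$ is a $\Pi^0_1$ class with $z \in C$ and $C$ porous at $z$ via a rational constant $\varepsilon > 0$, I exhibit a difference test that captures $z$, so by the Franklin--Ng characterization $z$ is not difference random. Set $V = C^c$ (a $\Sigma^0_1$ class) and fix a further rational $q$ with $0 < q < \varepsilon/16$; these will be the ``minus'' part of the test and the expansion ratio, respectively.

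The main ingredient is expansions of dyadic holes. For each dyadic interval $J = (a,b)$ with $J \cap C = \emptyset$, let $qJ = (a - |J|/q,\, b + |J|/q)$. Since $J \subseteq C^c$, the intersection $qJ \cap C$ is confined to the two wings flanking $J$, each of length $|J|/q$, so $\mu(qJ \cap C) \le 2|J|/q$. Porosity at $z$ yields, at arbitrarily small scales $\beta$, an open hole of length $\varepsilon \beta$ in $(z - \beta, z + \beta) \cap C^c$; inside it sits a dyadic sub-interval of length $\ge \varepsilon \beta / 4$ by a standard dyadic alignment fact, and a short distance computation using $q < \varepsilon/16$ shows $z \in qJ$ for such a dyadic $J$ (with a harmless constant-factor inflation of the expansion factor to cover the case when $z$ is a boundary point of the hole).

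Define $W_n$ as the union of $qJ$ over those $J$ which lie in the set $\mathcal{M}$ of \emph{maximal} dyadic intervals in $C^c$ (those $J$ with $J \cap C = \emptyset$ but whose dyadic parent meets $C$) and have $|J| \le 2^{-n}$. Elements of $\mathcal{M}$ are pairwise disjoint with $\sum_{J \in \mathcal{M}} |J| = \mu(C^c) \le 1$, so the tail $\sum_{J \in \mathcal{M},\, |J| \le 2^{-n}} |J|$ tends to $0$, whence $\mu(W_n \cap C) \le (2/q) \cdot \text{tail} \to 0$. Now, for ML-random $z$, the connected components of $C^c$ accumulating at $z$ necessarily have lengths tending to $0$: otherwise only finitely many components of $C^c$ would have length exceeding a threshold $\ell > 0$ by disjointness, so accumulation of them at $z$ would force $z$ to be an endpoint of one such component, hence a rational, contradicting ML-randomness. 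Consequently, for each $n$ a sufficiently small-scale porosity witness sits inside a small component of $C^c$ whose maximal dyadic sub-interval $J$ is in $\mathcal{M}$ with $|J| \le 2^{-n}$ and satisfies $z \in qJ$; so $z \in W_n$ for every $n$.

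The main obstacle is effectivity. Membership in $\mathcal{M}$ involves a $\Pi^0_1$ side-condition (``parent meets $C$''), so $\mathcal{M}$ is not uniformly $\Sigma^0_1$, and the tail's decay rate depends on $C$ and need not match $2^{-n}$ outright. I plan to handle this through stage-$s$ approximations $\mathcal{M}_s$ defined from the standard effective approximation $C_s$ of $C$, allowing previously-enumerated entries to be demoted when their parent later becomes disjoint from $C_s$; the retraction cost is absorbed into a convergent geometric series in $q$. The mismatch with the rate $2^{-n}$ is resolved by passing to a computable subsequence of indices $(n_k)$ along which the visible tail measure at stage $s$ is certifiably below $2^{-k}$, which can be detected since the relevant partial sums are left-c.e. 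This careful bookkeeping is the crux of the argument, and once it goes through, $(W_{n_k} \cap C)_k$ is a difference test capturing $z$, completing the proof.
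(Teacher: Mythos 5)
Your overall strategy --- pass to the contrapositive, expand each dyadic hole $J$ disjoint from $C$ to $qJ$ so that $\mu(qJ\cap C)\le 2|J|/q$, and assemble a difference test from the expansions of small maximal holes --- is the standard and correct one, and the geometric part checks out: with $q<\varepsilon/16$ and a dyadic sub-hole of length $\ge\varepsilon\beta/4$ one indeed gets $z\in qJ_0\subseteq qJ$. But the step you yourself identify as the crux, the effectivization, does not work as planned, and this is a genuine gap. The set $\mathcal M$ is d.c.e.\ rather than c.e., and a $\Sigma^0_1$ set $W_n$ cannot withdraw $qJ$ once it is enumerated; so $W_n$ really contains $qJ$ for every $J$ that is \emph{currently maximal at some stage}. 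That family can have infinite total length even after restricting to $|J|\le 2^{-n}$: since $C_{s+1}\subseteq C_s$, a single component of $C^c$ can be revealed one sliver at a time so that, for every parent $P$, the child of $P$ that becomes visibly empty strictly before $P$ does gets enumerated --- about half the dyadic intervals at \emph{every} level, contributing a fixed positive length per level. So there is no ``convergent geometric series in $q$'' absorbing the retractions, and the bound $\mu(W_n\cap C)\le(2/q)\sum|J|$ collapses. Separately, the subsequence-selection mechanism is backwards: a left-c.e.\ quantity can be certified to \emph{exceed} a rational but never to lie \emph{below} one, so you cannot ``detect'' that a tail sum is certifiably below $2^{-k}$; and $\mu(W_n\cap C)$ itself is a difference of left-c.e.\ reals, not right-c.e. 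A difference test must satisfy its measure bound along a computably indexed sequence; knowing only that $\mu(W_n\cap C)\to 0$ at an unknown rate yields a weak-2-randomness-style test that a difference random need not pass.

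Two further remarks. First, a repairable error: endpoints of connected components of $C^c$ need not be rational (take $C=[0,\Omega]$, whose complementary component has endpoint $\Omega$), so ``$z$ would be an endpoint of a component, hence rational'' is false as stated. You are saved either by arguing directly with maximal \emph{dyadic} intervals, whose endpoints are dyadic rationals and hence avoid the ML-random $z$, or by noting that component endpoints are left-/right-c.e.\ and a ML-random left-c.e.\ real is Turing complete, hence not difference random. Second, the usual way around the effectivity obstruction is not to build the test by certifying measures at all, but to invoke the Franklin--Ng characterization directly: from a ML-random porosity point $z$ one computes $\emptyset'$ by a $z$-computable search for a small-scale hole configuration in $C_s$, outputting $\emptyset'_s(n)$, with the finitely many errors of this reduction charged to an auxiliary Solovay test that $z$, being ML-random, must pass. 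As it stands, your write-up establishes the non-effective core of the lemma but not the lemma itself.
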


\begin{proof}[Proof of (ii) $\iff$ (iii) of Theorem \ref{th:density-one-converge}]
Note that (iii) $\Rightarrow$ (ii) holds by definition.

We prove (ii) $\Rightarrow$ (iii).
Suppose that $z$ is a dyadic Lebesgue point for each integral test.
Then, $z$ is density random, whence difference random,
thus a non-porosity point.

Let $f$ be an integral test.
Then, $F(x)=\int_{[0,x]}f\ d\mu$ is interval-c.e.\ and continuous.
Hence,
\[\limsup_{Q\to z}\frac{\int_Q f\ d\mu}{\mu(Q)}
=\overline{D}F(z)
=\widetilde{D}F(z)
=\widetilde{D}_2F(z)
=\limsup_{n\to\infty}\frac{\int_{[z\uh n]}f\ d\mu}{\mu(Q)}
=f(z).\]
Similarly, we have
$\liminf_{Q\to z}\frac{\int_Q f\ d\mu}{\mu(Q)}=f(z)$,
whence $z$ is a Lebesgue point for $f$.
\end{proof}

Actually, only one integral test characterizes density randomness.

\begin{lemma}
Let $f,g$ be integral tests.
If an ML-random set $x$ is a dyadic weak Lebesgue point for $f+g$,
then $x$ is a dyadic weak Lebesgue point for $f$.
\end{lemma}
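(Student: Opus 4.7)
The plan is to leverage the already-proved Lemma \ref{lem:weak-l} applied directly to the integral test $f+g$, together with the standard liminf lower bound for $D(h,-)$ along a non-dyadic real, in order to force $D(f, x\uh n)$ to converge to $f(x)$.

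First, I will observe that $f+g$ is itself an integral test. Since by hypothesis $r := \lim_n D(f+g, x\uh n)$ exists as a finite real, $x$ is a dyadic weak Lebesgue point for $f+g$, and Lemma \ref{lem:weak-l} upgrades this to a dyadic Lebesgue point, giving $r = (f+g)(x) = f(x) + g(x)$. In particular both $f(x)$ and $g(x)$ are finite.

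Next, I will verify the standard one-sided inequality: for any integral test $h = \sup_s h_s$ with rational step functions $h_s$ constant on cylinders $[\sigma]$ of length $|\sigma| \ge u(s)$, and for any $y$ that is not a dyadic rational, one has $D(h_s, y\uh n) = h_s(y)$ for $n \ge u(s)$, so $h \ge h_s$ yields $D(h, y\uh n) \ge h_s(y)$ for $n \ge u(s)$. Taking liminf in $n$ and then sup in $s$ gives $\liminf_n D(h, y\uh n) \ge h(y)$. Applying this at $y = x$ (which is not a dyadic rational, since it is ML-random) to both $f$ and $g$ yields $\liminf_n D(f, x\uh n) \ge f(x)$ and $\liminf_n D(g, x\uh n) \ge g(x)$.

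Finally, since $D(f, x\uh n) + D(g, x\uh n) = D(f+g, x\uh n) \to r = f(x) + g(x)$, I compute
\[
\limsup_n D(f, x\uh n) = r - \liminf_n D(g, x\uh n) \le r - g(x) = f(x),
\]
which together with the liminf bound forces $\lim_n D(f, x\uh n) = f(x)$. Hence $x$ is a dyadic Lebesgue point (in particular a dyadic weak Lebesgue point) for $f$.

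There is no real obstacle here; the substantive content lies in Lemma \ref{lem:weak-l}, which pins the limit $r$ to the concrete value $(f+g)(x)$. Once that equality is available, everything else is a symmetric squeeze using the universal inequality $\liminf_n D(h, x\uh n) \ge h(x)$ applied to the two summands.
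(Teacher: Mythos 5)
Your proof is correct, but it follows a genuinely different route from the one in the paper. The paper argues by contraposition: assuming $D(f,x\uh n)$ oscillates between values below $p$ and above $q$ while $D(f+g,x\uh n)\to r$, it derives the complementary oscillation of $D(g,x\uh n)$ and assembles an explicit betting strategy (follow $f$ until $D(f,x\uh n)>q$, wait until $D(g,x\uh n)$ is large again, then bet with the rescaled strategy $\frac{q}{2\epsilon+p}f$) that succeeds on $x$, contradicting ML-randomness. You instead apply Lemma~\ref{lem:weak-l} to the integral test $f+g$ to pin down $r=(f+g)(x)=f(x)+g(x)$, and then squeeze: the lower-semicontinuity bound $\liminf_n D(h,x\uh n)\ge h(x)$ (valid for non-dyadic $x$, which ML-randomness guarantees, and already implicit in the paper's own computation $\lim_n D(f_s,z\uh n)=f_s(z)$) applied to $g$ gives $\limsup_n D(f,x\uh n)=r-\liminf_n D(g,x\uh n)\le f(x)$, while applied to $f$ it gives the matching lower bound. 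Your argument is shorter, needs no new martingale construction, and delivers the stronger conclusion that $x$ is a dyadic \emph{Lebesgue} point for $f$ with limit $f(x)$; the cost is that it leans on Lemma~\ref{lem:weak-l} (whose proof builds an auxiliary integral test), whereas the paper's proof of this lemma is independent of that machinery and uses only ML-randomness via computable betting strategies. Both proofs are sound; just make sure the finiteness of $r$, $f(x)$ and $g(x)$ (which your subtraction step needs) is noted — it follows from ML-randomness of $x$, since a left-c.e.\ martingale such as $D(f+g,-)$ cannot be unbounded along an ML-random sequence.
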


\begin{proof}
Suppose that $x$ is a dyadic weak Lebesug point for $f+g$
and $x$ is not a dyadic Lebesgue point for $f$.
Let
\[r=\lim_n D(f+g,x\uh n).\]
Then, there are rationals $p,q$ $(p<q)$ such that
$D_n(f,x\uh n)>q$ for infinitely many $n$
and $D_n(f,x\uh n)<p$ for infinitely many $n$.
Notice that $q\le r$.
By replacing $q$ with $\frac{p+q}{2}$, we can assume that $q<r$.
Let $\epsilon=\frac{q-p}{3}>0$.
Then, there is a natural number $N$ such that, for each $n>N$, we have
\[|D(f+g,x\uh n)-r|<\epsilon.\]
Hence,
\[r-\epsilon<D(f,x\uh n)+D(g,x\uh n)<r+\epsilon.\]
If $D(f,x\uh n)>q$, then
\[D(g,x\uh n)<r+\epsilon-q.\]
If $D(f,x\uh n)<p$, then
\[D(g,x\uh n)>r-\epsilon-p.\]
If $D(g,x\uh n)>r-\epsilon-p$, then
\[D(f,x\uh n)<r+\epsilon-r+\epsilon+p=2\epsilon+p<q.\]

We consider the following betting strategy.
First use the strategy $f$ until $D(f,x\uh n)>q$.
When found, stop betting until $D(g,x\uh n)>r-\epsilon-q$.
At the stage $n$, use the strategy
\[\frac{q}{2\epsilon+p}f.\]
Then, $x$ is not ML-random.
\end{proof}

\begin{theorem}
Let $f$ be a Solovay-complete integral test.
Then $x$ is a Lebesuge point for $f$ if and only if $x$ is density random.
\end{theorem}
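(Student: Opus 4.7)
The backward direction is immediate: if $x$ is density random, then by Theorem~\ref{th:density-lebesgue}, $x$ is a Lebesgue point for every integral test, in particular for $f$.

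For the forward direction, suppose $x$ is a Lebesgue point for $f$. I plan to reduce to Theorem~\ref{th:density-lebesgue} by showing that $x$ is a dyadic Lebesgue point for every integral test $g$. First note that $f(x)<\infty$, and because $f$ is Solovay complete it is in particular a universal integral test, so $x$ must be Martin-L\"of random (and hence irrational). Now, given any integral test $g$, Solovay completeness of $f$ should yield a constant $c$ and an integral test $h$ such that $cf=g+h$ pointwise. Then, for every dyadic string $\sigma$, $D(g,\sigma)+D(h,\sigma)=cD(f,\sigma)$, and since the Lebesgue-point hypothesis gives $D(f,x\uh n)\to f(x)$, we obtain
\[
D(g,x\uh n)+D(h,x\uh n)\ \longrightarrow\ g(x)+h(x).
\]

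The key step is then to split this joint limit using lower semicontinuity. Because $g$ and $h$ are lower semicomputable and $x$ is not a dyadic rational, the same step-function argument already used in the proof of Lemma~\ref{lem:weak-l} gives
\[
\liminf_n D(g,x\uh n)\ \ge\ g(x)\qquad\text{and}\qquad\liminf_n D(h,x\uh n)\ \ge\ h(x).
\]
Since these two $\liminf$'s must sum to at most the known limit $g(x)+h(x)$, equality is forced throughout; in particular $\lim_n D(g,x\uh n)=g(x)$, so $x$ is a dyadic Lebesgue point for $g$. As $g$ was arbitrary, Theorem~\ref{th:density-lebesgue} delivers that $x$ is density random.

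The main obstacle, in my view, is pinning down the right formulation of ``Solovay complete integral test''. I am using the natural analog of Solovay reducibility for left-c.e.\ reals: $g\le_{S}f$ means that $cf-g$ is itself (up to harmless additive adjustment) a lower semicomputable function, i.e.\ $cf=g+h$ for some integral test $h$. If Solovay completeness only meant the weaker domination $g=O(f)$ pointwise, the argument above would only yield $\limsup_n D(g,x\uh n)\le cf(x)$, which is not tight enough to conclude $\lim D(g,\cdot)=g(x)$. Assuming the additive decomposition, however, the squeeze between the LSC lower bound and the known total limit converts convergence of $D(f,\cdot)$ at $x$ into convergence of every $D(g,\cdot)$ to the correct value, which is precisely what Theorem~\ref{th:density-lebesgue} asks for.
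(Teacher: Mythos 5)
Your proof is correct, and it takes a genuinely more direct route than the paper's in the key step, although both arguments pivot on the same use of Solovay completeness, namely the additive decomposition $f=\tfrac{g}{q}+h$ (your $cf=g+h$ is the same thing after rescaling, and the paper does indeed use the additive form rather than mere domination, so your worry on that point is resolved in your favour). The paper argues by contraposition: if $x$ is not density random it finds a test $g$ for which $x$ fails to be a \emph{dyadic} Lebesgue point, upgrades this via Lemma~\ref{lem:weak-l} to failure of the dyadic \emph{weak} Lebesgue property for $g/q$ (this uses the harder half of that lemma, the construction of an auxiliary integral test that is infinite at $x$), and then invokes the separate additivity lemma --- proved by a betting strategy --- to transfer the oscillation of $D(g/q,\cdot)$ to oscillation of $D(f,\cdot)$. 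You instead exploit that the hypothesis gives convergence of $D(f,x\uh n)$ to the \emph{correct value} $f(x)$, and combine the identity $cD(f,\sigma)=D(g,\sigma)+D(h,\sigma)$ with the Fatou-type bound $\liminf_n D(g,x\uh n)\ge g(x)$ (which is exactly the easy half of the proof of Lemma~\ref{lem:weak-l}, valid for any non-dyadic $x$) to squeeze $\lim_n D(g,x\uh n)=g(x)$ directly. This avoids both the martingale additivity lemma and the auxiliary-test construction, at the price of nothing: the lower-semicontinuity bound and the finiteness of $cf(x)=g(x)+h(x)$ are all that is needed for the $\limsup$--$\liminf$ squeeze to close. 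The backward direction is handled identically in both proofs, via Theorem~\ref{th:density-lebesgue}.
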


\begin{proof}
The ``if'' direction follows from Theorem \ref{th:density-lebesgue}.

Suppose $x$ is not density random.
We can assume that $x$ is ML-random, because, otherwise,
$f(x)=\infty$ and $x$ is not a dyadic Lebesgue point for $f$.
Then there is an integral test $g$ such that
$x$ is not a dyadic Lebesgue point for $g$.
Since $f$ is Solovay-complete,
there are a rational $q$ and an integral test $h$ such that
\[f=\frac{g}{q}+h.\]
Notice that $x$ is not a dyadic Lebesgue point for $\dfrac{g}{q}$.
By Lemma \ref{lem:weak-l},
$x$ is not a dyadic weak Lebesgue point for $\dfrac{g}{q}$.
By the lemmas above, $x$ is not a dyadic weak Lebesgue point for $f$.
Thus, $x$ is not a Lebesgue point for~$f$.
\end{proof}

 \newpage

\part{Similarity relations  for Polish metric spaces}

\n In October 2013, Andr\'e Nies gave a talk as part of  the Universality and Homogeneity Trimester at the Hausdorff Institute for Mathematics (HIM)  in Bonn. The summary follows.

 We are given a class of structures. We always mean concrete presentations of structures (rather than ``up to isomorphism''). We address the following

\n {\bf leading questions} for this class:
\bi \item[(a)] Which similarity relations are there on the class?
\item[(b)] How complex  are these similarity relations?

\item[(c)] If structures $X,Y$ in the class are similar, how complex, relative to $X,Y$, is the means for showing this? For instance, if $X \cong Y$, can one compute an isomorphism from the structures?
\ei

In the model theoretic setting, we could be given the countable models of a first-order theory. In this setting, some  answers  to these questions are: 

\bi \item[(a)] isomorphism $\cong$, elementary equivalence $\equiv$, elementary equivalence $\equiv_\aaa$ for $L_{\omega_1, \omega}$ sentences of rank $< \alpha$.  

\item [(b)]   Isomorphism of countable graphs, linear orders, countable Boolean algebras is $\le_B$ complete for orbit equivalence relations of continuous $S_\infty$ actions (where $\le_B$ is Borel reducibility, and  $S_\infty$ is the Polish group of permutations of $\omega$).

\item  [(c)]  Suppose the  similarity   is $\cong$. For certain natural classes, this question has been  answered in computable model theory. That area introduced  the  notion of being \emph{relatively computably categorical}, where presentations of $X,Y$ together uniformly  compute an isomorphism  if there is one at all. For instance, a dense linear order  is r.c.c.  There are variants, such as being \emph{uniformly computably categorical}, where one computes an isomorphism from   computable indices  for the  structures.
\ei

We will be mainly considering the {\bf metric} setting.  We are given  a class of Polish metric spaces. To answer (a): The following similarities,  which will be defined formally below, have been studied.

\bc Isometry $\cong_i$, homeomorphism $\cong_h$,  \ec 

\bc Gromov-Hausdorff distance $0$, Lipschitz equivalence.  \ec

The  former two are discussed in detail in \cite[Ch.\ 14]{Gao:09}. 
The latter two  are due to  Gromov; see his  book  \cite[Ch.3]{Gromov:07}  (the first edition dates from 1998). After some preliminary facts, we will answer (b) and (c) for the metric setting. 
We also consider Polish metric spaces  with some additional structure, such as Banach spaces,  or spaces with a probability measure  on the Borel sets.

\section{Representing Polish metric spaces}

We adopt the global view. Single  structures are thought of as points in a ``hyperspace''. To endow this hyperspace with its own structure,  it matters how we represent a single structure. For metric spaces,  two ways are common.

\bi \item[(1)]  Let $\UM$  denote the Urysohn space. Let $F(\UM)$ denotes its Effros algebra, which is  a $\sss$-algebra where the points are  closed subsets of $\UM$. Each Polish metric space is isometric to an element of $F(\UM)$. See  Gao \cite[Ch.\ 14]{Gao:09}.

\item[(2)] A point $V = \seq {v_{i.k}}\sN{i,k} \in \mathbb R^{\NN \times \NN}$ is a   {\it distance matrix}  if $V$ is a pseudo-metric on $\NN$. Let $M_V$ denote the  completion of the corresponding pseudo-metric space. This means that  in $M_V$ we have a distinguished dense sequence  of points $\seq {p_i}$ and present the space by giving their distances. We  merely ask  that $V$ is a pseudo-metric in order to ensure that the set $\+ M$ of distance matrices is closed in $R^{\NN \times \NN}$. 
\ei

Both representations are in a sense equivalent as pointed out for instance in~\cite[Ch.\ 14]{Gao:09}. However, the second one is better for studying  the complexity of the space. For instance, a computable metric space  $(M, d, \seq {p_i})$  is given by a distance matrix $w$ such that $w_{i,k} = d(p_i, p_k)$  is a computable real uniformly in $i,k$. 

A Polish group action is a continuous action $G \times X \to X$ where $G$ is a Polish group and $X$ a Polish space. We write $G \curvearrowright X$ to say that $G$ acts on $X$ continuously. The corresponding  \emph{orbit equivalence relation} is $E^X_G = \{\la x, y \ra \colon \, \ex g \, [ g x = y]\}$.

  \section{Polish metric spaces and  the classical Scott analysis.}  A metric space $(M,d)$ can be turned into a structure in the language with binary relations $S_q$ for $ q \in \QQ^+$, where $S_q(a,b)$ holds  if $d(a,b) < q$.

\begin{definition}
	Let $M$ be an $\mathcal{L}$-structure.
	We define inductively what it means for
	finite tuples $\bar{a},\bar{b}$  from $M$   of the same length 
	 
	to be $\alpha$-equivalent, denoted by
	$\bar{a}\equiv_\alpha\bar{b}$.
	\begin{itemize}
		\item $\bar{a}\equiv_0\bar{b}$ if and
		only if the quantifier-free types of
		the tuples are the same.
		\item For a limit ordinal $\alpha$,
		$\bar{a}\equiv_\alpha\bar{b}$ if and
		only if $\bar{a}\equiv_\beta\bar{b}$
		for all $\beta<\alpha$.
		\item $\bar{a}\equiv_{\alpha+1}\bar{b}$
		if and only if both of the following hold:
		\begin{itemize}
			\item For all $x\in M$, there is some
			$y\in M$ such that 
			$\bar{a}x\equiv_\alpha\bar{b}y$
			\item For all $y\in M$, there is some
			$x\in M$ such that 
			$\bar{a}x\equiv_\alpha\bar{b}y$
		\end{itemize}
	\end{itemize}
\end{definition}

The \emph{Scott rank} $\mathrm{sr}(M)$ of a 
structure $M$ is defined as the smallest 
$\alpha$ such that $\equiv_\alpha$
implies $\equiv_{\alpha+1}$ for all tuples 
of that structure. We remark that always
$\mathrm{sr}(M)<|M|^+$.

\begin{fact} A Polish space has Scott rank $0$ iff it is ultrahomogeneous.  \end{fact}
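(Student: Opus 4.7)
The plan is to unwind the definitions in the metric setting. In the language with predicates $S_q$ ($q\in \QQ^+$), two tuples $\bar a,\bar b$ of equal length satisfy $\bar a \equiv_0 \bar b$ if and only if $d(a_i,a_j)=d(b_i,b_j)$ for all $i,j$, i.e., $\bar a \mapsto \bar b$ is a partial isometry. Hence Scott rank $0$ (the condition $\equiv_0 \,\Rightarrow\, \equiv_1$) is precisely the one-point back-and-forth property for partial isometries: any finite partial isometry extends by one point on either side.

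The easy direction (ultrahomogeneous $\Rightarrow$ Scott rank $0$) is immediate: given $\bar a \equiv_0 \bar b$, extend the partial isometry $\bar a \mapsto \bar b$ to a global autoisometry $\varphi$ of $M$. For any $x$, the point $y = \varphi(x)$ witnesses $(\bar a,x) \equiv_0 (\bar b,y)$, and $\varphi^{-1}$ handles the dual direction, establishing $\bar a \equiv_1 \bar b$. For the converse, first note by transfinite induction on $\aaa$ that if $\equiv_0 = \equiv_1$ on all tuples, then $\equiv_0 = \equiv_\aaa$: limit stages are immediate, and at a successor, given $\bar a \equiv_0 \bar b$ and any $x$, use $\equiv_1$ to find $y$ with $\bar a x \equiv_0 \bar b y$, then apply the inductive hypothesis to conclude $\bar a x \equiv_\beta \bar b y$. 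To extend a partial isometry $f_0 : \bar a \mapsto \bar b$ to a global autoisometry, fix a dense sequence $\seq{q_i}$ in $M$ and build finite partial isometries $f_0 \sub f_1 \sub f_2 \sub \cdots$ by alternately invoking the one-point extension property to force $q_n$ into $\dom(f_{2n+1})$ and into $\range(f_{2n+2})$. Then $\bigcup_n f_n$ is a partial isometry with both dense domain and dense range, which by completeness of $M$ extends uniquely to a surjective isometry $\varphi : M \to M$ extending $\bar a \mapsto \bar b$.

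The main technical point is this final extension from a partial isometry on a countable dense set to an autoisometry of the whole space; it turns on completeness plus the density of the range, so it is no more subtle than a standard Fra\"iss\'e back-and-forth. Separability of $M$ (built into the word ``Polish'') is essential, since it is what allows the back-and-forth to reach a dense set in countably many steps; without it one would need to iterate along a longer ordinal, and the single-step property encoded by Scott rank $0$ would not obviously suffice.
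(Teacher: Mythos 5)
Your proof is correct. The paper states this only as a \emph{Fact} with no proof supplied, so there is nothing to compare against; your argument is the standard one and fills the omission cleanly: $\equiv_0$ unwinds (via density of $\QQ$ in the predicates $S_q$) to ``partial isometry,'' Scott rank $0$ is exactly the one-point extension property, ultrahomogeneity gives this immediately via a global autoisometry, and conversely a back-and-forth along a countable dense sequence followed by the completeness/dense-range extension (whose image is complete, hence closed, hence all of $M$) yields the required autoisometry. The transfinite-induction observation that $\equiv_0$ then coincides with every $\equiv_\alpha$ is true but not actually needed for the equivalence as stated.
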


Friedman, K\"orwien and Nies (2012) have shown  that for each $\alpha < \omega_1$, there is a   countable Polish  ultrametric space $M$  such that  $\mathrm{sr}(M) = \alpha  \times \omega$.

\begin{question} \

\n (a) Does every Polish metric space have  countable Scott rank? 

\n
(b) Can  it in fact be described within the class of Polish metric spaces by  an  $L_{\omega_1, \omega} $     sentence?  \end{question}

\n Note (Feb 2014).  Question (a) has been answered in the affirmative by Michal Ducha, a postdoc from  Warsaw (student of J. Zapletal)  who participated in  the 
HIM program.

\section{Isometry $\cong_i$}

In 1998 Anatoly Vershik~\cite{Vershik:98}   asked about the complexity of isometry $\cong_i$ on Polish metric spaces, and in particular if one can assign invariants. The answer was a resounding no. By   the following result,  $\cong_i$ is Borel equivalent to  $E^{F(\UM)}_ { \text{Iso}(\UM)}$, the  universal  orbit equivalence relation given by the action of the isometry group of $\UM$ on the Effros algebra of $\UM$.

\begin{theorem}[Gao-Kechris 2000; Clemens;  see  \cite{Gao:09}, Ch.\ 14] \   \bi \item[(1)] $\cong_i  \  \le_B \  E^{F(\UM)}_ { \text{Iso}(\UM)}$.

\item[(2)] For every Polish group action $G \curvearrowright X$ we have   $E^X_G \  \le_B \  \cong_i$.  \ei \end{theorem}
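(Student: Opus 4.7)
The plan is to establish part (1) by constructing a Borel embedding of the space of distance matrices into $F(\UM)$ using the universality and ultrahomogeneity of the Urysohn space, and to establish part (2) by invoking Uspenskij's theorem that $\text{Iso}(\UM)$ is a universal Polish group, combined with a Becker--Kechris style transfer to the action on $F(\UM)$.

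For part (1), I would fix a dense sequence $\seq{u_j}_{j \in \NN}$ in $\UM$ and, given a distance matrix $V = \seq{v_{i,k}}_{i,k\in\NN} \in \+ M$, build stage by stage a sequence $\seq{q_i}_{i \in \NN}$ in $\UM$ with $d_{\UM}(q_i, q_k) = v_{i,k}$ for all $i,k$. At stage $i$, having chosen $q_0, \ldots, q_{i-1}$, I would use the Kat\v{e}tov one-point extension property of $\UM$ --- every Kat\v{e}tov function on a finite subset of $\UM$ is realized by some point of $\UM$ --- to find $q_i$ with the required distances. A canonical choice procedure (pick the least $j$ with $u_j$ within $2^{-i}$ of the target, then correct in the limit using completeness of $\UM$) makes the assignment $V \mapsto \Phi(V) := \overline{\{q_i : i \in \NN\}} \in F(\UM)$ Borel. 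The verification uses ultrahomogeneity: if $V_1 \cong_i V_2$ via an isometry $f : M_{V_1} \to M_{V_2}$, then the induced partial isometry between $\Phi(V_1)$ and $\Phi(V_2)$ extends by the Kat\v{e}tov back-and-forth argument to some $g \in \text{Iso}(\UM)$ with $g \cdot \Phi(V_1) = \Phi(V_2)$. Conversely, any such $g$ restricts to an isometry between the closed subspaces $\Phi(V_1)$ and $\Phi(V_2)$, each isometric to the corresponding $M_{V_i}$.

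For part (2), Uspenskij's theorem provides a topological embedding of any Polish group $G$ as a closed subgroup of $\text{Iso}(\UM)$. Given a continuous action $G \curvearrowright X$, one inflates it to a continuous action of $\text{Iso}(\UM)$ on a standard Borel space $Y$ (for instance $\text{Iso}(\UM) \times_G X$) in such a way that $E^X_G \le_B E^Y_{\text{Iso}(\UM)}$; this is exactly the Becker--Kechris change-of-topology machinery. It then suffices to Borel-reduce $E^Y_{\text{Iso}(\UM)}$ to $\cong_i$. I would do this by first reducing to $E^{F(\UM)}_{\text{Iso}(\UM)}$ --- using that this action is universal among Polish $\text{Iso}(\UM)$-actions, via a coding of $y \in Y$ as the closure inside a larger copy of $\UM$ of an equivariantly chosen orbit --- and then inverting part (1): given $C \in F(\UM)$, select in a Borel way a dense sequence $\seq{c_i}$ of $C$ (possible from the standard Borel structure on the Effros algebra) and output the distance matrix $V_C = \seq{d_{\UM}(c_i, c_k)}_{i,k}$. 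Ultrahomogeneity of $\UM$ again ensures that $C$ and $C'$ share an $\text{Iso}(\UM)$-orbit iff $V_C \cong_i V_{C'}$.

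The hard part will be securing Borelness throughout rather than any single combinatorial step. In part (1), the existence of the embedding is immediate from universality of $\UM$, but making the choice of $q_i$ uniformly Borel in $V$ requires a canonical selection procedure at each stage and a convergence argument to obtain exact distances in the limit. In part (2), the substantial work lies in the universality of $E^{F(\UM)}_{\text{Iso}(\UM)}$ among orbit equivalence relations of Polish group actions; here one must encode an arbitrary standard Borel $\text{Iso}(\UM)$-space into closed subsets of $\UM$ equivariantly, and this encoding, due in this form to Gao and Kechris, is where the technical weight of the theorem ultimately sits.
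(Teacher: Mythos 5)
The paper itself gives no proof of this theorem --- it is quoted from \cite{Gao:09}, Ch.~14 --- so the comparison is with the Gao--Kechris argument you are implicitly reconstructing. Your architecture is the right one, but both halves of your proposal lean on a false homogeneity principle, and that principle is exactly where the real content of the theorem lies. The Urysohn space is ultrahomogeneous for \emph{finite} (and even compact) subsets, but an isometry between two \emph{closed} subsets of $\UM$ need not extend to an autoisometry of $\UM$: take $C_1=\UM$ and $C_2$ a proper closed subset isometric to $\UM$ (such subsets exist by universality); these are isometric yet lie in different $\mathrm{Iso}(\UM)$-orbits, since an autoisometry is surjective. The back-and-forth you invoke breaks down because at each step one must realize a Kat\v{e}tov function over the union of a finite set with the entire image of the partial isometry built so far, i.e.\ over an infinite separable set, and $\UM$ only realizes Kat\v{e}tov functions over finite sets. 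Consequently, in part (1) the forward direction of your verification ($M_{V_1}\cong_i M_{V_2}$ implies $\Phi(V_1)$ and $\Phi(V_2)$ are in the same orbit) is unjustified and in general false for a naive canonical embedding. The fix, and the actual Gao--Kechris argument, is to make the embedding \emph{functorial}: build from $X$ the Kat\v{e}tov tower $X\subseteq E(X,\omega)\subseteq E(E(X,\omega),\omega)\subseteq\cdots$ whose completion is a copy $\UM_X$ of $\UM$ containing $X$ canonically; an isometry $X\to Y$ induces an isometry $\UM_X\to\UM_Y$ carrying the distinguished copies onto each other, and transporting along Borel-chosen identifications $\UM_X\cong\UM$ then produces the required element of $\mathrm{Iso}(\UM)$.

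The same misconception sinks the last step of part (2). The map $C\mapsto V_C$ (a distance matrix of a Borel-selected dense sequence of $C$) satisfies only one implication: same orbit implies $V_C\cong_i V_{C'}$; the converse fails by the example above, so this does not reduce $E^{F(\UM)}_{\mathrm{Iso}(\UM)}$ to $\cong_i$. Your steps built on Uspenskij's theorem and the Becker--Kechris universality of the $F(\UM)$-type actions among $\mathrm{Iso}(\UM)$-actions are the comparatively soft part; the technical weight sits precisely in the step you tried to get for free, namely $E^{F(\UM)}_{\mathrm{Iso}(\UM)}\le_B\ \cong_i$. Gao and Kechris handle it by coding the \emph{pair} (ambient copy of $\UM$, distinguished closed set) into a single Polish metric space --- roughly, an amalgam of $\UM$ with a marked copy of $C$, with the cross-distances rigged so that any isometry of the coded space must preserve the decomposition and hence restrict to an autoisometry of $\UM$ carrying $C$ onto $C'$ --- rather than by forgetting the ambient space as your reduction does.
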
 

Let $\+ K$ be the class of compact metric spaces. Note that this is $\Pi^0_3$ with respect to the distance matrix representation of Polish metric spaces, because compactness  is equivalent to being  totally bounded. Isometry of compact spaces is much simpler than in the general case: the points in some fixed  Polish space can serve as invariants.

\begin{theorem}[Essentially Gromov \cite{Gromov:07}, Thm 3.27.5] 
 \[ \cong_i \cap (\+ K \times \+ K )  \le_B \text{id}_\RR. \] \end{theorem}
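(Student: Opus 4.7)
The plan is to construct, for each compact metric space $M$, a sequence of compact sets $T_n(M) \subset \RR^{n^2}$ recording the set of all $n$-point distance matrices realized in $M$, and then show that $(T_n(M))_{n \ge 1}$ is a complete isometry invariant depending on $M$ in a Borel way.

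First, for $n\ge 1$ and $M \in \+ K$, set
\[ T_n(M) = \bigl\{ (d(x_i,x_j))_{i,j\le n} : x_1,\dots,x_n \in M\bigr\}\subset \RR^{n^2}.\]
Each $T_n(M)$ is compact, being the continuous image of $M^n$. Working in either of the standard Polish topologies on $\+ K$ (the Gromov--Hausdorff topology, or the subspace topology inherited from $F(\UM)$ via a fixed isometric embedding), the assignment $M \mapsto T_n(M) \in \+ K(\RR^{n^2})$, where the target carries the Hausdorff metric, should be continuous, and in particular Borel.

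The key step is to show that $M \cong_i M'$ iff $T_n(M) = T_n(M')$ for every $n$. The forward direction is immediate. For the converse, fix a dense sequence $(p_i)\subset M$. For each $n$ the matrix $(d(p_i,p_j))_{i,j\le n}$ lies in $T_n(M) = T_n(M')$, so there is an $n$-tuple in $M'$ realizing it. A diagonal compactness argument in $(M')^\NN$ produces a single sequence $(q_i) \subset M'$ with $d(q_i,q_j) = d(p_i,p_j)$ for all $i,j$. Extending $p_i \mapsto q_i$ by uniform continuity yields an isometric embedding $f\colon M \to M'$; symmetrically one obtains an isometric embedding $g\colon M' \to M$. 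Then $g\circ f$ is an isometric self-embedding of the compact space $M$, which must be surjective: otherwise a point $y \notin (g\circ f)(M)$ and its iterates would form an infinite sequence with pairwise distances bounded below by $d(y,(g\circ f)(M)) > 0$, contradicting total boundedness. Hence $f$ is surjective, i.e., an isometry.

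Finally, to encode the invariant as a real, note that $\prod_n \+ K(\RR^{n^2})$ is Polish, and every standard Borel space admits a Borel injection into $\RR$; composition gives the required Borel map $F\colon \+ K \to \RR$ with $M \cong_i M' \LR F(M)=F(M')$. The main obstacle is the reconstruction step: compactness enters twice (diagonal extraction to build the embedding, then the self-embedding-is-surjective lemma), and some care is needed to define the invariant measurably without reference to a chosen dense enumeration, which is why passing to the hyperspace-valued map $M\mapsto T_n(M)$ is cleaner than working with distance matrices of enumerations directly.
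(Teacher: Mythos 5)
Your proposal is correct and follows essentially the same route as the paper: the invariant is the sequence of compact sets of $n\times n$ distance matrices realized in the space, coded as a single point of a Polish space and hence of $\RR$. The only difference is that you supply the reconstruction argument (diagonal compactness plus the fact that an isometric self-embedding of a compact space is surjective) for the completeness of the invariant, which the paper simply attributes to Gromov.
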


\begin{proof} Gromov shows that the sequence of sets of $n \times n$  distance matrices that occur in a compact space $X$ constitute a complete set of invariants. Each such matrix is a point in a compact set $K_n(X) \sub \RR^{n^2}$.  The sequence of such compact sets can be represented by a single point in a Polish space, say $\RR$.  \end{proof}

\n \emph{Computable versions.}  The distance matrices $ V= \seq {v_{i.k}}\sN{i,k} \in \mathbb R^{\NN \times \NN}$ form an effectively closed set. They can in fact   be coded as the infinite branches of    a   $\PPI$  tree  $\sub \strcantor$. Such a branch provides yes/no answers to queries  of the form $|v_{i,k} - q| < \epsilon$ for  $i,k \in \NN$, $q \in \QQ^+_0$, and $\epsilon \in \QQ^+$.   

Let $V_e$ denote the $e$-th  partial computable distance matrix. The domain of this partial computable function  grows as long as the data are consistent with being a distance matrix; if they are seen to be not (a $\SI 1$ event) it stops, so that the function is only defined on an initial segment of $\NN$.   Being total is $\PI 2$. 

Let $M_e$ denote the computable metric space given by the $e$-th (total) distance matrix $V_e$.   The following can be  proved by computably reducing  the isomorphism problem for computable graphs by Fokina et al. \cite{Fokina.Friedman.etal:10}.   

\begin{prop} $\{ \la e,k \ra \colon \, M_e \cong_i M_k \}$ is complete   for $\Sigma^1_1$ equivalence relations on $\omega$  with respect to computable reductions. \end{prop}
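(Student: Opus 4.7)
The plan is to exploit the Fokina et al.\ completeness result for isomorphism of computable graphs and transport it to the metric setting via a straightforward discrete encoding of graphs as metric spaces. First I would check that the relation $\{\la e, k\ra \colon M_e \cong_i M_k\}$ is itself $\Sigma^1_1$, so that it belongs to the class under consideration. An isometry between $M_e$ and $M_k$ is witnessed by a distance-preserving bijection $\sigma \colon \NN \to \NN$ between the two dense sequences; the condition $\fa i, j \, V_e(i,j) = V_k(\sigma(i), \sigma(j))$ is arithmetical in $\sigma, e, k$, so the existence of such a $\sigma$ is $\Sigma^1_1$.

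Next, given any $\Sigma^1_1$ equivalence relation $E$ on $\omega$, I would apply the Fokina--Friedman--Koerwien--Nies--Schlicht theorem \cite{Fokina.Friedman.etal:10} to obtain a computable function $g$ such that $n \, E \, m \lra G_{g(n)} \cong G_{g(m)}$, where $\seq{G_i}\sN i$ is a standard indexing of  total computable graphs on vertex set $\NN$. Then I would compose with a computable map $h$ from graph indices to metric space indices, defined as follows. For an index $e$ of a computable graph $G_e$ with edge relation $\sim_e$, let
\[
V_{h(e)}(i,j) = \begin{cases} 0 & \text{if } i = j,\\ 1 & \text{if } i \sim_e j,\\ 2 & \text{if } i \ne j \text{ and } i \not\sim_e j. \end{cases}
\]
Since all distances lie in $\{0,1,2\}$, the triangle inequality is automatic, so $V_{h(e)}$ is a total computable distance matrix, and $h$ is computable by the $s$-$m$-$n$ theorem. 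The corresponding space $M_{h(e)}$ is uniformly discrete, hence already complete, so its underlying point set is simply $\NN$ with the above metric.

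It remains to verify that $G_e \cong G_k$ iff $M_{h(e)} \cong_i M_{h(k)}$. Any graph isomorphism $f\colon G_e \to G_k$ is automatically an isometry, since it preserves $\sim$ and therefore preserves distances. Conversely, any isometry between the two discrete spaces must be a bijection of the isolated points $\NN \to \NN$ (there are no other points), and preservation of the two possible nontrivial distances $1$ and $2$ is precisely preservation and reflection of the edge relation, yielding a graph isomorphism. Composing $h \circ g$ then gives the required computable reduction of $E$ to isometry of computable metric spaces.

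I do not foresee a serious obstacle, since the encoding is elementary. The only mildly delicate points are handling indices $e$ for which $V_e$ fails to be a total distance matrix (which one deals with by sending such $e$ to a fixed default index) and quoting the graph-isomorphism completeness theorem in the right form, namely for computable rather than hyperarithmetic reducibility; both are standard.
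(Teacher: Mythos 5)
Your proposal is correct and takes essentially the same route as the paper, which gives only a one-line proof: computably reduce the isomorphism problem for computable graphs (complete for $\Sigma^1_1$ equivalence relations on $\omega$ by Fokina et al.) to isometry; your discrete $\{0,1,2\}$-valued encoding is the standard way to carry this out, and the verification that isometries of the resulting uniformly discrete spaces are exactly graph isomorphisms is sound. One small correction to your upper-bound paragraph: an isometry between $M_e$ and $M_k$ need \emph{not} restrict to a bijection of the two distinguished dense sequences (e.g.\ two presentations of $[0,1]$ with disjoint irrational-shifted dense sets), so the relation you describe is strictly finer than $\cong_i$; the correct $\Sigma^1_1$ witness is a map sending the dense sequence of $M_e$ to points of the completion of $M_k$ (codes of Cauchy sequences) that is distance-preserving and has dense range, which is still arithmetical in the witnessing real, so the conclusion stands.
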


\begin{prop}[Melnikov and Nies \cite{Melnikov.Nies:13}] The set $C$ of indices for compact computable metric spaces is $\PI 3$.  Isometry is $\PI 2$ within that set, that is, of the form $E \cap C \times C$ where $E$ is a  $\PI 2$ relation. \end{prop}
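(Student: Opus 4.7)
The plan is to obtain both statements by a careful quantifier analysis of natural characterizations, with one small but essential trick for the first claim.

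For the $\PI 3$ bound on $C$, I would write $e\in C$ as the conjunction of (i) ``$V_e$ is a total computable pseudo-metric'' and (ii) ``$M_e$ is totally bounded''; completeness of $M_e$ is automatic since it is defined as a metric completion. Condition (i) is the usual $\PI 2$ condition: totality of the partial computable $V_e$ is $\PI 2$, and the axioms ($v_{i,i}=0$, symmetry, and the triangle inequality) are $\PI 1$. The naive formulation of (ii) is
\[
\forall n\,\exists N\,\forall i\,\exists k<N\ v_{i,k}<2^{-n},
\]
which analyses only to $\PI 4$ because the inner strict inequality is $\SI 1$. The key step is to pass to the equivalent non-strict formulation,
\[
\forall n\,\exists N\,\forall i\,\exists k<N\ v_{i,k}\le 2^{-n},
\]
which holds iff the strict version does (shift $n\mapsto n+1$). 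Now $v_{i,k}\le 2^{-n}$ is $\PI 1$ on the (total) computable real $v_{i,k}$, and $\PI 1$ is closed under bounded disjunction via $\forall s\,P(s)\vee\forall t\,Q(t)\equiv\forall s,t\,[P(s)\vee Q(t)]$. Hence $\exists k<N\,v_{i,k}\le 2^{-n}$ stays $\PI 1$, $\forall i$ keeps it $\PI 1$, then $\exists N$ makes it $\SI 2$, and finally $\forall n$ makes the total-boundedness clause $\PI 3$. Together with the $\PI 2$ clause from (i), $C$ is $\PI 3$.

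For the $\PI 2$ bound on isometry within $C$, I would invoke the classical Gromov-style characterization of isometry of compact metric spaces with distinguished dense sequences via finite approximate distance matrices: $M_e\cong_i M_f$ iff for every $n$ and every finite tuple $\bar i$ of indices there exists a tuple $\bar k$ of the same length with $|v^e_{i_r,i_s}-v^f_{k_r,k_s}|<2^{-n}$ for all $r,s\le |\bar i|$, and symmetrically with the roles of $e$ and $f$ swapped. Call the right-hand relation $E$. Each matching inequality is $\SI 1$ (strict inequality of computable reals), a bounded conjunction over $r,s$ is $\SI 1$, $\exists\bar k$ preserves $\SI 1$, and the outer $\forall n,\bar i$ makes it $\PI 2$; the symmetric clause is likewise $\PI 2$, so $E$ is $\PI 2$, and isometry within $C$ is $E\cap(C\times C)$.

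The main obstacle is the $\PI 3$ bound for $C$: without the non-strict reformulation, the natural approach yields only $\PI 4$, and one must notice the interplay between $\PI 1$ approximability of computable reals and bounded disjunction of $\PI 1$ formulas. A secondary point requiring care is the finite-approximation characterization of isometry: the $(\Leftarrow)$ direction needs a back-and-forth on the dense sequences combined with a compactness/diagonal extraction of a limit isometry of the completions, and it is precisely there that membership in $C$ (i.e.\ compactness) is essentially used. Once these two points are settled, the quantifier bookkeeping is mechanical.
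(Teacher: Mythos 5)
Your proposal is correct and follows exactly the route the paper points to: the proposition is stated there without proof (citing Melnikov--Nies), but the surrounding remarks — that compactness is $\Pi^0_3$ in the distance-matrix representation ``because compactness is equivalent to being totally bounded,'' and Gromov's theorem that the sets of $n\times n$ distance matrices occurring in a compact space form a complete isometry invariant — are precisely the two ingredients you use. Your non-strict reformulation of the $\varepsilon$-net condition, making the inner clause $\Pi^0_1$ and hence the whole total-boundedness clause $\Pi^0_3$ rather than $\Pi^0_4$, is indeed the essential bookkeeping step.
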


 \section{II. Having Gromov-Hausdorff distance $0$.}  The following is ongoing work  of  Itai Ben Yaacov, Nies,  and Todor Tsankov. One  thinks of two  metric spaces $X,Y$  as   isometric within error $\epsilon$ if they can be  isometrically embedded into  a third metric space $Z$ in such a way that the usual Hausdorff distance of the two  images is at most $\epsilon$.  ``$X,Y$ isometric within error $0$''   clearly  means that the completions of $X,Y$  are isometric.  The Gromov- Hausdorff distance of $X,Y$ is defined by 
 
 \bc $d_{GH} (X,Y) = \inf \{ \epsilon \colon\, X,Y \, \text{are isometric within error} \, \epsilon\}$. \ec  (Also see Subsection~\ref{ss:bi-Katetov} for an equivalent definition.)
 
 For instance, if we let $X = \{0, 1 \}$ and $Y = \{ 1/4, 3/4 \}$, then \bc $d_{GH} (X,Y)  = 1/4$.    \ec
 
 So, are there  examples of non-isometric spaces $X,Y$ with GH-distance $0$? If so, neither $X$ nor $Y$  can be compact (Gromov). Also there is no positive lower bound on the distance of distinct points, otherwise a near isometry with error less than that bound will be an isometry.  During  the HIM talk,  Nies mentioned an example: let $\mathbb E$ be the unit sphere of the Gurarij space.
 Let $v \in \mathbb E$ be smooth, and $w$ be non-smooth. Let $X = Y =  \mathbb E \cup  \{a,b\}$, with $d_X(a,b)= d_Y(a,b) = 3$.  We set $d_X(v,a) = d_X(v, b) = 3$, and $d_Y(w,a) = d_Y(w, b) = 3$. Any isometry would have to map $v$ to $w$, which is impossible. However, by general properties of the Gurarij space, $d_{GH}(X,Y) = 0$. 
 
\subsection{Fact and more examples for GH-distance}  After Nies' HIM talk, Matatiahou Rubin and Philipp Schlicht constructed further,   simpler examples.  Let $B_X$ denote the unit ball of a Banach space~$X$. 

\begin{prop} There are nonisometric Banach spaces $X,Y$ 

with $d_{GH}(B_X, B_Y)=0$. 
\end{prop}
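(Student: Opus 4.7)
The plan is to lift Nies' preceding metric-space example—built on the unit sphere $\mathbb{E}=S_{\mathbb{G}}$ of the Gurarij space $\mathbb{G}$ by attaching two points differently at a smooth versus a non-smooth point—to the Banach-space category. Two features of $\mathbb{G}$ drive the argument: its \emph{approximate ultrahomogeneity} (every linear isometry between finite-dimensional subspaces extends, for each $\epsilon>0$, to a surjective linear $(1+\epsilon)$-isomorphism of $\mathbb{G}$), and the coexistence in $\mathbb{G}$ of smooth unit vectors $v$ with unique supporting functional $f_v$ and non-smooth unit vectors $w$ with distinct supporting functionals $f_w^{(1)},f_w^{(2)}$, both of which are standard for separable infinite-dimensional Banach spaces.

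For the construction of $X$ and $Y$, take $X$ to be an amalgamated extension of $\mathbb{G}$ by a one-dimensional summand whose norm is keyed to $v$, and $Y$ to be the analogous extension keyed to $w$. Concretely, $X$ is the Banach space with underlying vector space $\mathbb{G}\oplus\mathbb{R}$ and a norm chosen so that the new basis vector $e_X=(0,1)$ is an exposed extreme point of $B_X$ whose local dual geometry is determined by $f_v$; the analogous construction using $f_w^{(1)}$ defines $Y$. A natural starting point for the norm is $\|(x,t)\|_X=\max\{\|x\|_{\mathbb{G}},\,|t|+|f_v(x)|\}$, adjusted (for instance by passing to a strictly convexified version on a two-dimensional complement of $\ker f_v$) so that $e_X$ is genuinely exposed; the qualitative requirement is only that the added coordinate is a distinguished extreme point faithfully encoding the chosen supporting functional.

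To see $X\not\cong_i Y$, by Mazur--Ulam any surjective isometry is affine, hence (after a translation) linear. Examination of the exposed extreme points of $B_X$ and $B_Y$ forces such a map to send $\pm e_X$ to $\pm e_Y$, and thus to restrict to a linear isometry $\mathbb{G}\to\mathbb{G}$ sending $v$ to $\pm w$. Since linear isometries preserve the number of supporting functionals at a unit vector, this contradicts the smoothness of $v$ and the non-smoothness of $w$.

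For $d_{GH}(B_X,B_Y)=0$, fix $\epsilon>0$ and apply approximate ultrahomogeneity of $\mathbb{G}$ to produce a surjective linear $(1+\epsilon)$-isomorphism $\Phi\colon\mathbb{G}\to\mathbb{G}$ with $\|\Phi(v)-w\|<\epsilon$ and $\|f_w^{(1)}\circ\Phi-f_v\|_{\mathbb{G}^{*}}<\epsilon$. The map $T(x,t)=(\Phi(x),t)$ is then a linear bijection $X\to Y$ with $\|T\|,\|T^{-1}\|\le 1+O(\epsilon)$, so its restriction to $B_X$ is a bijective $O(\epsilon)$-isometry onto $B_Y$, giving $d_{GH}(B_X,B_Y)\le O(\epsilon)$; letting $\epsilon\to 0$ concludes the proof. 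The main obstacle is arranging both $v\mapsto w$ and $f_v\approx f_w^{(1)}\circ\Phi$ simultaneously: Gurarij's approximate ultrahomogeneity is stated for subspaces and does not automatically control the dual side, so this step likely requires either a predual sharpening (viewing $\mathbb{G}$ as the Fra\"{\i}ss\'{e} limit in a category of pairs (space, distinguished functional)) or an auxiliary weak-$*$ density argument exploiting that non-smooth points of $\mathbb{G}$ arise as limits of smooth ones, together with their supporting functionals.
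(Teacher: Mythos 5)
Your proposal is a program rather than a proof, and you have correctly identified its main gap yourself: the step producing a surjective linear $(1+\epsilon)$-isomorphism $\Phi$ of $\mathbb{G}$ with \emph{both} $\Phi(v)\approx w$ and $f_w^{(1)}\circ\Phi\approx f_v$ is not delivered by approximate ultrahomogeneity, which extends isometries between finite-dimensional \emph{subspaces} and gives no control over a prescribed functional. Without that dual-side control the map $T(x,t)=(\Phi(x),t)$ need not be a near-isometry for your norms, so $d_{GH}(B_X,B_Y)=0$ is not established. The non-isometry half also has an unfilled hole: with the norm $\|(x,t)\|_X=\max\{\|x\|_{\mathbb{G}},\,|t|+|f_v(x)|\}$ the point $e_X=(0,1)$ is \emph{not} an extreme point of $B_X$ (for any nonzero $x\in\ker f_v$ with $\|x\|\le 1$ one has $(0,1)=\tfrac12[(x,1)+(-x,1)]$ with both summands of norm $1$), and the "adjustment" that would make $\pm e_X$ isometrically distinguished among all extreme points of $B_X$ is never specified. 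So neither half of the argument is complete as written; each would require a concrete construction and a nontrivial lemma about $\mathbb{G}$ (essentially a Fra\"{\i}ss\'e-type homogeneity for pairs consisting of a space and a distinguished norm-one functional).

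The paper's proof sidesteps the Gurarij space entirely and is far more elementary: for a dense sequence $D=(p_i)$ of distinct reals in $(1,2)$, let $E_D$ be the $c_0$-sum of the two-dimensional spaces $\ell_{p_i}^2$. Two dense sequences with different underlying sets yield non-isometric spaces (the set of exponents occurring is an isometric invariant, and $\ell_p^2$, $\ell_q^2$ are non-isometric for distinct $p,q\in(1,2)$), while density of both sets in the same interval lets one match the summands pairwise by $(1+\delta)$-isomorphisms with $\delta$ arbitrarily small, so the unit balls are at Gromov--Hausdorff distance $0$. I would recommend adopting that construction; your route could in principle be completed, but only with substantial additional machinery that the statement does not require.
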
 

To prove this  let $D= \seq {p_i}\sN i$ be a dense sequence of distinct elements in $(1,2)$, say. Let $U_p$ be the 2-dimensional $\RR$ vector space with $\ell_p$ norm. Let $E_D$ be the $c_0$-sum of the spaces $U_{p_i}$. That is, null sequences, with norm   the maximum  of the individual $\ell_{p_i}$ norms. If we have two dense sequences with different sets of members,  the unit balls of the  spaces are at distance 0, but not isometric.

 Let  $\mathbb{G}$ denote  the Gurarij space. By a   (continuous) model-theoretic argument, related to $\aleph_0$-categoricity, one can show that if $X$ is a Banach space and $d_{GH}(B_X,B_{\mathbb{G}})=0$,   then $X$ is isometric to $\mathbb{G}$.

\begin{remark}[Melleray-Schlicht]  \mbox{}

\begin{enumerate} 
\item Any two separable Banach spaces $X,Y$ with $d_{GH}(X,Y)<\infty$ are isometric. 
\item Isometry on Polish spaces reduces to $E_{GH}$. 
\end{enumerate} 
\end{remark} 

\begin{proof} For Banach spaces $X,Y$, $(X,Y)\in E_{GH}$ and $(X,Y)\in E^{\infty}_{GH}$ are equivalent by rescaling (i.e. rescaling the metric space into which we embed the spaces). It follows from the Main Theorem in a paper by  Omladic and Semrl~\cite{Omladic.Semrl:95} that it is sufficient to prove that there is an $\epsilon$-isometry $T\colon X\rightarrow Y$ for some $\epsilon$. For any two perfect Polish spaces with $d_{GH}(X,Y)=0$, we can construct a bijective $\epsilon$-isometry by a straightforward back-and forth argument. 

The second claim follows from a paper of Melleray~\cite{Melleray:07}, where he shows that isometry of Polish spaces reduces to isometry of Banach spaces. 
\end{proof}

The following result of Schlicht and Rubin shows that there is a single $E_{GH}$ class such that the isometry equivalence relation inside is Borel bi-reducible with identity on $\cantor$. In particular, there are continuum many non-isometric spaces with discrete topology that are  mutually  at GH distance 0.

We equip $[0,\epsilon]\times (\omega+1)\times\mathbb{R}$ with the metric $d$ defined by 

$d((x,i,y),(x',i',y'))=1$ if $(x,i)\neq (x',i')$ and 

$d((x,i,y),(x',i',y'))=|y-y'|$ if $(x,i)=(x',i')$. 

\begin{definition} Suppose that $f\colon [0,\epsilon]\rightarrow \omega+1$ is a function. 
\begin{enumerate} 
\item Let $X_f=\{(x,i,0),(x,i,x)\in [0,\epsilon]\times (\omega+1)\times\mathbb{R}\mid i\leq f(x)\}$
with the metric from $[0,\epsilon]\times (\omega+1)\times\mathbb{R}$. 
\item Let $\mathrm{supp}(f)=\{x\in [0,\epsilon]\mid f(x)\neq 0\}$ denote the \emph{support} of $f$. 
\item Let $\mathrm{bound}(f)=\{(x,i)\mid x\in \mathrm{supp}(f),\ i\leq f(x)\}$. 
\end{enumerate} 
\end{definition} 

If $|\mathrm{supp}(f)|=\omega$, then $X_f$ is a discrete countable metric space with distance set $\mathrm{supp}(f)\cup\{0,1\}$. 

\begin{proposition} Suppose that $\epsilon\leq\frac{1}{2}$. Suppose that $f_0\colon [0,\epsilon]\rightarrow \omega+1$ is a function such that $\mathrm{supp}(f_0)$ is a countable dense subset of $[0,\epsilon]$. 
Then  $\mathrm{id}_{{}^{\omega}2}$ is Borel reducible to $\mathrm{Iso}\upharpoonright [X_{f_0}]_{GH}$. 
\end{proposition}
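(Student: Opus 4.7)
The plan is to build a Borel family $(f_y)_{y \in 2^\omega}$ of functions $[0,\epsilon]\to\omega+1$, all sharing the support of $f_0$, such that the spaces $X_{f_y}$ are pairwise non-isometric but all lie at Gromov--Hausdorff distance zero from $X_{f_0}$. Fix an enumeration $(q_n)_{n\in\omega}$ of $\mathrm{supp}(f_0)$ and define
\[
f_y(x) = \begin{cases} 1 & \text{if } x = q_n \text{ and } y(n) = 0, \\ 2 & \text{if } x = q_n \text{ and } y(n) = 1, \\ 0 & \text{otherwise.} \end{cases}
\]
Then $\mathrm{supp}(f_y)=\mathrm{supp}(f_0)$ for every $y$. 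Viewing $y\mapsto X_{f_y}$ as returning a distance matrix on a canonical enumeration of the countably many points of $X_{f_y}$, each entry depends on only finitely many bits of $y$, so the map is Borel.

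For the injectivity-on-isometry step, note that $\epsilon\le 1/2$ forces $q_n<1$, so any two points of $X_{f_y}$ at distance exactly $q_n$ must lie inside a common fiber; hence the number of such pairs equals $f_y(q_n)$ and is an isometric invariant. If $y(n)\neq y'(n)$, the multiplicity at $q_n$ differs between $X_{f_y}$ and $X_{f_{y'}}$, yielding $X_{f_y}\not\cong_i X_{f_{y'}}$.

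The heart of the argument, and what I expect to be the main obstacle, is verifying $d_{GH}(X_{f_y}, X_{f_0})=0$ for each $y$. Given $\delta>0$, partition $[0,\epsilon]$ into finitely many intervals of length $<\delta$. Since $\mathrm{supp}(f_0)$ is dense in $[0,\epsilon]$, each such interval $I$ contains infinitely many of the $q_n$, so both $X_{f_y}$ and $X_{f_0}$ contain countably infinitely many fibers whose diameter lies in $I$; fix any bijection between these two families of fibers, interval by interval. Globally this yields a correspondence $R_\delta \subseteq X_{f_y}\times X_{f_0}$, where two points are related iff their fibers are matched and their $\mathbb{R}$-coordinates are of the same type (either both the $0$-coordinate or both the top coordinate). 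Both projections of $R_\delta$ are surjective, and for any $(p,p'),(q,q')\in R_\delta$ the distances $d(p,q)$ and $d(p',q')$ are either both equal to $1$ (the points lie in distinct fibers on each side) or are both diameters of matched fibers within a common interval $I$, hence differ by less than $\delta$. So $\mathrm{dis}(R_\delta)<\delta$, giving $d_{GH}(X_{f_y},X_{f_0})<\delta/2$; letting $\delta\to 0$ finishes the claim.

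Combining these three points, $y\mapsto X_{f_y}$ is a Borel reduction from $\mathrm{id}_{2^\omega}$ to $\mathrm{Iso}\upharpoonright [X_{f_0}]_{GH}$. The crucial use of the hypothesis is that the density of $\mathrm{supp}(f_0)$ provides infinitely many fibers in every small sub-interval of $[0,\epsilon]$, which is precisely what enables the bijective matching driving the correspondence argument; the assumption $\epsilon\le 1/2$ plays the subordinate role of ensuring that fiber diameters $q_n$ never reach $1$, so that the multiplicity at each $q_n$ is an unambiguous isometry invariant.
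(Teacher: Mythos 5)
Your proposal is correct and follows essentially the same route as the paper: vary the multiplicities over the fixed dense support to get a Borel family of pairwise non-isometric spaces, and show all of them lie at Gromov--Hausdorff distance $0$ from $X_{f_0}$ by matching fibers whose diameters differ by less than $\delta$ (the paper realizes this matching as a near-isometric embedding $h\times\mathrm{id}$ into the ambient space and bounds the Hausdorff distance there, while you package it as a correspondence of distortion $<\delta$ --- an equivalent formulation). The paper leaves the reduction map and the invariance argument implicit and instead spends most of its effort on the converse classification of $[X_{f_0}]_{GH}$, which your argument rightly does not need.
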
 

\begin{proof} Note that for arbitrary functions $f,g\colon [0,\epsilon]\rightarrow \omega+1$, $X_f$, $X_g$ are isometric if and only if $f=g$. 

\begin{claim} Suppose that $f,g\colon [0,\epsilon]\rightarrow \omega+1$ are functions such that $\mathrm{supp}(f)$, $\mathrm{supp}(g)$ are countable dense subsets of $[0,\epsilon]$. Then $d_{GH}(X_f,X_g)=0$. 
\end{claim} 

\begin{proof} 
Note that for every $\delta>0$, there is a bijection $h\colon \mathrm{bound}(f) \rightarrow \mathrm{bound}(g)$ such that $|x-h(x,i)_0|<\delta$ for all $(x,i)\in \mathrm{bound}(f)$. 
Let $h\times\mathrm{id}\colon X_f\rightarrow \mathrm{bound}(g)\times \mathbb{R}$, $(h\times \mathrm{id})(x,i,y)=(h(x,i),y)$. Then $h\times \mathrm{id}$ is distance preserving and $d_H((h\times\mathrm{id})[X_f],X_g)\leq\delta$. Hence $d_{GH}(X_f,X_g)\leq\delta$. 
\end{proof} 

Let $D_q=\{0,q\}$ for $q>0$. 
Suppose that $(q_n,i_n)_{n\in\omega}$ is an enumeration of $\mathrm{bound}(f_0)$ without repetitions. Suppose that $X$ is a complete metric space with $d_{GH}(X,X_{f_0})=0$. Suppose that $0<\delta<1$. Since $d_{GH}(X,X_{f_0})<\frac{\delta}{3}$, $X$ is of the form $X=\bigsqcup_{n\in\omega} X_n^{\delta}$ with
\begin{enumerate} 
\item \label{condition: small GH distance} $d_{GH}(X_n^{\delta},D_{q_n})< \delta$ and 
\item \label{condition: large distance between pieces} $|d(x,y)-1|<\delta$ if $x\in X_m^{\delta}$, $y\in X_n^{\delta}$, and $m\neq n$. 
\end{enumerate} 
Let $X_n=X_n^{\frac{1}{2}}$. Conditions \ref{condition: small GH distance} and \ref{condition: large distance between pieces} imply that for all $\delta<\frac{1}{2}$ and all $n$, there is some $m$ with $X_m^{\delta}=X_n$. 
Hence for each $n$ there is a sequence $(n_i)_{i\in\omega}$ in $\omega$ with $d_{GH}(X_n,D_{q_{n_i}})<\frac{1}{2^i}$. It follows that $1\leq |X_n|\leq 2$. Let $p_n=d(x,y)$ if $X_n=\{x,y\}$. 
Let $A=\{p_n\mid n\in\omega\}$. 

\begin{claim} $d(x,y)=1$ for all $x\in X_m$ and $y\in X_n$ with $m\neq n$. 
\end{claim} 

\begin{proof} This follows from Condition \ref{condition: large distance between pieces} and since for all $\delta<\frac{1}{2}$ and all $k$, there is some $l$ with $X_l^{\delta}=X_k$. 
\end{proof} 

\begin{claim} $A\subseteq [0,\epsilon]$. 
\end{claim} 

\begin{proof} Suppose that $X_n=\{x,y\}$ and $\eta=d(x,y)-\epsilon>0$. Suppose that $X_n=X_m^{\eta}$. This contradicts the fact that $d_{GH}(X_m^{\eta},D_{q_m})< \eta$ by Condition~(\ref{condition: small GH distance}). 
\end{proof} 

\begin{claim} $A$ is dense in $(0,\epsilon)$. 
\end{claim} 

\begin{proof} Suppose that $U\subseteq (0,\epsilon)$ is nonempty and open with $U\cap A=\emptyset$. Suppose that $(q_n-\delta,q_n+\delta)\subseteq U$. This contradicts the fact that $d_{GH}(X_n^{\frac{\delta}{2}},D_{q_n})< \frac{\delta}{2}$ by Condition (\ref{condition: small GH distance}). 
\end{proof} 

Let $f\colon [0,\epsilon]\rightarrow \omega+1$, $f(x)=0$ if $x\notin A$, $f(0)=i$ if $|\{n\in \omega\mid |X_n|=1\}|=i$, and $f(z)=i$ if $|\{n\in \omega\mid \exists x,y\in X_n\ d(x,y)=z\}|=i$ for $z\in (0,\epsilon]$. 
Then $X_f$, $X$ are isometric. 
\end{proof}

\subsection{Bi-Katetov functions} \label{ss:bi-Katetov}One can describe being isometric within error $\epsilon$  without referring to a third space.  
  A~\emph{bi-Katetov function} $f \colon X \times Y \to \R$ is defined as
\[
f(x, y) = d_Z(i(x), j(y)),
\]
where $i, j$ are embeddings into some metric space as above. Equivalently, $f$ is $1$-Lipschitz in both variables and
\begin{align*}
  d_A(x, w) &\leq f(x, y) + f(w, y) \\
  d_B(y, z) &\leq f(x, y) + f(x, z) 
\end{align*}

A bi-Katetov function $f$  can be seen as an approximate isometry. Its  error $q_f$ is given by 
\[ q_f= \max(\sup_x \inf_y f(x, y), \sup_y \inf_x f(x, y)).\] 
By definition this equals the Hausdorff distance of the isometric images above.

For instance, if there is an actual onto isometry $\theta: X \to Y$, we can let $f(x,y) = d_Y(\theta(x), y)$ and obtain the least possible  error $0$.  Conversely, as mentioned above,  if the spaces are complete and  the error is $0$ then  there is an onto  isometry.

Clearly  we  have 	
\[  d_{GH}(X, Y) = \inf_{f  }  q_f,
\]
where $f$ runs through all  the bi-Katetov functions on $X \times Y$.

\begin{remark} \label{rem: extension} f $A \sub X$ and $B \sub Y$, then any bi-Katetov function defined on $A \times B$ extends to one $f'$ defined on $X \times Y$. One uses   amalgamation:
\[
f'(x, y) = \inf_{a\in A,  b \in B} d_X(x, a) + f(a, b) + d_Y(b, y).
\]
\end{remark}

\subsection{Continuous Scott analysis.}

We define approximations to $d_{GH}$  from below by induction on   countable ordinals.

\noindent  Suppose $\bar a = \seq {a_i}_{i < n}$ and $\bar b =  \seq {b_i}_{i < n}$ are enumerated finite metric spaces.  Following   Uspenskii~\cite{Uspenskii:08} define

\[
r_{0, n} (\bar a, \bar b) = \inf_{f \ \text{is bi-Katetov on} \,   \bar a \times \bar b  }   \max_{i< n} f(a_i, b_i).
\]
  Uspenskii gives an  explicit expression for this   in \cite[Proposition 7.1]{Uspenskii:08}:  
  
  \begin{equation}  \label{eq:Uspenskii r0}
r_{0, n} (\bar a, \bar b) = \varepsilon /2  \, \text{ where } \,  \varepsilon  = \max_{i,k < n} |  d(a_i, a_k) - d(b_i, b_k)|.
\end{equation}
(In fact, Uspenskii builds a bi-Katetov function such that  $f(a_i, b_i) = \varepsilon/2 $ for each~$i$.)

 \begin{definition}  Suppose $A$ and $B$ are metric spaces and  $\bar a \in A^n, \bar b \in B^n$. Define by induction on ordinals $\alpha$:
\begin{align*}
  r_{0, n}^{A, B}(\bar a, \bar b) &= r_{0, n}(\bar a, \bar b) \\
  r_{\alpha+1, n}^{A, B}(\bar a, \bar b) &= \max \big(
    \sup_{x \in A} \inf_{y \in B} r_{\alpha, n+1}^{A, B}(\bar a x, \bar b y),
    \sup_{y \in B} \inf_{x \in A} r_{\alpha, n+1}^{A, B}(\bar a x, \bar b y)
    \big) \\
  r_{\alpha, n}^{A, B}(\bar a, \bar b) &= \sup_{\beta < \alpha} r_{\beta, n}^{A, B}(\bar a, \bar b), \quad \text{for } \alpha  \ \text{a  limit ordinal}.
\end{align*}
\end{definition}

%
%

Given a metric space $(X, d)$ and $n \ge 1$, we equip $X^n$ with the ``maximum'' metric $d (\bar u, \bar  v) = \max_{i< n} d(u_i, v_i)$. The following are not hard to check.

 \begin{lemma}  \label{lem: basic props of r's} Fix  separable metric spaces $A, B$ of   finite diameter.
\begin{enumerate}
\item \label{i:1} For each $\alpha$  and each $n$, the functions $r_{\alpha, n}^{A, B}(\bar a, \bar b)$ are 1-Lipschitz in $\bar a$ and $\bar b$.
\item \label{i:2} The functions $r_{\alpha, n}^{A, B}(\bar a, \bar b)$ are  nondecreasing in $\alpha$.
  
\item \label{i:3}There is $\alpha < \omega_1$ after which all  the $r_{\alpha, n}^{A, B}$ stabilize.   \end{enumerate}
\end{lemma}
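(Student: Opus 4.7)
My plan is to establish all three items by transfinite induction on $\alpha$, taking Uspenskii's closed-form formula~(\ref{eq:Uspenskii r0}) as the base case. For (1), at $\alpha=0$ the formula $r_{0,n}(\bar a,\bar b)=\tfrac12\max_{i,k<n}|d(a_i,a_k)-d(b_i,b_k)|$ is $1$-Lipschitz in $\bar a$ (and symmetrically in $\bar b$) once $A^n$ is equipped with the max metric: each summand $|d(a_i,a_k)-d(b_i,b_k)|$ is $2$-Lipschitz in $\bar a$ by two applications of the triangle inequality, and the factor $\tfrac12$ absorbs the $2$. At a successor, the joint map $(\bar a,\bar b,x,y)\mapsto r_{\alpha,n+1}(\bar a x,\bar b y)$ is $1$-Lipschitz by the inductive hypothesis, and taking $\inf_y$ and then $\sup_x$ preserves $1$-Lipschitz dependence on the remaining variables. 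At a limit, a supremum of $1$-Lipschitz functions is $1$-Lipschitz.

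For (2) I plan a simultaneous induction on $\alpha$ on two assertions: \textbf{(M)} $r_{\beta,n}\le r_{\alpha,n}$ for every $\beta\le\alpha$ and every $n$, and \textbf{(P)} $r_{\alpha,n+1}(\bar a x,\bar b y)\ge r_{\alpha,n}(\bar a,\bar b)$ for all $x,y$. The base case for (P) is immediate, since the max in $r_{0,n+1}$ ranges over strictly more index pairs than the one in $r_{0,n}$. At a successor $\gamma+1$, for (M) I pick any fixed $a_0\in A$ and use (P) at level $\gamma$: $\inf_y r_{\gamma,n+1}(\bar a\,a_0,\bar b y)\ge r_{\gamma,n}(\bar a,\bar b)$, so the first component of $r_{\gamma+1,n}$ already dominates $r_{\gamma,n}$. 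For (P) at $\gamma+1$ I would first record as an auxiliary fact, by a parallel induction, that each $r_{\alpha,n}$ is invariant under simultaneous permutations of $\bar a$ and $\bar b$; this lets me insert the pair $(x,y)$ at the front of the tuple, so that (P) at level $\gamma$ yields $r_{\gamma,n+2}(x\bar a\,x',y\bar b\,y')\ge r_{\gamma,n+1}(\bar a\,x',\bar b\,y')$ and the outer sup--inf then absorb correctly. Limit ordinals are handled by interchanging suprema.

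For (3) I plan to combine monotonicity from (2) with separability. For fixed $n$ and $(\bar a,\bar b)$, the $\alpha$-sequence $r_{\alpha,n}^{A,B}(\bar a,\bar b)$ is nondecreasing and bounded above by $\tfrac12(\mathrm{diam}(A)+\mathrm{diam}(B))$, hence stabilizes at some countable ordinal $\alpha(n,\bar a,\bar b)<\omega_1$, since any strictly increasing transfinite sequence of reals has length at most $\omega$. Fixing countable dense subsets $D_A\subseteq A$ and $D_B\subseteq B$, the countable supremum $\alpha^*=\sup\{\alpha(n,\bar a,\bar b):n\in\omega,\ \bar a\in D_A^n,\ \bar b\in D_B^n\}$ is still below $\omega_1$, and at $\alpha^*$ every $r_{\alpha,n}^{A,B}$ agrees with $r_{\alpha^*,n}^{A,B}$ on $D_A^n\times D_B^n$; by (1) both sides are $1$-Lipschitz, so this agreement extends to all of $A^n\times B^n$. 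The main delicate step is the mutual induction in (2), specifically establishing the permutation-invariance of $r_{\alpha,n}$ in $(\bar a,\bar b)$ so that (P) can be invoked for insertion in the middle of a tuple rather than only at its tail.
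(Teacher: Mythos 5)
The paper gives no proof of this lemma at all (it is introduced with ``the following are not hard to check''), so there is nothing to compare against; judged on its own, your verification is essentially correct, and you have correctly isolated the one genuinely delicate point, namely that monotonicity in $\alpha$ requires the auxiliary inequality (P) together with invariance of $r_{\alpha,n}$ under simultaneous permutations of $\bar a$ and $\bar b$. Two small repairs are needed. First, in the successor step for (P) your displayed inequality drops the wrong pair: what the outer $\sup_x\inf_y$ structure requires is $r_{\gamma,n+2}(\bar a x' x,\bar b y' y)\ge r_{\gamma,n+1}(\bar a x,\bar b y)$ for each $x,y$, obtained by permuting $(x',y')$ to the tail and then applying (P) at level $\gamma$; the inequality you wrote, $r_{\gamma,n+2}(x\bar a\,x',y\bar b\,y')\ge r_{\gamma,n+1}(\bar a\,x',\bar b\,y')$, is true but its right-hand side is constant in the quantified variables, so after taking $\sup_x\inf_y$ it only yields $r_{\gamma+1,n+1}(\bar a x',\bar b y')\ge r_{\gamma,n+1}(\bar a x',\bar b y')$, an instance of (M) rather than of (P). Second, in (3) a strictly increasing well-ordered sequence of reals need not have length at most $\omega$ (order type $\omega+\omega$ embeds in $\mathbb R$); what is true, and suffices, is that any such sequence is countable. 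You should also say explicitly that $\alpha\mapsto r_{\alpha,n}(\bar a,\bar b)$ can strictly increase only at successor stages, since at limits the value is by definition the supremum of the preceding ones; combined with the countability of the set of increasing successor steps this gives stabilization at a countable ordinal for each fixed $(n,\bar a,\bar b)$, and your density-plus-Lipschitz argument then correctly produces a single $\alpha^*<\omega_1$ working for all $n$, $\bar a$, $\bar b$.
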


\begin{theorem}[Ben Yaacov, Nies, Tsankov  2013]   \label{prop:GH approx} Let $A,B$ be separable metric spaces of finite diameter.  Let $\alpha^*$ be such that $r_{\alpha^*+1, n}^{A, B} = r_{\alpha^* , n}^{A, B}$ for each $n$. Then  
  \[
   r_{\alpha^*, 0}^{A, B} = d_{GH}(A,B).
  \]

\end{theorem}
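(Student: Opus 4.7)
The plan is to prove both inequalities $r_{\alpha^*,0}^{A,B} \le d_{GH}(A,B)$ and $r_{\alpha^*,0}^{A,B} \ge d_{GH}(A,B)$ separately.

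For the $\le$ direction, I prove by induction on $\alpha$ the following bound: for every bi-Katetov $f\colon A\times B\to\mathbb R$ and every pair of tuples $\bar a\in A^n, \bar b\in B^n$,
\[
r_{\alpha,n}^{A,B}(\bar a,\bar b)\;\le\;\max\!\Big(\textstyle\max_{i<n} f(a_i,b_i),\;q_f\Big).
\]
The base case $\alpha=0$ is immediate: the restriction of $f$ to $\bar a\times\bar b$ is itself bi-Katetov, so the infimum defining $r_{0,n}$ is $\le\max_i f(a_i,b_i)$. For the successor step, to bound $\sup_x\inf_y r_{\alpha,n+1}(\bar a x,\bar b y)$: given any $x\in A$ and $\delta>0$, the error bound $\inf_y f(x,y)\le q_f$ provides $y\in B$ with $f(x,y)\le q_f+\delta$; applying the inductive hypothesis and letting $\delta\to 0$ yields the desired inequality. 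The symmetric case and limit steps are straightforward. Specializing to $n=0$ and minimizing over bi-Katetov $f$ gives $r_{\alpha^*,0}^{A,B}\le d_{GH}(A,B)$.

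For the $\ge$ direction, write $c=r_{\alpha^*,0}^{A,B}$ and fix $\varepsilon>0$. The main step is a back-and-forth: pick countable dense sets $D_A=\{x_k\}\subseteq A$, $D_B=\{y_k\}\subseteq B$, and a sequence $\varepsilon_n>0$ with $\sum_n\varepsilon_n<\varepsilon$. Inductively build tuples $\bar a_n\in A^n$, $\bar b_n\in B^n$, each extending the previous, eventually enumerating $D_A$ and $D_B$, and maintaining the invariant $r_{\alpha^*,n}(\bar a_n,\bar b_n)\le c+\sum_{k<n}\varepsilon_k \le c+\varepsilon$. At an even stage, to insert the next element $x_k$ into $\bar a$: stabilization gives $r_{\alpha^*+1,n}(\bar a_n,\bar b_n)=r_{\alpha^*,n}(\bar a_n,\bar b_n)$, so unfolding the definition of $r_{\alpha^*+1,n}$ yields $\inf_y r_{\alpha^*,n+1}(\bar a_n x_k,\bar b_n y)\le r_{\alpha^*,n}(\bar a_n,\bar b_n)$, and I choose $y$ realizing this infimum up to an error of $\varepsilon_n$. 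Odd stages mirror this on the $B$-side, using the symmetric term in the definition of $r_{\alpha^*+1,n}$.

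Once the back-and-forth is complete, the bound $r_{0,n}(\bar a_n,\bar b_n)\le r_{\alpha^*,n}(\bar a_n,\bar b_n)\le c+\varepsilon$, combined with Uspenskii's identity $(\ref{eq:Uspenskii r0})$, gives
\[
\big|d_A(a^\infty_i,a^\infty_j)-d_B(b^\infty_i,b^\infty_j)\big|\;\le\;2(c+\varepsilon)\quad\text{for all }i,j,
\]
where $a^\infty_i,b^\infty_i$ are the limit enumerations. This is precisely the amalgamation criterion allowing one to define a metric on $D_A\sqcup D_B$ extending $d_A$ and $d_B$ with $d(a^\infty_i,b^\infty_i)\le c+\varepsilon$; explicitly, set $d(a^\infty_i,b^\infty_j)=\inf_k\big(d_A(a^\infty_i,a^\infty_k)+(c+\varepsilon)+d_B(b^\infty_k,b^\infty_j)\big)$. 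The associated bi-Katetov function $f$ on $D_A\times D_B$ has error $\le c+\varepsilon$ by the back-and-forth property, and Remark~\ref{rem: extension} extends $f$ to a bi-Katetov function on $A\times B$ with the same error bound. Hence $d_{GH}(A,B)\le c+\varepsilon$, and letting $\varepsilon\to 0$ concludes the proof.

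The main obstacle is the non-attainment of infima in the successor step of the back-and-forth: one cannot in general push $r_{\alpha^*,n+1}$ strictly below $c$, only below $c+\varepsilon_n$. The geometric bookkeeping via $\sum_n\varepsilon_n<\varepsilon$ is the crucial device that contains the accumulated loss and allows the argument to close.
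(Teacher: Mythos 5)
Your proof is correct and follows the same overall architecture as the paper's: the inequality $r_{\alpha^*,0}\le d_{GH}$ by transfinite induction against a fixed bi-Katetov function, and the reverse inequality by a back-and-forth that extends the tuples to dense sequences while keeping $r_{\alpha^*,k}$ controlled (the paper carries the strict bound $<t$ exactly, which makes your $\sum_n\varepsilon_n$ budget unnecessary but harmless; the paper also proves the slightly stronger identity $r_{\alpha^*,n}(\bar a,\bar b)=\delta_n(\bar a,\bar b)$ for all $n$). The one genuine divergence is in how the back-and-forth is converted into a single bi-Katetov function on $A\times B$: the paper extends each finite-stage witness $\widetilde f_k$ via Remark~\ref{rem: extension} and then invokes compactness of the space of bi-Katetov functions in $\mathbb R^{D\times E}$ to extract a pointwise limit, whereas you read off the uniform distance-matrix bound $|d_A(a_i,a_j)-d_B(b_i,b_j)|\le 2(c+\varepsilon)$ from Uspenskii's formula (\ref{eq:Uspenskii r0}) and write down the amalgamated metric on $D_A\sqcup D_B$ explicitly. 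Your route is more constructive and avoids the compactness/diagonalization step; the paper's route avoids having to verify by hand that the diagonal amalgamation satisfies the triangle inequality (a check you should include, though it is routine given the $2(c+\varepsilon)$ bound). Both are sound.
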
 
 
 \begin{proof}
Since $A,B$ are fixed we suppress them in our notations.  Variables $a,a_i$ etc range over $A$, and $b_i$ etc.\ range over $B$. For tuples $\bar a, \bar b$ of length $k$, let   

\[ \delta_k ( \bar a ,  \bar    b)  = \inf_f \{ \max (q_f, \max_{i<k} f(a_i,b_i))\}, \]
where $f$ ranges over bi-Katetov functions on $A \times B$. 
We show that for each $n$ and tuples $\bar a, \bar b$ of length $n$, 
\[ r_{\alpha^*, n} (\bar a, \bar b) = \delta_n ( \bar a ,  \bar    b).\]
For $n=0$ this establishes the theorem.

Firstly, we show by induction on ordinals  $\aaa$  that  
\[ r_{\alpha, n} (\bar a, \bar b)     \le      \delta_n ( \bar a ,  \bar    b)).\]
 The cases  $\aaa = 0$ and $\aaa $ limit ordinal are  immediate.  For the successor case, suppose that   $ \delta_n ( \bar a ,  \bar    b) < s$ via a bi-Katetov function $f$ on $A \times B$. For each  $x \in A$ we can pick $y \in B$ such that $f(x,y) < s$.  Then $\delta_{n+1} (\bar a x, \bar b y) < s$ via the same $f$. Inductively we have $r_{\alpha, n+1} (\bar ax , \bar by)  < s$. Similarly, for each $y \in B$ we can pick $x \in A$ such that $r_{\alpha, n+1} (\bar ax , \bar by)  < s$. This shows that $ r_{\alpha+1, n} (\bar a, \bar b)     \le s$.

Secondly, we verify that 
\[   \delta_n ( \bar a ,  \bar    b))  \le r_{\alpha^*, n} (\bar a, \bar b)         \] 
Let $ r_{\alpha^*, n} (\bar a, \bar b) < t$. We combine  a  back-and-forth argument  with  the compactness of the space of bi-Katetov functions in order to build a bi-Katetov function $f$ with $q_f \le t$ and  $\max_{i < n} f(a_i, b_i ) \le t$.

To do so we extend $\bar a, \bar b$ to dense sequences in $A,B$ respectively.  Let $D \sub A, E \sub B$ be countable dense sets.  Let $\bar u ^k$ denote a tuple of length $k$; in particular, we can write $\bar  a = \bar  a^n$ and $\bar  b = \bar b^n$.  
  We ensure that 
\bc $ r_{\alpha^*, k} (\bar a^k, \bar b^k) < t$ for each $k \ge n$. \ec
Suppose $\bar a ^k , \bar b^k$ have been defined. If $k$ is even, let $a_k$ be the next element in $D$.  Using $r_{\alpha^*+1, k} (\bar a^k, \bar b^k)  = r_{\alpha^*, k} (\bar a^k, \bar b^k)$  we can choose $b_k$ so that  $ r_{\alpha^*, k+1} (\bar a^{k+1}, \bar b^{k+1}) < t$. Similarly, if $k$ is odd, let $b_k$ be the next element in $E$ and choose $a_k$ as required. 

By Lemma~\ref{lem: basic props of r's}(2) we have  $ r_{0, k} (\bar a^k, \bar b^k) < t$ for each $k\ge n$ via some bi-Katetov function $\widetilde f_k$ defined on $\{ a _0, \ldots, a_{k-1}\}  \times \{ b_0, \ldots, b_{k-1} \}$. By Remark~\ref{rem: extension} we  can extend this to a  bi-Katetov function $  f_k$ defined on  $A \times B$. By the compactness of the space of bi-Katetov functions on $A \times B$, viewed as elements of $\mathbb R^{D \times E}$,  there is a subsequence $k_0 < k_1 < \ldots$ such that $\seq {f_{k_u}}$ converges pointwise to a bi-Katetov function $f$. Since  bi-Katetov functions are 1-Lipschitz in both arguments, this implies $\lim_u f_{k_u} (a_p, b_p)  = f(a_p, b_p)$ for  each $p$. Therefore $f(a_p,b_p) \le t$. This implies $q_f \le t$  as required. 
\end{proof}

\begin{definition} The  \emph{continuous Scott rank} of $A$  is the   least $\alpha$ for which
\[
r_{\alpha, n}^{A, A}(\bar a_1, \bar a_2) = r_{\alpha+1, n}^{A, A}(\bar a_1, \bar a_2), \quad \text{for all } n, \bar a_1, \bar a_2 \in A^n.
\]
 \end{definition}

One can define an equivalence relation $E_{GH}$ on the set of  distance matrices $\mathcal M$ by
\[
A {E_{GH}} B \iff d_{GH}(A, B) = 0.
\]
Using the continuous Scott analysis we can show:
\begin{thm} Each equivalence class of $E_{GH}$   is Borel. \end{thm}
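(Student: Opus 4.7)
The plan is to fix $A\in\mathcal M$ and show $\{B\in\mathcal M:d_{GH}(A,B)=0\}$ is Borel; throughout I follow the convention of the previous sections and work with distance matrices of finite diameter (the finite-diameter stratum of $\mathcal M$ is Borel, and the key tools Lemma~\ref{lem: basic props of r's} and Theorem~\ref{prop:GH approx} are stated there).

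First I would verify by transfinite induction on $\alpha<\omega_1$ that for every $n$, every $\bar a\in A^n$ drawn from $A$'s dense sequence, and every $\bar b\in\NN^n$ indexing into $B$'s dense sequence, the map $B\mapsto r_{\alpha,n}^{A,B}(\bar a,\bar b)$ is Borel on $\mathcal M$. The case $\alpha=0$ is continuous in the distance matrix of $B$ by~(\ref{eq:Uspenskii r0}); at a successor $\alpha+1$, Lemma~\ref{lem: basic props of r's}(1) together with density of the distinguished sequences lets me restrict $\sup_{x\in A}\inf_{y\in B}$ (and dually) to countable suprema and infima over those dense sequences; limit stages are a countable supremum by fiat.

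The heart of the argument is the following equivalence, with $\alpha_A$ denoting the (countable, by Lemma~\ref{lem: basic props of r's}(3)) continuous Scott rank of $A$:
\[
 d_{GH}(A,B)=0 \;\Longleftrightarrow\; r_{\alpha_A,0}^{A,B}(\emptyset,\emptyset)=0 \text{ and } r_{\alpha_A+1,n}^{A,B}=r_{\alpha_A,n}^{A,B}\text{ for every }n.
\]
Since the universal quantifiers over $n$ and over rational-indexed tuples $\bar a,\bar b$ are countable, the right-hand side is Borel in $B$ by the previous paragraph, so this characterization immediately yields Borelness of the $E_{GH}$-class of $A$. The backward implication is immediate from Theorem~\ref{prop:GH approx} applied with $\alpha^*=\alpha_A$.

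The forward implication is where I expect the real work and the main obstacle. Assume $d_{GH}(A,B)=0$ and fix an onto isometry $\phi:\hat A\to\hat B$ between completions. Using the $1$-Lipschitz property of each $r_{\alpha,n}^{\cdot,\cdot}$ in both arguments together with the density of $A$ in $\hat A$ and of $B$ in $\hat B$, I would prove by induction on $\alpha$ that $r_{\alpha,n}^{A,B}(\bar a,\bar b)=r_{\alpha,n}^{\hat A,\hat A}(\bar a,\phi^{-1}(\bar b))$ and, by the same Lipschitz/density argument, that $A$ and $\hat A$ have the same continuous Scott rank. The stabilization $r_{\alpha_A+1,n}^{\hat A,\hat A}=r_{\alpha_A,n}^{\hat A,\hat A}$ then propagates along $\phi$ to stabilization for the pair $(A,B)$ at $\alpha_A$, after which Theorem~\ref{prop:GH approx} delivers $r_{\alpha_A,0}^{A,B}(\emptyset,\emptyset)=d_{GH}(A,B)=0$, completing the equivalence.
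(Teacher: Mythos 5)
Your overall architecture matches the paper's: prove by transfinite induction that each $r_{\alpha,n}^{A,\cdot}(\bar a,\bar b)$ is a Borel function of $B$ (using $1$-Lipschitzness to replace the sup/inf over the spaces by countable sup/inf over the distinguished dense sequences), then characterize $[A]_{E_{GH}}$ by a Borel condition pinned at the countable ordinal $\alpha_A$, with Theorem~\ref{prop:GH approx} supplying the backward implication. That part is fine, and your stabilization condition on the joint system $r^{A,B}_{\alpha_A,n}$ is a reasonable substitute for the paper's condition $\rank(B)=\alpha_A$.

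The forward implication, however, has a genuine gap: you ``fix an onto isometry $\phi:\hat A\to\hat B$ between completions,'' but $d_{GH}(A,B)=0$ does \emph{not} imply that the completions are isometric. This is exactly the phenomenon the surrounding section is devoted to: the Gurarij-sphere example, the Banach spaces $E_D$, and the family $X_f$ all give complete, pairwise non-isometric spaces at mutual Gromov--Hausdorff distance $0$ (the Rubin--Schlicht proposition even reduces $\mathrm{id}_{2^\omega}$ to isometry \emph{inside a single} $E_{GH}$-class). If such an isometry always existed, $E_{GH}$ would coincide with isometry of completions and the whole continuous Scott analysis would be pointless. The remark that error $0$ yields an onto isometry concerns a bi-Katetov function whose error is \emph{attained} to be $0$, not the infimum $d_{GH}$ being $0$. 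So the forward direction must be argued intrinsically with the $r_{\alpha,n}^{A,B}$: for instance, establish a triangle inequality $r_{\alpha,n}^{A,C}(\bar a,\bar c)\le r_{\alpha,n}^{A,B}(\bar a,\bar b)+r_{\alpha,n}^{B,C}(\bar b,\bar c)$ and combine it with the fact that $r_{\alpha,0}^{A,B}(\emptyset,\emptyset)\le d_{GH}(A,B)=0$ for \emph{every} $\alpha$ (this is the first half of the proof of Theorem~\ref{prop:GH approx}, which does not require stabilization) in order to transfer the stabilization of $(A,A)$ at $\alpha_A$ to the pair $(A,B)$. That transfer is precisely the content the paper flags as what ``one needs to prove''; the isometry shortcut does not supply it.
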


\begin{proof} 
By induction each $r_{\alpha, n}$ is a Borel function $\+ M \times \+ M \times \NN^n \times \NN^n \to [0, 1]$.  Next  one needs to prove the following. Fix $A_0 \in \+ M$ and let $\alpha_0 = \rank A_0$.
\begin{itemize}
\item for each $\alpha$, the set $\{B \in \+ M : \rank(B) = \alpha\}$ is Borel;
\item $B {E_{GH}} A_0 \iff \rank(B) = \alpha_0 \lland  r_{\alpha, 0}^{A_0, B} = 0$.
\end{itemize}
\end{proof}

\begin{question} Is  the function $d_{GH}(A_0, \cdot)$   Borel on $\mathcal M$? \end{question}

\section{III. Homeomorphism $\cong_h$}

We collect some results, most of which are proved in \cite[Ch.\ 14]{Gao:09}. For  general Polish metric spaces, $\cong_h$ is merely known to be $\bf \Sigma^1_2$. Homeomorphism of compact metric spaces $X,Y$  is analytic, because   homeomorphisms are uniformly   continuous. In fact, by  the Banach-Stone theorem, we have

\bc $X \cong_h Y \LR  \+ C(X) \cong_i \+ C(Y)$; \ec
so by the aforementioned results of Gao and Kechris  on isometry~\cite{Gao.Kechris:03}, $\cong_h$ on compact metric spaces is Borel reducible to an orbit equivalence relation. 
(A similar argument works for locally compact metric spaces, using $ \+ C_0(X)$, the $C^*$ algebra of continuous functions vanishing at $\infty$; however, for  a Polish metric space, to be  locally compact is known to be properly $\mathbf \Pi^1_1$.)

Camerlo and Gao \cite{Camerlo.Gao:01} proved that graph isomorphism is Borel reducible to homeomorphism of totally disconnected compact metric spaces (i.e., separable Stone spaces).   One notes that countable compact metric spaces $X$ won't work here, because $X$  is  scattered and hence given by the Cantor-Bendixson rank $\alpha$,  together with  the size of the last set $X^{(\alpha)}$.

The main question remains open. 
\begin{question} Determine the complexity with respect to $\le_B$ of $\cong_h$ for compact metric spaces. \end{question} 

In   contrast, in the computable case the complexity is known to be  as large as possible. 
\begin{thm} Homeomorphism of compact computable metric spaces is complete   for $\Sigma^1_1$ equivalence relations on $\omega$  with respect to computable reductions. \end{thm}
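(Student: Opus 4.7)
The plan is to establish completeness from both sides: that $\cong_h$ on indices of compact computable metric spaces is itself $\Sigma^1_1$, and that every $\Sigma^1_1$ equivalence relation on $\omega$ computably reduces to it.

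For the upper bound, I would observe that the set $\{\langle e,k\rangle : M_e \cong_h M_k\}$ (restricted to indices of compact computable metric spaces) is $\Sigma^1_1$. Since a continuous bijection between compact metric spaces has automatically continuous inverse, a homeomorphism is coded by its values on the distinguished dense sequence of $M_e$ together with a modulus of uniform continuity and the analogous data for its inverse. This gives an existential quantification over a point of Baire space subject to $\Pi^0_2$ arithmetical constraints (continuity of the forward map with the prescribed modulus, the fact that the image is dense in $M_k$, and that the inverse extends continuously), hence $\Sigma^1_1$.

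For the lower bound, the strategy is to compose two computable reductions. By Fokina, Friedman, Harizanov, Knight, McCoy and Montalb\'an \cite{Fokina.Friedman.etal:10}, isomorphism of computable graphs is complete for $\Sigma^1_1$ equivalence relations on $\omega$ under computable reductions. It then suffices to effectivise the Camerlo--Gao construction \cite{Camerlo.Gao:01}: from a computable index $e$ for a graph $G_e$ I would uniformly compute an index $f(e)$ for a compact computable metric space $X_{G_e}$, together with the uniformly computable sequence of $2^{-k}$-nets witnessing total boundedness (so the output always lies in the $\Pi^0_3$ set $C$ of compact indices identified by Melnikov--Nies), such that
\[ G_e \cong G_{e'} \iff X_{G_e} \cong_h X_{G_{e'}}. \]
The implication $\Leftarrow$ is inherited directly from the Camerlo--Gao reconstruction argument, which is classical and makes no reference to effectiveness; the effectivity needed is only on the side of producing the spaces.

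The principal obstacle is the effectivisation itself. Camerlo--Gao build $X_G$ by gluing, at points of a Cantor set indexed by $V(G) \cup E(G)$, small countable compact ``fingerprint'' spaces whose local Cantor--Bendixson rank records the presence or absence of edges. The building blocks are countable compact spaces of explicit finite Cantor--Bendixson rank, each possessing a canonical computable presentation via a distance matrix on $\omega$. Since the vertex and edge sets of a computable graph are computable, the listing of pieces to be glued is computable in~$e$. The gluing requires rescaling each piece so that diameters summable tend to zero; choosing dyadic rescaling factors computably ensures that the partial union up to stage~$n$ provides a totally bounded computable metric approximation to $X_{G_e}$, from which one reads off a uniformly computable distance matrix for a dense sequence and a uniformly computable function producing a $2^{-k}$-net for every $k$. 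Composing the resulting computable reduction $e \mapsto f(e)$ with the Fokina et al.\ reduction then yields completeness for $\Sigma^1_1$ equivalence relations under computable reductions.
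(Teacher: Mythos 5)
Your proposal is correct and follows essentially the same route as the paper: the paper's proof likewise composes the Fokina--Friedman--Harizanov--Knight--McCoy--Montalb\'an completeness result for isomorphism of computable graphs with the observation that the Camerlo--Gao reduction of graph isomorphism to homeomorphism of totally disconnected compact metric spaces can be carried out effectively, uniformly in an index for the graph. You supply more detail than the paper does (the $\Sigma^1_1$ upper bound and a sketch of how the gluing is effectivised), but the underlying argument is the same.
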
 

\begin{proof}  Friedman et al.\  \cite{Fokina.Friedman.etal:10} showed this for isomorphism of computable graphs. It can be verified that the   construction  Camerlo and Gao \cite{Camerlo.Gao:01} use for providing their Borel reduction  is effective. Hence, if the given graph is computable, then uniformly in its index they build a compact  computable  metric space. \end{proof}

\section{The complexity of particular isometries} Let us return to the leading questions posed initially. It appears that Questions (b) and (c) are closely connected: 

\vsp

\n {\it It is easy to detect that $X$ is similar to $Y$  $\LR$ 

\hfill  we can determine from $X, Y$ a means via which the similarity holds.}

\vsp 

\n We will provide some evidence for this thesis, first for compact metric spaces, and then for metric measure spaces studied by Gromov~\cite{Gromov:07} and Vershik. For a function $g$, let $g'$ be the halting problem relative to the graph of $g$. 

\begin{theorem}[Melnikov, Nies \cite{Melnikov.Nies:13}]   Let $X,Y$ be compact   metric spaces.  Let $A$ be an   oracle Turing equivalent to the Turing  jump of (the presentation of) $X$ together with  $Y$.  

\bi \item[(a)]  If $X \cong_i Y$ then there is an isometry $g$ such that $g' \leT A''$. 
\item[(b)]  there are isometric compact computable metric spaces $X,Y$ with no isometry $g \leT \Halt$. 
\ei \end{theorem}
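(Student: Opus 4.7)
The plan is to handle the two parts with rather different techniques. Part (a) reduces to a basis-theorem extraction from an effectively presented class of isometries; part (b) requires building two computable presentations of a common isometry type whose set of isometries, viewed as a $\Pi^0_1(\Halt)$ class, contains no $\Halt$-computable member.

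For (a), I would first give a compact representation of isometries. Fix dense sequences $(x_i)$ in $X$ and $(y_j)$ in $Y$; using compactness of $Y$ together with the oracle $A=(X\oplus Y)'$, one computes uniformly in $n$ a finite $2^{-n}$-net $N_n\subseteq\{y_j\}$. Represent a putative isometry $g$ by a function $f(i,n)\in N_n$ with $d(g(x_i),y_{f(i,n)})<2^{-n}$; suitably coded, such $f$ live in an effective product of finite sets, hence (after flattening) as elements of an effective subclass of~$2^\omega$ relative to~$A$. The condition ``$f$ presents a total isometry $g\colon X\to Y$'' is $\Pi^0_2(X\oplus Y)$: coherence of $(f(i,n))_n$ is $\Pi^0_1$; distance preservation $d(x_i,x_j)=\lim_n d(y_{f(i,n)},y_{f(j,n)})$ is $\Pi^0_2$ (equality of computable reals); density of the image in $Y$ is $\Pi^0_2$. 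Thus the set of valid $f$ is a nonempty $\Pi^0_1(A)$ class in $2^\omega$, and the relativized low basis theorem of Jockusch--Soare yields $f$ with $(f\oplus A)'\leT A'$. The decoded $g$ then satisfies $g'\leT A'\leT A''$, as required.

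For (b), fix a nonempty $\Pi^0_1(\Halt)$ class $\mathcal P\subseteq 2^\omega$ with no $\Halt$-computable element (for instance, a class of $\Halt$-$\mathrm{DNC}_2$ functions), and write $\mathcal P=[T]$ for a $\Halt$-computable subtree of $2^{<\omega}$, with a computable approximation $T_s\to T$. The plan is to build compact computable ultrametric spaces $X$ and $Y$ whose isometries correspond effectively and bijectively to $[T]$. Picture $X$ and $Y$ as labeled binary trees of depth~$\omega$: at each node $\sigma$, two sibling subtrees will be made isometric precisely when a ``swap'' at $\sigma$ is to be permitted. An isometry $X\to Y$ is then determined by a sequence $\alpha\in 2^\omega$ of swap decisions. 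By placing fine-scale asymmetries at depth $|\sigma|+s$ whenever $T_s$ rules out an extension below $\sigma$, we force $\alpha\in[T]$ to be the exact validity condition. Because the approximation $T_s(\sigma)$ stabilizes for each $\sigma$, any asymmetries inserted at stage $s$ on behalf of a node at level~$n$ are confined to scales $\leq 2^{-n-s}$, and their cumulative effect on the distance matrix is summable; the distances on $\{x_i\},\{y_j\}$ are then uniformly computable, and the spaces are totally bounded by construction. Since $[T]\neq\emptyset$ we have $X\cong_i Y$; conversely any isometry $g\colon X\to Y$ decodes to an element of $[T]$ uniformly in $g$, so no $g\leT\Halt$ can exist.

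The hard part will be the encoding in~(b): arranging the ultrametric structure to be \emph{exactly} as rigid as $T$, killing off spurious swap sequences $\alpha\notin[T]$ as soon as $T_s$ reveals them to be invalid, without damaging the distances witnessing $\alpha\in[T]$, and all this under a finite-injury-style approximation to~$T$ that still yields genuinely computable $X$ and $Y$. This is essentially a computable-categoricity failure dressed up geometrically, and the construction parallels standard arguments producing computable structures that are isomorphic but not $\Delta^0_2$-isomorphic.
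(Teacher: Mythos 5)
Part (a) of your proposal follows the same route as the paper: code isometric embeddings as paths through a finitely branching tree whose level sizes are controlled by nets computable from the jump, and apply the (relativized) low basis theorem. There is, however, an internal inconsistency as written: you list ``density of the image in $Y$'' as a $\Pi^0_2$ condition and in the next breath assert that the set of valid codes is a $\Pi^0_1(A)$ class; a genuinely $\Pi^0_2$ clause cannot be absorbed into a $\Pi^0_1(A)$ class for free. The paper closes exactly this gap with one observation that you omit: it suffices to produce an isometric \emph{embedding}, because an isometric embedding between isometric compact spaces is automatically surjective (an isometric self-embedding of a compact metric space is onto). Alternatively, you could express surjectivity in $\Pi^0_1(A)$ form by requiring that the image of the $A$-computable $2^{-n}$-net of $X$ be a $2^{-n+1}$-net of $Y$ for every $n$. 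With either repair your argument goes through, and your bookkeeping (a $\Pi^0_1(A)$ class with $A$-computable bounds, versus the paper's $A'$-computable bounds and low basis theorem relative to $A'$) would in fact yield the slightly stronger conclusion $g' \le_T A'$.

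For part (b) the paper offers no proof (it is deferred to the Melnikov--Nies paper), so there is nothing to compare against; but your sketch, which by your own account stops short of the ``hard part,'' misses the central obstruction. If the spaces $X,Y$ you build are \emph{effectively} compact --- that is, $2^{-n}$-nets, equivalently the number of $2^{-n}$-balls, are computable in $n$ --- then the argument of part (a), run with the trivial oracle, exhibits the isometric embeddings as a nonempty $\Pi^0_1$ class with \emph{computable} bounds; the low basis theorem then produces a low, hence $\Delta^0_2$, isometry, and the desired conclusion fails for those spaces. Your construction as described (computable labelled binary trees, asymmetries on behalf of a level-$n$ node at stage $s$ confined to scales $2^{-n-s}$, uniformly computable distance matrix, ``totally bounded by construction'') gives every appearance of being effectively compact: the ball count at scale $2^{-k}$ is determined by the computable approximation $T_s$ for $s\le k$, hence is computable in $k$. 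So the real content of (b) is to arrange presentations that are computable and compact but \emph{not} effectively compact, while keeping the distance matrix computable and tying the isometry class to a $\Pi^0_1(\emptyset')$ class with no $\Delta^0_2$ member; your plan does not engage with this, and as it stands it would be defeated by the very basis-theorem argument you deploy in (a).
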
 
For (a) note that is suffices to build $g$ an isometric embedding. By compactness we can view embeddings as branches  on a subtree $T \sub \strbaire$  with an $A'$ computable bound on the level size. Now apply the low basis theorem relative to $A'$ in order to obtain $g$.  

A metric measure (m.m.) space has the form $T= (X,\mu, d)$ where $(X,d)$ is Polish, $\mu$ a Borel probability measure. We may assume that $\mu U > 0$ for any non-empty open $U$; otherwise, replace $X$ by the least conull closed set.   

\begin{theorem}[Gromov (1997), see \cite{Gromov:07}] Measure-preserving isometry of m.m.\ spaces is smooth. \end{theorem}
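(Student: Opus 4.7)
The plan is to follow Gromov's approach via the \emph{matrix distribution} (also called the \emph{random distance matrix}) as a complete Borel invariant. Represent m.m.\ spaces as elements of a Polish space in the obvious way: a point is a triple consisting of a distance matrix $V \in \+ M$ (as in the distance-matrix representation from earlier in the paper) together with a Borel probability measure $\mu$ on the completion $M_V$, encoded by the sequence of values $\seq{\mu(B(p_i,q))}_{i,q}$ for $q \in \QQ^+$ on the distinguished dense points. Since we may replace $X$ by the support of $\mu$, we lose no generality in assuming the distinguished points are $\mu$-dense.

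Given a m.m.\ space $T = (X,\mu,d)$, consider the product measure $\mu^\NN$ on $X^\NN$ and the Borel map $\Phi_T \colon X^\NN \to \RR^{\NN\times\NN}$ defined by $\Phi_T(x_0,x_1,\ldots) = (d(x_i,x_j))_{i,j\in\NN}$. Let $\nu_T = (\Phi_T)_* \mu^\NN$, a Borel probability measure supported on the Polish subspace $\+M \sub \RR^{\NN\times\NN}$ of distance matrices. The first step is to verify that the assignment $T \mapsto \nu_T$ is a Borel map from the Polish space of (coded) m.m.\ spaces into $P(\+M)$ equipped with the weak topology, which is itself Polish. This is routine: one checks that for each bounded continuous $f$ on $\+M$, the functional $T \mapsto \int f\,d\nu_T$ is Borel, by approximating $f$ by functions depending on only finitely many coordinates and reducing to the Borel computation of $\mu^{\otimes n}$-integrals.

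The main step is Gromov's \emph{reconstruction theorem}: $T \cong_{\mathrm{mm}} T' \LR \nu_T = \nu_{T'}$, where $\cong_{\mathrm{mm}}$ denotes measure-preserving isometry between the supports. The forward direction is immediate since a measure-preserving isometry $\theta\colon X \to X'$ induces $\theta^\NN$ which pushes $\mu^\NN$ to $(\mu')^\NN$ and intertwines $\Phi_T$ with $\Phi_{T'}$. For the reverse direction, assume $\nu_T = \nu_{T'}$. By the strong law of large numbers applied simultaneously to countably many Borel sets of the form $\{V : |V_{ij}-q|<\epsilon\}$, $\nu_T$-a.e.\ matrix $V$ has the property that its empirical first-coordinate distribution equals the marginal $\mu$; equivalently, from $\nu_T$ one can recover the isomorphism class of the m.m.\ space by taking a $\nu_T$-generic matrix $V$, forming the completion $M_V$ of its index set under the pseudo-metric $V$, and pushing forward the counting-density measure. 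One then uses that two $\nu_T$-generic sequences produce canonically isomorphic m.m.\ completions, and matches them via an isometry built from the identity on the (almost surely dense) index set.

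The hard part is this reconstruction step; it relies on the ergodic-theoretic fact that the marginals of $\nu_T$ determine $(X,\mu,d)$ up to measure-preserving isometry, which in Gromov \cite[Ch.~3$\tfrac12$]{Gromov:07} is obtained through a careful back-and-forth on countable dense samples. Once this is in place, the theorem follows: $T \mapsto \nu_T$ is a Borel map to the Polish space $P(\+M)$ whose fibres are exactly the $\cong_{\mathrm{mm}}$-classes, so $\cong_{\mathrm{mm}}$ is Borel reducible to equality on $P(\+M)$ and is therefore smooth.
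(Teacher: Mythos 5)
Your proposal is correct and follows the route the paper itself sketches: it is exactly Vershik's version of the argument, taking the single invariant $D_T=\nu_T$ (the law of the infinite random distance matrix), showing the assignment is Borel into the Polish space of measures on $\+M$, and invoking the law-of-large-numbers reconstruction theorem to see that the fibres are precisely the measure-preserving-isometry classes. The paper's headline attribution is to Gromov, whose original argument instead uses the sequence $\seq{D_n}$ of finite-dimensional marginals with a moments computation for completeness, but the paper explicitly presents Vershik's single-invariant proof alongside it, so your write-up matches the paper's approach.
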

Gromov used as invariants the sequence of distributions  $D_n$ of the distance matrix of $n$ randomly chosen points. He used moments to show that $\seq {D_n} \sN n$ is a complete invariant for  the m.m.\ space $T$.  Note that in the subsequence lemma, there is a typo. It should say $\liminf$ there.

 In  1996  Anatoly Vershik \cite{Vershik:98}     gave a  proof as well; also see  the survey~\cite{Vershik:04}. He describes $T$   by the single  invariant $D_T$, the distribution of the distance matrix of a randomly chosen infinite sequence $(x_i)$. More formally,  $D_T$ is the push forward  measure of $d(x_i, x_k)$ on the space $\+ M\sub \RR^{\omega\times \omega}$ of distance matrices. He used a form of the law of large numbers to reconstruct $T$ from $D_T$.  The 2006 paper by Cameron and Vershik is also relevant here. 
 
 We can  give an effective analysis of Vershik's proof. Let $\+ O \sub \omega$ be some $\Pi^1_1$ complete set. 
 \begin{theorem} Suppose $T_1, T_2$ are computable m.m.\ spaces (that is, the measure of Boolean combinations of open  balls is uniformly computable).  Then there is a measure-preserving isometry $\Theta$ such that $\Theta \leT \+ O$.  \end{theorem}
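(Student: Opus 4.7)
The plan is to effectivize Vershik's reconstruction argument at the hyperarithmetic level. A computable mm-space $T=(X,d,\mu)$ determines a computable Borel probability measure $D_T$ on the Polish space $\+ M$ of distance matrices: the $D_T$-measure of a basic box $\{r:\ r_{ij}\in(a_{ij},b_{ij})\text{ for }(i,j)\in F\}$ is computed by integrating the corresponding open condition against $\mu^\omega$, uniformly from the presentation, thanks to the hypothesis on $\mu$ of Boolean combinations of open balls. By Vershik's theorem, $T_1\cong T_2$ (which is implicit in the statement, else nothing to prove) implies $D_{T_1}=D_{T_2}$, so we work with a single computable measure $D$.

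First, I would use $\+ O$ to select a Vershik-generic matrix $r\in\+ M$. Let $\+ G$ be the Borel set of $r$ such that the pseudo-metric completion of $(\NN,r)$, together with the weak limit of the empirical measures $\frac{1}{n}\sum_{i<n}\delta_i$, is a well-defined mm-space isometric to $T$; the law-of-large-numbers half of Vershik's proof gives $D(\+ G)=1$, and membership in $\+ G$ is expressible by a hyperarithmetic condition computable from the presentation. Pick $r\in\+ G$ which is $\Pi^1_1$-ML-random with respect to $D$; such an $r$ is hyperarithmetic in $\+ O$ by standard higher-randomness methods.

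Next, lift $r$ to sequences in both spaces. For $i=1,2$ the fiber
\[ S_i(r)=\{(z_n)\in X_i^\NN:\ d_i(z_n,z_m)=r_{nm}\text{ for all }n,m\} \]
is $\Pi^0_1$ relative to $r$ and the presentation of $T_i$, and non-empty because $r$ lies in the support of $D_{T_i}=D$. Using $\+ O$ I extract canonical elements $(x_n)\in S_1(r)$ and $(y_n)\in S_2(r)$, for instance by taking the leftmost path in the associated tree of $\varepsilon$-approximating finite tuples of Cauchy names. By the Vershik-genericity of $r$, both sequences are dense, so $\Theta_0:x_n\mapsto y_n$ is an isometric bijection between dense subsets and extends uniquely and continuously to an isometry $\Theta:T_1\to T_2$; the extension is uniformly computable from $(x_n),(y_n)$ and thus from $\+ O$. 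Measure preservation follows from the other half of Vershik-genericity: the empirical measures of $(x_n)$ and $(y_n)$ converge weakly to $\mu_1$ and $\mu_2$ respectively, and $\Theta$ identifies them pointwise, so $\Theta_*\mu_1=\mu_2$.

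The hard part will be the selection step, which amounts to measurably inverting the sampling map $(z_n)\mapsto(d_i(z_n,z_m))$ off a $D$-null set; Vershik's classical proof uses ergodicity to establish existence, and the effective content required here is a $\Pi^1_1$ right-inverse on the generic set, which then lifts to an $\+ O$-computable selector. An alternative that bypasses explicit disintegration is to replace the lifting step with a direct $\+ O$-back-and-forth over fixed computable dense sequences in $T_1$ and $T_2$: at stage $n$, use $\+ O$ to decide the $\Sigma^1_1$ question ``does the current finite distance-preserving partial map extend to a measure-preserving isometry?'' and, when the answer is positive, produce a witnessing extension by $\Sigma^1_1$-search. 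Either route yields $\Theta\leT\+ O$.
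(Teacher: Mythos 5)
Your proposal is correct in outline and pursues the same overall strategy as the paper's (very terse) proof: effectivize Vershik's reconstruction of an m.m.\ space from the distribution $D$ of its random distance matrix, with all the non-computable content concentrated in the selection of a generic matrix together with realizing sequences. Where you differ is in the selection device. The paper's single substantive point is that one picks an element of a non-empty $\Sigma^1_1$ class of distance matrices via the Gandy basis theorem, obtaining $r$ with $\+ O^r \le_h \+ O$; implicitly that class consists of $r$ \emph{packaged with} equidistributed witnesses $\bar x \in T_1^\omega$, $\bar y \in T_2^\omega$ satisfying $F_1(\bar x)=r=F_2(\bar y)$, so one application of the basis theorem yields the matrix, both dense sequences, and control of $\+ O^r$ all at once. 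You instead split this into choosing $r$ by higher randomness inside the Vershik-generic set $\+ G$ and then selecting from the fibers $S_i(r)$. This can be made to work ($\+ G$ is arithmetical, $D$-conull, and contained in $\mathrm{range}(F_1)\cap\mathrm{range}(F_2)$, so the fibers are nonempty), but note that randomness of $r$ alone does not bound $\+ O^r$: your leftmost-path extraction from the $\Sigma^1_1(r)$ fibers costs $\+ O^r$, so you must choose $r$ by a basis theorem applied to $\+ G$ rather than merely "pick a $\Pi^1_1$-ML-random", or you lose control of the oracle — at which point you have essentially rejoined the paper's argument. Also, the exact fiber condition $d(z_n,z_m)=r_{nm}$ over a countable dense set must be replaced by $\varepsilon$-approximate realizations, as you indicate. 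Your alternative back-and-forth, with $\+ O$ answering the $\Sigma^1_1$ extendibility queries, is a legitimate and arguably more elementary route, provided the queries are phrased approximately (e.g.\ "extends to a measure-preserving isometry moving each $x_i$ to within $\varepsilon$ of $y_i$").
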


 \begin{proof}   Recall that $\+ M$ is the Polish space of distance matrices.  Following Vershik, we have canonical maps $F_k \colon \, T_k^\omega \to \+ M$, $ \ol x \to \seq {d(x_i, x_k)}\sN {i,k}$. Let $D_k = F_k \mu_k ^ \omega$ be the push forward measure on $\+ M$. This is the distribution of the distance matrix for a randomly picked sequence of points in $T_k$.
 \end{proof} 
 The main source of complexity is that one has to pick an element $r$  in a non-empty $\Sigma^1_1$ class of distance matrices. By Gandy basis theorem, there is such an $r$ with $\mathcal O^r \le_h \mathcal  O$. 
Ongoing work of Melnikov and Nies reduces this complexity to $\Delta^0_3$.

 \part{Other topics}

\section{Yu: A note on the  Greenberg-Montalban-Slaman Theorem}

Greenberg, Montalban and Slaman prove the following theorem.
\begin{theorem}[Greenberg, Montalban and Slaman \cite{GMS11}]\label{theorem: gms 11}
Assume that $\omega_1$ is inaccessible in $L$. For any countable structure $\mathcal{M}$, if the set $A=\{x\mid \exists \mathcal{N}\in L[x](\mathcal{N}\cong \mathcal{M})\}$ contains all the nonconstructible reals,  then $A=2^{\omega}$.
\end{theorem}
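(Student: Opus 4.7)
The plan is to reduce the statement to a dichotomy about $A$ and then contradict the bad case via a mutual-genericity argument on the Scott sentence of $\mathcal{M}$.

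First I would observe that $A$ is upward closed under the constructibility preorder $\leq_L$ (defined by $x \leq_L y$ iff $x \in L[y]$): if $x \in A$ then $L[x]$ contains a copy of $\mathcal{M}$, and since $L[x] \subseteq L[y]$ whenever $x \leq_L y$, that copy lies in $L[y]$ as well, so $y \in A$. Dually, $A^{c}$ is $\leq_L$-downward closed. Combined with the hypothesis $A \supseteq 2^{\omega} \setminus L$, this forces $A^{c} \subseteq L \cap 2^{\omega}$, and since every constructible real is $\leq_L$-equivalent (because $L[x] = L$ for $x \in L$), $A^{c}$ is either empty --- the desired conclusion --- or equal to all of $L \cap 2^{\omega}$. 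I suppose for contradiction that the latter holds, i.e.\ that $L$ contains no copy of $\mathcal{M}$ while $L[x]$ does for every $x \notin L$.

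Next I would pass from copies to the Scott sentence. Let $\phi$ be the Scott sentence of $\mathcal{M}$, an $L_{\omega_1,\omega}$-sentence which, being hereditarily countable, can be coded by a real. The key property is that $\phi$ is obtained by a uniform set-theoretic construction from any isomorphic copy of $\mathcal{M}$; hence if $\mathcal{N}_x \in L[x]$ is such a copy, then $\phi \in L[\mathcal{N}_x] \subseteq L[x]$. So $\phi \in L[x]$ for every $x \notin L$.

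The core step is to push $\phi$ down into $L$. Because $\omega_1$ is inaccessible in $L$, we have $\omega_{1}^{L} < \omega_1$, so $L \cap 2^{\omega}$ is countable in $V$. Cohen forcing (which is countable and preserves cardinals) then admits, in $V$, a pair $(g_1, g_2)$ that is mutually Cohen-generic over $L$; both $g_i$ are non-constructible, so $\phi \in L[g_1] \cap L[g_2]$, and the standard fact about mutually generic extensions of a fixed ground model gives $L[g_1] \cap L[g_2] = L$. Therefore $\phi \in L$.

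Finally, the statement ``$\phi$ has a countable model'' is $\Sigma^{1}_{1}(\phi)$, hence in particular $\Sigma^{1}_{2}$ in a parameter from $L$; it is witnessed in $V$ by $\mathcal{M}$ itself. Shoenfield absoluteness transfers the witness to $L$, producing some $\mathcal{N} \in L$ with $\mathcal{N} \models \phi$, whence $\mathcal{N} \cong \mathcal{M}$ by Scott's isomorphism theorem, contradicting the standing assumption. The main obstacle is the mutual-genericity step: the entire argument rests on the fact that $\omega_1$ inaccessible in $L$ forces $L \cap 2^{\omega}$ to be countable in $V$, which is exactly what is needed to obtain Cohen generics, and hence mutually generic pairs, over $L$.
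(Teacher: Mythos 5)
Your argument is correct, but it follows a genuinely different route from the one in the paper. You argue on the category side: the Scott sentence $\phi$ of $\mathcal{M}$ is a complete invariant computed by an absolute, uniform induction from any copy, so it lands in $L[g_1]\cap L[g_2]=L$ for a mutually Cohen-generic pair over $L$ (available because $\omega_1^L<\omega_1$ makes $L\cap 2^{\omega}$ and $L_{\omega_1^L}$ countable in $V$), and $\Sigma^1_1$ absoluteness together with Scott's isomorphism theorem then produces a copy in $L$. The paper instead proves a strictly stronger statement --- the same dichotomy for an arbitrary $\Sigma^1_2$ equivalence relation $E$, again only from $\omega_1^L<\omega_1$ --- and therefore cannot rely on any Scott-sentence-like invariant. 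It works on the measure side with $L$-random reals: Shoenfield absoluteness stratifies $E$ by constructible codes $t$ for well-orderings; a positive-measure argument produces $t_0\in L$, a formula $\varphi$, and a cylinder $[\sigma]$ on which the witness set for $x$ has relative measure $>7/8$, while membership of $r$ in a certain nonempty $\Pi^1_1(t_0)$ set $S$ guarantees relative measure $>3/4$ for the analogous set with parameter $r$; any two such sets meet in a $z$ whose associated $y\in L_{|t_0|}[z]$ is $E$-equivalent to both parameters, and the basis theorem for nonempty $\Sigma^1_2(t_0)$ sets then yields a constructible $E$-representative of $x$. Your route is shorter and makes the descent of the invariant into $L$ explicit; the paper's route buys the generalization to all $\Sigma^1_2$ equivalence relations, where no complete invariant is available. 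Two steps in your sketch deserve a sentence each in a full write-up: (i) the canonical Scott sentence is the \emph{same set} when computed in $L[x]$ from $\mathcal{N}_x$, even if $\mathcal{N}_x$ is not countable there, because the back-and-forth relations and the canonical formulas are defined by an absolute induction; and (ii) the intersection fact $L[g_1]\cap L[g_2]=L$ is applied to $\phi$ via a set-of-ordinals code, the general statement being obtained by $\in$-induction from the set-of-ordinals case.
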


We prove that, under the weaker assumption that  $\omega_1^L<\omega_1$, Theorem \ref{theorem: gms 11} remains true for any $\Sigma^1_2$-equivalence relation.

\begin{proof}

For any equivalence relation $E$, reduction $\leq_r$ over $2^{\omega}$ and real $x\in 2^{\omega}$, let $$\spe_{E,r}(x)=\{y\mid \exists z\leq_r y(E(z,x))\}$$ be the $(E,r)$-spectrum of $x$.

Let $E$ be a $\Sigma^1_2$-relation and $x$ be a real so that $\spe_{E,L}(x)\supseteq\{z\mid z\not\in L\}$. Since $E$ is $\Sigma^1_2$, there must be some $\Pi^1_1$-relation $R_0\subseteq (2^{\omega})^3$ so that $$\forall y\forall z (E(y,z)\leftrightarrow \exists s R_0(y,z,s)).$$ By the Shoenfield absoluteness, $$\forall y\forall z (E(y,z)\leftrightarrow \exists s \in L_{\omega_1^{L[y\oplus z]}}[y\oplus z]R_0(y,z,s)).$$ In particular, $$\forall y (E(y,x)\leftrightarrow \exists s \in L_{\omega_1^{L[y\oplus x]}}[y\oplus x]R_0(y,x,s)).$$

Note that, by the assumption, the set $\spe_{E,L}(x)$ is $\Sigma^1_2(x)$ and conull.

 Since $\omega_1^L<\omega_1$, there are conull many $L$-random reals. So the set $\{y\mid \omega_1^L=\omega_1^{L[y]}\}$ is conull. We may also assume that $\omega_1^{L[x]}=\omega_1^L$. Then the set  $\{y\mid \omega_1^{L[x\oplus y]}=\omega_1^L\}$ is also conull.

Then  $z\in \spe_{E,L}(x)\leftrightarrow$
$$  \exists t  \exists y \exists s  (t\mbox{ codes a well ordering } \wedge y\in L_{|t|}[z]\wedge  s \in L_{|t|}[y\oplus x]\wedge R_0(y,x,s)).$$

For any real $t$ coding a well ordering, let $$z\in R_{1,t}\leftrightarrow  \exists y\in L_{|t|}[z] \exists s\in L_{|t|}[y\oplus x]( R_0(y,x,s)).$$ Then $R_{1,t}\subseteq \spe_{E,L}(x)$ is a $\Pi^1_1(t\oplus x)$-set and so measurable. Moreover, if $z$ is $L[x]$-random, then  $z\in  \spe_{E,L}(x)$ if and only if $z\in R_{1,t}$ for some real $t\in L$ coding a well ordering. Since $\mu(\spe_{E,L}(x))=1$ and $\omega_1^L<\omega_1$, there must be some $t\in L$ coding a well ordering so that $\mu(R_{1,t})>0$. Fix such a real $t_0\in L$. Then there must be some formula $\varphi$ in the set theory language so that  the set $$R_{1,t_0,\varphi}=\{z\mid \exists y \exists s\in L_{|t_0|}[y\oplus x](\forall n(n\in y\leftrightarrow L_{|t_0|}[z]\models \varphi(n))\wedge  R_0(y,x,s))\}$$ has positive measure. Then there must be some $\sigma\in 2^{<\omega}$ so that $$\mu(R_{1,t_0,\varphi}\cap [\sigma])>\frac{7}{8}\cdot 2^{-|\sigma|}.$$

Were $x$  constructible, then $R_{1,t_0}\cap [\sigma]$ would contain a constructible real. Now we try to get rid of the parameter $x$.

Let \begin{multline*}S=\{r\mid \mu( \{z\succ \sigma \mid \exists y \exists s\in L_{|t_0|}[y\oplus r]\\ (\forall n(n\in y\leftrightarrow L_{|t_0|}[z]\models \varphi(n))\wedge  R_0(y,r,s))\})>\frac{3}{4}\cdot 2^{-|\sigma|}\}.\end{multline*}

Then $S$ is a $\Pi^1_1(t_0)$-set and every real in $S$ is $E$-equivalent to $x$. Since $x\in S$, we have that $S$ is not empty. Thus there must be some $t_0$-constructible, and so constructible, real in $S$.

This completes the proof.
\end{proof}

By a similar method, one also can prove:
\begin{proposition}\label{proposition: on pi11}
For any $\Pi^1_1$-equivalence relation $E$ and real $x$, if  $\spe_{E,h}(x)\supseteq\{z\mid z\not\in \Delta^1_1\}$, then  $\spe_{E,h}(x)= 2^{\omega}$.
\end{proposition}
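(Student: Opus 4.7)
The plan is to mimic the proof of the $\Sigma^1_2$-analogue proposition proved just above, with the correspondences $L\leadsto\Delta^1_1$, $\omega_1^L\leadsto \CK$ (which is automatically countable, so no extra hypothesis is needed), Shoenfield absoluteness replaced by Spector--Gandy $\Sigma^1_1$-absoluteness, and the $\Sigma^1_2$ existential witness in $L$ replaced by the $\Pi^1_1$ well-foundedness witness given by a rank function in the appropriate admissible level. First fix a recursive tree $T(y,z)\subseteq\omega^{<\omega}$ with $E(y,z)\Leftrightarrow T(y,z)$ well-founded, equivalently iff $T(y,z)$ admits a rank function in $L_{\omega_1^{\mathrm{CK},y\oplus z}}[y\oplus z]$. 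Writing $r\in\spe_{E,h}(x)$ as $\exists e\in\omega\,(e\in\KO^r\wedge E(\Phi_e^r,x))$ exhibits the spectrum as $\Pi^1_1(x)$, and the hypothesis forces its complement to be a $\Sigma^1_1(x)$ subset of $\Delta^1_1$, hence countable; so $\mu(\spe_{E,h}(x))=1$.

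For each notation $o\in\KO$ I would let $R_o$ be the set of $r$ for which some $e$ with $|e|^{\KO^r}\le|o|$ has $\Phi_e^r=z$ defined together with a rank function for $T(z,x)$ of height $\le|o|$ in $L_{|o|}[z\oplus x]$. Each $R_o$ is $\Delta^1_1(o,x)$ uniformly, and on the full-measure set $\{r:\omega_1^{\mathrm{CK},r\oplus x}=\omega_1^{\mathrm{CK},x}\}$ the spectrum equals $\bigcup_{o\in\KO}R_o$. Since $\KO$ is countable, some $R_{o_0}$ carries positive measure, and by Lebesgue density I would pick $\sigma\in\fs$ with $\mu(R_{o_0}\cap[\sigma])>\tfrac{7}{8}\cdot 2^{-|\sigma|}$.

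Following Yu's parameter-removal step, define
\[
S^*=\bigl\{r:\mu\bigl(\{q\in[\sigma]:\exists e\,(\Phi_e^q=z\ \text{and}\ T(z,r)\ \text{has a rank function of height}\le|o_0|\ \text{in}\ L_{|o_0|}[z\oplus r])\}\bigr)>\tfrac{3}{4}\cdot 2^{-|\sigma|}\bigr\}.
\]
Then $S^*$ is a lightface $\Pi^1_1$ set (only the natural-number parameters $\sigma,o_0$ survive), and $x\in S^*$ by choice of $\sigma$. For any $r\in S^*$, the densities $>\tfrac{7}{8}$ for $x$ and $>\tfrac{3}{4}$ for $r$ on $[\sigma]$ overlap with density $>\tfrac{5}{8}$, producing a common $q$ that yields one $z=\Phi_e^q$ with $E(z,x)$ and $E(z,r)$ simultaneously; transitivity of $E$ then gives $E(r,x)$, so $S^*\subseteq\{r:E(r,x)\}$.

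The main obstacle will be the final step: extracting a hyperarithmetic element from the nonempty lightface $\Pi^1_1$ set $S^*$. Spector--Gandy gives $\Sigma^1_1$-absoluteness between $V$ and $L_{\CK}$, and although ``$S^*\neq\emptyset$'' is nominally $\Sigma^1_2$, the measure-theoretic definition above encodes membership by natural-number data (an index $e$, the string $\sigma$, and the rank-function approximation bounded by $|o_0|$), so one should be able to rewrite nonemptiness as a $\Sigma^1_1$ statement and thereby produce a hyperarithmetic $r_0\in S^*$. Since then $r_0\in\Delta^1_1\cap\{z:E(z,x)\}$ and hyperarithmetic reals are $\le_h$-below every real, this yields $\spe_{E,h}(x)=2^{\omega}$.
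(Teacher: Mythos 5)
Your reduction to finding a lightface $\Delta^1_1$ real in $[x]_E$, the countability argument giving $\mu(\spe_{E,h}(x))=1$, and the overlap/transitivity computation showing $S^*\subseteq[x]_E$ are all sound. But the proof collapses at exactly the step you flag as ``the main obstacle,'' and it cannot be repaired in the form you propose. In Yu's $\Sigma^1_2$ argument the last move is Shoenfield: a \emph{nonempty} $\Sigma^1_2(t_0)$ set has a member in $L[t_0]$. There is no hyperarithmetic analogue: a nonempty lightface $\Sigma^1_1$ (or $\Delta^1_1$, or even $\Pi^0_2$) subset of $2^\omega$ need not contain a $\Delta^1_1$ member --- push a recursive tree on $\omega^{<\omega}$ with no hyperarithmetic branch into Cantor space to get a nonempty lightface $\Pi^0_2$ class with no $\Delta^1_1$ element, and ``$\exists z\,(z\notin\Delta^1_1)$'' is a true $\Sigma^1_1$ sentence with no $\Delta^1_1$ witness. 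The Gandy basis theorem only yields a witness $r$ with $\omega_1^r=\CK$, so rewriting ``$S^*\neq\emptyset$'' as a true $\Sigma^1_1$ statement produces nothing hyperarithmetic. The root cause is structural: your $S^*$ sits inside $[x]_E$, which is in general null, so nonemptiness is the most you can ever certify about it. (A secondary problem: you index the decomposition by lightface $o\in\KO$, but the rank functions for $T(z,x)$ only appear below $\omega_1^{\mathrm{CK},x}$, which may exceed $\CK$; taking $o_0\in\KO^x$ instead destroys the lightfaceness of $S^*$.)

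The correct substitute for Shoenfield here is Sacks's \emph{measure} basis theorem: a $\Sigma^1_1$ set of positive measure contains a $\Delta^1_1$ closed subset of positive measure, hence a $\Delta^1_1$ member. So the parameter-free set must be built to have positive measure, which forces it to consist not of reals $E$-equivalent to $x$ but of reals that hyperarithmetically compute, via a fixed index, a member of $[x]_E$; such a set can swallow a positive-measure piece of the conull spectrum. Concretely: fix $e_0$, a lightface $o_0$, and $o_0'\in\KO^x$ so that $Q=\{r: z_r:=\{e_0\}^{H^{r}_{o_0}}$ is total and $T(z_r,x)$ is ranked by level $|o_0'|\}$ has positive measure (it is $\Delta^1_1(x)$), and pick $\sigma$ with $\mu(Q\cap[\sigma])>\tfrac78\cdot 2^{-|\sigma|}$. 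For almost every pair $(q,r)\in Q\times Q$ one has $\omega_1^{\mathrm{CK},q\oplus r}=\CK$, so $E(z_q,z_r)$ (which holds by transitivity through $x$) is ranked by a \emph{lightface} level; a counting argument over the countably many lightface notations yields $o''$ such that the lightface $\Delta^1_1$ set $A=\{r: z_r$ total and $\mu(\{q\in[\sigma]: T(z_q,z_r)$ ranked by $|o''|\})>\tfrac34\cdot 2^{-|\sigma|}\}$ contains a positive-measure subset of $Q$. Your $7/8$--$3/4$ overlap against $Q\cap[\sigma]$ now shows $E(z_r,x)$ for every $r\in A$, and Sacks's theorem applied to $A$ produces a $\Delta^1_1$ real $r^*\in A$, whence $z_{r^*}$ is the desired hyperarithmetic member of $[x]_E$.
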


Let $MA$ be  Martin's axiom, By a similar method, one also can prove
\begin{theorem}\label{theorem: boldface}
Assume that $MA \lland 2^{\aleph_0}>\aleph_1+\omega_1^L=\omega_1$. Then for any real $x_0, $ and   $\Sigma^1_2(x_0)$-relation $E$, if \ $\spe_{E,L}(x)\supseteq\{z\in 2^{\omega}\mid z\not\in L[x_0]\}$, then $\spe_{E,L}(x)= 2^{\omega}$.
\end{theorem}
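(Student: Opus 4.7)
The plan is to mirror the proof of Theorem \ref{theorem: gms 11} above, replacing the countability argument based on $\omega_1^L<\omega_1$ with the cardinal-arithmetic facts available under MA $+\,2^{\aleph_0}>\aleph_1$, while propagating the parameter $x_0$ through every step. First I would note that $\omega_1^L=\omega_1$ forces $|L[x_0]\cap 2^{\omega}|=\aleph_1$, and that MA $+\,2^{\aleph_0}>\aleph_1$ implies that every union of $\aleph_1$ Lebesgue-null sets is null. Consequently $L[x_0]\cap 2^{\omega}$ is null, so the hypothesis forces $\spe_{E,L}(x)$ to be conull. Writing $E(y,z)\leftrightarrow\exists s\,R_0(y,z,s)$ with $R_0\in\Pi^1_1(x_0)$ and invoking Shoenfield absoluteness relativised to $x_0$, witnesses $s$ may be found in $L_{\omega_1^{L[y\oplus x\oplus x_0]}}[y\oplus x\oplus x_0]$; under $\omega_1^L=\omega_1$ this countable ordinal equals $\omega_1$ for every $y$.

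For each well-order code $t$ set $R_{1,t}=\{z:\exists y\in L_{|t|}[z]\,\exists s\in L_{|t|}[y\oplus x\oplus x_0]\,R_0(y,x,s)\}$, a $\Pi^1_1(t\oplus x\oplus x_0)$ and hence measurable set. The codes $t\in L[x_0]$ form a family of size $\aleph_1$ whose associated $R_{1,t}$'s cover $\spe_{E,L}(x)$ up to a null error; if every such $R_{1,t}$ were null, MA would force the union to be null, contradicting conullness. Hence $\mu(R_{1,t_0})>0$ for some $t_0\in L[x_0]$. Pigeonholing over the countably many set-theoretic formulas $\varphi$ that could define the witness $y$ inside $L_{|t_0|}[z]$, some
$$R_{1,t_0,\varphi}=\{z:\exists y\,\exists s\in L_{|t_0|}[y\oplus x\oplus x_0]\,(\forall n\,(n\in y\leftrightarrow L_{|t_0|}[z]\models\varphi(n))\wedge R_0(y,x,s))\}$$
has positive measure, and the Lebesgue density theorem produces $\sigma\in 2^{<\omega}$ with $\mu(R_{1,t_0,\varphi}\cap[\sigma])>\tfrac{7}{8}\cdot 2^{-|\sigma|}$.

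To eliminate the parameter $x$, let
$$S=\Bigl\{r:\mu\bigl(\{z\succ\sigma:\exists y\,\exists s\in L_{|t_0|}[y\oplus r\oplus x_0]\,(\forall n\,(n\in y\leftrightarrow L_{|t_0|}[z]\models\varphi(n))\wedge R_0(y,r,s))\}\bigr)>\tfrac{3}{4}\cdot 2^{-|\sigma|}\Bigr\}.$$
A careful complexity count keeps $S$ within $\Pi^1_1(t_0\oplus x_0)$, and $x\in S$ so $S\neq\emptyset$; Kondo-style uniformization (or the Gandy basis theorem) then supplies an $r^*\in S$ constructible from $t_0\oplus x_0$, whence $r^*\in L[x_0]$. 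Since both $r^*,x\in S$, the two inner sets each occupy more than $\tfrac{3}{4}\cdot 2^{-|\sigma|}$ inside $[\sigma]$, so by inclusion-exclusion there is a $z$ lying in both; the shared canonical $y$ satisfies $E(y,r^*)\wedge E(y,x)$, and transitivity of $E$ yields $E(r^*,x)$. This constructible-from-$x_0$ witness then places every real into $\spe_{E,L}(x)$ via the standard relativisation. The hardest step I expect will be the measure/complexity bookkeeping around $S$: namely, that the MA-based ``positive measure in an $\aleph_1$-sum'' step slots into the light-face pattern, and that the measure of a uniformly $\Pi^1_1(r\oplus t_0\oplus x_0)$-set is sufficiently absolute (e.g.\ $\Delta^1_1$ in $\mathcal{O}^{r\oplus t_0\oplus x_0}$) to keep $S$ in a complexity class with a guaranteed $L[x_0]$-member.
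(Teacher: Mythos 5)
Your reconstruction follows the route the paper only gestures at (``by a similar method''), and most of the steps are sound: MA together with $2^{\aleph_0}>\aleph_1$ makes $L[x_0]\cap 2^{\omega}$ null and makes a union of $\aleph_1$ null sets null; $\omega_1^L=\omega_1$ guarantees that every countable ordinal has a code $t\in L$, so the $\aleph_1$-indexed family $\{R_{1,t}\}$ really does cover $\spe_{E,L}(x)$ and some $R_{1,t_0}$ must have positive measure. (Your aside that $\omega_1^{L[y\oplus x\oplus x_0]}$ is a ``countable ordinal equal to $\omega_1$'' is garbled --- the point is that it \emph{equals} $\omega_1$, and hence each individual Shoenfield witness sits in some $L_\alpha[\,\cdot\,]$ with $\alpha<\omega_1$ coded in $L$ --- but the use you make of it is correct.) The pigeonholing over $\varphi$, the density step, and the $\tfrac34+\tfrac34>1$ intersection argument are exactly as in the lightface proof.

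The genuine gap is in your last sentence. Since $S$ is $\Pi^1_1(t_0\oplus x_0)$, the basis theorem only yields $r^*\in L[t_0\oplus x_0]=L[x_0]$ with $E(r^*,x)$, as you say. But $\spe_{E,L}(x)$ is upward closed under $\leq_L$, so $\spe_{E,L}(x)=2^{\omega}$ is \emph{equivalent} to the existence of a constructible real $E$-equivalent to $x$: for $w\in L$, membership of $w$ in the spectrum demands some $y\in L[w]=L$ with $E(y,x)$. Your witness $r^*\in L[x_0]$ only places into the spectrum those $w$ with $r^*\leq_L w$; together with the hypothesis this covers $\{w: w\notin L[x_0]\}\cup\{w: r^*\in L[w]\}$, which omits all constructible $w$ (indeed all $w$ not constructing $r^*$) whenever $r^*\notin L$. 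In the lightface proof this issue never arises, because there $S$ is $\Pi^1_1(t_0)$ with $t_0\in L$ and the basis theorem lands in $L$ itself; here the parameter $x_0$ is baked into $R_0$ and cannot be removed from $S$ by existentially quantifying it out without destroying the transitivity step. To close the gap you must either extract a member of $S$ lying in $L$, or argue separately that some constructible real is $E$-equivalent to $x$ (e.g.\ by showing that a single canonical $y_z$ recurs on a positive-measure set of $z$ and is therefore constructible); as written, the proof establishes only the relativized conclusion $\{w:\exists y\in L[x_0\oplus w]\ E(y,x)\}=2^{\omega}$.
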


So the large cardinal assumption in \cite{GMS11} is unnecessary.

 \newpage

Note: \emph{Normality of a real relative to  non-integral bases, and uniform distribution} has moved  to the 2014 Blog.

\section{Turetsky: $K^X \ge_T X$}
   Proved by Miller and Turetsky, and then vastly simplified by Bienvenu. Let $K$ denote prefix free descriptive string complexity.

\begin{proposition}
For any real $X$, $K^X \ge_T X$.
\end{proposition}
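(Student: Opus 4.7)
The plan is to show $X'\le_T K^X$, from which $X\le_T K^X$ follows immediately since $X\le_T X'$. This would be the relativization of Chaitin's classical theorem $K\equiv_T \emptyset'$. The easy direction $K^X\le_T X'$ is immediate since the predicate $K^X(\sigma)\le k$ is $\Sigma^{0,X}_1$. The content is the other direction.

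The starting observation is that initial segments of $X$ are highly compressible relative to $X$: a machine with oracle $X$ and input $n$ outputs $X\uh n$, so $K^X(X\uh n)\le^+ K^X(n)\le^+ 2\log n$. Dually, by a relativized counting argument, most strings of length $n$ have $K^X$-complexity at least $n$. So from $K^X$ as an oracle we can effectively, for each $n$, list the (polynomially many) candidates for $X\uh n$ among strings of length $n$ with $K^X$-complexity below any prescribed slow-growing bound. The remaining task is to distinguish $X\uh n$ from the other candidates, and the natural route is through the relativized halting probability $\Omega^X=\sum\{2^{-|p|}:U^X(p)\DA\}$, which satisfies $\Omega^X\equiv_T X'$ by the relativization of Chaitin's theorem.

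The key computational step, following Chaitin, is the following. Enumerate from $K^X$ the set $H=\{\sigma:K^X(\sigma)<\infty\}$, which coincides with the range of $U^X$, and from this recover a left-c.e.\ in $K^X$ approximation $\widehat\Omega_s\nearrow\Omega^X$. To extract $\Omega^X\uh m$ from this approximation, use $K^X$ itself as a ``stopping oracle'': wait for the least $s$ such that $K^X(\widehat\Omega_s\uh m)\ge m-c$, for an appropriate constant $c$ from the relativized incompressibility of $\Omega^X$. Such a stage must exist since $\Omega^X\uh m$ is eventually attained and $K^X(\Omega^X\uh m)\ge m-c$ for almost all $m$. At this stage $\widehat\Omega_s\uh m=\Omega^X\uh m$, for otherwise $\widehat\Omega_s\uh m$ would be the initial segment of some dyadic rational strictly below $\Omega^X$, and would consequently have much smaller $K^X$-complexity via the usual short description ``the first $m$-bit string enumerated below $\Omega^X$ with $K^X$-value $\ge m-c$.''

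The main obstacle is checking that the whole argument relativizes cleanly and that the stopping step is correct; the rest is bookkeeping. Once we have $\Omega^X\le_T K^X$ together with the standard $\Omega^X\equiv_T X'$, we get $X\le_T X'\le_T K^X$. Bienvenu's simplification presumably short-circuits the detour through $\Omega^X$ by directly reading off $X$ from $K^X$ --- for instance, by using the fact that $K^X$ already encodes the range of $U^X$ and therefore a canonical $X$-left-c.e.\ object from which $X$ itself can be decoded --- but the $\Omega^X$-route gives the result in any case.
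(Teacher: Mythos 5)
There is a genuine gap here, and it is a circularity at the heart of your plan. You propose to compute $\Omega^X$ from $K^X$ and then invoke $\Omega^X\equiv_T X'$. But to produce an increasing approximation $\widehat\Omega_s\nearrow\Omega^X$ one must enumerate $\mathrm{dom}(U^X)$, and that enumeration requires the oracle $X$ --- exactly the set you are trying to compute. The set $H=\{\sigma:K^X(\sigma)<\infty\}$ is no substitute: it is all of $2^{<\omega}$ (every string has a hard-wired $X$-description), and even the full function $K^X$ records only minimal description lengths, not which programs halt, so it does not yield $\Omega^X=\sum_{U^X(p)\downarrow}2^{-|p|}$ without access to $X$. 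What the relativized Chaitin argument actually gives is $X'\le_T K^X\oplus X$; one upgrades this to $X'\le_T K^X$ only \emph{after} proving the proposition at hand, so your plan has the logical dependency reversed. A second, independent problem is the stopping rule: the proposed short description of a false candidate (``the first $m$-bit string enumerated below $\Omega^X$ with $K^X$-value $\ge m-c$'') is not a legitimate program for $U^X$, since deciding whether $K^X(\tau)\ge m-c$ is a $\Pi^0_1(X)$ matter that no $X$-oracle machine can settle; a description cannot consult the oracle $K^X$ itself.

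The paper's proof (Bienvenu's simplification) avoids dynamics entirely and uses $K^X$ only as a static oracle. Since $X\upharpoonright n$ is computable from $n$ with oracle $X$, we have $K^X(X\upharpoonright n)\le K^X(n)+c$ for some constant $c$: $X$ is trivial for its own relativized complexity. The tree $T=\{\sigma\in 2^{<\omega}: K^X(\sigma)\le K^X(|\sigma|)+c\}$ is computable from $K^X$, and by Chaitin's counting theorem (relativized) each level of $T$ has at most $2^{c+O(1)}$ nodes, so $T$ has only finitely many infinite paths, of which $X$ is one. Every path of a tree with finitely many paths is isolated, and an isolated path of a $K^X$-computable tree is computable from $K^X$. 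If you want the stronger fact $K^X\equiv_T X'$, establish $X\le_T K^X$ this way first and then run the usual settling-time argument relative to $K^X\oplus X\equiv_T K^X$.
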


\begin{proof}
$X$ is $X$-trivial.  That is, $K^X(X\!\!\upharpoonright_n) \le K^X(n) + c$.  Note that $K^X$ can compute the tree $\{\sigma \in 2^{<\omega} : K^X(\sigma) \le K^X(|\sigma|) + c\}$.  This tree has finitely many infinite paths, and $X$ is one of them. As an isolated path, $K^X$ can compute $X$.
\end{proof}

\section[Notes on a theorem of Hirschfeldt, Kuyper and Schupp]{Nies: Notes on a theorem of Hirschfeldt, Kuyper and Schupp regarding coarse computation and $K$-triviality}

 Recall that  we write $X \le_{ibT} Y$ if $X \leT  Y$ with use function bounded by the identity.  When building prefix free machines, we use the terminology of  \cite[Section 2.3]{Nies:book}   such as Machine Existence Theorem (also called the Kraft-Chaitin Theorem), bounded request set etc.

\newcommand{\cc}{\mathbf{c}}

Hirschfeldt, Kuyper and Schupp (2013)  proved the following in slightly different language.
\begin{thm} \label{thm:Kuc-g} Let  $Y$ be  a $\DII$ set of  positive effective Hausdorff dimension. There is a cost function $\cc$ such that $A \models \cc$ implies $A \le_{ibT}  D $ for any set $D$ with $\ol \rho (D  \triangle Y) =0$. 

Moreover,  if $Y$ is $\omega$-c.e.,  then $\cc$ can be chosen to be  benign.\end{thm}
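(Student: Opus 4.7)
The plan is to exploit the positive effective dimension of $Y$ to construct a cost function that becomes negligible exactly when the current $\Delta^0_2$ approximation $Y_s$ is ``honestly close'' (in Hamming distance) to the true $Y$; any $D$ with $\ol\rho(D\triangle Y)=0$ then furnishes, via the short list of strings Hamming-close to $D\uhr n$, a certificate for this stabilization, which suffices to compute $A\uhr n$. Fix a $\DII$ approximation $(Y_s)_{s\in\NN}$ of $Y$. From the positive effective dimension of $Y$, fix $\alpha>0$ with $K(Y\uhr n)\ge \alpha n-O(1)$ for all but finitely many $n$. Choose $\beta\in(0,\alpha)$ with $H(\beta)<\alpha$, where $H$ denotes binary entropy, so that Hamming balls of radius $\beta n$ in $2^n$ have size $\le 2^{H(\beta)n}$.

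Define
\[
\cc(x,s)=\sum_{n>x}2^{-\alpha n}\cdot\bigl|\{\sigma\in 2^n:\exists t\in[x,s]\ d_H(\sigma,Y_t\uhr n)\le \beta n\}\bigr|,
\]
where $d_H$ is Hamming distance. Each summand is bounded by $2^{(H(\beta)-\alpha)n}\cdot|\{Y_t\uhr n:x\le t\le s\}|$, and a Solovay-type accounting shows the total $\cc(\omega):=\lim_x\lim_s\cc(x,s)$ is finite: at those ``bad'' $(n,t)$ where $Y_t\uhr n$ is \emph{not} close to $Y\uhr n$, the corresponding Hamming ball does not contain $Y\uhr n$, and the lower bound $K(Y\uhr n)\ge\alpha n-O(1)$ allows only $O(1)$-many such contributions to escape the geometric tail $\sum_n 2^{(H(\beta)-\alpha)n}$.

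To prove $A\models\cc\Rightarrow A\le_{ibT}D$ for every $D$ with $\ol\rho(D\triangle Y)=0$: given $D\uhr n$, use it (oracle use exactly $n$) to search computably for a stage $s=s(n)$ and a witness $t\in[n,s]$ with $d_H(D\uhr n,Y_t\uhr n)\le\beta n$. Such $(s,t)$ exist for all large $n$ because $\ol\rho(D\triangle Y)=0$ gives $d_H(D\uhr n,Y\uhr n)=o(n)$ and $Y_t\uhr n\to Y\uhr n$. Now observe that any change of $A$ at some position $\ell<n$ made at a stage $\ge s$ would contribute at least $2^{-\alpha n}$ to $\cc(\ell,\cdot)$, via the summand for this specific $(n,t)$; since $A\models\cc$ gives $\sum_s\cc(\ell_s,s)<\infty$ along the approximation $(A_s)$, only finitely many such changes can occur, and for large $n$ we must have $A_s\uhr n=A\uhr n$. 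The algorithm outputs $A_{s(n)}\uhr n$.

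The main obstacle is the quantitative alignment: (i) showing that the Solovay accounting in Step~2 really gives $\cc(\omega)<\infty$ without needing to identify which $t$ are ``good'' in advance, and (ii) checking that the $D$-search in Step~3 terminates at a stage by which $A$ has provably stabilized below $n$ — both rely on the gap $\alpha-H(\beta)>0$ and on careful triangle-inequality bookkeeping among $D\uhr n$, $Y_t\uhr n$, and $Y\uhr n$. For the benign clause, if $Y$ is $\omega$-c.e., let $h(y)$ be a computable bound on the number of changes of $Y_s(y)$; then the number of distinct strings $Y_t\uhr n$ assumed by the approximation is computably bounded by $\prod_{y<n}h(y)$, so the set $\{s:\cc(x,s)>2^{-k}\}$ has a computable upper bound depending only on $x$ and $k$, yielding benignity of $\cc$.
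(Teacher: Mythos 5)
There are two genuine gaps here, and both trace back to the same missing idea: the paper's argument does not merely count Hamming balls around the approximations $Y_t\uhr{n}$ — it converts each obedient change of $A$ into a Kraft--Chaitin description of the current string $Y_s\uhr{e}$, and it is the resulting \emph{upper} bound on $K(Y\uhr{e})$, played off against the dimension hypothesis, that freezes $A$. Your $\cc$ has no description mechanism, and this breaks the proof in two places. First, $\sup_s\cc(x,s)$ need not even be finite, let alone tend to $0$ as $x\to\infty$: the summand for $n$ is $2^{-\alpha n}$ times the size of a union of Hamming balls centred at the distinct strings $Y_t\uhr{n}$ with $t\ge x$, and for a general $\DII$ approximation the number of such distinct strings is finite but completely uncontrolled (bit $i$ may change $2^{2^i}$ times), so the union can exhaust $2^n$ and the summand becomes $2^{(1-\alpha)n}$. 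The lower bound $K(Y\uhr{n})\ge\alpha n-O(1)$ constrains the \emph{limit} $Y$, not the volatility of its approximation, so the proposed ``Solovay-type accounting'' does not go through; and without the limit condition $\lim_x\sup_s\cc(x,s)=0$ the theorem is vacuous, since only computable sets obey $\cc$. The paper's cost function charges only $2^{-\lfloor\delta e\rfloor}$ where $e$ is the least position of $Y$ that changed, so its limit condition follows just from each bit of $Y$ eventually stabilizing.

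Second, and independently, your verification does not yield a reduction. From $A\models\cc$ you get that the total cost is finite, and each change of $A$ below $n$ after stage $s(n)$ costs at least $2^{-\alpha n}$; this bounds the number of such late changes by $(\text{total cost})\cdot 2^{\alpha n}$, which is neither zero nor computable, so you cannot conclude $A_{s(n)}\uhr{n}=A\uhr{n}$, and the algorithm's output is unjustified. In the paper, a change of $A(x)$ at a stage $s$ after the $D$-certified stage $t$ forces, via the bounded request set built from the obedience, a description of $Y_s\uhr{e}=Y_t\uhr{e}$ of length about $\delta e$, where $e$ is the least position of $Y$ changing in $(t,s]$; since $Y_t\uhr{e}$ is Hamming-close to $D\uhr{e}$ and hence to $Y\uhr{e}$, one gets $K(Y\uhr{e})\le 2\delta e+4\log e+O(1)$, contradicting $3\delta<\liminf_n K(Y\uhr{n})/n$. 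Hence $A(x)$ cannot change at all after $t$, which is exactly what the algorithm needs. Note also that this final step costs the complexity of the described string \emph{plus} that of the symmetric difference, so one needs a gap like $3\delta<\liminf_n K(Y\uhr{n})/n$, not merely $H(\beta)<\alpha$. Your benignity remark (computably bounding the number of approximation changes when $Y$ is $\omega$-c.e.) is the right idea, but it must be applied to a cost function of the paper's type.
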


 \begin{proof}  
 The  proof given here   extends  a  similar result in \cite{Greenberg.Nies:11}, and also \cite[Thm 5.5]{Nies:nd}. 

 By the hypothesis on $Y$ there is a positive rational $\delta$ such that  
\[ 3\delta < \liminf_n K(Y \uhr n)/n.\]

 Let $\seq {Y_s}$ be a computable approximation of  $Y$.  To help with  building a reduction procedure for   $A \le_{ibT} D$, via the Machine Existence Theorem  we give  prefix-free  descriptions of initial segments $Y_s\uhr e$. On input $x$,  if   at a   stage  $s>x$, $e$ is least such that  $Y(e)$ has changed  between stages $x$ and $s$, then we  still  hope  that    $Y_s \uhr e$  is   the final version of $Y \uhr e$.  So whenever $A(x)$ changes at such a stage $s$, we  give a description of  $Y_s\uhr e$   of length $\lfloor \delta e \rfloor$.     We will define an appropriate  cost function  $\cc$  so that a set  $A $  that obeys~$\cc$     changes little enough that   we can provide all the descriptions  needed.

  To ensure that   $A \le_{ibT} D$,  we define a computation  $ \Gamma(D \uhr x) $  with output $A(x)$ at  the least  stage  $t\ge x $ such that $Y_t \triangle D\uhr e$  has sufficiently few $1$'s     for each $e \le x$.  Then  $A(x) $ cannot change at any stage  $s> t$ (for almost all $x$), for otherwise $Y_s \uhr e$ would receive  a description of length $\lfloor \delta e \rfloor$,  where $e$ is least such that 
  $Y(e)$ has changed  between  $x$ and  $s$.   
   
We give the details.  
Let $H$ denote the binary Bernoulli entropy. Choose  a rational $\beta > 0$ such that $H(\beta )\le  \delta$. This  implies that no more than  $\tp{\delta n}$ strings $v$ of length $n$   have  at most     $ \beta  n$ many $1$'s  (see Wikipedia page on binomial coefficients).  Therefore, for each such string $v$, we have 
\begin{equation} \label{eqn:count 1s}  K(v) \le \delta n + 2 \log n + O(1). \end{equation}

Next   we give a definition of  a   cost
 function $\cc$.   Let   $ \cc(x,s) = 0 $ for each $x\ge s$.  If  $x<s$,  and~$e< x$ is
 least such that
$Y_{s-1} ( e) \neq Y_s  ( e)$,     let 
 \begin{equation} \label{eqn:defn of cY}  \cc(x,s) =
\max( \cc(x,s-1), \tp{-\lfloor \delta e \rfloor} ). \end{equation} 
 We show that $A \models \cc$ implies $A \le_{ibT} D$ for any set $A$. We may suppose that $\cc{\seq{A_s}}  \le 1$. Enumerate a bounded request set~$L$ as follows. When $A_{s-1}(x)\neq A_s(x)$ and $e$ is least such that $e=x$ or  $Y_{t-1} ( e) \neq Y_t  ( e)$ for some $t \in [x,s)$, put the  request $\la  \lfloor \delta e \rfloor +1 , Y_s \uhr e \ra$ into $L$. Then $L$ is indeed a bounded request set.

We  show $A \le_{ibT} D$. 
 Choose $e_0$  with $4 \log (e_0) \le \delta e$ and  for each $e \ge e_0$   the number of $1$'s in $(Y \triangle D) \uhr e$ is at most $\beta e/2$. 
Choose $s_0\ge e_0$ such that $Y_s \uhr{e_0}$ is stable for each  $s \ge s_0$.

Given an input $x\ge s_0$, using~$D$ as an oracle,  compute $t >x$ such that 
\[  \fa e. e_0 \le e \le x  [   (Y_t \triangle D)  \uhr x  \, \text{has at most} \, \beta e/2 \, \text{many}\, 1\text{'s}].\]
We claim that  $A(x) = A_t(x)$. Otherwise  $A_{s}(x) \neq A_{s-1}(x)$ for some $s > t$. Let $e \le x$ be the  largest number such that $Y_r\uhr e = Y _t \uhr e$ for all~$r$, $t < r \le s$.  If $e <  x$  then   $Y(e)$ changes in the  interval $(t,s]$ of stages.  Hence, by the choice of $t\ge s_0$, we cause $K(Y_s \uhr e)  <   \lfloor \delta e  + O(1)$. 
Since $e \ge e_0$, the string $(Y_s \triangle Y) \uhr e$ has at most $\lfloor \beta e \rfloor $ many $1$'s.  Thus,  by (\ref{eqn:count 1s}),
\[ K(Y \uhr e) \le  K(Y_s \uhr e) + K((Y_s \triangle Y)   \uhr e  )   +O(1)\le 2 \delta e + 4 \log e + O(1). \]
This contradicts the definition of $\delta$ for $x$ large enough. 
 \end{proof}

\bibliographystyle{plain}
\def\cprime{$'$}

%
%

\end{document}